\documentclass[12pt]{amsart}
\usepackage{amsfonts}
\usepackage{amsmath}
\usepackage{amsthm}
\usepackage{url}
\usepackage[all]{xy}
\usepackage{latexsym}
\usepackage{amssymb}
\usepackage[cp850]{inputenc}
\usepackage{labelfig}
\usepackage{amsfonts}
\usepackage{graphicx}

\newtheorem{sat}{Theorem}[section]		\newtheorem{lem}[sat]{Lemma}
\newtheorem{kor}[sat]{Corollary}			\newtheorem{prop}[sat]{Proposition}
\newtheorem{defi}{Definition}
\newtheorem*{defi*}{Definition}			\newtheorem*{bei*}{Example}
\newtheorem*{sat*}{Theorem}				\newtheorem*{kor*}{Corollary}
\newtheorem*{rmk*}{Remark}				\newtheorem{quest}{Question}


\let\ssection=\section
\renewcommand{\section}{\setcounter{equation}{0}\ssection}

\newtheorem*{namedtheorem}{\theoremname}
\newcommand{\theoremname}{testing}
\newenvironment{named}[1]{\renewcommand{\theoremname}{#1}\begin{namedtheorem}}{\end{namedtheorem}}

\theoremstyle{remark}
\newtheorem*{bem}{Remark}
\newtheorem{bei}{Example}

\newtheorem*{namedtheoremr}{\theoremnamer}
\newcommand{\theoremnamer}{testing}

\newcommand{\BC}{\mathbb C}			
\newcommand{\BR}{\mathbb R}			\newcommand{\BD}{\mathbb D}
\newcommand{\BN}{\mathbb N}			
\newcommand{\BS}{\mathbb S}			\newcommand{\BZ}{\mathbb Z}
\newcommand{\BF}{\mathbb F}			\newcommand{\BT}{\mathbb T}

\newcommand{\CM}{\mathcal M}		\newcommand{\CN}{\mathcal N}
\newcommand{\CO}{\mathcal O}		
		\newcommand{\CR}{\mathcal R}
\newcommand{\CS}{\mathcal S}		\newcommand{\CT}{\mathcal T}

		\newcommand{\CZ}{\mathcal Z}

\newcommand{\actson}{\curvearrowright}
\newcommand{\D}{\partial}

\DeclareMathOperator{\Out}{Out}		
\DeclareMathOperator{\SL}{SL}		
\DeclareMathOperator{\PSL}{PSL}		
\DeclareMathOperator{\Id}{Id}		
\DeclareMathOperator{\vol}{vol}		

\DeclareMathOperator{\Map}{Map}

\DeclareMathOperator{\Ker}{Ker}

\newcommand{\comment}[1]{}

\DeclareMathOperator{\SO}{SO}

\begin{document}

\title[]{Homomorphisms between mapping class groups}
\author{Javier Aramayona \& Juan Souto}
\thanks{The first author has been partially supported by M.E.C. grant MTM2006/14688. 
The second author has been partially supported by NSF grant DMS-0706878, NSF Career award 0952106, and the 
Alfred P. Sloan Foundation}
\begin{abstract}
Suppose that $X$ and $Y$ are surfaces of finite topological type, where $X$ has genus $g\geq 6$ and $Y$ has genus at most $2g-1$; 
in addition, suppose that $Y$ is not closed if it has genus $2g-1$. 

Our main result asserts that every non-trivial homomorphism $\Map(X) \to \Map(Y)$ 
is induced by an {\em embedding}, i.e.\ a combination of forgetting punctures, deleting 
boundary components and subsurface embeddings. In particular, if $X$ has no
 boundary then every non-trivial endomorphism $\Map(X)\to\Map(X)$ is in fact
 an isomorphism.

As an application of our main theorem we obtain that, under the same hypotheses on genus, if $X$ and $Y$ have finite 
analytic type then every non-constant holomorphic map $\CM(X)\to\CM(Y)$ between the corresponding moduli spaces is a 
forgetful map. In particular, there are no such holomorphic maps unless $X$ and $Y$ have the same genus and $Y$ has at 
most as many marked points as $X$.
\end{abstract}
\maketitle

\centerline{\em A nuestras madres, cada uno a la suya.}
\medskip

\section{Introduction}
Throughout this article we will restrict our attention to connected orientable surfaces of finite topological type, 
meaning of finite genus and with finitely many boundary components and/or cusps. We will feel free to think about 
cusps as marked points, punctures or topological ends. If a surface has empty boundary and no cusps, it is said to 
be closed. The mapping class group $\Map(X)$ of a surface $X$ of finite topological type is the group of isotopy classes of 
orientation preserving homeomorphisms fixing pointwise the union of the boundary and the set of 
punctures. We denote 
by $\CT(X)$ and by $\CM(X)=\CT(X)/\Map(X)$ the Teichm\"uller space and moduli space of $X$, respectively.

\subsection{The conjecture}
The triad formed by the mapping class group $\Map(X)$, Teichm\"uller space $\CT(X)$ and 
moduli space $\CM(X)$ is
often compared with the one formed by $\SL_n\BZ$, the symmetric space 
$\SO_n\backslash\SL_n\BR$, and the locally symmetric 
space $\SO_n\backslash\SL_n\BR/\SL_n\BZ$; here $\SL_n\BZ$ stands as the paradigm of an arithmetic lattice in a higher
 rank semisimple algebraic group. This analogy has motivated many, possibly most, advances in the understanding of the 
mapping class group $\Map(X)$. For example, Grossman \cite{Grossman} proved that $\Map(X)$ is residually finite; Birman, 
Lubotzky and McCarthy \cite{BLM} proved that the Tits alternative holds for subgroups of $\Map(X)$; the Thurston 
classification of elements in $\Map(X)$ mimics the classification of elements in an algebraic group \cite{Thurston}; 
Harvey \cite{Harvey} introduced the curve complex in analogy with the rational Tits' building; Harer's \cite{Harer} 
computation of the virtual cohomological dimension of $\Map(X)$ follows the outline of Borel and Serre's argument for 
arithmetic groups \cite{Borel-Serre}, etc... 

On the other hand, the comparison between $\Map(X)$ and $\SL_n\BZ$ has strong limitations; for instance 
the mapping class group has finite index in its abstract commensurator \cite{Ivanov-comm}, does not have property (T) 
\cite{Andersen} and has infinite dimensional second bounded cohomology \cite{Bestvina-Fujiwara}. In addition, it is not
 known if the mapping class group contains finite index subgroups $\Gamma$ with $H^1(\Gamma;\BR)\neq 0$.

With the dictionary between $\Map(X)$ and $\SL_n\BZ$ in mind, it is natural to ask to what extent there is an analog
of {\em Margulis' superrigidity} in the context of mapping class groups. This question, in various guises, has been
addressed by a number of authors in recent times. For instance, Farb-Masur \cite{Farb-Masur} proved that every homomorphism 
from an irreducible lattice in a higher-rank Lie group to a mapping class group has finite image. Notice that, 
on the other hand, mapping class groups admit non-trivial homomorphisms into higher-rank lattices \cite{Looijenga}. With the same
motivation, one may try to understand homomorphisms between mapping class groups; steps in this direction
include the results of \cite{ALS, Bell-Margalit, Harvey-Korkmaz, Ivanov-2, Ivanov-McCarthy, McCarthy}. 
In the light of this discussion we propose the following general conjecture, which states that, except in some low-genus
cases (discussed in Example \ref{example10}),  some version of Margulis' 
superrigidity holds for homomorphisms between mapping class groups:
\begin{named}{\sc Superrigidity conjecture}
Margulis' superrigidity holds for homomorphisms 
$$\phi:\Map(X)\to\Map(Y)$$ 
between mapping class groups as long as $X$ has at least genus three.
\end{named}

The statement of the superrigidity conjecture is kept intentionally vague for a good reason: different formulations 
of Margulis' superrigidity theorem suggest different forms of the conjecture.

For instance, recall that the geometric version of superrigidity asserts that any homomorphism
 $\Gamma\to\Gamma'$ between two lattices in simple algebraic groups of higher rank is induced by a totally geodesic 
immersion $M_\Gamma\to M_{\Gamma'}$ between the locally symmetric spaces associated 
to $\Gamma$ and $\Gamma'$. One possible way of interpreting
the superrigidity conjecture for mapping class groups is to ask whether every homomorphism between mapping class groups of, say, surfaces of finite analytic type, induces a
holomorphic map between the corresponding moduli spaces. 

\begin{bem}
There are examples \cite{ALS} of injective homomorphisms $\Map(X)\to\Map(Y)$ which map some pseudo-Anosovs to multi-twists and hence are not induced 
by any isometric embedding $\CM(X)\to\CM(Y)$ for any reasonable choice of metric on $\CM(X)$ and $\CM(Y)$. This is 
the reason why we prefer not to ask, as done by Farb and Margalit (see Question 2 of \cite{Bell-Margalit}), whether 
homomorphisms between mapping class groups are {\em geometric}. 
\end{bem}

The Lie theoretic version of superrigidity essentially asserts that every homomorphism $\Gamma\to\Gamma'$ between two irreducible lattices in higher 
rank Lie groups either has finite image or extends to a homomorphism between the ambient groups. The mapping class group $\Map(X)$ is a quotient of the group of homeomorphisms (resp. diffeomorphisms) of $X$ but not a subgroup \cite{Morita,Markovic}, and thus there is no
ambient group as such. A natural interpretation
of this flavor of superrigidity would be to ask whether every homomorphism $\Map(X) \to \Map(Y)$ is induced by a homomorphism between the corresponding groups of homeomorphisms (resp. diffeomorphisms). 

Finally, one has the folkloric version of superrigidity: {\em every homomorphism between two irreducible higher rank 
lattices is one of the ``obvious'' ones.} The word ``obvious'' is rather vacuous; to give it a little bit of content we 
adopt Maryam Mirzakhani's version of the conjecture above: {\em every homomorphism between mapping class groups has either finite image or is 
induced by some manipulation of surfaces.} The statement {\em manipulation of surfaces} is again vague, but it 
conveys the desired meaning. 

\subsection{The theorem}
Besides the lack of counterexamples, the evidence supporting the 
superrigidity conjecture is limited to the results in \cite{Bell-Margalit, Harvey-Korkmaz,  Ivanov-2,Ivanov-McCarthy,
 McCarthy}. The goal of this paper is to prove the conjecture, with respect to any of its possible interpretations,
under suitable genus bounds. Before stating our main result we need a definition:

\begin{defi} 
Let $X$ and $Y$ be surfaces of finite topological type, and consider their cusps to be marked points. Denote
by $\vert X\vert $ and $\vert Y\vert$ the compact surfaces obtained from $X$ and $Y$ by forgetting all their
marked points. By an {\em embedding} $$\iota:X\to Y$$
we will understand a continuous injective map $\iota:\vert X\vert\to\vert Y\vert$ with the property 
that whenever $y\in\iota(\vert X\vert)\subset\vert Y\vert$ is a marked point of $Y$ in the image of $\iota$, 
then $\iota^{-1}(y)$ is also a marked point of $X$. 
\end{defi}

Note that forgetting a puncture, deleting a boundary component, and
embedding $X$ as a subsurface of $Y$ are examples of embeddings. Conversely, 
every embedding is a combination of these three building blocks; compare with 
Proposition \ref{prop-embedding} below.

It is easy to see that every embedding $\iota:X\to Y$ induces a 
homomorphism $\Map(X)\to\Map(Y)$. Our main result is that, as long 
as the genus of $Y$ is less than twice that of $X$, the converse is also true:

\begin{sat}\label{dogs-bollocks}
Suppose that $X$ and $Y$ are surfaces of finite topological type, of genus $g\ge 6$ and $g'\le 2g-1$ respectively; 
if $Y$ has genus $2g-1$, suppose also that it is not closed. Then every nontrivial homomorphism 
$$\phi:\Map(X)\to\Map(Y)$$
is induced by an embedding $X\to Y$.
\end{sat}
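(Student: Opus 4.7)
The plan is to follow the template of Ivanov, McCarthy, and Korkmaz. The key technical step is to show that for every non-separating simple closed curve $c \subset X$, the image $\phi(T_c)$ is a power of a Dehn twist along a non-separating simple closed curve $\phi_*(c)$ in $Y$. Once this is established, standard relations in mapping class groups quickly promote the assignment $c\mapsto\phi_*(c)$ to a simplicial embedding of the complex of non-separating curves of $X$ into that of $Y$ which preserves disjointness and single intersections. A rigidity theorem of Ivanov--Korkmaz type, adapted to produce embeddings rather than automorphisms, then yields a topological embedding $\iota : X \to Y$; since $\phi$ and the induced homomorphism $\iota_*$ agree on the generating set of non-separating Dehn twists, they must be equal.

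The heart of the argument is the Dehn-twist analysis. Non-triviality of $\phi(T_c)$ is automatic, since a single non-separating Dehn twist normally generates $\Map(X)$: if one were killed by $\phi$ all would be, forcing $\phi$ to be trivial. To identify $\phi(T_c)$, I would analyse the centralizer $Z(T_c)\subset\Map(X)$, which for $g\ge 6$ essentially contains the mapping class group of $X$ cut along $c$, and hence mapping class groups of subsurfaces of genus up to $g-1$ together with abelian subgroups of rank on the order of $3g-3$. Writing $\phi(T_c)$ in its Nielsen--Thurston canonical form, the Birman--Lubotzky--McCarthy description of centralizers forces $\phi(Z(T_c))$ to live in a group whose complexity is controlled by the topology of the complement of the support of $\phi(T_c)$ in $Y$. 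Under the sharp bound $g'\le 2g-1$ this complement is too thin to absorb the image of $Z(T_c)$ unless $\phi(T_c)$ has no pseudo-Anosov components and its twist multicurve consists of a \emph{single} non-separating curve. The extra hypothesis that $Y$ not be closed when $g'=2g-1$ enters at exactly this point, since a closed surface of that genus has no room for a genus $g$ subsurface with non-empty boundary complement.

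With $\phi(T_c)=T_{\phi_*(c)}^{k(c)}$ in hand, the braid relation $T_cT_{c'}T_c=T_{c'}T_cT_{c'}$ for curves $c,c'$ meeting once forces $\phi_*(c)$ and $\phi_*(c')$ to meet once and $k(c)=k(c')$; since any two non-separating curves in $X$ are connected by a chain of such pairs, the exponent $k$ is a global constant, which by primitivity of Dehn twists in $\Map(Y)$ must be $\pm 1$. Commutativity of $T_c$ and $T_{c'}$ when $c,c'$ are disjoint translates to disjointness of $\phi_*(c)$ and $\phi_*(c')$, so $\phi_*$ is simplicial as claimed, and the adapted rigidity theorem provides $\iota$. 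The main obstacle is the centralizer analysis of the previous paragraph: the sharp genus bound $g'\le 2g-1$ is precisely what makes the dimension count close, and without it one cannot rule out $\phi(T_c)$ being a multi-twist or possessing pseudo-Anosov components on proper subsurfaces of $Y$. The remaining bookkeeping, including the reduction from the homomorphism level to checking on Dehn twist generators, is standard once the structural statement about $\phi(T_c)$ is secured.
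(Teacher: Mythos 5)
Your high-level plan (show $\phi(T_c)$ is a Dehn twist along a non-separating curve, then reconstruct an embedding from the induced map on curves) is the same as the paper's, but the sketch of the technical core contains several genuine gaps that are not mere bookkeeping.

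First, you never rule out that $\phi(T_c)$ has \emph{finite order}. Your observation that killing one non-separating twist kills all of them shows only that $\phi(T_c)\neq 1$. Bridson's theorem (which you never invoke, but which is the paper's actual starting point) gives that $\phi(T_c)$ is a root of a multitwist; but every finite-order element is a root of the identity multitwist, so this says nothing until you have excluded finite order. The paper devotes an entire section to this exclusion (Proposition \ref{no-torsion} and Theorem \ref{blablabla}): the argument goes through cut systems, an $\CS_g$-equivariance lemma for finite quotients of $\BZ^g$, connectivity of the cut system complex, and the Maclachlan--Nakajima bound on abelian torsion. None of this is present in your sketch, and the Nielsen--Thurston analysis you propose is not a substitute because it does not distinguish finite-order elements from the identity at the level of canonical reducing systems.

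Second, the ``centralizer analysis'' you describe as forcing $\phi_*(c)$ to be a single non-separating curve is not how the paper's argument works, and as stated it does not close. The paper does \emph{not} perform a dimension count on $Z(T_c)$ against the complement of the support of $\phi(T_c)$; instead it uses Paris' theorem that $\Map$ of genus $\ge 3$ has no subgroup of index $\le 4g+4$ to show that $\phi(Z_0(\delta_c))$ cannot permute the components of $\phi_*(c)$ (Lemma \ref{prop:curves-invariant}), and then a component count for a collection of $3g-3$ curves forces $\phi_*(c)$ to be a single curve (Proposition \ref{lem:single-component}). Even after that, one only knows $\phi(\delta_c)$ is a \emph{root} of a power of $\delta_{\phi_*(c)}$; promoting this to an honest power requires showing the centralizer of $\phi(\Map(X_c))$ in $\Map(Y'_{\phi_*(c)})$ is torsion-free, which rests on Proposition \ref{blabla} (no nontrivial homomorphisms when genus drops) and the Riemann--Hurwitz bound. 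Your explanation of where the ``not closed when $g'=2g-1$'' hypothesis enters is also incorrect: a closed surface of genus $2g-1$ certainly contains genus-$g$ subsurfaces with complementary boundary; the hypothesis is used in the component count of Proposition \ref{lem:single-component}, where in the equality case $6g-6$ components cannot be excluded without a puncture to anchor a pair-of-pants argument.

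Third, you invoke ``a rigidity theorem of Ivanov--Korkmaz type, adapted to produce embeddings rather than automorphisms'' as if it existed as a black box. It does not; producing the embedding from the simplicial data is a substantial part of the paper (the reduction to the irreducible boundary-free case, the analysis of the $r_i$-fan and the multicurve $\CR$, the ordering arguments, the use of lantern relations to detect which punctures to fill). Indeed the paper \emph{recovers} the Ivanov--McCarthy co-Hopf result as a corollary, rather than the reverse. So the final step you treat as ``standard'' is in fact a significant piece of new content that your proposal leaves entirely open.
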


\begin{bem}
As we will prove below, the conclusion of Theorem \ref{dogs-bollocks} also applies to homomorphisms 
$\phi:\Map(X)\to\Map(Y)$ when both $X$ and $Y$ have the same genus $g\in\{4,5\}$.
\end{bem}

We now give some examples that highlight the necessity for the genus bounds in Theorem \ref{dogs-bollocks}. 

\begin{bei}\label{example10}
Let $X$ be a surface of genus $g \leq 1$; if $g=0$ then assume that $X$ has at least four marked
points or boundary components. The mapping class group $\Map(X)$ surjects onto 
$\PSL_2\BZ\simeq(\BZ/2\BZ)*(\BZ/3\BZ)$. In particular, any two elements $\alpha,\beta\in\Map(Y)$ 
with orders two and three, respectively, determine a homomorphism $\Map(X)\to\Map(Y)$; notice that 
such elements exist if $Y$ is closed, for example. Choosing $\alpha$ and $\beta$ appropriately, 
one can in fact obtain infinitely many conjugacy classes of homomorphisms $\Map(X)\to\Map(Y)$ 
with infinite image and with the property that every element in the image is either pseudo-Anosov or has finite order.
\end{bei}

Example \ref{example10} shows that some lower bound on the genus of $X$ is necessary in the statement of Theorem 
\ref{dogs-bollocks}. Furthermore, since $\Map(X)$ has non-trivial abelianization if $X$ has genus $2$, there exist
homomorphisms from $\Map(X)$ into mapping class groups of arbitrary closed surfaces $Y$ that are not 
induced by embeddings. On the other hand, we expect Theorem \ref{dogs-bollocks} to be true for surfaces of genus $g\in 
\{3,4,5\}$.

\begin{bem}
Recall that the mapping class group of a punctured disk is a finite index subgroup of the appropriate braid group. 
In particular, Example \ref{example10} should be compared with the rigidity results for homomorphisms between 
braid groups, and from braid groups into mapping class groups, due to 
Bell-Margalit \cite{Bell-Margalit} and Castel \cite{Castel}.
\end{bem}

Next, observe that an upper bound on the genus of the target surface is also necessary in the statement
of Theorem \ref{dogs-bollocks} since, for instance, the mapping class group of every closed surface injects into the 
mapping class group of some non-trivial connected cover \cite{ALS}. Moreover, 
the following example shows that the bound in Theorem \ref{dogs-bollocks} is in fact optimal:

\begin{bei}\label{example1}
Suppose that $X$ has non-empty connected boundary and let $Y$ be the double of $X$. Let $X_1,X_2$ be the two copies of 
$X$ inside $Y$, and for $x\in X$ denote by $x_i$ the corresponding point in $X_i$. Given a homeomorphism $f:X\to X$ 
fixing pointwise the boundary and the cusps define
$$\hat f:Y\to Y,\ \ \hat f(x_i)=(f(x))_i\ \forall x_i\in X_i$$
The map $f\to\hat f$ induces a homomorphism
$$\Map(X)\to\Map(Y)$$
which is not induced by any embedding. 
\end{bei}

%

\subsection{Applications}
After having established that in Theorem \ref{dogs-bollocks} a lower bound for the genus of $X$ is necessary and that 
the upper bound for the genus of $Y$ is optimal, we discuss some consequences of our main result. First, we will observe
that, in the absence of boundary, every embedding of a surface into itself is in fact a homeomorphism; in light of
this, Theorem \ref{dogs-bollocks} implies the following:


\begin{sat}\label{no-boundary2}
Let $X$ be a surface of finite topological type, of genus $g \ge 4$ and with empty boundary.
Then any non-trivial endomorphism $\phi:\Map(X)\to\Map(X)$ is induced by a homeomorphism 
$X\to X$; in particular $\phi$ is an isomorphism.
\end{sat}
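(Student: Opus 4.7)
The plan is to combine Theorem \ref{dogs-bollocks} with a soft topological observation about self-embeddings of closed surfaces. Given a non-trivial endomorphism $\phi:\Map(X)\to\Map(X)$, Theorem \ref{dogs-bollocks} together with the remark extending it to genus $g\in\{4,5\}$ (applicable since source and target have the same genus) produces an embedding $\iota:X\to X$ inducing $\phi$. The entire content of Theorem \ref{no-boundary2} beyond Theorem \ref{dogs-bollocks} is then to promote this embedding to a homeomorphism; once that is achieved, $\phi$ is realized as conjugation by a self-homeomorphism of $X$, which is automatically an automorphism of $\Map(X)$.

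Recall that $|X|$ denotes the compact surface obtained from $X$ by forgetting marked points; since $X$ has empty boundary, $|X|$ is a \emph{closed} surface. The embedding $\iota$ is by definition a continuous injection $|X|\to|X|$. The argument is the soft fact that ``a closed surface cannot embed strictly inside itself'': by invariance of domain, $\iota(|X|)$ is open in $|X|$; by compactness and Hausdorffness it is also closed; by connectedness of $|X|$ it must therefore be all of $|X|$. A continuous bijection between compact Hausdorff spaces is a homeomorphism, so $\iota$ is a self-homeomorphism of $|X|$. It remains to check that $\iota$ carries the marked points of $X$ bijectively to themselves. The defining condition of an embedding says that every marked point of the target which lies in $\iota(|X|)$ has its preimage among the marked points of the source; since $\iota$ is surjective, this yields an injection from the target marked points to the source marked points. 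As these sets are finite of the same cardinality, the injection is a bijection, and hence $\iota$ itself sends source marked points to target marked points. Consequently $\iota$ descends to a self-homeomorphism of $X$.

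There is no real obstacle beyond invoking Theorem \ref{dogs-bollocks}: once we know that $\phi$ is induced by some embedding, the passage from ``embedding'' to ``homeomorphism'' uses only invariance of domain and a counting argument on finitely many marked points. The only mild subtlety is that the embedding $\iota$ produced by Theorem \ref{dogs-bollocks} is a priori not required to be orientation-preserving; however, this is irrelevant for the conclusion, since conjugation by any self-homeomorphism (orientation-preserving or not) preserves $\Map(X)$ and induces an automorphism of it.
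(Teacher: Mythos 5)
Your proof is correct and takes essentially the same route as the paper: invoke Theorem \ref{dogs-bollocks} (with the same-genus extension to $g\in\{4,5\}$) to obtain an embedding $\iota:X\to X$ inducing $\phi$, then observe that a self-embedding of a boundaryless finite-type surface must be a homeomorphism. The paper reaches this last point by citing Corollary \ref{kor-embedding}(3), which rests on the decomposition of embeddings in Proposition \ref{prop-embedding}; your direct invariance-of-domain argument together with the finite counting of marked points is a slightly more self-contained way of establishing the same elementary fact, and the closing remark about orientation is a correct (if unnecessary) precaution.
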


\begin{bem}
The analogous statement of Theorem \ref{no-boundary2} for injective endomorphisms was known to be 
true by the work of Ivanov and McCarthy \cite{Ivanov-McCarthy,Ivanov,McCarthy}.
\end{bem}

Theorem \ref{no-boundary2}, as well as other related results discussed in Section
\ref{sec:applications}, are essentially specializations of Theorem \ref{dogs-bollocks} 
to particular situations. 

Returning to the superrigidity conjecture, recall that in order to prove superrigidity for (cocompact) lattices one may associate, to every homomorphism between two lattices, a harmonic
map between the associated symmetric spaces, and then use differential geometric arguments to show that this map is a totally geodesic immersion. This is not the approach we follow in this paper, and neither Teichm\"uller 
space nor moduli space will play any role in the proof of Theorem \ref{dogs-bollocks}.
As a matter of fact, reversing the 
logic behind the proof of superrigidity, Theorem \ref{dogs-bollocks} will actually provide information  about maps 
between moduli spaces, as we describe next. 

Suppose that $X$ and $Y$ are Riemann surfaces of finite analytical type. Endow  the associated Teichm\"uller spaces 
$\CT(X)$ and $\CT(Y)$ with the standard complex structure. The latter is invariant under the action of the corresponding mapping class group and hence we can consider the moduli spaces
$$\CM(X)=\CT(X)/\Map(X),\ \ \CM(Y)=\CT(Y)/\Map(Y)$$ 
as complex orbifolds.

Suppose now that $X$ and $Y$ have the same genus and that $Y$ has at most as many marked points as $X$. Choosing an 
identification between the set of marked points of $Y$ and a subset of the set of marked points of $X$, we obtain a 
holomorphic map $$\CM(X)\to\CM(Y)$$ obtained by forgetting all marked points of $X$ which do not correspond to a 
marked point of $Y$. Different identifications give rise to different maps; we will refer to these maps as 
{\em forgetful maps}. In Section \ref{sec:forgetful-proof} we will prove the following result:

\begin{sat}\label{forgetful}
Suppose that $X$ and $Y$ are Riemann surfaces of finite analytic type and assume that $X$ has genus 
$g\ge 6$ and $Y$ genus $g'\le 2g-1$; in the equality case $g'=2g-1$ assume that $Y$ is not closed.
 Then, every non-constant holomorphic map $$f:\CM(X)\to\CM(Y)$$ is a forgetful map.
\end{sat}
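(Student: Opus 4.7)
The plan breaks into three movements: lift $f$ to a holomorphic map between Teichm\"uller spaces, extract a homomorphism between mapping class groups and analyse it via Theorem~\ref{dogs-bollocks}, and then upgrade the resulting topological embedding to the desired holomorphic conclusion via complex-analytic rigidity.

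First I would lift. Since $\CT(X)$ and $\CT(Y)$ are the orbifold universal covers of $\CM(X)$ and $\CM(Y)$, with deck transformation groups $\Map(X)$ and $\Map(Y)$, the holomorphic map $f$ lifts to a holomorphic map $\tilde f:\CT(X)\to\CT(Y)$ equivariant under some homomorphism $\phi:\Map(X)\to\Map(Y)$. To show $\phi$ is non-trivial I would argue by contradiction: if $\phi=1$ then $\tilde f$ is $\Map(X)$-invariant and descends to a holomorphic map $g:\CM(X)\to\CT(Y)$; composing with the Bers embedding $\CT(Y)\hookrightarrow\BC^{3g'-3+n'}$ realises each coordinate of $g$ as a bounded holomorphic function on $\CM(X)$. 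Since $\CM(X)$ is the complement of a divisor in its Deligne-Mumford compactification $\overline{\CM(X)}$, Riemann's removable singularity theorem extends each coordinate across that divisor, and compactness of $\overline{\CM(X)}$ forces the extensions, and hence $g$ itself, to be constant; but then $f$ would also be constant, contrary to hypothesis.

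Now Theorem~\ref{dogs-bollocks} produces an embedding $\iota:X\to Y$ inducing $\phi$. Since $X$ and $Y$ are of finite analytic type and hence have no boundary, $\iota$ is built from puncture-forgettings and possibly a proper subsurface embedding. The main obstacle is ruling out the latter, that is, showing $|X|=|Y|$; together with the definition of embedding this would then force the marked points of $Y$ to form a subset of those of $X$, giving precisely the forgetful setting. Since both $X$ and $Y$ are closed without boundary, this amounts to showing that their genera agree. My intended argument is: if the genus of $X$ were strictly less than that of $Y$, pick an essential simple closed curve $\gamma\subset |Y|\setminus\iota(|X|)$; the Dehn twist $T_\gamma$ then centralises $\phi(\Map(X))$ in $\Map(Y)$. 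Holomorphic invariants on $\CT(Y)$ detecting pinching of $\gamma$ -- for instance components of the period map associated to cycles intersecting $\gamma$ -- pull back through $\tilde f$ to $\Map(X)$-invariant bounded holomorphic functions on $\CT(X)$, hence to holomorphic functions on $\CM(X)$, and thence to constants by the Deligne-Mumford extension argument above. Collecting such constraints as $\gamma$ ranges over enough independent curves in $|Y|\setminus\iota(|X|)$ would imprison $\tilde f(\CT(X))$ in a proper holomorphic stratum of $\CT(Y)$ of dimension less than $\dim_\BC\CM(X)$, forcing $f$ to degenerate to a constant.

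With $\iota$ identified as forgetful, the canonical holomorphic forgetful map $F:\CM(X)\to\CM(Y)$ also induces $\phi$ on mapping class groups and lifts to a $\phi$-equivariant holomorphic $\tilde F:\CT(X)\to\CT(Y)$. To conclude $f=F$, I would observe that both $\tilde f$ and $\tilde F$ are $\phi$-equivariant holomorphic maps and use pseudo-Anosov dynamics: for any pseudo-Anosov $\psi\in\Map(X)$ the element $\phi(\psi)=\iota_*(\psi)$ is pseudo-Anosov on $Y$ with a unique Teichm\"uller axis, so equivariance forces $\tilde f$ and $\tilde F$ to send the $\psi$-axis into the $\phi(\psi)$-axis and to match the attracting fixed point on Thurston's boundary. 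Letting $\psi$ vary over pseudo-Anosovs whose fixed laminations form a dense subset of Thurston's boundary, and invoking holomorphic rigidity of $\CT(Y)$ along the lines of Royden/Earle-Kra for self-biholomorphisms, pins $\tilde f=\tilde F$ up to the action of a deck element of $\Map(Y)$, which descends to $f=F$ on $\CM(Y)$.
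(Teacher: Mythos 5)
The overall skeleton matches the paper's: lift $f$, observe the induced $\phi=f_*$ is non-trivial, apply Theorem~\ref{dogs-bollocks} to get an embedding, conclude it is forgetful, and then upgrade a homotopy $f\simeq F$ to an equality. Your argument for the non-triviality of $\phi$ (Bers embedding plus removable singularities over the Deligne--Mumford boundary plus compactness) is a correct and reasonable alternative to the paper's route, which instead derives non-triviality as a consequence of the same rigidity statement it uses at the end.

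However, your middle step is based on a misreading. Once $X$ has empty boundary, the underlying surface $\vert X\vert$ is closed, so any continuous injective map $\vert X\vert\to\vert Y\vert$ is automatically a homeomorphism (open by invariance of domain, closed by compactness), and the condition $\iota^{-1}(P_Y)\subset P_X$ then says exactly that $\iota$ is a puncture-forgetting map. This is Corollary~\ref{kor-embedding}(2). There is no ``proper subsurface embedding'' case to rule out, and the period-map/Dehn-twist digression is not needed. It is harmless but signals confusion about what the definition of embedding allows.

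The genuine gap is in the final step, which is where essentially all the work in the paper resides. You want to conclude $f=F$ from the fact that $\tilde f$ and $\tilde F$ are both $\phi$-equivariant and holomorphic, via pseudo-Anosov dynamics and ``holomorphic rigidity of $\CT(Y)$ along the lines of Royden/Earle-Kra.'' This does not hold up. First, Royden and Earle--Kra classify \emph{biholomorphisms} of $\CT(Y)$; they say nothing about holomorphic maps $\CT(X)\to\CT(Y)$ that are merely equivariant and in general neither injective nor surjective. Second, equivariance only forces $\tilde f$ of a $\psi$-axis to be a $\phi(\psi)$-invariant set; there is no reason it should be the Teichm\"uller axis of $\phi(\psi)$, which is a real-analytic geodesic, not a complex subvariety, and is not the unique $\phi(\psi)$-invariant set. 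Third, when $\iota$ genuinely forgets punctures, $\phi$ has a large kernel containing point-pushing pseudo-Anosovs $\psi$; for those, $\phi(\psi)=1$, so equivariance gives you nothing beyond $\tilde f(\psi x)=\tilde f(x)$, and the fixed-lamination bookkeeping collapses. This is precisely the point at which the paper does real analytic work: Proposition~\ref{Eells-Sampson} proves that homotopic holomorphic maps $\CM(X)\to\CM(Y)$ coincide via a harmonic-map/energy-convexity argument adapted to the non-compact, incomplete Weil--Petersson setting, using the Lipschitz bound from McMullen's metric and an exhaustion $\{K_n\}$ of $\CM(X)$ with exponentially shrinking boundary volume so that Stokes' theorem applies. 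Your proposal replaces that entire argument with a sketch that would not survive the objections above.
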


As a direct consequence of Theorem \ref{forgetful} we obtain:

\begin{kor}
Suppose that $X$ and $Y$ are Riemann surfaces of finite analytic type and assume that $X$ has genus $g\ge 6$ and $Y$ genus $g'\le 2g-1$; in the equality case $g'=2g-1$ assume that $Y$ is not closed. If there is a non-constant holomorphic map $f:\CM(X)\to\CM(Y)$, then $X$ and $Y$ have the same genus and $X$ has at least as many marked points as $Y$.\qed
\end{kor}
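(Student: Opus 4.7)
The plan is essentially to unpack Theorem \ref{forgetful}, since the corollary is advertised as a direct consequence. I would begin by invoking that theorem: under the given hypotheses on the genera of $X$ and $Y$ (with the equality case $g'=2g-1$ excluded when $Y$ is closed), any non-constant holomorphic map $f:\CM(X)\to\CM(Y)$ must be a forgetful map in the precise sense defined just before the statement of Theorem \ref{forgetful}.

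The second step is then to read off the definition of a forgetful map. By construction, a forgetful map $\CM(X)\to\CM(Y)$ requires an identification between the marked points of $Y$ and a subset of the marked points of $X$, and arises from the simultaneous requirements that $X$ and $Y$ have the same genus and that the set of marked points of $Y$ injects into the set of marked points of $X$. In particular, a forgetful map can only exist if $X$ and $Y$ have equal genus and $X$ has at least as many marked points as $Y$. Combining this with the first step yields the conclusion.

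Since the entire argument is a syllogism (Theorem \ref{forgetful} plus the definition of a forgetful map), there is no genuine obstacle here; the only point to keep in mind is that one needs to verify that the hypotheses of Theorem \ref{forgetful} are satisfied, which they are by assumption. In particular, no additional input about the mapping class group, the complex structure on moduli space, or the induced homomorphism is required at this stage, because all of that work has already been done in the proof of Theorem \ref{forgetful}. Accordingly, the proof fits in essentially one line followed by the \qed symbol, which is consistent with how the corollary is displayed in the text.
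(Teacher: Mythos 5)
Your proposal is correct and matches the paper's intent exactly: the paper states the corollary with a \qed precisely because it is the immediate syllogism you describe, namely Theorem \ref{forgetful} forces $f$ to be a forgetful map, and forgetful maps $\CM(X)\to\CM(Y)$ exist only when $X$ and $Y$ have the same genus and $Y$ has no more marked points than $X$. Nothing further is needed.
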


In order to prove Theorem \ref{forgetful} we will deduce from Theorem
 \ref{dogs-bollocks} that the map $f$ is homotopic to a forgetful map $F$. 
The following result, proved in Section \ref{sec:forgetful-proof}, will immediately yield
the equality between $f$ and $F$:

\begin{prop}\label{Eells-Sampson}
Let $X$ and $Y$ be Riemann surfaces of finite analytical type and let $f_1,f_2:\CM(X)\to\CM(Y)$ be homotopic 
holomorphic maps. If $f_1$ is not constant, then $f_1=f_2$.
\end{prop}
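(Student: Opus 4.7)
The plan is to pass to the universal covers---the Teichm\"uller spaces---and exploit the negative curvature of the target. Since $f_1$ and $f_2$ are homotopic, they induce the same homomorphism $\phi:\Map(X)\to\Map(Y)$ on orbifold fundamental groups, up to conjugation. Lifting both maps in a manner consistent with a single lift of the homotopy, we obtain holomorphic, $\phi$-equivariant maps $\tilde f_1,\tilde f_2:\CT(X)\to\CT(Y)$. The task reduces to proving $\tilde f_1=\tilde f_2$, since this descends to $f_1=f_2$.

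Next, equip $\CT(Y)$ with the Weil-Petersson metric, which is K\"ahler of strictly negative sectional curvature by Wolpert's theorem. Because $\tilde f_1$ and $\tilde f_2$ are holomorphic (hence harmonic for any K\"ahler metric on the source and the Weil-Petersson metric on the target) and the target has nonpositive curvature, the function
\[
\delta(z)=d_{WP}\bigl(\tilde f_1(z),\tilde f_2(z)\bigr)^2
\]
is plurisubharmonic on $\CT(X)$. Since $\phi$ takes values in the isometry group of $d_{WP}$, the function $\delta$ is $\Map(X)$-invariant and descends to $\bar\delta:\CM(X)\to[0,\infty)$. Note that the set $\{\tilde f_1=\tilde f_2\}=\{\delta=0\}$ is a closed complex-analytic subset of $\CT(X)$; if it has non-empty interior, then $\tilde f_1\equiv\tilde f_2$ by analytic continuation and we are done.

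To force this interior to be non-empty, I would invoke a Hartman-type uniqueness statement: two homotopic harmonic maps into a strictly negatively curved target can differ only by a parallel family of geodesics, and the hypothesis that $\tilde f_1$ is non-constant rules out any such non-trivial family. Concretely, if $\bar\delta$ is not identically zero, the strict negativity of the curvature combined with the fact that $\tilde f_1$ has somewhere non-degenerate differential makes $\bar\delta$ strictly plurisubharmonic at some point, and a maximum-principle argument on $\CM(X)$ (or on $\CT(X)$ in the equivariant setting) produces a contradiction.

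The chief obstacle is that $\CM(X)$ is non-compact and the Weil-Petersson metric is incomplete, so the classical Eells-Sampson--Hartman machinery does not apply off the shelf. Overcoming this requires either a careful analysis of the behaviour of $\bar\delta$ on the ends of $\CM(X)$, or replacing $d_{WP}$ by a more tractable distance---for instance the Kobayashi distance, which agrees with the Teichm\"uller metric by Royden's theorem, or the Bergman distance arising from the Bers embedding of $\CT(Y)$ as a bounded pseudoconvex domain---for which a suitable maximum principle is cleaner to establish. Reconciling the complex-analytic and Riemannian viewpoints at the boundary of moduli space is, I expect, the main technical hurdle.
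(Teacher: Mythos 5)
Your route --- lift to the universal covers, study the squared Weil-Petersson distance $\delta=d_{WP}(\tilde f_1,\tilde f_2)^2$ as a plurisubharmonic function equivariantly on $\CT(X)$, and conclude by a Hartman-type maximum-principle argument --- is a genuinely different strategy from the paper's, which works instead with the energy functional. The paper forms the straight homotopy $f_t$ (using geodesic convexity of the Weil-Petersson metric), notes that $t\mapsto E(f_t)$ is convex and strictly so unless $f_0=f_1$ or both are constant, and on the other hand shows that the quantity $\int_{\CM(X)} f_t^*\omega_Y\wedge\omega_X^{\dim_\BC\CT(X)-1}$ is independent of $t$ by a Stokes-theorem argument over $[0,t]\times K_n$; the pointwise Eells--Sampson inequality between the energy density and this $(1,1)$-form, with equality iff the map is holomorphic, then produces the contradiction. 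The whole weight of the non-compactness is carried by showing the boundary term $\int_{[0,t]\times\partial K_n}$ vanishes in the limit: the paper constructs an exhaustion $\{K_n\}$ with $\vol(\partial K_n)$ decaying exponentially (Lemma \ref{lem2}), while the integrand grows only polynomially because holomorphic maps $\CT(X)\to\CT(Y)$ are Lipschitz (Lemma \ref{lem1}); both ingredients depend on endowing $\CT(X)$ with McMullen's complete, finite-volume K\"ahler hyperbolic metric.

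Your proposal, however, has a genuine gap exactly where the real difficulty lies, and you flag it yourself: the classical compact-target, compact-source Eells--Sampson--Hartman machinery does not apply, and you offer only a wish-list (``careful analysis of the ends'', or switching to the Kobayashi or Bergman distance) rather than an argument. A plurisubharmonic function on the non-compact $\CM(X)$ need not attain its maximum and need not be constant; to conclude, one needs either growth control plus a Liouville-type theorem or a cutoff/integration-by-parts argument with vanishing boundary contributions --- precisely what the paper supplies with its exhaustion of exponentially shrinking boundary volume paired with the Lipschitz bound on holomorphic maps. Switching to the Kobayashi distance does not obviously help either, since that metric is not K\"ahler and plurisubharmonicity of the squared distance no longer comes for free. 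The strategy you sketch is a reasonable classical alternative, but as written it is a plan, not a proof: the step you defer is the step that matters.
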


Recall that the Weil-Peterson metric on moduli space is K\"ahler and has negative curvature. In particular, if the
 moduli spaces $\CM(X)$ and $\CM(Y)$ were closed, then Proposition \ref{Eells-Sampson} would follow directly from the 
work of Eells-Sampson  \cite{Eells-Sampson}. In order to prove Proposition \ref{Eells-Sampson} we simply ensure that 
their arguments go through in our context. 
\medskip

\subsection{Strategy of the proof of Theorem \ref{dogs-bollocks}} We now give a brief idea
of the proof of Theorem \ref{dogs-bollocks}. The main technical result of this paper is the following theorem:

\begin{prop}\label{main}
Suppose that $X$ and $Y$ are surfaces of finite topological type of genera $g\ge 6$ and 
$g'\le 2g-1$ respectively; if $Y$ has genus $2g-1$, suppose also that it is not closed. Every nontrivial homomorphism 
$$\phi:\Map(X)\to\Map(Y)$$
maps (right) Dehn twists along non-separating curves to (possibly left) Dehn twist along non-separating curves. 
\end{prop}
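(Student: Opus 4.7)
The plan is to analyze the image $\tau := \phi(T_\alpha)$ of a Dehn twist along a non-separating curve $\alpha \subset X$ via its Nielsen--Thurston canonical form. Since any two non-separating curves are exchanged by a homeomorphism of $X$, their Dehn twists are conjugate in $\Map(X)$, and so $\tau$ is well-defined up to conjugation in $\Map(Y)$ independently of the chosen $\alpha$. Write $\tau = \psi \cdot \prod_i T_{\gamma_i}^{k_i}$, where $\sigma(\tau) = \{\gamma_1,\dots,\gamma_m\}$ is the canonical reduction system of $\tau$ and $\psi$ acts on the non-annular components of $Y \setminus \sigma(\tau)$ as either the identity, a periodic homeomorphism, or a pseudo-Anosov. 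The target is to show $m = 1$, that $\gamma_1$ is non-separating, that $\psi = 1$, and that $k_1 = \pm 1$.

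The first task is to verify that $\tau$ has infinite order; otherwise, every image $\phi(T_\beta)$ would have bounded finite order, and combined with the braid relations and the fact that non-separating Dehn twists generate $\Map(X)$, this forces $\phi$ to be trivial, contradicting the hypothesis. The next step is to import algebraic relations from $\Map(X)$. For disjoint non-separating curves $\alpha, \beta \subset X$ the twists commute, so the images have compatible canonical reduction systems and pseudo-Anosov supports that are either disjoint or coincide; for curves meeting once, the images satisfy the braid relation, which forces the canonical reduction systems to essentially intersect. Choosing a pants decomposition of $X$ refined by dual curves yields a rich library of such commutation and braid patterns that must be realized inside $\Map(Y)$, and the images of these twists together generate a subgroup of $\Map(Y)$ whose abelian subrank is bounded by the genus of $Y$.

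The heart of the argument --- and what I expect to be the main obstacle --- is to use the sharp inequality $g' \le 2g-1$ (together with the non-closed hypothesis when $g' = 2g-1$) to rule out large canonical reduction systems and nontrivial pseudo-Anosov components. The idea is to exhibit inside $X$ a configuration of non-separating curves whose topological complexity is so large that if the $\sigma(\phi(T_{\alpha_i}))$ were to contain more than one component, or if any $\psi_i$ were nontrivial, then the supports on $Y$ would overflow the available complexity; showing this rigorously is subtle because $\psi$ is preserved by the image of the very large centralizer of $T_\alpha$ in $\Map(X)$, so one needs a rigidity statement precluding faithful actions of such subgroups on a proper subsurface of $Y$ of the allowed size. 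Once $\sigma(\tau) = \{\gamma\}$ with $\gamma$ non-separating and $\psi = 1$ are in hand, the exponent $k_1$ is pinned down to $\pm 1$ by transporting a lantern (or chain) relation from $\Map(X)$ to $\Map(Y)$: the relation becomes an identity among powers of Dehn twists along disjoint non-separating curves in $Y$, and comparing exponents on both sides forces $k_1 = \pm 1$, the sign reflecting the ``possibly left'' alternative in the statement.
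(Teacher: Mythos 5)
Your outline identifies the right target shape for $\phi(\delta_\alpha)$, but at least three of the steps hide genuine gaps that the paper has to work hard to close, and your proposed shortcuts don't work.

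First, the claim that $\phi(\delta_\alpha)$ has infinite order ``otherwise braid relations force $\phi$ to be trivial'' is not a proof. There is no obstruction to finite-order elements satisfying the braid relation, so nothing immediate follows. This step is in fact delicate enough that the paper devotes all of Section 5 to it (Proposition \ref{no-torsion} and Theorem \ref{blablabla}), using the connectivity of the cut-system complex, an $\CS_n$-equivariance lemma about finite abelian quotients of $\BZ^n$ (Lemma \ref{chinese}), Maclachlan's $4g+4$ bound on finite abelian subgroups, and the Kuribayashi--Kuribayashi classification for genus $3,4$; the paper even records the general case as an open question (Question \ref{question}). You cannot wave this away.

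Second, you never actually rule out a pseudo-Anosov piece $\psi$ in the canonical form. Disjointness and braid relations constrain canonical reduction systems, but they do not, by themselves, kill partial pseudo-Anosov behavior: the centralizer of $\delta_\alpha$ is enormous and a partial pA supported off $\sigma(\tau)$ is perfectly compatible with commuting with the images of many disjoint twists. The paper imports this as a black box (Bridson's Theorem \ref{bridson}, a CAT(0) semisimplicity argument) to conclude that $\phi(\delta_\alpha)$ is a root of a multitwist. Without Bridson or a substitute, your Nielsen--Thurston normal form has an uncontrolled $\psi$.

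Third, you explicitly flag the ``heart of the argument'' --- that $\sigma(\tau)$ is a single non-separating curve, and that there is no periodic part --- as the main obstacle and leave it unresolved. The paper closes this with two separate non-obvious inputs: Paris' theorem on the minimal index of proper subgroups (Theorem \ref{thm:luisito}) to show $\phi(\CZ_0(\delta_\gamma))$ cannot nontrivially permute the components of $\phi_*(\gamma)$ or of $Y\setminus\phi_*(\gamma)$, and then a counting argument with $3g-3$ curves; and, for the absence of a periodic part, the combination of Proposition \ref{blabla} (no nontrivial homomorphism if $X$ has larger genus) with Riemann--Hurwitz (Lemma \ref{trivial-centralizer1}) to show the relevant centralizer is torsion-free, feeding into Lemma \ref{root3}. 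These are exactly the ``rigidity statements'' you say you would need but do not supply; ``complexity overflow'' is not an argument here because a single curve $\gamma \subset X$ has complement of genus $g-1$, and a bound $g' \le 2g-1$ leaves plenty of room in $Y$ for a priori complicated configurations unless one uses the group-theoretic input above. Only the final step --- fixing $k_1 = \pm 1$ once everything else is known --- is genuinely routine (the paper does it via the braid relation and Lemma \ref{inter-1}; your lantern/chain version would also work).
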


Given a non-separating curve $\gamma\subset X$ denote by $\delta_\gamma$ the Dehn twist associated to $\gamma$. 
By Proposition \ref{main}, $\phi(\delta_\gamma)$ is a Dehn twist along some non-separating curve $\phi_*(\gamma)\subset Y$.
 We will observe that the map $\phi_*$ preserves disjointness and intersection number $1$. In particular, $\phi_*$ maps 
chains in $X$ to chains in $Y$. In the closed case, it follows easily that there is a unique embedding $X\to Y$ which
 induces the same map on curves as $\phi_*$; this is the embedding provided by Theorem \ref{dogs-bollocks}. In the 
presence of boundary and/or cusps the argument is rather involved, essentially because one needs to determine which 
cusps and boundary components are to be filled in.

Hoping that the reader is now convinced that Theorem \ref{dogs-bollocks} follows after a moderate amount of work
 from Proposition \ref{main}, we sketch the proof of the latter. The starting point is a result of Bridson 
\cite{Bridson} which asserts that, as long as $X$ has genus at least $3$, any homomorphism
$$\phi:\Map(X)\to\Map(Y)$$ 
maps Dehn twists to roots of multitwists. The first problem that we face when proving Proposition \ref{main} is that
Bridson's result does not rule out that $\phi$ maps Dehn twists to finite order elements. In this direction, one may ask 
the following question:

\begin{quest}\label{question}
Suppose that $\phi:\Map(X)\to\Map(Y)$ is a homomorphism between mapping class groups 
of surfaces of genus at least $3$, with the property that the image of every Dehn twist along a non-separating 
curve has finite order. Is the image of $\phi$ finite?
\end{quest}

The answer to this question is trivially positive if $\D Y\neq\emptyset$,
 for in this case $\Map(Y)$ is torsion-free. We will also give an affirmative answer if $Y$ has punctures:

\begin{sat}\label{blablabla}
Suppose that $X$ and $Y$ are surfaces of finite topological type, that $X$ has genus at least $3$, and that $Y$ is not closed. 
Then any homomorphism $\phi:\Map(X)\to\Map(Y)$ which maps a Dehn twist along a non-separating curve to a finite order element is trivial.
\end{sat}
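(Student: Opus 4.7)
Since any two non-separating simple closed curves on $X$ lie in a single $\Map(X)$-orbit, conjugation extends the hypothesis to give $\phi(\delta_{\gamma'})$ of finite order for \emph{every} non-separating $\gamma'\subset X$. Because $g\ge 3$, the group $\Map(X)$ is generated by Dehn twists along non-separating curves, so to conclude $\phi\equiv 1$ it will be enough to show $\phi(\delta_\gamma)=1$ for a single such $\gamma$.

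The argument splits according to the topology of $Y$. If $\D Y\neq\emptyset$ then $\Map(Y)$ is torsion-free, since an orientation-preserving periodic homeomorphism of a surface cannot fix a boundary circle pointwise without being the identity; the finite-order hypothesis then yields $\phi(\delta_\gamma)=1$ at once.

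Assume now $\D Y=\emptyset$ and $Y$ has $n\ge 1$ punctures; I would induct on $n$. Pick a puncture $p$, let $Y'$ denote the surface obtained by filling it in, and recall the Birman exact sequence
$$1\longrightarrow \pi_1(Y',p)\longrightarrow \Map(Y)\longrightarrow \Map(Y')\longrightarrow 1,$$
whose kernel is a surface (or free) group, hence torsion-free. Post-composing $\phi$ with the surjection produces $\phi':\Map(X)\to\Map(Y')$, which still satisfies the finite-order hypothesis. Whenever $Y'$ is not closed --- that is, when $n\ge 2$ --- the inductive hypothesis forces $\phi'\equiv 1$; consequently $\phi(\Map(X))$ is contained in the torsion-free group $\pi_1(Y',p)$, and the finite-order element $\phi(\delta_\gamma)$ must be trivial.

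The remaining base case $n=1$ is the principal obstacle, since $Y'$ is then closed and the previous mechanism fails. To handle it I would replace $Y$ by the compact surface $Y^*$ obtained by substituting a boundary circle for $p$, producing a central extension
$$1\longrightarrow \BZ\langle\delta_{\D Y^*}\rangle \longrightarrow \Map(Y^*)\longrightarrow \Map(Y)\longrightarrow 1$$
whose total group is torsion-free by the boundary case. Combining Bridson's theorem --- which, together with the finite-order hypothesis, forces $\phi(\delta_\gamma)^M$ to be a trivial multitwist for some $M$ --- with the Thurston--Nielsen structure of $\phi(\delta_\gamma)$, I aim to exclude the pseudo-Anosov case directly and, in the reducible case, to analyze the canonical reduction system and the rotation at $p$. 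Using commutation relations with $\phi(\delta_{\gamma'})$ for a large collection of pairwise disjoint non-separating $\gamma'\subset X$, together with the braid and chain relations --- whose image constrains the rotation numbers at $p$ of the periodic representatives supplied by Nielsen realization --- I expect to force the reduction system and the rotation to be trivial, so that $\phi(\delta_\gamma)=1$. Pinning down these rotation numbers at the unique puncture, in a setting where $\Map(Y)$ itself contains genuine torsion, is the crux of the argument.
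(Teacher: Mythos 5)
Your proposal takes a genuinely different route from the paper's, but it is incomplete: the entire weight of the theorem falls on the one-puncture base case, and you leave precisely that case open.

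What works: the conjugacy observation, the $\partial Y\neq\emptyset$ case via torsion-freeness, and the downward induction on punctures via the Birman sequence are all correct. For $n\ge 2$ punctures, if $\phi'$ is trivial then $\phi(\Map(X))\subset\pi_1(Y')$, which is torsion-free (in fact free), so $\phi(\delta_\gamma)=1$ and Lemma~\ref{dehn-trivial-trivial} finishes. But the induction must bottom out somewhere, and the only available base is $n=1$ with $\partial Y=\emptyset$, where $Y'$ is closed and the mechanism fails. Your fallback — lifting to the bordered surface $Y^*$, invoking Bridson, and chasing rotation numbers at the puncture through chain and braid relations — is a plan, not a proof: you cannot lift $\phi$ across the central extension $1\to\BZ\to\Map(Y^*)\to\Map(Y)\to 1$, and you acknowledge that pinning down the rotation data is the unresolved crux. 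So there is a genuine gap.

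The paper does not case-split on the number of punctures. It runs the same argument as Proposition~\ref{no-torsion} uniformly, with a single new observation: when $Y$ is not closed, every finite subgroup of $\Map(Y)$ is cyclic (Theorem~\ref{thm:boundary-no-torsion}). Given a cut system $\eta\subset X$ with $g$ components, $\phi(\BT_\eta)$ is a finite abelian group carrying an $\CS_g$-action, and $\phi\vert_{\BT_\eta}$ gives an $\CS_g$-equivariant epimorphism $\BZ^g\to\phi(\BT_\eta)$. Lemma~\ref{chinese} says either the restriction to $g-1$ coordinates is already onto, or $\phi(\BT_\eta)$ needs at least $g$ generators; cyclicity kills the second alternative. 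Hence $\phi(\BT_\eta)=\phi(\BT_{\eta'})$ whenever $\eta,\eta'$ differ by an elementary move; connectivity of the cut system complex forces $\phi(\Map(X))=\phi(\BT_\eta)$ to be abelian; and Powell's Theorem~\ref{thm:homology} kills it. This is cleaner, works for all $g\ge 3$ without genus bounds on $Y$, and has no residual base case — which is exactly the feature your approach is missing.
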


For closed surfaces $Y$ we only give a partial answer to the above question; more concretely, in 
Proposition \ref{no-torsion} we will prove that, as long as the genus of $Y$ is in a suitable range determined by the
 genus of $X$, then the statement of Theorem \ref{blablabla} remains true.

\begin{bem}
At the end of section \ref{sec:rid-torsion} we will observe that a positive answer to question \ref{question} would imply that the abelianization of finite index subgroups in $\Map(X)$ is finite. This is conjectured to be the case.
\end{bem}

We continue with the sketch of the proof of Proposition \ref{main}. At this point we know that for every non-separating curve
$\gamma\subset X$, the element $\phi(\delta_\gamma)$ is a root of a multitwist and has infinite order. We may thus 
associate to $\gamma$ the multicurve $\phi_*(\gamma)$ supporting the multitwist powers of $\phi(\delta_\gamma)$. 
In principle, and also in practice if $Y$ has sufficiently large genus, $\phi(\delta_\gamma)$ could permute the 
components of $\phi_*(\gamma)$. However, under the genus bounds in Theorem \ref{dogs-bollocks}, we deduce 
from a result of Paris \cite{Paris} that this is not the case. Once we know that $\phi(\delta_\gamma)$ fixes each 
component of $\phi_*(\gamma)$, a simple counting argument yields that $\phi_*(\gamma)$ is actually
a single curve. This implies that $\phi(\delta_\gamma)$ is a root of some power of the Dehn twist along 
$\phi_*(\gamma)$. In the last step, which we now describe, we will obtain that $\phi(\delta_\gamma)$ is in fact a Dehn twist.

Denote by $X_\gamma$ the surface obtained by deleting from $X$ an open regular neighborhood of $\gamma$, and let 
$Y_{\phi_*(\gamma)}'$ be the surface obtained by deleting $\phi_*(\gamma)$ from $Y$. The centralizer of the Dehn 
twist $\delta_\gamma$ in $\Map(X)$ is closely related to $\Map(X_\gamma)$ and the same is true for the centralizer 
of $\phi(\delta_\gamma)$ and $\Map(Y_{\phi_*(\gamma)}')$. More concretely, the homomorphism $\phi$ induces a homomorphism 
$$\Map(X_\gamma)\to\Map(Y_{\phi_*(\gamma)}')$$
Moreover, it follows from the construction that $\phi(\delta_\gamma)$ is in fact a power of the Dehn twist along 
$\phi_*(\gamma)$ if the image of this homomorphism is not centralized by any finite order element in 
$\Map(Y_{\phi_*(\gamma)}')$. This will follow from an easy computation using the 
Riemann-Hurwitz formula together with the next result:

\begin{prop}\label{blabla}
Suppose that $X$ and $Y$ are surfaces of finite topological type. If the genus of $X$ is at least 3 and larger 
than that of $Y$, then there is no nontrivial homomorphism $\phi:\Map(X)\to\Map(Y)$.
\end{prop}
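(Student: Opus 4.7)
The plan is to argue by contradiction. Suppose $\phi$ is nontrivial. Since $\Map(X)$ is generated by Dehn twists along non-separating simple closed curves (Lickorish) and any two such twists are conjugate in $\Map(X)$, triviality of $\phi(\delta_\gamma)$ for a single non-separating $\gamma$ would force $\phi$ to be trivial. So I may assume $\phi(\delta_\gamma)\neq 1$ for every non-separating $\gamma\subset X$. Since $g(X)\geq 3$, Bridson's theorem applies and each $\phi(\delta_\gamma)$ is a root of a multitwist in $\Map(Y)$.

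I would next split according to whether $Y$ is closed. If $Y$ is not closed, Theorem \ref{blablabla} directly rules out $\phi(\delta_\gamma)$ having finite order; hence $\phi(\delta_\gamma)$ is a nontrivial infinite-order element, a power of which is a multitwist supported on a nonempty multicurve $\mu_\gamma\subset Y$. Because disjoint curves in $X$ give commuting Dehn twists, and commuting roots of multitwists must have isotopy-disjoint supports, the assignment $\gamma\mapsto \mu_\gamma$ sends disjoint non-separating curves in $X$ to disjoint nonempty multicurves in $Y$. I would then feed this into a combinatorial comparison: take a suitably rich configuration of non-separating curves in $X$ (for instance a chain of length $2g(X)+1$, whose braid and commutation relations transfer to $\phi$ of the twists), and extract enough disjoint simple closed curves in $Y$ to violate the maximum size $2g(Y)+1 < 2g(X)+1$ allowed by $g(Y)<g(X)$.

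For $Y$ closed I would reduce to the non-closed case, either by appealing to the closed-surface analogue of Theorem \ref{blablabla} (Proposition \ref{no-torsion} mentioned in the text, whose range hypothesis is satisfied by $g(Y)<g(X)$ with $g(X)\geq 3$), or by puncturing $Y$ at a point fixed by relevant representatives and using the Birman exact sequence $1\to \pi_1(Y)\to \Map(Y\setminus\{y\})\to\Map(Y)\to 1$ to lift $\phi$ to a non-closed target and apply the previous case.

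The main obstacle is the combinatorial step in paragraph two: namely, promoting the disjointness-of-supports data to a genuine chain of simple closed curves in $Y$. This is delicate because a multicurve $\mu_\gamma$ may be disconnected and because $\phi(\delta_\gamma)$ could a priori permute the components of its support — precisely the phenomenon that Paris's theorem rules out in the proof of Proposition \ref{main} under the stronger genus bound $g(Y)\leq 2g(X)-1$. Here I have the weaker but more restrictive inequality $g(Y)<g(X)$, so I would exploit the extra room this gives to sidestep the permutation issue by counting directly at the level of supports rather than at the level of curves.
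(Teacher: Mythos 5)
Your proposal takes a genuinely different route from the paper, and it has a real gap at exactly the step you flag.

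The paper proves Proposition~\ref{blabla} by induction on the genus of $X$, with Proposition~\ref{genus2} as the base case. The inductive step has two parts: first it shows (as a claim inside the proof) that every $\phi:\Map(X)\to\Map(Y)$ with $g(Y)<g(X)$ is \emph{reducible}, and second it cuts along the invariant multicurve and applies the claim again to the induced irreducible homomorphism $\phi':\Map(X)\to\Map(Y'_\eta)$. The reducibility claim is where the work is done: assuming $\phi$ irreducible, Proposition~\ref{lem:single-component} and Lemma~\ref{prop:curves-invariant} give that $\phi_*(\gamma)$ is a single non-separating curve and that $\phi(\CZ_0(\delta_\gamma))\subset\CZ_0(\delta_{\phi_*(\gamma)})$; cutting along $\gamma$ and $\phi_*(\gamma)$ produces a homomorphism $\Map(X_\gamma)\to\Map(Y'_{\phi_*(\gamma)})$ between surfaces of genus $g-1$ and $g'-1$, which the induction hypothesis forces to be trivial, and then Lemmas~\ref{trick2} and~\ref{dehn-trivial-trivial} kill $\phi$. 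Your plan, by contrast, is a one-shot counting argument with chains (or cut systems), bypassing the induction entirely.

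The gap is in step four, and your proposed fix does not close it. You correctly note two obstructions: $\mu_\gamma$ may be disconnected, and $\phi(\delta_\gamma)$ may permute its components. But there is a third you do not address: distinct disjoint curves $\gamma,\gamma'$ may have supports $\mu_\gamma,\mu_{\gamma'}$ that \emph{share components} (commutation only forces $i(\mu_\gamma,\mu_{\gamma'})=0$, which allows coincidence). In the extreme case all $g$ supports coming from a cut system could equal the same single curve $c$, and then ``counting at the level of supports'' yields exactly one curve in $Y$, no contradiction. This is precisely the scenario of a reducible homomorphism. The paper rules out shared components via Corollary~\ref{kor:irreducible}, which in turn rests on Lemma~\ref{prop:curves-invariant} and hence on the irreducibility assumption and Paris's Theorem~\ref{thm:luisito}; none of that machinery is available to you as long as you avoid the reducible/irreducible reduction. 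Similarly, ``commutation and braid relations transfer'' is not enough to conclude that the images form a chain --- the paper needs all of Proposition~\ref{main} (single curves, twist-to-twist, intersection number one via Lemma~\ref{inter-1}) before a statement like Lemma~\ref{mapping-curves} becomes available, and those results are proved only after the reducibility dichotomy is in place. So the phrase ``exploit the extra room to sidestep the permutation issue'' is a placeholder rather than an argument, and it is the heart of the problem. Your finite-order dichotomy (Theorem~\ref{blablabla} for $Y$ non-closed, Proposition~\ref{no-torsion} for $Y$ closed, with the $g=3$ case absorbed by Proposition~\ref{genus2}) is sound, but that was never the hard part.
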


\begin{bem} 
If $X$ is closed, Proposition \ref{blabla} is due to Harvey and Korkmaz \cite{Harvey-Korkmaz}. 
Their argument makes heavy use of torsion in $\Map(X)$; therefore, it cannot be used for
general surfaces.
\end{bem}

Once we know that 
$\phi(\delta_\gamma)$ is a power of a Dehn twist, it follows from the braid relation 
that this power has to be $\pm 1$, as we needed to prove. This finishes the sketch of the proof 
of Proposition \ref{main} and hence of Theorem \ref{dogs-bollocks}.
\medskip

Before concluding the introduction, we would like to mention a related result due to Bridson and Vogtmann 
\cite{Bridson-Vogtmann} on homomorphisms between outer automorphism groups of free groups, namely: 

\begin{sat*}[Bridson-Vogtmann]
Suppose $n>8$. If $n$ is even and $n<m\le 2n$, or if $n$ is odd and $n<m\le 2n-2$, then  every homomorphism 
$\Out(\BF_n)\to\Out(\BF_m)$ factors through a homomorphism $\Out(\BF_n)\to\BZ/2\BZ$.
\end{sat*}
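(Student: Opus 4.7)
The plan is to leverage the rich finite subgroup structure of $\Out(\BF_n)$. Recall that $\Out(\BF_n)$ contains a natural copy of the symmetric group $\Sigma_{n+1}$, realized by permuting a basis $\{x_1, \dots, x_{n+1}\}$ of $\BF_n$ subject to the relation $x_1 \cdots x_{n+1} = 1$; moreover, $\Out(\BF_n)$ is generated by this $\Sigma_{n+1}$ together with a single Nielsen transvection $\tau$, since the $\Sigma_{n+1}$-conjugates of $\tau$ yield all remaining Nielsen generators. Given a homomorphism $\phi : \Out(\BF_n) \to \Out(\BF_m)$, the strategy is first to analyze the restriction $\phi|_{\Sigma_{n+1}}$, and then to propagate the conclusion to $\phi(\tau)$.

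The first and most substantial step is to show that, under the stated rank bounds on $m$, any homomorphism $\psi : \Sigma_{n+1} \to \Out(\BF_m)$ factors through the sign character $\Sigma_{n+1} \to \BZ/2\BZ$. The key tool is the realization of any finite subgroup of $\Out(\BF_m)$ as a group of graph automorphisms of a connected graph of rank $m$ (Culler/Zimmermann realization). If $\psi$ did not factor through sign, its image would contain a non-abelian quotient of $\Sigma_{n+1}$, in particular a $3$-cycle and either an $(n+1)$-cycle or an $n$-cycle; lower bounds on the ranks of graphs admitting such a faithful action, obtained by an orbifold Euler-characteristic calculation for the branched covering $\Gamma \to \Gamma/G$ combined with orbit-counting for long cycles and disjoint transpositions, force $m$ to exceed the hypothesis. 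The asymmetry between even and odd $n$ in the statement should arise exactly from the parity of the smallest non-trivial orbit of $\Sigma_{n+1}$ in a permutation representation on vertices and edges of $\Gamma$.

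The second step is to conclude that $\phi(\tau)$ lies in the kernel of the sign character, so that $\phi$ itself factors through $\BZ/2\BZ$. The centralizer of $\tau$ in $\Sigma_{n+1}$ contains a large subgroup (roughly $\Sigma_{n-1}$ permuting the basis elements not involved in $\tau$), which therefore commutes with $\phi(\tau)$. Applying Culler realization once more, the centralizer of $\phi(\Sigma_{n-1})$ in $\Out(\BF_m)$ is controlled by the rank of the relevant quotient graph and hence constrained by the same rank bounds. Combining this with the standard Nielsen relations---commutation relations among disjoint transvections and braid-type relations expressing more complicated Nielsen generators as $\Sigma_{n+1}$-conjugates of $\tau$---yields enough equations to force $\phi(\tau)$ into the commutator subgroup of $\Out(\BF_m)$. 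Since $\tau$ together with $\Sigma_{n+1}$ generates $\Out(\BF_n)$, this gives the desired factorization through $\BZ/2\BZ$.

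The main technical obstacle will be the first step: pinning down the precise rank thresholds beyond which a faithful non-abelian quotient of $\Sigma_{n+1}$ cannot act on a rank-$m$ graph. This requires a careful combinatorial analysis of how a long cycle and a disjoint transposition can simultaneously permute the vertex and edge sets of $\Gamma$, with the parity of $n$ entering through the distinction between cycles of length $n$ and $n+1$; matching the optimal bounds $m \le 2n$ and $m \le 2n-2$ is the delicate part. A secondary difficulty is ensuring that in Step 2 the Nielsen relations genuinely force $\phi(\tau)$ into the commutator subgroup, and do not leave room for subtle nontrivial images in abelian centralizers arising from graph automorphisms of small-rank quotient graphs.
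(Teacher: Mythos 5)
The paper does not contain a proof of this statement. The Bridson--Vogtmann theorem is quoted at the end of the introduction as a motivating result from the cited preprint \cite{Bridson-Vogtmann}, and the authors explicitly remark that ``the proof of the Bridson--Vogtmann theorem and that of Theorem \ref{dogs-bollocks} are completely different.'' So there is nothing in the present paper for your sketch to be compared against.

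As a reconstruction of the Bridson--Vogtmann argument itself, your outline has the right general flavor---exploit large finite subgroups of $\Out(\BF_n)$, realize finite subgroups of $\Out(\BF_m)$ as graph automorphism groups (Culler/Zimmermann/Khramtsov), and combine rank inequalities with Nielsen generation---but it has a concrete gap in Step 1. The group $\Sigma_{n+1}$ acts faithfully on the theta graph with two vertices and $n+1$ edges, which has rank $n$; hence $\Sigma_{n+1}$ already embeds in $\Out(\BF_m)$ for every $m\ge n$, and no Euler-characteristic or orbit-counting argument can rule out a faithful image of $\Sigma_{n+1}$ inside $\Out(\BF_m)$ for $m$ near $2n$. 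Bounding $m$ by the rank needed for a faithful $\Sigma_{n+1}$-graph therefore cannot produce the thresholds $m\le 2n$ and $m\le 2n-2$. Bridson and Vogtmann instead work with the much larger signed permutation group $W_n=(\BZ/2\BZ)^n\rtimes\Sigma_n$ and extract the thresholds from a study of abelian covers of graphs carrying a $W_n$-action; the abelian covers also furnish the examples showing the bounds are sharp. Your Step 2 is additionally under-specified: it is not verified that $\Sigma_{n+1}$ together with a single elementary transvection generates $\Out(\BF_n)$, and the assertion that the Nielsen relations force $\phi(\tau)$ into the commutator subgroup of $\Out(\BF_m)$ is precisely the sort of claim where the real work lies and would need to be proved rather than hoped for.
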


We remark that in their proof, Bridson and Vogtmann make very heavy use of the presence of rather large torsion 
subgroups in $\Out(\BF_n)$. Rather on the contrary, in the present paper torsion is an annoyance. In particular, 
the proof of the Bridson-Vogtmann theorem and that of Theorem \ref{dogs-bollocks} are completely different. In spite of that, 
we would like to mention that the Bridson-Vogtmann theorem played a huge role in this paper: it suggested the 
possibility of understanding homomorphisms between mapping class groups $\Map(X)\to\Map(Y)$ under the assumption that
the genus of $Y$ is not much larger than that of $X$. 
\medskip

\noindent{\bf Acknowledgements.} The authors wish to thank Martin Bridson, Benson Farb, Chris Leininger and especially 
Johanna Mangahas for many very interesting conversations on the topic of this paper.  The authors are also grateful 
to Michel Boileau and Luis Paris for letting them know about the work of Fabrice Castel. The first author
wishes to express his gratitude to Ser Peow Tan and the Institute for Mathematical Sciences of Singapore, where
parts of this work were completed.

\section{Generalities}
\label{sec:general}
In this section we discuss a few well-known facts on mapping class groups. See \cite{Farb-Margalit,Ivanov} for details.

\medskip

Throughout this article, all surfaces under consideration are orientable and have finite topological type, 
meaning that they have finite genus, finitely many boundary components and finitely many punctures. We will feel 
free to consider cusps as marked points, punctures, or ends homeomorphic to $\BS^1\times\BR$. For instance, if $X$ 
is a surface with, say, 10 boundary components and no cusps, by deleting every boundary component we obtain a surface 
$X'$ with 10 cusps and no boundary components.

A simple closed curve on a surface is said to be {\em essential} if it does not bound a disk 
containing at most one puncture; we stress that we consider boundary-parallel curves to be essential. 
From now on, by a {\em curve} we will mean an essential simple closed curve. Also, we will often 
abuse terminology and not distinguish between curves and their isotopy classes. 

We now introduce some notation 
that will be used throughout the paper. Let $X$ be a surface and let $\gamma$
be an essential curve not parallel to the boundary of $X$. We will denote by 
$X_\gamma$ the complement in $X$ of the interior of a closed regular neighborhood of $\gamma$; 
we will refer to the two boundary components of $X_\gamma$ which appear in the boundary of the 
regular neighborhood of $\gamma$ as the {\em new boundary components} of $X_\gamma$. We will 
denote by $X_\gamma'$ the surface obtained from $X_\gamma$ by deleting the new boundary components 
of $X_\gamma$; equivalently, $X_\gamma'=X\setminus\gamma$.


A {\em multicurve} is the union of a, necessarily finite, collection of pairwise disjoint,
 non-parallel curves. 
Given two multicurves $\gamma,\gamma'$ we denote their geometric intersection number by $i(\gamma,\gamma')$.

A {\em cut system} is a multicurve whose complement is a connected surface of genus $0$. Two cut systems
are said to be related by an {\em elementary move} if they share all curves but one, and the remaining
two curves intersect exactly once. The {\em cut system complex} of a surface $X$ is the simplicial graph whose vertices
are cut systems on $X$ and where two cut systems are adjacent if the corresponding cut systems are related by an
 elementary move.

\subsection{Mapping class group}
The mapping class group $\Map(X)$ of a surface $X$ is the group of isotopy classes 
of orientation preserving homeomorphisms $X\to X$ which fix the boundary pointwise and map every cusp to 
itself; here, we also require that the isotopies fix the boundary pointwise. We will also denote by $\Map^*(X)$ 
the group of isotopy classes of all orientation preserving homeomorphisms of $X$. Observe that $\Map(X)$ is 
a subgroup of $\Map^*(X)$ only in the absence of boundary; in this case $\Map(X)$ has finite index in $\Map^*(X)$.

While every element of the mapping class group is an isotopy class of homeomorphisms, it is well-known that the 
mapping class group cannot be realized by a group of diffeomorphisms \cite{Morita}, or even homeomorphisms
 \cite{Markovic}. However, for our purposes the difference between actual homeomorphisms and their isotopy 
classes is of no importance. In this direction, and in order to keep notation under control, we will usually 
make no distinction  between mapping classes and their representatives.

\subsection{Dehn twists}
Given a curve $\gamma$ on $X$, we denote by $\delta_\gamma$ the (right) Dehn twist along $\gamma$. 
It is important to remember that $\delta_\gamma$ is solely determined by the curve $\gamma$ and 
the orientation of $X$. In other words, it is independent of any chosen orientation of $\gamma$. 

Perhaps the main reason why Dehn twists appear so prominently in this paper is because they generate 
the mapping class group:

\begin{sat}[Dehn-Lickorish]\label{lickorish}
If $X$ has genus at least $2$, then $\Map(X)$ is generated by Dehn twists along non-separating curves.
\end{sat}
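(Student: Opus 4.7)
The plan is to exploit the cut system complex $\CC(X)$ already introduced in the text. Let $\CK\subset\Map(X)$ be the subgroup generated by Dehn twists along non-separating curves; the goal is to show $\CK=\Map(X)$. I would establish two facts: \emph{(i)} $\CK$ acts transitively on vertices of $\CC(X)$, and \emph{(ii)} the $\Map(X)$-stabilizer of a fixed cut system is contained in $\CK$. Together these give the theorem: for any $\phi\in\Map(X)$, one first picks $\psi\in\CK$ so that $\psi\phi$ fixes a chosen cut system $\Sigma$, and then $\psi\phi\in\CK$ by \emph{(ii)}, so $\phi\in\CK$.

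For \emph{(i)}, the first task is to show $\CC(X)$ is connected, by a Hatcher--Thurston style induction on $i(\Sigma,\Sigma')$: an innermost-arc surgery on $\Sigma'$ along $\Sigma$ produces a cut system adjacent to $\Sigma$ with strictly smaller intersection with $\Sigma'$. Once connectivity is known, it suffices to realize each elementary move by an element of $\CK$. If $\Sigma$ and $\Sigma'$ differ only in the curves $c,c'$, then by definition of an elementary move $i(c,c')=1$, so both $c$ and $c'$ are non-separating; the composition $\delta_c\delta_{c'}\in\CK$ sends $c$ to $c'$ and fixes every remaining curve of $\Sigma$, hence carries $\Sigma$ to $\Sigma'$.

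For \emph{(ii)}, cut $X$ along $\Sigma=\{c_1,\dots,c_g\}$ to obtain a planar (genus zero) surface $S$ with $2g$ new boundary components paired by the gluing, together with the original boundary and cusps of $X$. An element $\phi\in\Map(X)$ fixing $\Sigma$ as a multicurve induces, after composing with suitable powers of the $\delta_{c_i}\in\CK$ to kill the relative twisting along each $c_i$, a mapping class of $S$ respecting the pairing. The mapping class group of the planar surface $S$ is generated by half-twists supported in disks containing two boundary components or punctures, and each such half-twist lifts to a Dehn twist along a simple closed curve in $X$ crossing some $c_i$ exactly once, hence along a non-separating curve. Thus every generator lies in $\CK$.

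The principal obstacle is part \emph{(ii)}, and in particular the need to express Dehn twists along boundary-parallel curves of $X$ (which are separating, and therefore not directly in the generating set of $\CK$) as words in Dehn twists along non-separating ones. This is supplied by the chain relation, which writes the Dehn twist along a separating curve bounding a chain neighborhood as an explicit word in twists along the chain curves, each of which is non-separating. The hypothesis $g\geq 2$ enters precisely at this point, to guarantee that there is enough room inside $X$ to form chains realizing the relevant boundary-parallel or separating curves.
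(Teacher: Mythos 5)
The paper does not prove this; Theorem~\ref{lickorish} is stated as a classical fact and the reader is referred to \cite{Farb-Margalit,Ivanov}, so there is no internal argument to compare against. Your strategy---act on the cut system complex, realize elementary moves inside $\CK$, and show the stabilizer of a cut system lies in $\CK$---is a legitimate and standard-flavored route, and part \emph{(i)} is essentially fine (modulo the usual care in the Hatcher--Thurston connectivity argument, where the innermost surgery decreases $i(\cdot,\Sigma)$ and produces a cut system adjacent to $\Sigma'$, not to $\Sigma$; the induction still closes up, but the sentence as written has the roles swapped).

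Part \emph{(ii)} has a genuine gap. You claim that each half-twist of the cut-open surface $S$ ``lifts to a Dehn twist along a simple closed curve in $X$ crossing some $c_i$ exactly once.'' A Dehn twist $\delta_\alpha$ with $i(\alpha,c_i)=1$ does \emph{not} stabilize the cut system $\Sigma$: it moves $c_i$ to a different curve, hence does not lie in $\CN(\BT_\Sigma)$ and induces nothing on $S$ at all. So the proposed generators of the stabilizer cannot be ``lifted'' in this way. What is actually true is more delicate: the stabilizer of $\Sigma$ fits into an extension whose pieces are $\BT_\Sigma$, the pure mapping class group of the genus-zero surface $X\setminus\Sigma$, the side-flips $(\BZ/2\BZ)^g$, and the permutation group $S_g$ of the components of $\Sigma$ (cf.\ the discussion of $\CN(\BT_\eta)$, $\CZ(\BT_\eta)$, $\CZ_0(\BT_\eta)$ in Section~\ref{sec:general}). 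You need to show each of these pieces is reached by words in non-separating twists, and this requires: (a) for the pure genus-zero part, the fact that Dehn twists along curves of $S$ enclosing two new punctures coming from distinct $c_i,c_j$ are non-separating in $X$, while those enclosing a paired $\{p_i^+,p_i^-\}$ or involving original punctures are separating in $X$ and must be rewritten via the chain relation \emph{and} the lantern relation (you mention only the chain relation); (b) for the permutation and side-flip parts, explicit products of non-separating twists realizing handle swaps and handle flips. None of these elements is a single Dehn twist crossing some $c_i$ once. Alternatively, you could sidestep (b) by proving the stronger statement that $\CK$ acts transitively on \emph{ordered, co-oriented} cut systems, which your elementary-move argument does not yet establish. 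As written, step \emph{(ii)} does not go through.
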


There are quite a few known concrete sets of Dehn twists which generate the mapping class group. We will 
consider the so-called {\em Humphries generators} \cite{Humphries}; see Figure \ref{fig1} for a 
picture of the involved curves.

\begin{figure}[tbh] \unitlength=1in
\begin{center} 
\includegraphics[width=3.5in]{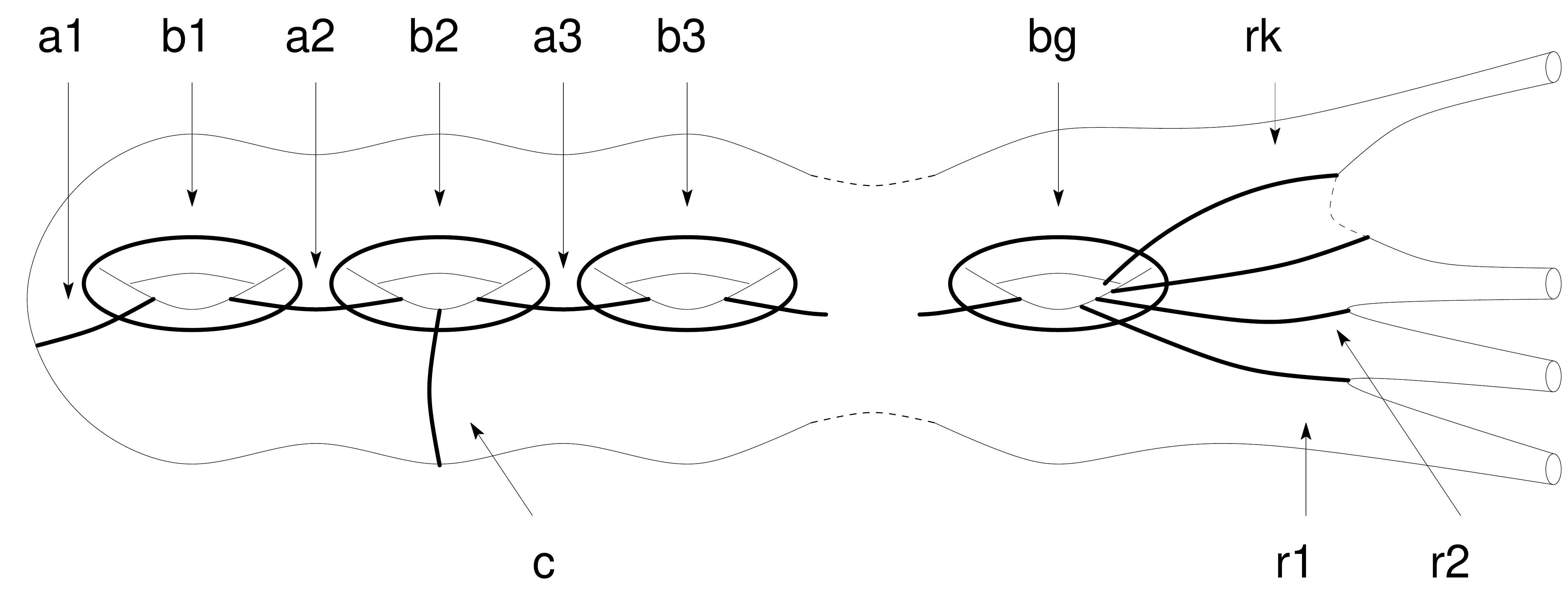}
\end{center}
\caption{The Humphries generators: Dehn twists along the curves $a_i,b_i,c$ and $r_i$ generate $\Map(X)$.}\label{fig1}
\end{figure}

Algebraic relations among Dehn twists are often given by particular configurations of curves.  
We now discuss several of these relations; see \cite{Farb-Margalit,Hamidi-Tehrani,Margalit} and the references therein 
for proofs and details:
\medskip

\noindent{\em Conjugate Dehn twists.} For any curve $\gamma\subset X$ and any $f\in\Map(X)$ we have
$$\delta_{f(\gamma)}=f\delta_\gamma f^{-1}$$
Hence, Dehn twists along any two non-separating curves are conjugate in $\Map(X)$. Conversely, if the Dehn twist along $\gamma$ is conjugate in $\Map(X)$ to a Dehn twist along a non-separating curve, then $\gamma$ is non-separating. 
Observe that Theorem \ref{lickorish} and the fact that Dehn twists along any two non-separating curves are 
conjugate immediately imply the following very useful fact:

\begin{lem}\label{dehn-trivial-trivial}
Let $X$ be a surface of genus at least 3 and let $\phi:\Map(X)\to G$ be a homomorphism.
 If $\delta_\gamma\in\Ker(\phi)$ for some $\gamma\subset X$ non-separating, then $\phi$ is trivial.
\end{lem}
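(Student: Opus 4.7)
The plan is to combine the two ingredients stated just before the lemma: the Dehn--Lickorish theorem and the conjugacy of Dehn twists along non-separating curves. The idea is that a single non-separating Dehn twist lying in $\Ker(\phi)$ forces \emph{all} non-separating Dehn twists into the kernel by conjugation, and these suffice to generate the whole mapping class group.

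More concretely, I would first note that the change-of-coordinates principle guarantees, for any two non-separating simple closed curves $\gamma, \gamma' \subset X$, the existence of a mapping class $f \in \Map(X)$ with $f(\gamma) = \gamma'$. The conjugation formula $\delta_{f(\gamma)} = f \delta_\gamma f^{-1}$ then gives $\delta_{\gamma'} = f \delta_\gamma f^{-1}$. Applying $\phi$ and using $\phi(\delta_\gamma) = 1$ yields $\phi(\delta_{\gamma'}) = 1$ for every non-separating curve $\gamma'$. Thus $\Ker(\phi)$ contains every Dehn twist along a non-separating curve.

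Since $X$ has genus at least $3 \geq 2$, Theorem \ref{lickorish} of Dehn--Lickorish applies and tells us that such Dehn twists generate $\Map(X)$. Consequently $\Ker(\phi) = \Map(X)$, so $\phi$ is trivial.

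There is no real obstacle in this argument: both of the tools invoked have been set up in the paragraphs preceding the lemma, and the proof is a two-line application of them. The only mild point worth being careful about is recording why any two non-separating curves are related by a mapping class (the change-of-coordinates principle, which is the standard justification for the conjugacy of their Dehn twists).
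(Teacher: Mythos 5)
Your proof is correct and is exactly the argument the paper has in mind: the lemma is stated immediately after the observation that Theorem~\ref{lickorish} together with the conjugacy of Dehn twists along non-separating curves ``immediately imply'' it. Nothing to add.
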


\noindent{\em Disjoint curves.} Suppose $\gamma, \gamma'$ are disjoint curves, meaning $i(\gamma, \gamma')=0$. 
Then $\delta_\gamma$ and $\delta_{\gamma'}$ commute.
\medskip

\noindent{\em Curves intersecting once.} Suppose that two curves $\gamma$ and $\gamma'$ intersect once, 
meaning $i(\gamma,\gamma')=1$. Then
$$\delta_\gamma\delta_{\gamma'}\delta_\gamma=\delta_{\gamma'}\delta_\gamma\delta_{\gamma'}$$
This is the so-called {\em braid relation}; we say that $\delta_\gamma$ and $\delta_{\gamma'}$ {\em braid}.
\medskip

It is known \cite{Hamidi-Tehrani} that if $\gamma$ and $\gamma'$ are two curves in $X$ and $k\in\BZ$ is 
such that $\vert k\cdot i(\gamma,\gamma')\vert\ge 2$, 
then $\delta^k_\gamma$ and $\delta^k_{\gamma'}$ generate a free group $\BF_2$ of rank 2. In particular we have:
 
\begin{lem}\label{inter-1}
Suppose that $k\in\BZ\setminus\{0\}$ and that $\gamma$ and $\gamma'$ are curves such that
 $\delta_\gamma^k$ and $\delta_{\gamma'}^k$ satisfy the braid relation, then either $\gamma=\gamma'$ 
or $k=\pm 1$ and $i(\gamma,\gamma')=1$.
\end{lem}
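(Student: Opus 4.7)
The plan is to combine the free-group fact stated just before the lemma with a short algebraic manipulation handling the commuting case. Suppose $\gamma \neq \gamma'$; otherwise the braid relation is satisfied trivially and there is nothing to prove.

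First, the braid relation is a nontrivial relation in the abstract group generated by $\delta_\gamma^k$ and $\delta_{\gamma'}^k$: indeed, $aba = bab$ can be rewritten as the nontrivial word $abab^{-1}a^{-1}b^{-1} = 1$. So if $\delta_\gamma^k$ and $\delta_{\gamma'}^k$ generated a rank-2 free group, they could not satisfy the braid relation. By the free-group fact recalled just before the lemma (from \cite{Hamidi-Tehrani}), this forces
\[
|k \cdot i(\gamma,\gamma')| \leq 1,
\]
so either $i(\gamma,\gamma') = 0$, or $k = \pm 1$ and $i(\gamma,\gamma') = 1$.

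It remains to rule out the disjoint case $i(\gamma,\gamma') = 0$ under the assumption $\gamma \neq \gamma'$. In that case $\delta_\gamma$ and $\delta_{\gamma'}$ commute (by the disjoint-curves relation recalled above), so $\delta_\gamma^k$ and $\delta_{\gamma'}^k$ also commute. Using commutativity, the braid identity
\[
\delta_\gamma^k \delta_{\gamma'}^k \delta_\gamma^k = \delta_{\gamma'}^k \delta_\gamma^k \delta_{\gamma'}^k
\]
simplifies to $\delta_\gamma^{2k} \delta_{\gamma'}^k = \delta_\gamma^k \delta_{\gamma'}^{2k}$, and hence to $\delta_\gamma^k = \delta_{\gamma'}^k$. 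Since $k \neq 0$ and Dehn twists have infinite order with the underlying curve recoverable from any nontrivial power, this forces $\gamma = \gamma'$, contradicting our standing assumption.

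Combining the two steps, if $\gamma \neq \gamma'$ we are left with exactly $k = \pm 1$ and $i(\gamma,\gamma') = 1$. The only real subtlety is the last sentence of the previous paragraph, namely that distinct curves give distinct nontrivial powers of Dehn twists; this is standard and follows, for instance, from the fact that $\delta_\gamma^k$ acts nontrivially on any curve intersecting $\gamma$ in the expected way while fixing curves disjoint from $\gamma$, so the support (and hence $\gamma$) is determined by $\delta_\gamma^k$ for any $k \neq 0$.
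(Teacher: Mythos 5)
Your proof is correct and follows the same route the paper intends: the lemma is stated as an immediate consequence of the Hamidi--Tehrani free-group criterion, and you correctly deduce from it that $\lvert k\cdot i(\gamma,\gamma')\rvert\le 1$ because the braid relation, rewritten as the reduced word $abab^{-1}a^{-1}b^{-1}=1$, cannot hold in a rank-two free group. You are also right to notice that the criterion alone does not rule out the case $i(\gamma,\gamma')=0$ with $\gamma\neq\gamma'$ and $k$ arbitrary; the paper leaves this step implicit, whereas you supply it via the clean observation that commuting $a,b$ satisfying $aba=bab$ forces $a=b$, and that a nonzero power of a Dehn twist determines its core curve. That last fact is standard (the support of a generic multitwist is well defined, cf.\ the paper's Lemma~\ref{root1}), so the argument is complete.
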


\noindent{\em Chains.} Recall that a {\em chain} in $X$ is a finite sequence of curves $\gamma_1,\dots,\gamma_k$ such that $i(\gamma_i,\gamma_j)=1$ if $\vert i-j\vert=1$ and $i(\gamma_i,\gamma_j)=0$ otherwise. Let $\gamma_1,\dots,\gamma_k$ be a chain in $X$ and suppose first that $k$ is even. Then the boundary $\D Z$ of a regular neighborhood of $\cup\gamma_i$ is connected and we have
$$(\delta_{\gamma_1}\delta_{\gamma_2}\dots\delta_{\gamma_k})^{2k+2}=\delta_{\D Z}$$
If $k$ is odd then $\D Z$ consists of two components $\D_1Z$ and $\D_2Z$ and the appropriate relation is
$$(\delta_{\gamma_1}\delta_{\gamma_2}\dots\delta_{\gamma_k})^{k+1}=\delta_{\D Z_1}\delta_{\D Z_2}=\delta_{\D Z_2}\delta_{\D Z_1}$$
These two relations are said to be {\em the chain relations}.
\medskip

\noindent{\em Lanterns.} A {\em lantern} is a configuration in of seven curves $a,b,c,d,x,y$ and $z$ in $X$ as represented in figure \ref{fig-lantern}. 

\begin{figure}[tbh] \unitlength=1in
\begin{center} 
\includegraphics[width=2in]{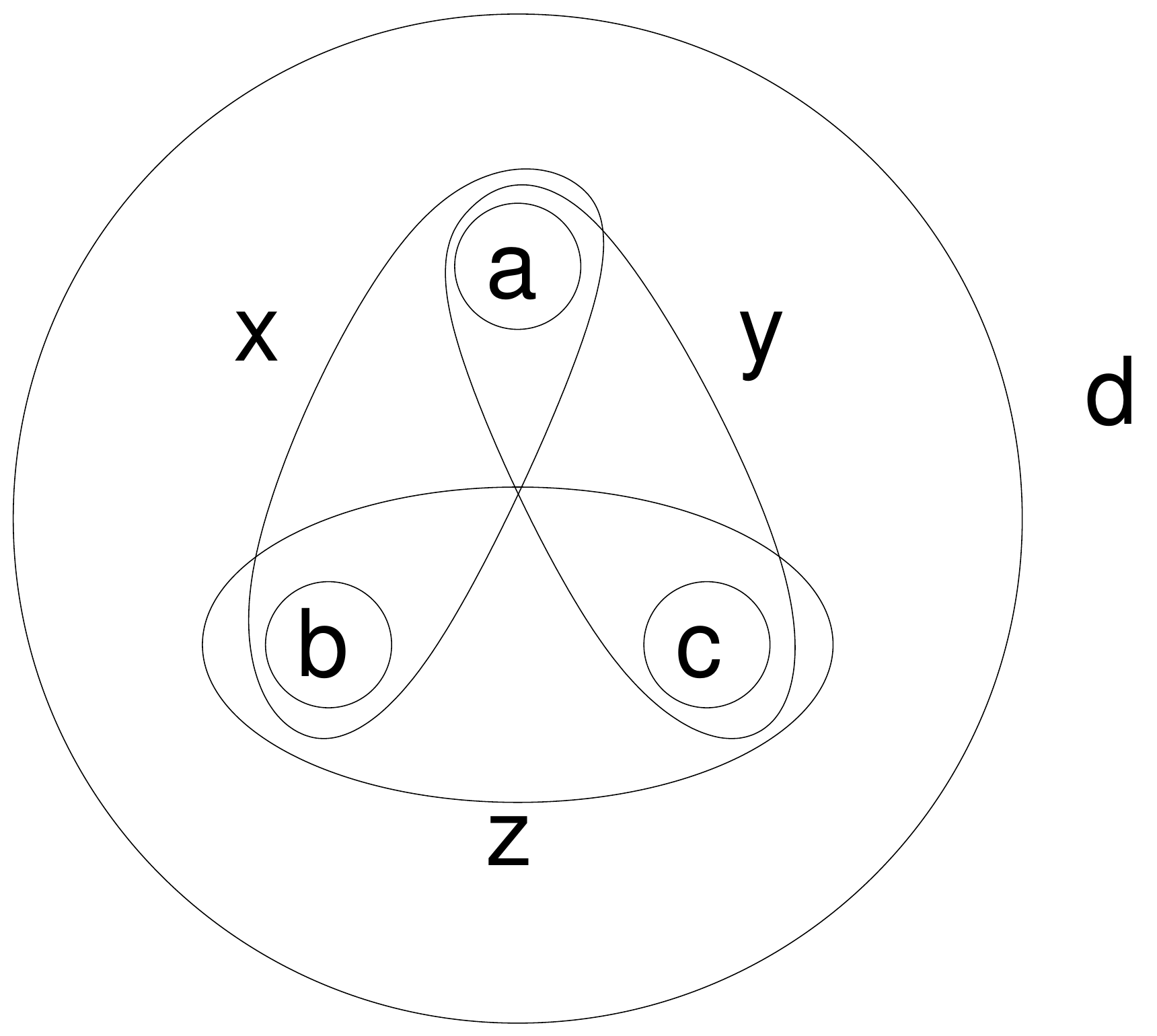}
\end{center}
\caption{A lantern}\label{fig-lantern}
\end{figure}

If seven curves $a,b,c,d,x,y$ and $z$ in $X$ form a lantern then the corresponding Dehn twists satisfy the so-called {\em lantern relation}:
$$\delta_a\delta_b\delta_c\delta_d=\delta_x\delta_y\delta_z$$
Conversely, it is due to Hamidi-Tehrani \cite{Hamidi-Tehrani} and Margalit \cite{Margalit} that, under mild hypotheses,
any seven curves whose associated Dehn twists satisfy the lantern relation form a lantern. More concretely:

\begin{prop}[Hamidi-Tehrani, Margalit]\label{fact-lantern}
Let $a,b,c,d,x,y,z$ be essential curves whose associated Dehn twists satisfy the lantern relation
$$\delta_a\delta_b\delta_c\delta_d=\delta_x\delta_y\delta_z$$
If the curves $a,b,c,d,x$ are paiwise distinct and pairwise disjoint, then $a,b,c,d,x,y,z$ is a lantern. 
\end{prop}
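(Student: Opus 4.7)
The plan is to extract geometric information from the algebraic identity $\delta_a\delta_b\delta_c\delta_d=\delta_x\delta_y\delta_z$ by combining canonical reduction system (CRS) theory with intersection-number tests, and finally identifying the resulting configuration as the standard lantern. Write $L=\delta_a\delta_b\delta_c\delta_d$ and $R=\delta_x\delta_y\delta_z$. Since $a,b,c,d$ are pairwise distinct and pairwise disjoint, $L$ is a pure multitwist whose CRS is precisely the multicurve $M=a\cup b\cup c\cup d$. The identity $L=R$ then forces $R$ to have the same CRS; in particular, $R$ preserves each component of $M$ setwise, commutes with each of $\delta_a,\delta_b,\delta_c,\delta_d$, and acts as the identity on every complementary subsurface of $M$.

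Next I would exploit that $x$ is disjoint from $a,b,c,d$, so $\delta_x$ commutes with $L$ and hence with $\delta_y\delta_z$. Using that $\delta_x$ commutes with $f$ exactly when $f(x)=x$, this yields $(\delta_y\delta_z)(x)=x$. Testing the relation $L=R$ on each of $a,b,c,d,x$ and combining with the CRS constraint, I would then argue that neither $y$ nor $z$ can intersect $M$ essentially: otherwise the powers $R^n=L^n$ would move $y$ and $z$ across $M$, contradicting that $L^n$ acts trivially away from $M$. Hence $x,y,z$ all lie in $X\setminus M$, and in fact in one and the same component $S$ of it, since the global element $L$ acts trivially on every complementary subsurface while the restriction of $\delta_x\delta_y\delta_z$ to a component containing any one of $x,y,z$ is nontrivial.

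Let $S$ denote that common complementary component. Then $L=R$ becomes, in $\Map(S)$ with its boundary held pointwise, an equality of the form ``product of boundary twists equals $\delta_x\delta_y\delta_z$''. Abelianizing $\Map(S)$ and tracking the coefficients of boundary twists, I would deduce that each of $a,b,c,d$ appears exactly once on $\partial S$ with multiplicity $+1$, so $\partial S=a\cup b\cup c\cup d$ and $S$ has exactly four boundary components. A short Euler characteristic count, using that on $\Sigma_{g,n}$ a product of only three Dehn twists cannot be equal to the full boundary multitwist unless $g=0$ and $n=4$, then pins down $S$ as a four-holed sphere. Finally, inside $S_{0,4}$ there are, up to the natural symmetry, exactly three essential non-peripheral isotopy classes of curves, pairwise intersecting twice and satisfying the lantern relation; this identifies $x,y,z$ with those three curves, yielding a lantern.

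The hard part will be the step confining $y$ and $z$ to $X\setminus M$ and showing they lie in the same component as $x$: a priori, one must rule out exotic algebraic cancellations in which $\delta_y$ or $\delta_z$ is supported on a curve meeting $M$, but the product $\delta_x\delta_y\delta_z$ still coincides with $L$. This requires invoking invariance of the CRS under conjugation and under multiplication by pure elements of disjoint support, together with the positivity of twist coefficients in the pure subgroup of $\Map(X)$, and is where the hypothesis that $a,b,c,d,x$ are pairwise distinct plays a crucial role in excluding degenerate cancellations. Once $y,z$ are localized in $S$, the endgame Euler-characteristic computation and the classification of essential curves on $S_{0,4}$ are routine.
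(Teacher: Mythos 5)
The paper does not supply a proof of this proposition; it is cited directly to \cite{Hamidi-Tehrani} and \cite{Margalit}. Evaluating your sketch on its own merits, there is a genuine gap at exactly the step you flag as ``the hard part,'' namely showing that $y$ and $z$ are disjoint from $M=a\cup b\cup c\cup d$. The justification offered --- that if $y$ met $M$ then the powers $R^n=L^n$ would move $y$ ``across $M$, contradicting that $L^n$ acts trivially away from $M$'' --- is circular: $L^n$ is only known to fix curves that are \emph{already} disjoint from $M$, so it imposes no constraint whatsoever on a curve hypothetically meeting $M$. The correctly derived identity $(\delta_y\delta_z)(x)=x$ is likewise inconclusive, since a multitwist always fixes each component of its own support. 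And the tools you gesture at to close the gap do not apply: the element whose reduction system one wants to control is $T:=\delta_y\delta_z=\delta_x^{-1}\delta_a\delta_b\delta_c\delta_d$, a generic multitwist on the five-component multicurve $a\cup b\cup c\cup d\cup x$ whose coefficient on $x$ is $-1$, so ``positivity of twist coefficients in the pure subgroup'' is simply false here.

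That said, the observation about $T$ is the germ of a correct argument, closer in spirit to the cited sources, that sidesteps your step entirely. Since $T$ is a generic multitwist with five-component support, one rules out $i(y,z)=0$ (then $T$ would be supported on at most two curves), $i(y,z)=1$ (then $T^6$ is, by the chain relation, supported on a single boundary curve of a one-holed torus, while $(\delta_x^{-1}\delta_a\delta_b\delta_c\delta_d)^6$ is supported on five), and $i(y,z)\ge3$ (then $T$ is relatively pseudo-Anosov by \cite[Theorem~3.10]{Hamidi-Tehrani}, as also invoked in the proof of Lemma~\ref{ordering}). One is forced into $i(y,z)=2$, and a case analysis of the regular neighborhood $W$ of $y\cup z$ and of the restriction of $T$ to $W$ then identifies $W$ as a four-holed sphere whose lantern relation, compared term-by-term with $\delta_x^{-1}\delta_a\delta_b\delta_c\delta_d$, matches $\partial W$ with $a\cup b\cup c\cup d$ and the third lantern curve with $x$. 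Your remaining steps also need repair --- the claim that $x,y,z$ lie in a single component of $X\setminus M$ is asserted rather than argued, and abelianizing $\Map(S)$ tells you nothing once $S$ has genus $\ge 3$ --- but these are downstream of the central gap.
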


In the course of this paper we will continuously discriminate against separating curves. By a {\em non-separating} 
lantern we understand a lantern with the property that all the involved curves are non-separating. It is well-known, 
and otherwise easy to see, that $X$ contains a non-separating lantern if $X$ has genus at least $3$. In particular
we deduce that, as long as $X$ has genus $g\ge 3$, every non-separating curve belongs to a non-separating lantern.

\subsection{Centralizers of Dehn twists}
Observe that the relation $f\delta_\gamma f^{-1}=\delta_{f(\gamma)}$, for $f\in\Map(X)$ and $\gamma\subset X$ a
curve, implies that $$\CZ(\delta_\gamma) = \{f \in \Map(X) \mid f(\gamma) = \gamma\},$$ where $\CZ(\delta_\gamma)$ denotes the centralizer of $\delta_{\gamma}$ in $\Map(X)$. Notice that $\CZ(\delta_\gamma)$ 
is also equal to the normalizer $\CN(\langle\delta_\gamma\rangle)$ of the subgroup of $\Map(X)$ generated by 
$\delta_\gamma$.

An element in $\Map(X)$ which fixes $\gamma$ may either switch the sides of $\gamma$ or may preserve them. 
We denote by $\CZ_0(\delta_\gamma)$ the group of those elements which preserve sides; observe that 
$\CZ_0(\delta_\gamma)$ has index at most 2 in $\CZ(\delta_\gamma)$.

The group $\CZ_0(\delta_\gamma)$ is closely related to two different mapping class groups. First, 
let $X_\gamma$ be the surface obtained by removing the interior of a closed regular neighborhood $\gamma\times[0,1]$ 
of $\gamma$ from $X$. Every homeomorphism of $X_\gamma$ fixing pointwise the boundary and the punctures extends to a
 homeomorphism $X\to X$ which is the identity on $X\setminus X_\gamma$. This induces a homomorphism 
$\Map(X_\gamma)\to\Map(X)$; more concretely we have the following exact sequence:
\begin{equation}\label{eq:central1}
0\to\BZ\to\Map(X_\gamma)\to \CZ_0(\delta_\gamma)\to 1
\end{equation}
Here, the group $\BZ$ is generated by the difference $\delta_{\eta_1}\delta^{-1}_{\eta_2}$ of the Dehn twists along 
$\eta_1$ and $\eta_2$, the new boundary curves of $X_\gamma$.

Instead of deleting a regular neighborhood of $\gamma$ we could also delete $\gamma$ from $X$. Equivalently, 
let $X_\gamma'$ be the surface obtained from $X_\gamma$ by deleting the new boundary curves of $X_{\gamma}$. Every 
homeomorphism of  $X$ fixing $\gamma$ induces a homeomorphism of $X_\gamma'$. This yields a second exact sequence
\begin{equation}\label{eq:central2}
0\to\langle\delta_\gamma\rangle\to \CZ_0(\delta_\gamma)\to\Map(X_\gamma')\to 1
\end{equation}

\subsection{Multitwists}
\label{subsec:multitwists}
To a multicurve $\eta\subset X$ we associate the group 
$$\BT_\eta=\langle\{\delta_\gamma,\ \gamma\subset\eta\}\rangle\subset\Map(X)$$
generated by the Dehn twists along the components of $\eta$. We refer to the elements in $\BT_\eta$ as {\em multitwists} 
along $\eta$. Observe that $\BT_\eta$ is abelian; more concretely, $\BT_\eta$ is isomorphic to the free abelian group with 
rank equal to the number of components of $\eta$. 

Let $\eta\subset X$ be a multicurve. An element $f\in\BT_\eta$ which does not belong to any $\BT_{\eta'}$, 
for some $\eta'$ properly contained in $\eta$, is said to be a {\em generic multitwist along $\eta$}. Conversely, if 
$f\in\Map(X)$ is a multitwist, then the {\em support} of $f$ is the smallest multicurve $\eta$ such that $f$ 
is a generic multitwist along $\eta$.

Much of what we just said about Dehn twists extends easily to multitwists. For instance, if $\eta\subset X$ is a 
multicurve, then we have 
$$\BT_{f(\eta)}=f\BT_\eta f^{-1}$$
for all $f\in\Map(X)$. In particular, the normalizer $\CN(\BT_\eta)$ of $\BT_\eta$ in $\Map(X)$ is equal to
$$\CN(\BT_\eta)=\{f\in\Map(X)\vert f(\eta)=\eta\}$$
On the other hand, the centralizer $\CZ(\BT_\eta)$ of $\BT_\eta$ is the intersection of the centralizers of its generators; 
hence
$$\CZ(\BT_\eta)=\{f\in\Map(X)\vert f(\gamma)=\gamma\ \hbox{for every component}\ \gamma\subset\eta\}$$
Notice that $\CN(\BT_\eta)/\CZ(\BT_\eta)$ acts by permutations on the set of components of $\eta$.  For further use we 
observe that if the multicurve $\eta$ happens to be a cut system, then $\CN(\BT_\eta)/\CZ(\BT_\eta)$ is in fact isomorphic 
to the group of permutations of the components of $\eta$.

Denote by $\CZ_0(\BT_\eta)$ the subgroup of $\CZ(\BT_\eta)$ fixing not only the components but also the sides of 
each component. Notice that $\CZ(\BT_\eta)/\CZ_0(\BT_\eta)$ is a subgroup of $(\BZ/2\BZ)^{\vert\eta\vert}$ and hence 
is abelian.

\medskip


Observe that it follows from the definition of the mapping class group and from the relation 
$\delta_{f(\gamma)}=f\delta_\gamma f^{-1}$ that every Dehn twist along a boundary component of 
$X$ is central in $\Map(X)$. In fact, as long as $X$ has at least  genus $3$, such Dehn twists generate 
the center of $\Map(X)$:

\begin{sat}\label{center}
If $X$ has genus at least $3$ then the group $\BT_{\D X}$ generated by Dehn twists along the boundary components of
 $X$ is the center of $\Map(X)$. Moreover, we have
$$1\to\BT_{\D X}\to\Map(X)\to\Map(X')\to 1$$
where $X'$ is the surface obtained from $X$ by deleting the boundary.
\end{sat}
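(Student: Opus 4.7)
The plan is to first construct the stated exact sequence and then deduce the identification of the center from it.

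\textbf{Step 1 (the exact sequence).} I would define a homomorphism $\Pi:\Map(X)\to\Map(X')$ by viewing each boundary component of $X$ as a marked point of $X'$. Since any representative of a class in $\Map(X)$ fixes $\D X$ pointwise, it preserves each new marked point, so $\Pi$ is well-defined. Surjectivity: given $g\in\Map(X')$, realize it by a homeomorphism that is the identity on a small disk around each ``new'' marked point; this extends across the boundary of $X$ to produce a lift. For the kernel, this is the content of the standard capping exact sequence (see e.g.\ \cite{Farb-Margalit}). If a representative of $f\in\Map(X)$ becomes isotopic to the identity after puncturing, the obstruction to lifting that isotopy back to $X$ (relative to $\D X$) is measured precisely by integer winding numbers around each boundary component, which correspond to the Dehn twists generating $\BT_{\D X}$. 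Since each generator of $\BT_{\D X}$ is visibly in the kernel, the exact sequence follows.

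\textbf{Step 2 (the center).} The inclusion $\BT_{\D X}\subseteq Z(\Map(X))$ was already noted in the paragraph preceding the theorem. For the reverse inclusion, let $f\in Z(\Map(X))$ and write $\bar f=\Pi(f)$. Since $\Pi$ is surjective, $\bar f$ commutes with every element of $\Map(X')$, so $\bar f\in Z(\Map(X'))$. The surface $X'$ has genus at least $3$ and no boundary, and for such surfaces it is classical (due to Ivanov; see \cite{Farb-Margalit}) that $Z(\Map(X'))$ is trivial: any central element fixes every isotopy class of simple closed curves (because it commutes with every Dehn twist), and the Alexander method then forces it to be trivial. Consequently $\bar f=1$, i.e.\ $f\in\Ker(\Pi)=\BT_{\D X}$, completing the proof.

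\textbf{Main obstacle.} The technical heart of the argument is the kernel identification in the capping sequence of Step 1: one must rule out that the cumulative ``twisting near $\D X$'' of a mapping class which is isotopically trivial after puncturing can be anything other than a multitwist supported on $\D X$. This is standard but delicate, traditionally handled via the long exact sequence of a fibration of homeomorphism groups restricted to boundary data. Once this capping sequence is in hand, the triviality of $Z(\Map(X'))$ is either cited from Ivanov or proved on the spot via the Alexander method, and the identification of the center is immediate.
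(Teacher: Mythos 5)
Your proof is correct. Note that the paper states Theorem \ref{center} without proof, citing it as a standard fact (see the remark at the beginning of Section 2, pointing to \cite{Farb-Margalit,Ivanov}), so there is no in-paper argument to compare against. Your two-step argument—establish the capping exact sequence, then push a central element into $\Map(X')$ where the center is trivial by the Alexander method—is precisely the standard route for this fact, and your bookkeeping (surjectivity of $\Pi$ implies $\Pi(f)$ is central; kernel identified with boundary multitwists via winding numbers; genus $\geq 3$ needed to exclude hyperelliptic-type exceptions when applying the Alexander method on $X'$) is sound.
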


Notice that if $X$ is a surface of genus $g \in \{1,2\}$, with empty boundary and no marked points,
then the center of $\Map(X)$ is generated by the hyperelliptic involution.

\subsection{Roots}
It is a rather surprising, and annoying, fact that such simple elements in $\Map(X)$ as Dehn twists have non-trivial
 {\em roots} \cite{Margalit-Schleimer}. Recall that a root of $f\in\Map(X)$ is an element $g\in\Map(X)$ for which there 
is $k\in\BZ$ with $f=g^k$. Being forced to live with roots, we state here a few simple but important observations:

\begin{lem}\label{root1}
Suppose that $f\in\BT_\eta$ and $f'\in\BT_{\eta'}$ are generic multitwists along multicurves $\eta,\eta'\subset X$.
 If $f$ and $f'$ have a common root, then $\eta=\eta'$.
\end{lem}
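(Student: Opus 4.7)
The plan is to exhibit a suitable common power of $f$ and $f'$ which is simultaneously a generic multitwist along $\eta$ and along $\eta'$, at which point uniqueness of support for multitwists will give $\eta=\eta'$.

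First I would fix a common root $g\in\Map(X)$, so that $g^{k}=f$ and $g^{k'}=f'$ for some non-zero integers $k,k'$. The degenerate cases in which $k$ or $k'$ is zero are disposed of directly: a non-trivial multitwist has infinite order, so it cannot share a root with the identity, and if both of $f,f'$ are trivial then $\eta=\eta'=\emptyset$ and there is nothing to prove. Next I would form the common power
$$h:=g^{kk'}=f^{k'}=(f')^{k}.$$
Writing $f=\prod_{\gamma\subset\eta}\delta_\gamma^{a_\gamma}$ with every $a_\gamma\neq 0$, which is possible by genericity of $f$ along $\eta$, I get $h=\prod_{\gamma\subset\eta}\delta_\gamma^{k'a_\gamma}$; since $k'\neq 0$, every exponent $k'a_\gamma$ is non-zero, so $h$ is itself a generic multitwist along $\eta$. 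By the symmetric argument, $h$ is also a generic multitwist along $\eta'$.

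The statement is therefore reduced to the following assertion: a non-trivial mapping class cannot be a generic multitwist along two distinct multicurves. To establish this I would give an intrinsic characterization of the support by showing that for any multicurve $\sigma$ along which $h$ is a generic multitwist, an essential curve $\alpha\subset X$ is fixed by $h$ up to isotopy if and only if $i(\alpha,\sigma)=0$. One direction is immediate, since Dehn twists fix all curves disjoint from their core. For the converse, any non-zero power of $\delta_\gamma$ strictly increases intersection number with any curve essentially crossing $\gamma$, and this growth cannot be cancelled by Dehn twists along curves disjoint from $\gamma$; an elementary $i(\cdot,\cdot)$-computation completes the argument. This intrinsic characterization recovers $\sigma$ from $h$, forcing $\eta=\eta'$.

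The main obstacle, such as it is, is this final intrinsic recovery of the support. Everything else is a clean algebraic manipulation inside the free abelian group generated by the Dehn twists along the components of a fixed multicurve, using only that $k,k'\neq 0$ and that genericity is preserved under taking non-zero powers.
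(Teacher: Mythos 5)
The paper gives no proof of this lemma --- it is stated, together with Lemmas \ref{root2} and \ref{root3}, as one of ``a few simple but important observations'' --- so there is nothing internal to compare against and I can only assess your argument on its own. Your common-power reduction is sound: $h := g^{kk'} = f^{k'} = (f')^k$ is a generic multitwist along both $\eta$ and $\eta'$ (a non-zero power of a generic multitwist multiplies each twisting exponent by a non-zero integer), the degenerate cases are disposed of correctly, and you have reduced to showing that an element of $\Map(X)$ can be a generic multitwist along at most one multicurve.

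The gap is in the claim that the fixed-curve set recovers $\sigma$. Your characterization ``$h(\alpha)=\alpha$ iff $i(\alpha,\sigma)=0$'' is true (the converse direction is the Ivanov/Farb--Margalit multitwist intersection inequality, a standard fact but not the ``elementary $i(\cdot,\cdot)$-computation'' you describe), but the set $\{\alpha : i(\alpha,\sigma)=0\}$ does not determine $\sigma$ when $\sigma$ has boundary-parallel components: a boundary-parallel curve $\gamma$ satisfies $i(\alpha,\gamma)=0$ for \emph{every} simple closed curve $\alpha$, so $\{\alpha : i(\alpha,\sigma)=0\} = \{\alpha : i(\alpha,\sigma\setminus\gamma)=0\}$ and the fixed-curve set is blind to the boundary-parallel part of $\sigma$. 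The paper explicitly counts boundary-parallel curves as essential, and the lemma is applied to the multicurves $\phi_*(\gamma)\subset Y$, which can certainly include boundary-parallel curves when $\partial Y\neq\emptyset$, so this is a real gap and not pedantry. A clean repair: use the ``only if'' direction only to conclude that every component of $\eta$ is disjoint from $\eta'$ and vice versa (for non-boundary-parallel components this follows from the inequality applied to $\eta'$; for boundary-parallel components disjointness is automatic). Then $\eta\cup\eta'$ is a multicurve, $\BT_{\eta\cup\eta'}$ is free abelian with the Dehn twists along its components as a basis, and inside it $h\in\BT_\eta\cap\BT_{\eta'}=\BT_{\eta\cap\eta'}$; genericity of $h$ along $\eta$ and along $\eta'$ forces $\eta\subset\eta\cap\eta'$ and $\eta'\subset\eta\cap\eta'$, hence $\eta=\eta'$. (This still quietly uses that boundary Dehn twists are independent, i.e.\ the structure of the center of $\Map(X)$, but it avoids the false recovery claim.)
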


\begin{lem}\label{root2}
Suppose that $f\in\BT_\eta$ is a generic multitwist along a multicurve $\eta$ and $f'\in\Map(X)$ is 
a root of $f$. Then $f'(\eta)=\eta$ and hence $f'\in\CN(\BT_\eta)$.
\end{lem}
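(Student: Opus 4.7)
The plan is to exploit two simple facts: a root of $f$ automatically commutes with $f$, and conjugation by a mapping class permutes multicurves in a way that is equivariant for the support of a multitwist.

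First I would observe that if $f=(f')^k$, then $f'$ commutes with $f$, so $f'f(f')^{-1}=f$. Next, using the basic relation $f'\delta_\gamma(f')^{-1}=\delta_{f'(\gamma)}$ applied componentwise to the generators of $\BT_\eta$, conjugation by $f'$ carries $\BT_\eta$ isomorphically onto $\BT_{f'(\eta)}$. Combining these two observations, $f=f'f(f')^{-1}\in\BT_{f'(\eta)}$, so $f$ is simultaneously a multitwist along $\eta$ and along $f'(\eta)$.

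Finally, by hypothesis $f$ is generic along $\eta$, meaning $\eta$ is the smallest multicurve with $f\in\BT_\eta$; the inclusion $f\in\BT_{f'(\eta)}$ therefore forces $\eta\subseteq f'(\eta)$ as multicurves. Since $f'$ is a homeomorphism, $\eta$ and $f'(\eta)$ have the same number of components, hence $\eta=f'(\eta)$ and thus $f'\in\CN(\BT_\eta)$. There is no serious obstacle here; the only point that needs a moment of care is verifying that the support is well-defined and conjugation-equivariant, which is immediate from the definition recalled just before the statement and from the equation $f'\BT_\eta(f')^{-1}=\BT_{f'(\eta)}$.
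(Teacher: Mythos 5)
Your proof is correct, and since the paper states this lemma without proof (it appears among the ``simple but important observations'' in the Roots subsection), your argument is precisely the standard one the authors had in mind. The key steps — that a root of $f$ commutes with $f$, that $f'\BT_\eta (f')^{-1}=\BT_{f'(\eta)}$, hence $f\in\BT_{\eta}\cap\BT_{f'(\eta)}$, and then that genericity forces $\eta\subseteq f'(\eta)$ — are all in order. One small point worth making explicit: the paper's definition of ``generic along $\eta$'' says only that $f\notin\BT_{\eta'}$ for $\eta'\subsetneq\eta$, whereas you use the slightly stronger-sounding reformulation ``$\eta$ is the smallest multicurve with $f\in\BT_\eta$,'' i.e.\ $f\in\BT_{\eta''}$ implies $\eta\subseteq\eta''$. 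These are equivalent because $\BT_\eta\cap\BT_{\eta''}=\BT_{\eta\cap\eta''}$ (a multitwist is determined by its twist coordinates), so if $f\in\BT_{\eta''}$ then $f\in\BT_{\eta\cap\eta''}$, and genericity forces $\eta\cap\eta''=\eta$. Your closing counting argument ($f'$ a homeomorphism preserves the number of components, so $\eta\subseteq f'(\eta)$ with equal cardinality gives equality) is a clean way to conclude, and it sidesteps the alternative route of first noting that genericity is preserved under conjugation so that $f'(\eta)$ is the unique support of $f$.
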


If $\eta$ is an essential curve then $\BT_\eta=\langle\delta_\eta\rangle$; hence $\CN(\BT_\eta)=\CZ(\delta_\eta)$ 
is the subgroup of $\Map(X)$ preserving $\eta$. Recall that $\CZ_0(\delta_\eta)$ is the subgroup of $\CZ(\delta_\eta)$ 
which preserves sides of $\eta$.

\begin{lem}\label{root3}
Suppose that $\delta_\eta\in\Map(X)$ is a Dehn twist along an essential curve $\eta$. For $f\in\CZ_0(\delta_\eta)$ the 
following are equivalent:
\begin{itemize}
\item $f$ is a root of a power of $\delta_\eta$, and
\item the image of $f$ in $\Map(X_\eta')$ under the right arrow in \eqref{eq:central2} has finite order.
\end{itemize}
Moreover, $f$ is itself a power of $\delta_\eta$ if and only if the image of $f$ in $\Map(X_\eta')$ is trivial.
\end{lem}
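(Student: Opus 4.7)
The plan is to read everything off the exact sequence
$$0\to\langle\delta_\eta\rangle\to \CZ_0(\delta_\eta)\xrightarrow{\ \pi\ }\Map(X_\eta')\to 1$$
from \eqref{eq:central2}. The ``moreover'' clause is immediate: $f$ is a power of $\delta_\eta$ precisely when $f\in\langle\delta_\eta\rangle=\Ker(\pi)$, i.e.\ exactly when $\pi(f)=1$.

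For the equivalence, I would argue both directions by pushing the relation $f^k=\delta_\eta^n$ through $\pi$. If $f$ is a root of a power of $\delta_\eta$, say $f^k=\delta_\eta^n$ with $k\ge 1$, then applying $\pi$ and using $\delta_\eta\in\Ker(\pi)$ gives $\pi(f)^k=1$, so $\pi(f)$ has finite order (bounded by $k$). Conversely, if $\pi(f)$ has finite order $k\ge 1$, then $f^k\in\Ker(\pi)=\langle\delta_\eta\rangle$, hence $f^k=\delta_\eta^n$ for some $n\in\BZ$, exhibiting $f$ as a root of a power of $\delta_\eta$.

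There is no real obstacle here; the lemma is essentially a restatement of exactness of \eqref{eq:central2}, and the only thing to be checked is the (trivial) observation that the order of any element in a group is a positive integer, so that the integer $k$ produced in the backward direction is nonzero and the equation $f^k=\delta_\eta^n$ genuinely realizes $f$ as a root.
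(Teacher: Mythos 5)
Your proof is correct. The paper actually states Lemma~\ref{root3} without proof, presenting it alongside Lemmas~\ref{root1} and~\ref{root2} as ``a few simple but important observations''; your argument of pushing everything through the exact sequence \eqref{eq:central2} is precisely the intended justification, and you correctly handle the one delicate point (that the relevant exponent must be nonzero, which the backward direction supplies via the order of $\pi(f)$).
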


\subsection{Torsion}
In the light of Lemma \ref{root3}, it is clear that the existence of roots is closely related to the presence of 
torsion in mapping class groups. While it is known that every mapping class group always contains a finite index 
torsion-free subgroup, this is not going to be of much use here. The fact that the
mapping class group of a surface with boundary is torsion-free is going to be of more importance.

\begin{sat}\label{thm:boundary-no-torsion}
If $X$ is a surface with nonempty boundary, then $\Map(X)$ is torsion-free. Similarly, if $X$ has marked points, then 
finite subgroups of $\Map(X)$ are cyclic.
\end{sat}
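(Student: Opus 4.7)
The plan is to deduce both parts from \emph{Nielsen realization} (Kerckhoff's theorem): every finite subgroup $G\subset\Map(X)$ lifts to a genuine finite group of homeomorphisms of $X$ preserving some hyperbolic structure --- with totally geodesic boundary when $\partial X\neq\emptyset$, and with cusps or cone points at the marked points. Once a finite group of mapping classes is promoted to an actual group of isometries, both claims reduce to elementary facts about how isometries behave near a fixed geodesic or a fixed point.

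For the first claim, suppose $\partial X\neq\emptyset$ and that $g\in\Map(X)$ has finite order. By the convention recalled in Section \ref{sec:general}, every element of $\Map(X)$ fixes $\partial X$ pointwise, so after Nielsen realization $g$ is an isometry fixing each boundary geodesic pointwise. Pick $p\in\partial X$. Then $d_pg$ is an orientation-preserving linear isometry of $T_pX$ that fixes the unit tangent vector along $\partial X$; any such element of $\SO(T_pX)$ must be $\Id$. Since a hyperbolic isometry is determined by its $1$-jet at a point, $g=\Id$ on all of $X$, so $g$ was trivial in $\Map(X)$.

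For the second claim, suppose $X$ has a marked point $p$ and $G\subset\Map(X)$ is finite. The definition of $\Map(X)$ forces each element of $G$ to fix $p$, so after Nielsen realization $G$ acts by isometries of $X$ fixing $p$. The differential at $p$ gives a homomorphism $G\to\SO(T_pX)\cong\SO(2)$ which is injective (again because an isometry is determined by its $1$-jet at a point). A finite subgroup of $\SO(2)$ is cyclic, hence $G$ is cyclic. If the marked point is realized as a cusp rather than an interior cone point, the same argument applies with $T_pX$ replaced by the circle of horocyclic directions at the cusp.

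The only serious ingredient is Nielsen realization itself, which is invoked as a black box; everything else is routine linear algebra in dimension two. Thus the single obstacle is the appeal to Kerckhoff's theorem, after which both statements become immediate.
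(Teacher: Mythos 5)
Your plan correctly identifies the key mechanism — invoke Kerckhoff's Nielsen realization, promote a finite subgroup to a group of isometries, then argue via the differential at a fixed geodesic or fixed point — and the paper takes essentially the same route: Section 2 (``Torsion'') states Nielsen realization as the key ingredient and reduces the study of finite subgroups of $\Map(X)$ to groups of automorphisms of Riemann surfaces, from which both assertions of the theorem are understood to follow. (The paper treats the theorem as a known standard fact and does not write out the $1$-jet argument.)

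One caution: in the boundary case the argument as written needs a small adjustment. The usual way to realize a group of mapping classes on a surface with boundary is either to pass to the double, to cap boundary components with once-punctured disks, or otherwise to reduce to the boundaryless case; Kerckhoff's theorem is stated for closed or punctured surfaces, and one should be explicit that a finite group of mapping classes pointwise-fixing $\partial X$ and fixing it up to isotopies that also fix $\partial X$ pointwise can be realized isometrically with totally geodesic boundary on which the group still acts trivially. Once that realization is in hand, your $1$-jet rigidity argument is correct and standard. For the marked-point case, the argument is cleaner if one deletes the marked point and realizes the finite group as a group of hyperbolic isometries of a cusped surface; the point stabilizer injects into $\SO(2)$ (rotations of the horocycle) and is therefore cyclic, exactly as you say. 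So the proposal is essentially right, with the realization step in the boundary case needing to be stated more carefully.
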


The key to understand torsion in mapping class groups is the resolution by Kerckhoff \cite{Kerckhoff} of the Nielsen 
realization problem: the study of finite subgroups of the mapping class group reduces to the study of groups of automorphism 
of Riemann surfaces. For instance, it follows from the classical Hurewitz theorem that the order of such a group is 
bounded from above solely in terms of the genus of the underlying surface. Below we will need the following bound, 
due to Maclachlan \cite{Maclachlan} and Nakajima \cite{Nakajima},  for the order of finite abelian subgroups of $\Map(X)$.

\begin{sat}\label{Maclachlan}
Suppose that $X$ has genus $g \ge 2$. Then $\Map(X)$ does not contain finite abelian groups with more than $4g+4$ elements.
\end{sat}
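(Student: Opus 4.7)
The plan is to realize a finite abelian subgroup $A\le\Map(X)$ as a genuine automorphism group of a Riemann surface, then analyze the branched cover $X\to X/A$ via Riemann--Hurwitz, with the abelian hypothesis ruling out the ``most ramified'' signatures. First, by Kerckhoff's solution of the Nielsen realization problem, any finite $A\le\Map(X)$ fixes a point in Teichm\"uller space, so we may assume $A$ acts on $X$ by biholomorphisms for a suitable complex structure. Write $n=|A|$, let $h$ be the genus of the quotient orbifold $Y=X/A$, and let $m_1,\ldots,m_r$ be the ramification indices at the branch points $p_1,\ldots,p_r$. Riemann--Hurwitz gives
\[
2g-2 \;=\; n\!\left(2h-2+\sum_{i=1}^r\!\left(1-\tfrac{1}{m_i}\right)\right).
\]

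Next, I would exploit abelianness. The cover $X\to Y$ is classified by a surjection $\pi_1^{\mathrm{orb}}(Y)\twoheadrightarrow A$, and since $A$ is abelian this factors through the abelianization. In particular, for $h=0$ the relation $\gamma_1\cdots\gamma_r=1$ among the peripheral generators forces $A$ to be generated by at most $r-1$ elements of orders $m_1,\ldots,m_r$ subject to the relation that these elements sum to zero. This rules out covers with too few or too ramified branch points (for instance, it rules out $r\le 2$ for $h=0$ and $r=0$ for $h=1$, and it constrains small-$r$ triples). I would then run the standard case analysis on the signature $(h;m_1,\ldots,m_r)$, in each case bounding the quantity $2h-2+\sum(1-1/m_i)$ from below.

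The easy cases are $h\ge 2$, which gives $n\le g-1$, and $h=1$, where abelianness forces $r\ge 2$ and one obtains $n\le 4(g-1)$. The delicate case is $h=0$. Here I would split according to $r$: for $r\ge 5$ one has $\sum(1-1/m_i)-2\ge 1/2$ and $n\le 4(g-1)$; for $r=4$ with $(m_1,\ldots,m_4)=(2,2,2,2)$ the abelian monodromy lands in a Klein four-group and one reads off $n\le 4(g-1)/0$, so this signature in fact requires a bigger left-hand side, and a direct computation (writing $A$ as a subgroup of $(\mathbb Z/2)^2$ extended by the kernel of the monodromy) yields the sharp bound $n\le 4g+4$; for $r=4$ with higher orders, and for $r=3$ with the various hyperbolic triples $(m_1,m_2,m_3)$, abelianness severely restricts the possible $A$ (e.g.\ $(2,3,7)$ is excluded since the corresponding triangle group is perfect in its abelianization) and each remaining case is handled by an explicit calculation. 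The main obstacle is bookkeeping in the $h=0$, $r\in\{3,4\}$ cases, where the bound $4g+4$ is attained and one must verify it carefully against each admissible signature; elsewhere one gets much better bounds essentially for free.
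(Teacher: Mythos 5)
The paper states this result as a citation to Maclachlan and Nakajima and does not supply a proof, so there is no in-paper argument to compare against. Your overall strategy --- realize $A$ as a group of biholomorphisms via Kerckhoff, apply Riemann--Hurwitz to $X\to X/A$, and use that the branch cycles $\gamma_1,\dots,\gamma_r$ must satisfy $\sum\gamma_i=0$ in the abelian group $A$ (with $\gamma_i$ of order exactly $m_i$) to cut down the admissible signatures --- is exactly the standard route to Maclachlan's bound, and the reductions you make for $h\ge 2$, $h=1$, and $h=0$ with $r\ge 5$ are all sound. So the approach is the right one.

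However, the part of the argument where the theorem is actually sharp is both garbled and essentially absent. For the signature $(0;2,2,2,2)$, Riemann--Hurwitz gives $2g-2=n\cdot 0$, i.e.\ $g=1$: this signature is Euclidean and simply cannot occur for $g\ge 2$, so there is nothing to analyze there, and the line ``$n\le 4(g-1)/0$, so this signature requires a bigger left-hand side, and a direct computation \dots yields the sharp bound $n\le 4g+4$'' is not a valid inference. Moreover, the extremal case is \emph{not} at $r=4$: the bound $4g+4$ is attained at $r=3$, with signature $(0;2,2g+2,2g+2)$ and $A\cong\BZ/2\BZ\times\BZ/(2g+2)\BZ$ (take $a=(1,0)$, $b=(0,1)$, $c=(1,2g+1)$; then $a+b+c=0$, the orders are $2,2g+2,2g+2$, and Riemann--Hurwitz returns genus $g$). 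This is precisely the case you dismiss with ``each remaining case is handled by an explicit calculation.'' Since that is where the inequality is tight, the $r=3$ (and remaining $r=4$) enumeration is not bookkeeping but the entire content of the proof: one must list the hyperbolic triples $(m_1,m_2,m_3)$ satisfying the abelian compatibility $m_i\mid\mathrm{lcm}(m_j,m_k)$ for each $i$ (forced by $\gamma_k=-(\gamma_i+\gamma_j)$), bound $\lvert A\rvert$ using both $\lvert A\rvert\le m_1m_2$ and Riemann--Hurwitz, and verify $n\le 4g+4$ case by case. As written, that verification --- the heart of Maclachlan's argument --- is missing.
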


We remark that if $g\le 5$ all finite subgroups, abelian or not, of $\Map(X)$ have 
been listed \cite{kuku,kuku5}. In the sequel we will make use of this list in the case that $g=3,4$.

Finally, we  observe that a finite order diffeomorphism which is isotopic to the identity is in fact 
the identity. This implies, for instance, that if $\bar X$ is obtained from $X$ by filling in punctures, and $\tau:X\to X$ 
is a finite order diffeomorphism representing a non-trivial element in $\Map(X)$, then the induced mapping class of
 $\bar X$ is non-trivial as well.

\subsection{Centralizers of finite order elements}
By \eqref{eq:central1} and \eqref{eq:central2}, centralizers of Dehn twists are closely related to other mapping 
class groups. Essentially the same is true for centralizers of other mapping classes. We now discuss the case of 
torsion elements. The following result follows directly from the work of Birman-Hilden \cite{Birman-Hilden}:

\begin{sat}[Birman-Hilden]\label{Birman-Hilden}
Suppose that $[\tau]\in\Map(X)$ is an element of finite order and let $\tau:X\to X$ be a finite order diffeomorphism 
representing $[\tau]$. Consider the orbifold $\CO=X/\langle\tau\rangle$ and let $\CO^*$ be the surface obtained from 
$\CO$ by removing the singular points. Then we have a sequence
$$1\to\langle[\tau]\rangle\to\CZ([\tau])\to\Map^*(\CO^*)$$
where $\Map^*(\CO^*)$ is the group of isotopy classes of all homeomorphisms $\CO^*\to\CO^*$.
\end{sat}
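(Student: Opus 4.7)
The plan is to use Kerckhoff's solution of the Nielsen realization problem, already invoked in the paper, to replace any element of $\CZ([\tau])$ by a representative commuting with $\tau$ on the nose, and then to descend to the quotient orbifold via the classical Birman-Hilden isotopy-lifting theorem.

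To build the map $\Phi:\CZ([\tau])\to\Map^*(\CO^*)$, I would take $[f]\in\CZ([\tau])$ and an arbitrary representative $f$. The relation $[f][\tau][f]^{-1}=[\tau]$ says that $f\tau f^{-1}$ is a finite-order realization of the same mapping class as $\tau$; by the uniqueness part of Nielsen realization, there is $g$ isotopic to the identity with $f\tau f^{-1}=g\tau g^{-1}$. Then $\tilde f=g^{-1}f$ represents $[f]$ and satisfies $\tilde f\tau=\tau\tilde f$, so it descends to a self-homeomorphism $\bar f$ of $\CO$ permuting the singular points and restricts to an element of $\Map^*(\CO^*)$. I would set $\Phi([f])=[\bar f]$.

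Well-definedness, multiplicativity, and the identification of the kernel as $\langle[\tau]\rangle$ all reduce to the Birman-Hilden isotopy-lifting principle: every isotopy in $\CO^*$ lifts to a $\tau$-equivariant isotopy in $X$. Indeed, two $\tau$-equivariant representatives of the same mapping class are connected by an ambient isotopy which can be replaced by an equivariant one and then pushed down; and if $\bar f$ is isotopic to the identity in $\CO^*$, then the equivariant lift of such an isotopy connects $\tilde f$ to a deck transformation, forcing $[f]\in\langle[\tau]\rangle$. The main obstacle is this Birman-Hilden lifting step: in its classical form (hyperelliptic involutions of closed surfaces) it is elementary, but here one needs to apply it to an arbitrary finite cyclic action on a surface possibly carrying boundary and cusps, which requires some care but is standard in the literature.
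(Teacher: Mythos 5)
The paper gives no proof of this statement: it simply remarks that it ``follows directly from the work of Birman--Hilden'' and cites \cite{Birman-Hilden}, and the auxiliary fact that isotopic finite-order diffeomorphisms are actually conjugate is deferred to a remark in \cite{Bestvina-Church-Souto}. Your sketch is a correct reconstruction of such an argument, and the route through Nielsen realization is sound: by Kerckhoff the fixed locus of $\langle[\tau]\rangle$ in $\CT(X)$ is nonempty and contractible, so the two finite-order realizations $\tau$ and $f\tau f^{-1}$, each preserving a point of that locus, are joined by a path of $[\tau]$-invariant structures, and this furnishes the conjugating diffeomorphism $g$ isotopic to the identity. (The original Birman--Hilden work predates Kerckhoff and establishes the isotopy-lifting property directly, with the conjugacy fact appearing as a byproduct rather than an input; your proof simply reorganizes the logic, with no circularity.) You then correctly observe that well-definedness of $\Phi$, its multiplicativity, and the identification of the kernel with $\langle[\tau]\rangle$ all come down to the Birman--Hilden lifting principle for the branched cover $X\to\CO$, and you are right to flag that as the step you are importing rather than proving, since it is precisely the content the paper is borrowing when it cites \cite{Birman-Hilden}. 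Two small points you would want to make explicit in a full write-up: the lifting theorem is applied to the honest regular cover obtained by deleting the fixed points of $\tau$ upstairs and the singular points downstairs, so that $\tilde f$ need only permute, not fix, the fixed points of $\tau$; and when $X$ has boundary or punctures the standard reductions (doubling or capping, as used elsewhere in the paper) are needed before the Teichm\"uller-theoretic machinery applies. With those recorded, your outline is correct.
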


Hidden in Theorem \ref{Birman-Hilden} we have the following useful fact: {\em two finite order diffeomorphisms 
$\tau,\tau':X\to X$ which are isotopic are actually conjugate as diffeomorphisms} (see the remark in \cite[p.10]{Bestvina-Church-Souto}). 
Hence, it follows that the surface $\CO^*$ in Theorem \ref{Birman-Hilden} depends only on the mapping class $[\tau]$. 
Abusing notation, in the sequel we will speak about the fixed-point set of a finite order element in $\Map(X)$.

\section{Homomorphisms induced by embeddings}
\label{sec:embeddings}
In this section we define what is meant by an embedding $\iota:X\to Y$ between surfaces. 
As we will observe, any embedding induces
a homomorphism between the corresponding mapping class groups. We will discuss several
standard examples of such homomorphisms, notably the so-called Birman exact sequences. We will conclude the section with a 
few observations that will be needed later on. Besides the possible differences of terminology, all the facts
that we will state are either well known or simple observations in 2-dimensional topology. A reader
who is reasonably acquainted with \cite{Farb-Margalit,Ivanov} will have no difficulty filling the details.


\subsection{Embeddings}
 Let $X$ and $Y$ be surfaces of finite topological type, and consider 
their cusps to be marked points. Denote by $\vert X\vert $ and $\vert Y\vert$ the compact surfaces obtained 
from $X$ and $Y$, respectively, by forgetting all the marked points, and let
 $P_X\subset\vert X\vert$ and  $P_Y\subset\vert Y\vert$ be the sets of marked points of $X$ and $Y$.

\begin{defi*}
An {\em embedding} $\iota:X\to Y$ is a continuous injective map $\iota:\vert X\vert\to\vert Y\vert$ 
such that $\iota^{-1}(P_Y)\subset P_X$. An embedding is said to be a {\em homeomorphism} if it has an inverse
which is also an embedding.
\end{defi*}

We will say that two embeddings $\iota,\iota':X\to Y$ are {\em equivalent} or {\em isotopic} if there if a continuous injective 
map $f:\vert Y\vert\to\vert Y\vert$ which is isotopic (but not necessarily ambient isotopic) to the identity 
relative to the set $P_Y$ of marked points of $Y$ and which satisfies $f\circ\iota=\iota'$. Abusing terminology, we will often say that equivalent embeddings are the same.

Given an embedding $\iota:X\to Y$ and a homeomorphism $f:X\to X$ which pointwise fixes the boundary and the marked points 
of $X$, we consider the homeomorphism
$$\iota(f):Y\to Y$$
given by $\iota(f)(x)=(\iota\circ f\circ\iota^{-1})(x)$ if $x\in\iota(X)$ and $\iota(f)(x)=x$ otherwise. Clearly, $\iota(f)$
 is a homeomorphism which pointwise fixes the boundary and the marked points of $Y$. In particular $\iota(f)$ 
represents an element $\iota_\#(f)$ in $\Map(Y)$. It is easy to check that we obtain a well-defined group homomorphism 
$$\iota_{\#}:\Map(X)\to\Map(Y)$$
characterized by the following property: for any curve $\gamma\subset X$ we have 
$\iota_\#(\delta_\gamma)=\delta_{\iota(\gamma)}$. Notice that this characterization immediately implies 
that if $\iota$ and $\iota'$ are isotopic, then $\iota_\#=\iota'_{\#}$.

\subsection{Birman exact sequences} As we mentioned above, notable examples of homomorphisms induced by embeddings are 
the so-called Birman exact sequences, which we now describe. 

Let $X$ and $Y$ be surfaces of finite topological type. We will say that $Y$ is obtained from $X$ by 
{\em filling in a puncture} if there is an embedding $\iota:X\to Y$ and a marked point $p\in P_X$, such that
the underlying map $\iota:\vert X\vert\to\vert Y\vert$ is a homeomorphism, and $\iota^{-1}(P_Y)=P_X\setminus\{p\}$.
If $Y$ is obtained from $X$ by filling in a puncture we have the following exact sequence:
\begin{equation}\label{eq:birman1}
\xymatrix{
1\ar[r] & \pi_ 1(\vert Y\vert\setminus P_Y,\iota(p))\ar[r] & \Map(X)\ar[r]^{\iota_\#} & \Map(Y)\ar[r] & 1}
\end{equation}
The left arrow in \eqref{eq:birman1} can be described concretely. For instance, if $\gamma$ is a simple loop in
 $\vert Y\vert$ based $\iota(p)$ and avoiding all other marked points of $Y$, then the image of the element 
$[\gamma]\in\pi_1(\vert Y\vert\setminus P_Y,\iota(p))$ in $\Map(X)$ is the difference of the two Dehn twists 
along the curves forming the boundary of a regular neighborhood of $\iota^{-1}(\gamma)$.

Similarly, we will say that $Y$ is obtained from $X$ by {\em filling in a boundary component} if there is an 
embedding $\iota:X\to Y$, with $\iota^{-1}(P_Y)=P_X$, and such that the complement in $\vert Y\vert$ of the image 
of the underlying map $\vert X\vert\to\vert Y\vert$ is a disk which does not contain any marked point of $Y$. 
If $Y$ is obtained from $X$ by filling in a boundary component then we have the following exact sequence:
\begin{equation}\label{eq:birman2}
1\to \pi_1(T^1(\vert Y\vert\setminus P_Y))\to\Map(X)\to\Map(Y)\to 1
\end{equation}
Here $T^1(\vert Y\vert\setminus P_Y)$ is the unit-tangent bundle of the surface $\vert Y\vert\setminus P_Y$.

We refer to the sequences \eqref{eq:birman1} and \eqref{eq:birman2} as the Birman exact sequences.

\subsection{Other building blocks}
As we will see in the next subsection, any embedding is a composition of four basic building blocks. Filling punctures 
and filling boundary components are two of them; next, we describe 
the other two types.

Continuing with the same notation as above, we will say that $Y$ is obtained from $X$ by {\em deleting a boundary component} if 
there is an embedding $\iota:X\to Y$ with $\iota(P_X)\subset P_Y$ and such that the complement of the image of the 
underlying map $\vert X\vert\to\vert Y\vert$ is disk containing exactly one point in $P_Y$. If $Y$ is obtained from 
$X$ by deleting a boundary component then we have
\begin{equation}\label{eq:birman3}
1\to\BZ\to\Map(X)\to\Map(Y)\to 1
\end{equation}
where, $\BZ$ is the group generated by the Dehn twist along the forgotten boundary component. 

Finally, we will say that $\iota:X\to Y$ is a {\em subsurface embedding} if $\iota(P_X)\subset P_Y$ and if no 
component of the complement of the image of the underlying map $\vert X\vert\to\vert Y\vert$ is a disk containing at most one marked point. 
Notice that if $\iota:X\to Y$ is a subsurface embedding, then the homomorphism 
$$\iota_\#:\Map(X)\to\Map(Y)$$
is injective if and only if $\iota$ is {\em anannular}, i.e. if no component of the complement of the image of 
the underlying map $\vert X\vert\to\vert Y\vert$ is an annulus without marked points; compare with \eqref{eq:central1}
 above.

\subsection{General embeddings}
Clearly, the composition of two embeddings is an embedding. For instance, observe that filling in a boundary component is the same as first forgetting it and then filling in a puncture. The following proposition, whose proof we leave to the reader, asserts that every embedding is isotopic to a suitable composition of the elementary building blocks we have just discussed:

\begin{prop}\label{prop-embedding}
Every embedding $\iota:X\to Y$ is isotopic to a composition of the following three types of embedding: filling punctures, 
 deleting boundary components, and subsurface embeddings. In particular, the homomorphism $\iota_\#:\Map(X)\to\Map(Y)$ 
is injective if and only if $\iota$ is an anannular subsurface embedding.\qed 
\end{prop}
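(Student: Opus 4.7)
The plan is twofold: first I produce an explicit decomposition of an arbitrary embedding $\iota:X\to Y$ into the three listed building blocks, and then I read off the injectivity criterion from the standard exact sequences attached to each block. Throughout, write $f:\vert X\vert\to\vert Y\vert$ for the underlying continuous injection of $\iota$.

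For the decomposition, I proceed in two stages. In stage~(i), for each marked point $p\in P_X$ with $f(p)\notin P_Y$ I fill in the puncture $p$; iterating over all such $p$ produces a surface $X_1$ with $\vert X_1\vert=\vert X\vert$, $P_{X_1}=f^{-1}(P_Y)$, and a factorization $X\to X_1\stackrel{\iota_1}{\to}Y$ in which $\iota_1(P_{X_1})\subset P_Y$. In stage~(ii), I enumerate the components $C_1,\dots,C_m$ of $\vert Y\vert\setminus\mathrm{int}(f(\vert X\vert))$ that are disks containing at most one marked point of $Y$; each $C_j$ is bounded by the image of a boundary component $c_j$ of $\vert X_1\vert$. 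Processing the $C_j$ one at a time, I absorb each $C_j$ into $X_1$: if $C_j$ contains a marked point I delete $c_j$, while if $C_j$ contains none I first delete $c_j$ and then fill in the resulting puncture, which is precisely the decomposition of ``filling in a boundary component'' noted in the paragraph preceding the proposition. After all absorptions I am left with an embedding $\iota_2:X_2\to Y$ such that $\iota_2(P_{X_2})\subset P_Y$ and such that no complementary component is a disk containing at most one marked point --- that is, a subsurface embedding. By construction, $\iota$ is isotopic to the composition $X\to X_1\to\cdots\to X_2\to Y$ of embeddings of the three listed types.

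For the injectivity statement, the $(\Leftarrow)$ direction is the standard fact recorded just after the definition of subsurface embedding in the excerpt: an anannular subsurface embedding induces an injective homomorphism. For $(\Rightarrow)$ I argue contrapositively by exhibiting a non-trivial element of $\ker(\iota_\#)$ whenever $\iota$ fails to be an anannular subsurface embedding. If some puncture $p$ is filled, then the Birman sequence~\eqref{eq:birman1} gives a non-trivial kernel of the first arrow $\Map(X)\to\Map(X_1)$ (except in degenerate cases where $\Map(X)$ itself is trivial), and this kernel element remains in $\ker(\iota_\#)$ after composing with the subsequent embeddings. If some boundary component $c$ of $X$ is deleted, then $\iota(c)$ bounds a disk in $\vert Y\vert$ with at most one marked point, so $\iota(c)$ is non-essential in $Y$, whence $\iota_\#(\delta_c)=\delta_{\iota(c)}=1$ while $\delta_c\ne 1$ in $\Map(X)$. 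If $\iota$ is a subsurface embedding that is not anannular, then some complementary component is an unmarked annulus whose two boundary curves $\alpha,\beta$ are distinct components of $\partial\vert X\vert$ but are isotopic in $Y$, so $\delta_\alpha\delta_\beta^{-1}\in\ker(\iota_\#)\setminus\{1\}$. The main step requiring care is the one-at-a-time absorption in stage~(ii), since the definitions of the building blocks assume the entire complement is a single disk; I handle this by inducting on the number of ``small'' complementary components and verifying at each intermediate stage that the next piece still satisfies the local hypothesis of the relevant building block. Apart from this bookkeeping the argument is elementary two-dimensional topology.
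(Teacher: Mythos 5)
The paper explicitly leaves this proposition to the reader, so there is no argument in the text to compare against; what you have written supplies exactly the expected proof. Your two-stage decomposition is the natural one: first fill every marked point of $X$ whose image is unmarked in $Y$, so that afterwards $\iota_1(P_{X_1})\subset P_Y$; then absorb one at a time each complementary disc containing at most one marked point, coding a marked disc as a single deletion of the relevant boundary component and an unmarked disc as a deletion followed by a puncture-filling (exactly the decomposition of ``filling in a boundary component'' recorded just before the proposition). After the absorptions, the complement has no small discs left, which is the definition of a subsurface embedding, and the composition of the underlying maps literally equals $f$, so no actual isotopy is needed beyond the standard normalization putting $f(\partial\vert X\vert)$ in general position relative to $\partial\vert Y\vert$.

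The injectivity criterion is then correctly read off: the Birman kernel for a filled puncture, the twist $\delta_c$ along a boundary component whose image becomes inessential for a deleted boundary, and a difference $\delta_\alpha\delta_\beta^{-1}$ for a complementary unmarked annulus (precisely the kernel element appearing in \eqref{eq:central1}); the converse for anannular subsurface embeddings is the statement the paper records in Section~\ref{sec:embeddings}. The one caveat is the one you already flag: for sporadic low-complexity targets (spheres with at most three distinguished points, discs, annuli) the kernel elements you exhibit can degenerate and $\Map(X)$ may itself be trivial, so the ``if and only if'' needs the usual implicit exclusions; the paper's unproved $\qed$ is clearly just as silent on these cases, so this is a feature of the statement rather than a defect of your argument.
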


\noindent{\bf Notation.} In order to avoid notation as convoluted as $T^1(\vert Y\vert\setminus P_Y)$, most of the time
we will drop any reference to the underlying surface $\vert Y\vert$ or to the set of marked point $P_Y$; 
notice that this is consistent with taking the freedom to consider punctures as marked  points or as ends. 
For instance, the Birman exact sequences now read 
$$1\to\pi_1(Y)\to\Map(X)\to\Map(Y)\to 1,$$
if $Y$ is obtained from $X$ by filling in a puncture, and 
$$1\to\pi_1(T^1Y)\to\Map(X)\to\Map(Y)\to 1,$$
if it is obtained filling in a boundary component. We hope that this does not cause any confussion.

\subsection{Two observations}
We conclude this section with two observations that will be needed below.

First, suppose that $\iota:X\to Y$ is an embedding and let $\eta\subset X$ be a multicurve. 
The image $\iota(\eta)$ of $\eta$ in $Y$ is an embedded 1-manifold, but it does not need to be a 
multicurve. For instance, some component of $\iota(\eta)$ may not be essential in $Y$; also two 
components of $\iota(\eta)$ may be parallel in $Y$. If this is not the case, that is, if $\iota(\eta)$ is 
a multicurve in $Y$, then it is easy to see that $\iota_\#$ maps the subgroup $\BT_\eta$ of multitwists 
supported on $\eta$ isomorphically onto $\BT_{\iota(\eta)}$. We record this observation in the following lemma:

\begin{lem}\label{BHB}
Let $\iota:X\to Y$ be an embedding and let $\eta\subset X$ be a multicurve such that 
\begin{itemize}
\item every component of $\iota(\eta)$ is essential in $Y$, and
\item no two components of $\iota(\eta)$ are parallel in $Y$,
\end{itemize}
Then $\iota(\eta)$ is also a multicurve in $Y$ and the homomorphism $\iota_\#$ maps $\BT_\eta\subset\Map(X)$ isomorphically to $\BT_{\iota(\eta)}\subset\Map(Y)$. Moreover, the image of a generic multitwist in $\BT_\eta$ 
is generic in $\BT_{\iota(\eta)}$.\qed
\end{lem}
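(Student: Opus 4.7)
The plan is to unpack the three assertions directly from the definition of an embedding together with the defining property $\iota_\#(\delta_\gamma)=\delta_{\iota(\gamma)}$; I do not expect any real obstacle, since the lemma is essentially a bookkeeping statement whose whole content is recording how the two hypotheses prevent information loss when passing from $\eta$ to $\iota(\eta)$.

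First I would verify that $\iota(\eta)$ is a multicurve in $Y$. This is immediate: because $\iota:\vert X\vert\to\vert Y\vert$ is continuous and injective and the components of $\eta$ are pairwise disjoint simple closed curves, $\iota(\eta)$ is a disjoint union of simple closed curves in $\vert Y\vert$; the two bulleted hypotheses say precisely that these curves are essential and pairwise non-parallel, which is the definition of a multicurve.

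Next I would enumerate the components of $\eta$ as $\gamma_1,\dots,\gamma_k$. Injectivity of $\iota$ together with the no-parallel hypothesis guarantees that $\iota(\gamma_1),\dots,\iota(\gamma_k)$ are $k$ distinct components of $\iota(\eta)$, and the essentiality hypothesis guarantees that each $\delta_{\iota(\gamma_i)}$ is a nontrivial element of $\Map(Y)$. Therefore $\BT_\eta$ and $\BT_{\iota(\eta)}$ are both free abelian of rank $k$, with distinguished bases $\{\delta_{\gamma_i}\}$ and $\{\delta_{\iota(\gamma_i)}\}$. Since $\iota_\#(\delta_{\gamma_i})=\delta_{\iota(\gamma_i)}$ by construction, $\iota_\#$ sends basis to basis and so restricts to an isomorphism $\BT_\eta\xrightarrow{\sim}\BT_{\iota(\eta)}$.

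Finally, for the genericity statement I would write $f=\prod_i\delta_{\gamma_i}^{n_i}\in\BT_\eta$ in coordinates: by definition $f$ is a generic multitwist along $\eta$ iff every $n_i\neq 0$. Then $\iota_\#(f)=\prod_i\delta_{\iota(\gamma_i)}^{n_i}$, and since the $\iota(\gamma_i)$ are exactly the distinct components of $\iota(\eta)$, all exponents in this expression are again nonzero, so $\iota_\#(f)$ is generic along $\iota(\eta)$. The only conceptual point worth highlighting is that each hypothesis is doing nontrivial work: without essentiality some $\delta_{\iota(\gamma_i)}$ would be trivial, and without non-parallelness two of the $\delta_{\iota(\gamma_i)}$ would coincide; either failure would collapse the rank of $\BT_{\iota(\eta)}$ and destroy injectivity of the restriction of $\iota_\#$.
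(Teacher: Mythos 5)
Your proof is correct, and since the paper itself records this lemma with a \verb|\qed| and no argument (it is offered as an ``observation''), there is no written proof in the paper against which to compare; your writeup is the natural unpacking of the claim that any reader would supply. The one step worth flagging explicitly, which you use implicitly, is that for a multicurve $\eta$ with components $\gamma_1,\dots,\gamma_k$ the Dehn twists $\delta_{\gamma_1},\dots,\delta_{\gamma_k}$ are not merely commuting generators of $\BT_\eta$ but an actual free abelian basis of rank $k$; this is stated earlier in Section~2 of the paper and is what lets ``sends basis to basis'' conclude the isomorphism, and likewise what makes the coordinate description of genericity in your last paragraph legitimate on both sides. With that standard fact in hand, all three assertions follow exactly as you say.
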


Finally, we recall the well-known fact that the Birman exact sequence \eqref{eq:birman1} does not split:

\begin{lem}\label{no-birman-split}
Suppose that $X$ is a surface of genus $g\ge 3$ with empty boundary and a single puncture. 
If $Y$ is the closed surface obtained from $X$ by filling in the puncture, then the exact sequence \eqref{eq:birman1}
$$1\to\pi_1(Y)\to\Map(X)\to\Map(Y)\to 1$$
does not split.
\end{lem}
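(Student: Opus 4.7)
The plan is to argue by contradiction using torsion, exploiting the fact that $\pi_1(Y)$ is torsion-free while $\Map(Y)$ has plenty of finite-order elements. Assume for contradiction that $s \colon \Map(Y) \to \Map(X)$ is a section of the Birman exact sequence. The strategy is to produce a finite-order element $\bar\tau \in \Map(Y)$ represented by a fixed-point-free diffeomorphism, and then show that the presence of $s$ would force $\bar\tau$ to have a fixed point.

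First I would locate a suitable torsion element. Since $g \ge 3$, the Riemann--Hurwitz equality $g - 1 = n(h - 1)$ admits the integer solution $h = 2$, $n = g - 1 \ge 2$. As the abelianization of the fundamental group of a genus-$2$ surface surjects onto $\BZ/(g-1)\BZ$, the associated unramified cyclic cover realizes $Y$ as a free $\BZ/(g-1)\BZ$-cover of a closed genus-$2$ surface. Let $\bar\tau \colon Y \to Y$ be a generator of the deck group; then $\bar\tau$ is a diffeomorphism of order $n = g - 1$ with no fixed points on $Y$.

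Next, I would transport $\bar\tau$ through the putative section. Since $s$ is a splitting, $s(\bar\tau)^n = s(\bar\tau^n) = 1$, while the Birman projection $\pi \colon \Map(X) \to \Map(Y)$ sends $s(\bar\tau)$ back to $\bar\tau$; hence $s(\bar\tau) \in \Map(X)$ has order exactly $n$. By Kerckhoff's resolution of the Nielsen realization problem, there is a finite-order diffeomorphism $\tau \colon X \to X$ representing $s(\bar\tau)$. Because $X$ has empty boundary and only one puncture $p$, the diffeomorphism $\tau$ must fix $p$, and continuous extension across $p$ yields a finite-order self-homeomorphism $\hat\tau \colon Y \to Y$ with $\hat\tau(p) = p$ whose mapping class in $\Map(Y)$ is $\bar\tau$.

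Finally, I would close the loop using the Birman--Hilden conjugacy principle. The two finite-order maps $\hat\tau$ and $\bar\tau$ of $Y$ represent the same element $\bar\tau \in \Map(Y)$, so by the conjugacy fact recorded after Theorem \ref{Birman-Hilden}, there exists a homeomorphism $f \colon Y \to Y$ with $\hat\tau = f \circ \bar\tau \circ f^{-1}$. Evaluating both sides at $f^{-1}(p)$ and using $\hat\tau(p) = p$ gives $\bar\tau(f^{-1}(p)) = f^{-1}(p)$, contradicting the freeness of the deck action. The only delicate step is the second one: one has to check that the section preserves the exact order of the torsion element (immediate from the splitting identity) and then promote mapping classes to honest diffeomorphisms via Nielsen realization. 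Once $\hat\tau$ is produced, the Birman--Hilden conjugacy principle delivers the contradiction at once.
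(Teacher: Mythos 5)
Your argument is correct, but it takes a genuinely different route from the paper. The paper's proof is a two-line torsion count: since $X$ has a marked point, Theorem \ref{thm:boundary-no-torsion} forces every finite subgroup of $\Map(X)$ to be cyclic, while $\Map(Y)$ (closed, genus $\geq 3$) contains a copy of $\BZ/2\BZ\times\BZ/2\BZ$; a section would embed that group into $\Map(X)$, contradiction. You instead work with a single cyclic torsion element, but exploit a geometric rather than group-theoretic obstruction: you produce $\bar\tau\in\Map(Y)$ realizable by a \emph{fixed-point-free} finite-order diffeomorphism, push it through the hypothetical section, realize the image via Nielsen realization as a finite-order diffeomorphism of $X$ that is forced to fix the unique puncture, fill in the puncture, and invoke the conjugacy principle for isotopic finite-order diffeomorphisms to conclude that $\bar\tau$ itself must have a fixed point. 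Both proofs are complete; yours uses more machinery (the covering-space construction of the free cyclic action, Nielsen realization, Birman--Hilden conjugacy), whereas the paper's uses only the structure of finite subgroups. The one small point worth making explicit in your write-up is that the filled-in map $\hat\tau$ should be promoted from homeomorphism to diffeomorphism before applying the conjugacy principle; this is easy (take $\tau$ to be an isometry of a finite-area hyperbolic metric on $X$ by Nielsen realization, note that the underlying conformal structure extends across the cusp by removability of the singularity, and conclude $\hat\tau$ is conformal and hence smooth), but as stated the conjugacy fact in the paper is for diffeomorphisms.
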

\begin{proof}
The mapping class group $\Map(Y)$ of the closed surface $Y$ contains a non-cyclic finite subgroup; namely one 
isomorphic to $\BZ/2\BZ\times\BZ/2\BZ$. By Theorem \ref{thm:boundary-no-torsion} such a subgroup does not exist 
in $\Map(X)$, which proves that there is no splitting of \eqref{eq:birman1}.
\end{proof}

In Lemma \ref{no-birman-split} we proved that in a very particular situation one of the two Birman exact 
sequences does not split. Notice however that it follows from Theorem \ref{dogs-bollocks} that they never do; 
this also follows from the work of Ivanov-McCarthy \cite{Ivanov-McCarthy}.

\section{Triviality theorems}
In this section we remind the reader of two triviality theorems for homomorphisms from mapping class groups 
to abelian groups and permutation groups; these results are widely used throughout this paper. The first of these 
results is a  direct consequence of Powell's theorem \cite{Powell} on the vanishing of the integer homology of the 
mapping class group of surfaces of genus at least $3$:

\begin{sat}[Powell]\label{thm:homology}
If $X$ is a surface of genus $g\ge 3$ and $A$ is an abelian group, then every homomorphism $\Map(X)\to A$ is trivial.
\end{sat}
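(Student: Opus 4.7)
The plan is to combine the Dehn--Lickorish generation theorem with the lantern relation, following the standard route to Powell's theorem. Let $\phi:\Map(X)\to A$ be a homomorphism into an abelian group.

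First I would use the fact that any two Dehn twists along non-separating curves are conjugate in $\Map(X)$. Since $A$ is abelian, conjugation is trivial on the image, so there is a single element $\alpha\in A$ with
\[
\phi(\delta_\gamma)=\alpha \qquad \text{for every non-separating curve } \gamma\subset X.
\]

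The key step is to show $\alpha=0$ using the lantern relation. Because $g\geq 3$, the surface $X$ contains a non-separating lantern: seven non-separating curves $a,b,c,d,x,y,z$ whose Dehn twists satisfy
\[
\delta_a\delta_b\delta_c\delta_d=\delta_x\delta_y\delta_z
\]
(this existence was recorded right after Proposition~\ref{fact-lantern}). Applying $\phi$ and using that $A$ is abelian gives $4\alpha=3\alpha$, hence $\alpha=0$.

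Finally, by the Dehn--Lickorish theorem (Theorem~\ref{lickorish}), the group $\Map(X)$ is generated by Dehn twists along non-separating curves, each of which now lies in $\Ker(\phi)$. Therefore $\phi$ is trivial. There is no real obstacle here beyond invoking these two ingredients; the only point worth checking is the existence of a non-separating lantern, which is automatic for $g\geq 3$ and has already been noted in the excerpt.
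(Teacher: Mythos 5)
Your proof is correct, but the paper does not actually prove this statement: it is attributed to Powell and cited as a consequence of the vanishing of $H_1(\Map(X);\BZ)$ for $g\ge 3$ established in \cite{Powell}. What you have written is the standard self-contained argument for the vanishing of the abelianization using the lantern relation together with Dehn--Lickorish, which is cleaner and more elementary than Powell's original (which worked with an explicit presentation). Your proof is airtight given the background facts recorded in Section~2: conjugacy of Dehn twists along non-separating curves, the existence of a non-separating lantern when $g\ge 3$, and Theorem~\ref{lickorish}. The only thing your write-up buys beyond the paper's citation is a transparent, two-line reason why the result is true; what it gives up is the stronger homological statement (all of $H_1$ vanishes, not just for these generators) that Powell actually proves, though that stronger form is not needed here.
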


We refer the reader to Korkmaz \cite{Korkmaz} for a discussion of Powell's theorem and other homological properties of 
mapping  class groups. 

As a first consequence of Theorem \ref{thm:homology} we derive the following useful observation:

\begin{lem}\label{trick}
Let $X,Y$ and $\bar Y$ be surfaces of finite topological type, and let $\iota:Y\to\bar Y$ be an embedding. 
Suppose that $X$ has genus at least $3$ and that $\phi:\Map(X)\to\Map(Y)$ is a homomorphism such that the composition 
$$\bar\phi=\iota_\#\circ\phi:\Map(X)\to\Map(\bar Y)$$ 
is trivial. Then $\phi$ is trivial as well.
\end{lem}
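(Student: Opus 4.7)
The plan is to apply Proposition~\ref{prop-embedding} to write $\iota$ as a composition $\iota_n\circ\cdots\circ\iota_1$ of elementary embeddings, where each $\iota_i$ is either a subsurface embedding, a boundary-component deletion, or a puncture filling, and then to induct on $n$. By functoriality of $\iota\mapsto\iota_\#$, setting $\psi=(\iota_1)_\#\circ\phi$ converts the hypothesis $\iota_\#\circ\phi=1$ into the analogous one for $\psi$ and the shorter composition $\iota_n\circ\cdots\circ\iota_2$, so a single induction reduces the lemma to the case that $\iota$ is itself elementary. In each such case it suffices to prove that $K=\ker(\iota_\#)$ admits no nontrivial homomorphism from $\Map(X)$, since then $\phi$, which takes values in $K$, is forced to be trivial.

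For the first two elementary types the kernel $K$ is abelian. Indeed, when $\iota$ is a subsurface embedding the discussion around Lemma~\ref{BHB} and the sequence \eqref{eq:central1} identify $K$ as a free abelian group generated by the differences $\delta_{\alpha_1}\delta_{\alpha_2}^{-1}$, one for each annular component of $\bar Y\setminus\iota(Y)$; and when $\iota$ deletes a boundary component, \eqref{eq:birman3} gives $K\cong\BZ$. In both cases Powell's theorem (Theorem~\ref{thm:homology}) immediately implies that any homomorphism $\Map(X)\to K$ is trivial.

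The remaining and most delicate case is when $\iota$ fills in a puncture, so that by the Birman sequence \eqref{eq:birman1} one has $K=\pi_1(\vert\bar Y\vert\setminus P_{\bar Y})$; this $K$ need not be abelian, so Powell's theorem does not apply directly. The key observation I would use is that $\Map(X)$ is perfect (by Theorem~\ref{thm:homology}), so that the image of any homomorphism $\Map(X)\to K$ is a perfect subgroup of the surface group $K$. If $\vert\bar Y\vert$ has nonempty boundary or $P_{\bar Y}\ne\emptyset$, then $K$ is free, and a free group has no nontrivial perfect subgroup. Otherwise $\vert\bar Y\vert$ is closed without marked points and $K$ is the closed surface group $\pi_1(\vert\bar Y\vert)$; any subgroup of $K$ is either of finite index, and therefore itself a closed surface group with nontrivial abelianization, or of infinite index, and hence free. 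In either situation the only perfect subgroup of $K$ is trivial, which gives $\phi=1$ and completes the proof.
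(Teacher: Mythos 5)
Your proposal is correct and follows essentially the same route as the paper: decompose the embedding into elementary blocks via Proposition~\ref{prop-embedding}, reduce by induction to the case of a single elementary embedding, and then kill the image of $\phi$ in the relevant Birman kernel using Powell's theorem. Your treatment of the puncture-filling case (image is a perfect subgroup of a surface group, and surface groups have no nontrivial perfect subgroups since their finite-index subgroups are again surface groups and their infinite-index subgroups are free) is exactly the content of the paper's remark that every nontrivial subgroup of $\pi_1(\bar Y)$ has nontrivial homology; you simply spell it out in slightly more detail, while the paper instead chooses the boundary-filling case, where the kernel is $\pi_1(T^1\bar Y)$, for its worked example and leaves the others to the reader.
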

\begin{proof}
By Proposition \ref{prop-embedding} the embedding $\iota:Y\to\bar Y$ is a suitable composition of filling in 
punctures and boundary components, deleting boundary components and subsurface embeddings. In particular, 
we may argue by induction and assume that $\iota$ is of one of these four types. For the sake of concreteness 
suppose $\iota:\bar Y\to Y$ is the embedding associated to filling in a boundary component; the other cases are 
actually a bit easier and are left to the reader. We have the following diagram:
$$\xymatrix{
& & \Map(X)\ar[d]_\phi\ar[dr]^{\bar\phi} & & \\
1\ar[r]& \pi_1(T^1\bar Y)\ar[r] &\Map(Y)\ar[r]^{\iota_\#}& \Map(\bar Y)\ar[r] & 1}$$
The assumption that $\bar\phi$ is trivial amounts to supposing that the image of $\phi$ is contained in 
$\pi_1(T^1\bar Y)$. The homomorphism $\pi_1(T^1\bar Y)\to\pi_1(\bar Y)$ yields another diagram:
$$\xymatrix{
& & \Map(X)\ar[d]_\phi\ar[dr]^{\phi'} & & \\
1\ar[r]& \BZ\ar[r] &\pi_1(T^1\bar Y)\ar[r]& \pi_1(\bar Y)\ar[r] & 1}$$
Since every nontrivial subgroup of the surface group $\pi_1(\bar Y)$ has nontrivial homology, we deduce 
from Theorem \ref{thm:homology}  that $\phi'$ is trivial. Hence, the image of $\phi$ is contained in $\BZ$. 
Applying again Theorem \ref{thm:homology} above we deduce that $\phi$ is trivial, as it was to be shown.
\end{proof}

Before stating another consequence of Theorem \ref{thm:homology} we need a definition:

\begin{defi}
\label{def-irred}
A homomorphism $\phi:\Map(X)\to\Map(Y)$ is said to be {\em irreducible} if its image does not fix any essential curve in $Y$; otherwise we say it is {\em reducible}.
\end{defi}

\begin{bem}
Recall that we consider boundary parallel curves to be essential. In particular, every homomorphism $\Map(X)\to\Map(Y)$ is reducible if $Y$ has non-empty boundary.
\end{bem}

Let $\phi:\Map(X)\to\Map(Y)$ be a reducible homomorphism, where $X$ has genus at least 3, and let
$\eta\subset Y$ be a multicurve which is componentwise invariant under $\phi(\Map(X))$; in other words, 
$\phi(\Map(X))\subset\CZ(\BT_\eta)$. 
Moreover, notice that Theorem \ref{thm:homology} implies that $\phi(\Map(X))\subset\CZ_0(\BT_\eta)$, where
$\CZ_0(\BT_\eta)$ is 
the subgroup of $\CZ(\BT_\eta)$ consisting of those elements that fix the sides of each component of $\eta$.


Now let $Y_\gamma'=Y\setminus\eta$ be the surface obtained by deleting $\eta$ from $Y$. Composing \eqref{eq:central2} 
as often as necessary, we obtain an exact sequence as follows:
\begin{equation}\label{eq:central3}
1\to\BT_\eta\to\CZ_0(\BT_\eta)\to\Map(Y_\eta')\to 1
\end{equation} 
The same argument of the proof of Lemma \ref{trick} shows that $\phi$ is trivial if the composition of $\phi$ 
and the right homomorphism \eqref{eq:central3} is trivial. Hence we have:

\begin{lem}\label{trick2}
Let $X,Y$ be surfaces of finite topological type. Suppose that $X$ has genus at least $3$ and that 
$\phi:\Map(X)\to\Map(Y)$ is a non-trivial reducible homomorphism fixing the multicurve $\eta\subset Y$. 
Then $\phi(\Map(X))\subset\CZ_0(\BT_\eta)$ and the composition of $\phi$ with the homomorphism \eqref{eq:central3} 
is not trivial.\qed
\end{lem}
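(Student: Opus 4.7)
The plan is to prove the two assertions separately, each by an application of Powell's triviality theorem (Theorem \ref{thm:homology}) to a quotient whose target is abelian.

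For the first assertion, observe that since $\eta$ is assumed componentwise fixed by $\phi(\Map(X))$, we have $\phi(\Map(X)) \subset \CZ(\BT_\eta)$. As already recorded in the discussion of multitwists in Section \ref{sec:general}, the quotient $\CZ(\BT_\eta)/\CZ_0(\BT_\eta)$ embeds into $(\BZ/2\BZ)^{\vert \eta \vert}$ and is therefore abelian. Composing $\phi$ with the quotient map $\CZ(\BT_\eta) \to \CZ(\BT_\eta)/\CZ_0(\BT_\eta)$ yields a homomorphism from $\Map(X)$ into an abelian group. Since $X$ has genus at least $3$, Theorem \ref{thm:homology} forces this composition to be trivial, which means $\phi(\Map(X)) \subset \CZ_0(\BT_\eta)$.

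For the second assertion, I would argue by contradiction, mimicking the proof of Lemma \ref{trick}. Suppose the composition of $\phi$ with the projection $\CZ_0(\BT_\eta) \to \Map(Y_\eta')$ coming from \eqref{eq:central3} were trivial. Then $\phi(\Map(X))$ would be contained in the kernel of that projection, namely the free abelian group $\BT_\eta$ of rank $\vert \eta \vert$. Applying Theorem \ref{thm:homology} once more, this time to the homomorphism $\Map(X) \to \BT_\eta$, would force $\phi$ itself to be trivial, contradicting the hypothesis that $\phi$ is non-trivial.

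There is no serious obstacle here: the proof is essentially a bookkeeping exercise that exploits the abelian nature of the two quotients $\CZ(\BT_\eta)/\CZ_0(\BT_\eta)$ and $\BT_\eta$ appearing in the structure theory of centralizers of multitwists, combined with Powell's vanishing result. One could phrase the second step diagrammatically, in the style of the proof of Lemma \ref{trick}, but this is not strictly necessary since the relevant exact sequence is already isolated in \eqref{eq:central3}.
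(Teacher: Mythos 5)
Your proof is correct and takes essentially the same approach as the paper: the first inclusion is obtained by applying Powell's theorem to the composition of $\phi$ with the quotient $\CZ(\BT_\eta)\to\CZ(\BT_\eta)/\CZ_0(\BT_\eta)\hookrightarrow(\BZ/2\BZ)^{|\eta|}$, and the second claim follows by noting that a trivial composition would put $\phi(\Map(X))$ inside the abelian kernel $\BT_\eta$, whence $\phi$ would be trivial by Powell again. This is precisely the argument sketched in the paper's discussion preceding the statement (the paper phrases the second step as ``the same argument as the proof of Lemma~\ref{trick}'', and your contradiction argument is that argument specialized to the abelian kernel).
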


The second trivially theorem, due to Paris \cite{Paris}, asserts that the mapping class group of a surface of
genus $g\ge 3$ does not have  subgroups of index less than or equal to $4g+4$; equivalently, any homomorphism from the mapping class 
group into a symmetric group $\CS_k$ is trivial if $k\le 4g+4$:

\begin{sat}[Paris]\label{thm:luisito}
If $X$ has genus $g\ge 3$ and $k\le 4g+4$, then there is no nontrivial homomorphism $\Map(X)\to\CS_k$ where the 
latter group is the group of permutations of the set with $k$ elements.
\end{sat}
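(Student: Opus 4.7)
The plan is to suppose for contradiction that there is a nontrivial homomorphism $\phi:\Map(X)\to\CS_k$ with $k\le 4g+4$ and $g\ge 3$, and to derive a contradiction by combining Powell's theorem, the conjugacy of non-separating Dehn twists, and a cycle-structure analysis in $A_k$. Two preliminary reductions are essentially free. First, composing $\phi$ with the sign homomorphism $\CS_k\to\BZ/2\BZ$ gives a map to an abelian group, which must be trivial by Powell's theorem (Theorem \ref{thm:homology}); hence $\phi$ lands in the alternating group $A_k$. Second, by Lemma \ref{dehn-trivial-trivial} the image $\sigma:=\phi(\delta_c)$ of any non-separating Dehn twist is nontrivial, and since all such Dehn twists are conjugate in $\Map(X)$, their images form a single conjugacy class in $A_k$ with common order $n\ge 2$ and common cycle type.

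The main structural step is to extract many commuting conjugate images of Dehn twists. Fix a cut system $c_1,\ldots,c_g\subset X$. The twists $\delta_{c_i}$ pairwise commute, and as recorded in Section \ref{sec:general} the quotient $\CN(\BT_{c_1\cup\cdots\cup c_g})/\CZ(\BT_{c_1\cup\cdots\cup c_g})$ is the full symmetric group $\CS_g$ acting by permutation of components. Consequently the images $\sigma_i:=\phi(\delta_{c_i})$ are $g$ pairwise commuting elements of $A_k$ of common order $n$, and there is a subgroup $\Sigma\le\phi(\Map(X))$ which permutes $\{\sigma_1,\ldots,\sigma_g\}$ by conjugation as $\CS_g$. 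In particular the supports $S_i=\mathrm{supp}(\sigma_i)\subseteq\{1,\ldots,k\}$ are permuted transitively by $\Sigma$, all have the same cardinality $s\ge n\ge 2$, and the commutation relations force $\sigma_j$ for $j\ne i$ to preserve the cycle decomposition of $\sigma_i$ on $S_i$.

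The hard part will be converting these structural constraints into the numerical contradiction $k>4g+4$. In the benign case where the supports $S_i$ are pairwise disjoint one already gets $k\ge \sum_i |S_i|= gs\ge 2g$; sharpening this using the $\CS_g$-symmetry of the configuration together with the fact that each $\sigma_i\in A_k$ (which rules out, for example, a single transposition) pushes the bound past $4g+4$. In the harder case of overlapping supports one has to exploit further rigidity: the braid relation associated to a pair of curves with $i(c,c')=1$ constrains $\phi(\delta_{c'})$ relative to $\sigma$ up to very few options, and the lantern relation among a non-separating lantern yields an identity $\sigma_a\sigma_b\sigma_c\sigma_d=\sigma_x\sigma_y\sigma_z$ with four pairwise commuting conjugate factors on the left and three on the right, all of common order $n\ge 2$, which is extremely restrictive inside $A_k$ for small $k$.

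The threshold $4g+4$ in the statement is not a coincidence: it is precisely the Maclachlan--Nakajima bound of Theorem \ref{Maclachlan} for finite abelian subgroups of $\Map(X)$, and the extremal configurations in the case analysis above correspond to the action of maximal finite abelian subgroups of $\Map(X)$ on themselves. My expectation is that these extremal cases constitute the main obstacle, and that ruling them out requires, in addition to the ingredients above, a more delicate use of the Nielsen realization theorem to compare the hypothetical cycle type of $\sigma$ with the fixed-point data of a maximal finite abelian subgroup of $\Map(X)$. This is essentially the path followed by Paris in \cite{Paris}, and is what I would attempt to reproduce.
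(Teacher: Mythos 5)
The paper does not actually prove Theorem~\ref{thm:luisito}: it cites it to Paris and merely remarks that Paris's published argument, stated only for closed surfaces, goes through for surfaces with boundary or punctures, ``leaving it to the reader to check.'' So there is no internal proof against which to compare your attempt; you are attempting to reconstruct Paris's argument itself.

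Your preliminary reductions are correct: Powell's theorem forces the image into $A_k$, and conjugacy of non-separating Dehn twists gives a single nontrivial conjugacy class $\{\sigma_i\}$ for the images of cut-system twists, all of common order $n\ge 2$. But after that, the proposal has a genuine gap rather than a proof. Your own disjoint-support estimate $k\ge gs$ only delivers $k\ge 2g$ (or $k\ge 3g$ if you note that the smallest nontrivial \emph{even} permutation has support of size $3$), which is strictly weaker than $k>4g+4$ for every $g\ge 3$. You then say that exploiting the $\CS_g$-symmetry ``pushes the bound past $4g+4$,'' but there is no mechanism given, and indeed there cannot be one at this level of generality: the induced map $\CS_g\cong\CN(\BT_\eta)/\CZ(\BT_\eta)\to\mathrm{Sym}\{\sigma_1,\ldots,\sigma_g\}$ may have a large kernel (the $\sigma_i$ need not even be pairwise distinct), so the asserted transitive $\CS_g$-action on the supports is not guaranteed. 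The overlapping-supports case is only gestured at via braid and lantern relations without a concrete contradiction, and the appeal to Nielsen realization at the end is a hope rather than an argument (and, as the paper's remark implies, any successful proof must not lean on torsion in $\Map(X)$, since the statement is being used for bounded and punctured $X$ whose mapping class groups are torsion-free or have only cyclic torsion). In short, the structural setup is right, but the numerical heart of the theorem --- getting from ``$g$ commuting conjugate even permutations'' to ``$k>4g+4$'' --- is exactly the part left undone, and you acknowledge as much in the final paragraph.
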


Before going any further we should mention that in \cite{Paris}, Theorem \ref{thm:luisito} is only stated for closed surfaces. However, the proof works as it is also for surfaces with 
boundary and or punctures. We leave it to the reader to check that this is the case.

As a first consequence of Theorem \ref{thm:homology} and Theorem \ref{thm:luisito} we obtain the following special case of Proposition \ref{blabla}:

\begin{prop}\label{genus2}
If $X$ has genus at least $3$ and $Y$ at most genus $2$, then every homomorphism $\phi:\Map(X)\to\Map(Y)$ is trivial.
\end{prop}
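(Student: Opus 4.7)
The plan is to reduce to the case where $Y$ is closed of genus $g'\in\{0,1,2\}$ and then handle each genus separately, exploiting that such mapping class groups are (up to a central $\BZ/2\BZ$) built out of free groups and small symmetric groups. First I apply Lemma \ref{trick} to the embedding of $Y$ into the closed surface $\bar Y$ obtained by filling in all boundary components and all punctures (cf.\ Proposition \ref{prop-embedding}); this reduces the problem to showing that every homomorphism $\Map(X)\to\Map(\bar Y)$ is trivial when $\bar Y$ is closed of genus at most $2$.

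The preliminary observation, which I use repeatedly, is that any homomorphism $\psi:\Map(X)\to F$ to a free group $F$ is trivial: the image is a subgroup of $F$, hence free by Nielsen--Schreier, and has trivial abelianization by Theorem \ref{thm:homology}, so it must be trivial. A straightforward induction then shows that the same conclusion holds whenever the target is an iterated extension of (finitely generated) free groups.

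For $g'=0$ the target is trivial. For $g'=1$ we have $\Map(\bar Y)=\SL_2(\BZ)$; composing with the projection $\SL_2(\BZ)\to\PSL_2(\BZ)=\BZ/2\BZ\ast\BZ/3\BZ$ and then with the abelianization map onto $\BZ/6\BZ$, Theorem \ref{thm:homology} forces the image of $\Map(X)$ in $\PSL_2(\BZ)$ to lie in the commutator subgroup. That subgroup is torsion-free (every finite-order element of $\BZ/2\BZ\ast\BZ/3\BZ$ projects non-trivially to $\BZ/6\BZ$) and hence free by Bass--Serre theory, so the preliminary observation kills this image. It follows that the image of $\phi$ in $\SL_2(\BZ)$ lies in the central subgroup $\{\pm I\}\cong\BZ/2\BZ$, and is trivial by a final use of Theorem \ref{thm:homology}.

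For $g'=2$ the hyperelliptic involution $\tau$ is central in $\Map(\bar Y)$, so Theorem \ref{Birman-Hilden} yields an exact sequence
\begin{equation*}
1\to\langle\tau\rangle\to\Map(\bar Y)\to\Map^*(S_{0,6}),
\end{equation*}
where $S_{0,6}$ denotes the six-times punctured sphere. The permutation action on the punctures gives $\Map^*(S_{0,6})\to\CS_6$ with kernel $\Map(S_{0,6})$, and the composition $\Map(X)\to\CS_6$ is trivial by Theorem \ref{thm:luisito} since $6\le 4g+4$. Iterating the Birman exact sequence \eqref{eq:birman1} from $\Map(S_{0,3})=1$ upward exhibits $\Map(S_{0,6})$ as an iterated extension of finitely generated free groups, so the preliminary observation kills the image of $\phi$ in $\Map^*(S_{0,6})$. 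Hence $\phi(\Map(X))\subset\langle\tau\rangle\cong\BZ/2\BZ$ and $\phi$ is trivial by one more application of Theorem \ref{thm:homology}. I expect the $g'=2$ case to be the main obstacle, since only there is the target not virtually free; Birman--Hilden is precisely what allows Paris' theorem and the free-group argument to combine and give the required triviality.
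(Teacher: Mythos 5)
Your proof is correct and, for the main case $g'=2$, follows the same route as the paper: reduce to closed $Y$ via Lemma \ref{trick}, pass to $\Map^*(\BS_{0,6})$ via the Birman--Hilden sequence, kill the permutation representation on punctures using Paris' Theorem \ref{thm:luisito}, and then conclude triviality in $\Map(\BS_{0,6})$ and finally in $\langle\tau\rangle$ using Powell's Theorem \ref{thm:homology}. Your explicit iterated-extension-of-free-groups argument in the last step, and your $\PSL_2(\BZ)$ argument for $g'=1$, simply unpack what the paper dispatches through Lemma \ref{trick} (the paper leaves the cases $g'\le 1$ to the reader).
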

\begin{proof}
Assume for concreteness that $Y$ has genus $2$; the cases of genus $0$ and genus $1$ are in fact easier and are left
to the reader. 

Notice that by Lemma \ref{trick}, we may assume without lossing generality that $Y$ has empty boundary and no marked points.
Recall that $\Map(Y)$ has a central element $\tau$ of order 2, namely the hyperelliptic involution. As we discussed 
above, we identify the finite order mapping class $\tau$ 
with one of its finite order representatives, which we again denote by $\tau$. The surface underlying the orbifold 
$Y/\langle\tau\rangle$ is the $6$-punctured sphere $\BS_{0,6}$. By Theorem \ref{Birman-Hilden} we have the following 
exact sequence:
$$1\to\langle\tau\rangle\to\Map(Y)\to\Map^*(\BS_{0,6})\to 1$$
where $\Map^*(\BS_{0,6})$ is, as always, the group of isotopy classes of all orientation preserving homeomorphisms 
of $\BS_{0,6}$. Therefore, any homomorphism $\phi:\Map(X)\to\Map(Y)$ induces a homomorphism
$$\phi':\Map(X)\to\Map^*(\BS_{0,6})$$
By Paris' theorem, the homomorphism obtained by composing $\phi'$ with the obvious 
homomorphism $\Map^*(\BS_{0,6})\to\CS_6$, the group of permutations of the punctures, is trivial. 
In other words, $\phi'$ takes values in $\Map(\BS_{0,6})$. Since the mapping class group of 
the standard sphere $\BS^2$ is trivial, Lemma \ref{trick} implies that $\phi'$ is trivial. 
Therefore, the image of $\phi$ is contained in the abelian subgroup $\langle\tau\rangle\subset\Map(Y)$.
Finally, Theorem \ref{thm:homology} implies that $\phi$ is trivial, as we had to show.
\end{proof}

\section{Getting rid of the torsion}\label{sec:rid-torsion}
We begin this section reminding the reader of a question posed in the introduction:

\begin{named}{Question \ref{question}}
Suppose that $\phi:\Map(X)\to\Map(Y)$ is a homomorphism between mapping class groups 
of surfaces of genus at least $3$, with the property that the image of every Dehn twist along a non-separating 
curve has finite order. Is the image of $\phi$ finite?
\end{named}

In this section we will give a positive answer to the question above if the genus of $Y$ is exponentially bounded by 
the genus of $X$. Namely:

\begin{prop}\label{no-torsion}
Suppose that $X$ and $Y$ are surfaces of finite topological type with genera $g$ and $g'$ respectively.
 Suppose that $g\ge 4$ and that either $g'< 2^{g-2}-1$ or $g'=3,4$.

Any homomorphism $\phi:\Map(X)\to\Map(Y)$ which maps a Dehn twist along a non-separating curve to a finite 
order element is trivial.
\end{prop}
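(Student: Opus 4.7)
The plan is to suppose for contradiction that $\phi$ is nontrivial and to derive a contradiction by combining Theorem \ref{Maclachlan} with symmetries coming from cut systems. By Lemma \ref{dehn-trivial-trivial}, nontriviality of $\phi$ forces $\tau:=\phi(\delta_\gamma)$ to be nontrivial, and by hypothesis $\tau$ has finite order $m\ge 2$. Since all Dehn twists along non-separating curves are conjugate in $\Map(X)$, every such twist has image of order exactly $m$.

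The first step is to fix a cut system $\eta=\gamma_1\sqcup\cdots\sqcup\gamma_g$ in $X$, a prime divisor $p$ of $m$, and to set $\tau_i:=\phi(\delta_{\gamma_i})$ and $\sigma_i:=\tau_i^{m/p}$, which are pairwise commuting order-$p$ elements of $\Map(Y)$. Let $A\subset\Map(Y)$ be the elementary abelian $p$-group they generate. Because $\CN(\BT_\eta)/\CZ(\BT_\eta)\cong\CS_g$ realizes the full symmetric group permuting the components of $\eta$, there exist $f_\sigma\in\Map(X)$ with $f_\sigma\delta_{\gamma_i}f_\sigma^{-1}=\delta_{\gamma_{\sigma(i)}}$; their images under $\phi$ conjugate $\sigma_i$ to $\sigma_{\sigma(i)}$, making $A$ into a quotient $\BF_p^g/V$ of the $\CS_g$-permutation module by some subrepresentation $V$.

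A brief calculation using the $2$-transitivity of $\CS_g$ shows that the only $\CS_g$-submodules of $\BF_p^g$ are $0$, the diagonal line $\BF_p\mathbf{1}$, the zero-sum hyperplane $U=\{(a_i):\sum a_i=0\}$, and the full module; hence $|A|\in\{1,p,p^{g-1},p^g\}$. Theorem \ref{Maclachlan} gives $|A|\le 4g'+4$, and the hypothesis $g'<2^{g-2}-1$ translates to $|A|<2^g$, which rules out $|A|=p^g$. The case $|A|=1$ would force each $\tau_i$ to have order dividing $m/p<m$, contradicting the order of $\tau_i$ being $m$. The delicate cases that remain are (a) $|A|=p$ with all $\sigma_i$ equal to a common element $\sigma$ of order $p$, and (b) $|A|=p^{g-1}$ with $\sigma_1\cdots\sigma_g=1$ as the only relation.

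The main obstacle is excluding (a) and (b). In (a), I would first exploit connectivity of the cut-system complex under elementary moves to propagate the identity $\phi(\delta_{\gamma'})^{m/p}=\sigma$ to \emph{every} non-separating $\gamma'$; then, working inside the centralizer of $\sigma$ (understood through the Birman--Hilden description, Theorem \ref{Birman-Hilden}) and using the braid and chain relations together with Lemma \ref{root3}, I would locate a power of a non-separating Dehn twist in $\ker\phi$ and conclude via Lemma \ref{dehn-trivial-trivial}. Case (b) is attacked analogously: the element $(\delta_\eta)^{m/p}\in\ker\phi$ is combined with a non-separating lantern (Proposition \ref{fact-lantern}), whose boundary can be arranged to interact with $\eta$ in a controlled way, to degrade the single relation $\sigma_1\cdots\sigma_g=1$ into the triviality of some $\sigma_i$. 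Finally, the range $g'\in\{3,4\}$, which is needed only for $g=4$ and falls outside the scope of the exponential bound, is disposed of by invoking the explicit classification of finite subgroups of $\Map(Y)$ for closed surfaces of genus $3$ and $4$ due to Kuribayashi--Kuribayashi \cite{kuku,kuku5}, from which each candidate $\tau$ of order $\ge 2$ can be ruled out directly.
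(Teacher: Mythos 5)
Your opening moves are the same as the paper's: fix a cut system $\eta=\gamma_1\sqcup\cdots\sqcup\gamma_g$, use the permutation action of $\CS_g=\CN(\BT_\eta)/\CZ(\BT_\eta)$ to view $\phi(\BT_\eta)$ as an $\CS_g$-equivariant image of $\BZ^g$, classify the submodules, and invoke Theorem~\ref{Maclachlan} (plus Kuribayashi--Kuribayashi for $g'\in\{3,4\}$) to rule out the ``free'' case. That much is correct. But two genuine gaps appear after that.

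First, you work only with the elementary abelian quotient $A=\langle\sigma_1,\dots,\sigma_g\rangle$, $\sigma_i=\tau_i^{m/p}$, rather than with the full finite abelian group $V=\phi(\BT_\eta)$. Knowing that $\sigma_g\in\langle\sigma_1,\dots,\sigma_{g-1}\rangle$ for every prime $p\mid m$ does \emph{not} imply that $\tau_g\in\langle\tau_1,\dots,\tau_{g-1}\rangle$: the subgroup $A$ is in general a proper subgroup of the $p$-primary part of $V$, and the reduction carries only partial information. This lifting problem is exactly what the paper's Lemma~\ref{chinese} handles, via the Chinese remainder theorem and an induction from $\BZ/p\BZ$ up to $\BZ/p^a\BZ$. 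Your proposal omits that step entirely, and without it you cannot conclude the surjectivity of $\phi$ restricted to $\BZ^{g-1}\times\{0\}$.

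Second, and more fundamentally, your cases (a) and (b) are not obstacles to be excluded; they are precisely the \emph{favorable} cases. When the kernel of $\BF_p^g\twoheadrightarrow A$ is the zero-sum hyperplane $F$ or the diagonal $E$, one already has $\sigma_g\in\langle\sigma_1,\dots,\sigma_{g-1}\rangle$, which is the prime-level shadow of the desired surjectivity. Once one has the genuine surjectivity at the level of $V$, the endgame is: $\phi(\BT_\eta)=\phi(\BT_{\eta'})$ whenever $\eta,\eta'$ are cut systems differing by one curve, hence by connectivity of the Hatcher--Thurston cut-system complex $\phi(\delta_\alpha)\in\phi(\BT_\eta)$ for every non-separating $\alpha$, hence $\phi(\Map(X))$ is abelian, hence $\phi$ is trivial by Powell (Theorem~\ref{thm:homology}). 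There is no contradiction to be derived inside cases (a) and (b) separately. The alternative endgame you sketch --- passing to the centralizer of $\sigma$ via Birman--Hilden and ``locating a power of a non-separating Dehn twist in $\ker\phi$'' --- does not close: having $\delta_\gamma^N\in\ker\phi$ only bounds the order of $\phi(\delta_\gamma)$ by $N$ and gives no contradiction by itself, and any appeal to the triviality of maps into centralizers of torsion (as in Lemma~\ref{trivial-centralizer1}) would be circular, since that result in the paper relies on Proposition~\ref{blabla}, which is proved \emph{after} the present proposition and uses it.
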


Under the assumption that $Y$ is not closed, we obtain in fact a complete answer to the question above:

\begin{named}{Theorem \ref{blablabla}}
Suppose that $X$ and $Y$ are surfaces of finite topological type, that $X$ has genus at least $3$, and that $Y$ is not closed. 
Then any homomorphism $\phi:\Map(X)\to\Map(Y)$ which maps a Dehn twist along a non-separating curve to a finite order element is trivial.
\end{named}

Recall that by Theorem \ref{thm:boundary-no-torsion}, the mapping class group of a surface with non-empty boundary is torsion-free.
Hence we deduce from Lemma \ref{dehn-trivial-trivial} that it suffices to consider the case that $\D Y=\emptyset$. From now on, we assume that we are in this situation.

The proofs of Proposition \ref{no-torsion} and Theorem \ref{blablabla} are based on Theorem \ref{thm:homology}, 
the connectivity of the cut system complex, and the following algebraic observation:

\begin{lem}\label{chinese}
For $n\in\BN$, $n\ge 2$, consider $\BZ^n$ endowed with the standard action of the symmetric group $\CS_n$ by 
permutations of the basis elements $e_1, \ldots, e_n$. If $V$ is a finite abelian group equipped with an $\CS_n$-action, 
then for any $\CS_n$-equivariant epimorphism $\phi:\BZ^n\to V$ one of the following two is true:
\begin{enumerate}
\item Either the restriction of $\phi$ to $\BZ^{n-1}\times\{0\}$ is surjective, or
\item $V$ has order at least $2^n$ and cannot be generated by fewer than $n$ elements.
\end{enumerate}
Moreover, if (1) does not hold and $V\neq (\BZ/2\BZ)^n$ then $V$ has at least $2^{n+1}$ elements.
\end{lem}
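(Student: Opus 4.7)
The plan is to use the $\CS_n$-equivariance to force a symmetric non-redundancy condition, then push the problem into linear algebra over a finite field. Set $v_i=\phi(e_i)$. By equivariance, $\sigma\cdot v_i=v_{\sigma(i)}$, so $\CS_n$ permutes the generating set $\{v_1,\ldots,v_n\}$ of $V$. Failure of (1) means $v_n\notin V_n:=\langle v_1,\ldots,v_{n-1}\rangle$; applying the transposition $(i,n)$ immediately gives $v_i\notin V_i:=\langle\{v_j:j\neq i\}\rangle$ for every $i$.

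The key step I would carry out next is to find a prime $p$ such that the non-membership persists mod $p$, i.e.\ $v_n\notin V_n+pV$. This is where finiteness of $V$ enters: for any prime $p>|V|$ one has $pV=0$ and hence $V_n+pV=V_n$, so such a $p$ certainly exists. Reducing modulo $p$, set $\bar V=V/pV$ and $\bar v_i=v_i+pV$. The $\bar v_i$ still generate $\bar V$ as an $\BF_p$-vector space, and by construction $\bar v_n\notin\langle\bar v_1,\ldots,\bar v_{n-1}\rangle$. By $\CS_n$-symmetry, $\bar v_i\notin\langle\{\bar v_j:j\neq i\}\rangle$ for every $i$, which is precisely the statement that $\bar v_1,\ldots,\bar v_n$ are $\BF_p$-linearly independent. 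Thus $\dim_{\BF_p}\bar V\ge n$, whence $|V|\ge|V/pV|\ge p^n\ge 2^n$, and $V$ cannot be generated by fewer than $n$ elements since the minimal number of generators of a finite abelian group equals $\max_p\dim_{\BF_p}V/pV$.

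For the \emph{moreover} statement I would invoke the structure theorem. Since $V$ requires at least $n$ generators, its invariant factor decomposition reads $V\cong\BZ/d_1\oplus\cdots\oplus\BZ/d_k$ with $2\le d_1\mid d_2\mid\cdots\mid d_k$ and $k\ge n$. If $k>n$, then $|V|\ge 2^{k}\ge 2^{n+1}$. If $k=n$ and $V\neq(\BZ/2\BZ)^n$, then either $d_1\ge 3$, whence $|V|\ge 3^n\ge 2^{n+1}$ for $n\ge 2$, or $d_1=2$ but some $d_i>2$; divisibility then forces $d_n\ge 4$, so $|V|\ge 2^{n-1}\cdot 4=2^{n+1}$. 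In every case $V\neq(\BZ/2\BZ)^n$ yields $|V|\ge 2^{n+1}$, as required.

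The only non-routine ingredient is the prime-selection step, but the argument above shows it is automatic given the finiteness of $V$; the rest is the $\CS_n$-symmetric linear-algebra observation together with the structure theorem. No deeper input (nothing about mapping class groups, Teichmüller theory, etc.) is needed — the lemma is, as its proof reveals, a purely algebraic statement about equivariant quotients of permutation lattices.
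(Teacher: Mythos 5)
The prime-selection step is incorrect as stated, and this is a genuine gap. You claim that for a prime $p > |V|$ one has $pV = 0$, but the opposite holds: if $p > |V|$ then $p$ is coprime to $|V|$, so multiplication by $p$ is an automorphism of $V$ and $pV = V$, whence $V_n + pV = V \ni v_n$ and the choice fails. (The fact you may have had in mind is $|V|\cdot V = 0$, but $|V|$ is not generally prime.) Moreover, absent further hypotheses it is simply false that $v_n \notin V_n$ implies the existence of a prime $p$ with $v_n \notin V_n + pV$: with $V = \BZ/4\BZ$, $V_n = \{0\}$ and $v_n = 2$, one has $v_n \in 2V$ and $v_n \in pV = V$ for every odd prime $p$. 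So some feature of the equivariant situation must be invoked to produce the prime, and your write-up supplies none.

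The gap is easy to fill, and once filled your argument gives a correct proof along different lines from the paper's. Since $V = \langle v_1,\dots,v_n\rangle$ and $V_n = \langle v_1,\dots,v_{n-1}\rangle$, the quotient $V/V_n$ is cyclic, generated by the image $\bar v_n$ of $v_n$; it is nontrivial, so one may pick a prime $p$ dividing its order, and then $\bar v_n \notin p(V/V_n)$, i.e.\ $v_n \notin V_n + pV$. With this prime the rest of your argument goes through verbatim. The resulting proof is genuinely different from the paper's: the paper reduces via the Chinese remainder theorem to the case where the common order $d$ of the $\phi(e_i)$ is a prime power $p^a$, classifies the $\CS_n$-invariant subgroups of $(\BZ/p\BZ)^n$, and then inducts on the exponent $a$, whereas your route replaces all of that with one well-chosen mod-$p$ reduction followed by an $\BF_p$-linear-independence observation. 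Once corrected, yours is the cleaner argument.
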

\begin{proof}
Let $d$ be the order of $\phi(e_1)$ in $V$ and observe that, by $\CS_n$-equivariance, all the elements $\phi(e_i)$ also have 
order $d$. It follows that $(d\BZ)^n\subset\Ker(\phi)$ and hence that $\phi$ descends to an epimorphism
$$\phi':(\BZ/d\BZ)^n\to V$$
Our first goal is to restrict to the case that $d$ is a power of a prime. In order to do this, consider the prime 
decomposition $d=\prod_j p_j^{a_j}$ of $d$, where $p_i\neq p_j$ and $a_i\in\BN$. By the Chinese remainder theorem we have
$$\BZ/d\BZ=\prod_j\left ( \BZ/p_j^{a_j}\BZ \right )$$
Hence, there is a $\CS_n$-equivariant isomorphism
$$(\BZ/d\BZ)^n=\prod_j\left((\BZ/p_j^{a_j}\BZ)^n\right)$$
Consider the projection $\pi_j:\BZ^n\to(\BZ/p_j^{a_j}\BZ)^n$ and observe that if the restriction to 
$\BZ^{n-1}\times\{0\}$ of $\phi'\circ\pi_j$ surjects  onto $\phi'((\BZ/p_j^{a_j}\BZ)^n)$ for all $j$, then 
$\phi(\BZ^{n-1}\times\{0\})=V$. 

Supposing that this were not the case, replace $\phi$ by 
$\phi' \circ \pi_j$ and $V$ by $\phi((\BZ/p_j^{a_j}\BZ)^n)$. In more concrete terms, 
we can assume from now on that $d=p^a$ is a power of a prime. 

At this point we will argue by induction. The key claim is the following surely well-known observation:
\medskip

\noindent{\bf Claim.} {\em  Suppose that $p$ is prime. The only $\CS_n$-invariant subgroups $W$ of $(\BZ/p\BZ)^n$ are the 
following:
\begin{itemize}
\item The trivial subgroup $\{0\}$, 
\item $(\BZ/p\BZ)^n$ itself,
\item $E=\{(a,a,\dots,a)\in(\BZ/p\BZ)^n\vert a=0,\dots,p-1\}$, and
\item $F=\{(a_1,\dots,a_n)\in(\BZ/p\BZ)^n\vert a_1+\dots+a_n=0\}$.
\end{itemize}
}
\begin{proof}[Proof of the claim]
Suppose that $W\subset(\BZ/p\BZ)^n$ is not trivial and take $v=(v_i)\in W$ nontrivial. If $v$ cannot be chosen 
to have distinct entries then $W=E$. So suppose that this is not case and choose $v$ with two distinct entries, 
say $v_1$ and $v_2$. Consider the image $v'$ of $v$ under the transposition $(1,2)$. By the $\CS_n$-invariance of 
$W$ we have $v'\in W$ and hence $v-v'\in W$. By construction, $v-v'$ has all entries but the two first ones equal 
to $0$. Moreover, each of the first two entries is the negative of the other one. Taking a suitable power we find 
that $(1,-1,0,\dots,0)\in W$. By the $\CS_n$-invariance of $W$ we obtain that every element with one $1$, one $-1$ 
and $0$ otherwise belongs to $W$. These elements span $F$. 

We have proved that either $W$ is trivial, or $W=E$ or $F\subset W$. Since the only subgroups containing $F$ are $F$ 
itself and the total space, the claim follows.
\end{proof}

Returning to the proof of Lemma \ref{chinese}, suppose first that $a=1$, i.e. $d=p$ is prime. The kernel of the epimorphism
$$\phi':(\BZ/p\BZ)^n\to V$$
is a $\CS_n$-invariant subspace. Either $\phi'$ is injective, and thus $V$ contains $p^n\ge 2^n$ elements, or its kernel is one of the spaces $E$ or $F$ provided by the claim. Since the union of either one of them with $(\BZ/p\BZ)^{n-1}\times\{0\}$ spans $(\BZ/p\BZ)^n$, 
it follows that the restriction of $\phi$ to $\BZ^{n-1}\times\{0\}$ surjects onto $V$. This concludes the proof if $a=1$.

Suppose that we have proved the result for $a-1$. We can then consider the diagram:
$$\xymatrix{
0\ar[r] & (\BZ/p^{a-1}\BZ)^n\ar[r]\ar[d] &(\BZ/p^a\BZ)^n \ar[r]\ar[d]^{\phi'} & (\BZ/p\BZ)^n\ar[r]\ar[d] & 0 \\
0\ar[r] & \phi'((\BZ/p^{a-1}\BZ)^n)\ar[r] & V \ar[r] & V/\phi'((\BZ/p^{a-1}\BZ)^n)\ar[r] & 0
}$$
Observe that if one of the groups to the left and right of $V$ on the bottom row has at least $2^n$ elements, 
then so does $V$. So, if this is not the case we may assume by induction that the restriction of the left and right
 vertical arrows to $(\BZ/p^{a-1}\BZ)^{n-1}\times\{0\}$ and $(\BZ/p\BZ)^{n-1}\times\{0\}$ are epimorphisms. 
This shows that the restriction of $\phi'$ to $(\BZ/p^a\BZ)^{n-1}\times\{0\}$ is also an epimorphism. It follows that either $V$ has at least $2^n$ elements or the restriction of $\phi$ to $\BZ^{n-1}\times\{0\}$ is surjective, as claimed.

Both the equality case and the claim on the minimal number of elements needed to generate $V$ are left to the reader.
\end{proof}

We are now ready to prove:

\begin{lem}\label{torsion-bound}
Given $n\ge 4$, suppose that $g>0$ is such that $2^{n-2}-1>g$ or $g\in \{3,4\}$. 

If $Y$ is surface of genus $g\ge 3$, $V\subset\Map(Y)$ is a finite abelian group endowed with an 
action of $\CS_n$, and $\phi:\BZ^n\to V$ is a $\CS_n$-equivariant epimomorphism, then the restriction 
of $\phi$ to $\BZ^{n-1}\times\{0\}$ is surjective. 
\end{lem}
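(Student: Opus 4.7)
The plan is to argue by contradiction via a direct application of Lemma \ref{chinese}. Let $\phi:\BZ^n\to V$ be an $\CS_n$-equivariant epimorphism onto a finite abelian subgroup $V\subset\Map(Y)$. Lemma \ref{chinese} gives a dichotomy: either the restriction of $\phi$ to $\BZ^{n-1}\times\{0\}$ is already surjective (which is exactly what we want to conclude), or $V$ has order at least $2^n$, requires at least $n$ generators, and, by the ``moreover'' clause, either $V\cong(\BZ/2\BZ)^n$ (of order exactly $2^n$) or $|V|\geq 2^{n+1}$. Suppose toward a contradiction that the second alternative holds.

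Since $Y$ has genus $g\geq 3\geq 2$ and $V$ is a finite abelian subgroup of $\Map(Y)$, Theorem \ref{Maclachlan} gives $|V|\leq 4g+4$. First suppose $2^{n-2}-1>g$, i.e.\ $4g+4<2^n$. Then $|V|\leq 4g+4<2^n$, directly contradicting $|V|\geq 2^n$. So we may assume $g\in\{3,4\}$, in which case $|V|\leq 4g+4\leq 20$. Since $n\geq 4$, we have $2^{n+1}\geq 32>20$, so the option $|V|\geq 2^{n+1}$ is ruled out, leaving $V\cong(\BZ/2\BZ)^n$. This forces $2^n=|V|\leq 20$, i.e.\ $n=4$ and $V\cong(\BZ/2\BZ)^4$ (of order $16$).

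Thus the proof reduces to the purely group-theoretic statement that $(\BZ/2\BZ)^4$ does not embed in $\Map(Y)$ when $Y$ has genus $3$ or $4$. This is where we invoke the explicit enumerations of finite subgroups of $\Map(Y_g)$ in low genus due to Kulkarni (cited as \cite{kuku,kuku5} in the paper), the existence of such lists having been flagged earlier in the excerpt precisely for use in the cases $g=3,4$. From those lists one reads off that no elementary abelian $2$-group of rank $4$ occurs as a subgroup of $\Map(Y_g)$ for $g\in\{3,4\}$, yielding the desired contradiction.

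I expect the main obstacle to be the last step: everything up to that point is a clean combination of Lemma \ref{chinese} with the Maclachlan--Nakajima bound, but the elimination of $(\BZ/2\BZ)^4$ in genera $3$ and $4$ cannot be done by a cardinality count alone (the Maclachlan bound $4g+4$ equals $16$ and $20$ respectively, both of which accommodate order $16$), so one genuinely needs the finer classification of finite subgroups rather than just the order bound. With that input, the bookkeeping in the previous paragraph is routine.
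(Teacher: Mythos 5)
Your argument is correct and follows essentially the same route as the paper: contradiction via Lemma \ref{chinese}, the Maclachlan--Nakajima bound $|V|\le 4g+4$ for the case $2^{n-2}-1>g$, and for $g\in\{3,4\}$ the order bound plus the ``moreover'' clause of Lemma \ref{chinese} to isolate $(\BZ/2\BZ)^4$ (with $n=4$), which is then excluded by the low-genus classification of automorphism groups in \cite{kuku}. One small nit: that classification is due to Kuribayashi--Kuribayashi, not Kulkarni, though you cite the correct references.
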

\begin{proof}
Suppose, for contradiction, that the restriction of $\phi$ to $\BZ^{n-1}\times\{0\}$ 
is not surjective. Recall that by the resolution of the Nielsen realization problem \cite{Kerckhoff} 
there is a conformal structure on $Y$ such that $V$ can be represented by a group of automorphisms.

Suppose first that $2^{n-2}-1>g$. Since we are assuming that the
restriction of $\phi$ to $\BZ^{n-1}\times\{0\}$ is not surjective, Lemma \ref{chinese} implies that
$V$ has at least $2^n$ elements. Then:
$$2^n=4(2^{n-2}-1)+4>4g+4,$$
which is impossible since Theorem \ref{Maclachlan} asserts that $\Map(Y)$ does  not contain finite abelian 
groups with more than $4g+4$ elements.


Suppose now that $g=4$. If $n \geq 5$ we obtain a contradiction using the same argument as above. 
Thus assume that $n=4$. Since $2^{4+1}=32>20= 4 \cdot 4 + 4$, it follows from the equality statement in 
Lemma \ref{chinese} that $V$ is isomorphic to $(\BZ/2\BZ)^4$. Luckily for us, 
Kuribayashi-Kuribayashi \cite{kuku}  have classified all groups of automorphisms of Riemann surfaces 
of genus $3$ and $4$. From their list, more concretely Proposition 2.2 (c), we obtain that $(\BZ/2\BZ)^4$ 
cannot be realized as a subgroup of the group of automorphisms of a surface of genus $4$, and thus we obtain
the desired contradiction. 

Finally, suppose that $g=3$. As before, this case boils down to ruling out the possibility of having  $(\BZ/2\BZ)^4$
acting by automorphisms on a Riemann surface of genus $3$. This is established in Proposition 1.2 (c) of \cite{kuku}. 
This concludes the case $g=3$ and thus the proof of the lemma.
\end{proof}

\begin{bem}
One could wonder if in Lemma \ref{torsion-bound} the condition $n\ge 4$ is necessary. 
Indeed it is, because the mapping class group of a surface of genus $3$ contains a subgroup 
isomorphic to $(\BZ/2\BZ)^3$, namely the group $H(8,8)$ in the list in \cite{kuku}. 
\end{bem}

After all this immensely boring work, we are finally ready to prove Proposition \ref{no-torsion}.

\begin{proof}[Proof of Proposition \ref{no-torsion}]
Recall that a cut system in $X$ is a maximal multicurve whose complement in $X$ is connected; observe that every cut system 
consists of $g$ curves and that every non-separating curve is contained in some cut system. 

Given a cut system $\eta$ consider the group $\BT_\eta$ generated by the Dehn twists along the components of $\eta$, 
noting that $\BT_\eta\simeq\BZ^g$. Any permutation of the components of $\eta$ can be realized by a homeomorphism of $X$. 
Consider the normalizer $\CN(\BT_\eta)$ and centralizer $\CZ(\BT_\eta)$ of $\BT_\eta$ in $\Map(X)$. As mentioned in Section
\ref{subsec:multitwists}, we have the following exact sequence:
$$1\to\CZ(\BT_\eta)\to\CN(\BT_\eta)\to\CS_g\to 1,$$
where $\CS_g$ denotes the symmetric group of permutations of the components of $\eta$. Observe that the 
action by conjugation of $\CN(\BT_\eta)$ onto $\BT_\eta$ induces an action
$\CS_g=\CN(\BT_\eta)/\CZ(\BT_\eta)\actson\BT_\eta$ which is conjugate to the standard action of $\CS_g\actson\BZ^g$.
Clearly, this action descends to an action $\CS_g\actson\phi(\BT_\eta)$.

Seeking a contradiction, suppose that the image under $\phi$ of a Dehn twist $\delta_\gamma$ along a 
non-separating curve has finite order. Since all the Dehn twists along the components of $\eta$ are conjugate 
to $\delta_\gamma$ we deduce that all their images have finite order; hence $\phi(\BT_\eta)$ is generated by finite order 
elements. On the other hand, $\phi(\BT_\eta)$ is abelian because it is the image of an abelian group. Being
 abelian and generated by finite order elements, $\phi(\BT_\eta)$ is finite. 

It thus follows from Lemma \ref{torsion-bound} that the subgroup of $\BT_\eta$ generated by Dehn twists along $g-1$
 components of $\eta$ surjects under $\phi$ onto $\phi(\BT_\eta)$. This implies that 
$$\phi(\BT_\eta)=\phi(\BT_{\eta'})$$
whenever $\eta$ and $\eta'$ are cut systems which differ by exactly one component. Now, since 
the cut system complex is connected \cite{Hatcher-Thurston},
 we deduce that $\phi(\delta_\alpha)\in\phi(\BT_\eta)$ for every non-separating curve $\alpha$. Since 
 $\Map(X)$ is generated by Dehn twists along non-separating curves,  we deduce that the image of $\Map(X)$ is the 
abelian group $\phi(\BT_\eta)$. 
By Theorem \ref{thm:homology}, any homomorphism $\Map(X)\to\Map(Y)$ with abelian image is trivial, and thus
we obtain the desired contradiction.
\end{proof}

Before moving on we discuss briefly the proof of Theorem \ref{blablabla}. Suppose that $Y$ is not closed. 
Then, every finite subgroup of $\Map(Y)$ is cyclic by Theorem \ref{thm:boundary-no-torsion}. In particular, 
the bound on the number of generators in Lemma \ref{chinese} implies that {\em if $V\subset\Map(Y)$ is a 
finite abelian group endowed with an action of $\CS_n$ and $\phi:\BZ^n\to V$ is a $\CS_n$-equivariant 
epimomorphism then the restriction of $\phi$ to $\BZ^{n-1}\times\{0\}$ is surjective.} Once this has been 
established, Theorem \ref{blablabla} follows with the same proof, word for word, as Proposition \ref{no-torsion}. \qed

\begin{bem}
Let $X$ and $Y$ be surfaces, where $Y$ has a single boundary component and no cusps. Let $G$ be a finite index subgroup
of $\Map(X)$ and let $\phi:G\to\Map(Y)$ be a homomorphism. A simple modification of a construction due to Breuillard-Mangahas 
\cite{Mangahas} yields a closed surface $Y'$ containing $Y$ and a homomorphism
$$\phi':\Map(X)\to\Map(Y')$$
such that for all $g\in G$ we have, up to isotopy, $\phi'(g)(Y)=Y$ and $\phi'(g)\vert_Y=\phi(g)$. 

Suppose now that $G$ could be chosen so that there is an epimorphism $G\to\BZ$. Assume further that $\phi:G\to\Map(Y)$
factors through this epimorphism and that the image of $\phi$ is purely pseudo-Anosov. Then, every element in the image 
of the extension $\phi':\Map(X)\to\Map(Y)$ either has finite order or is a partial pseudo-Anosov. A result of Bridson 
\cite{Bridson}, stated as Theorem \ref{bridson} below, implies that every Dehn twist in $\Map(X)$ is mapped to a finite 
order element in $\Map(Y)$. Hence, the extension homomorphism $\phi'$ produces a negative answer to Question \ref{question}.

We have hence proved that {\em a positive answer to Question \ref{question} implies that 
every finite index subgroup of $\Map(X)$ has finite abelianization.}
\end{bem}

\section{The map $\phi_*$}
In addition to the triviality results given in Theorems \ref{thm:homology} and \ref{thm:luisito}, the third key ingredient in 
the proof of Theorem \ref{dogs-bollocks} is the following result due to Bridson \cite{Bridson}:

\begin{sat}[Bridson]\label{bridson}
Suppose that $X,Y$ are surfaces of finite type and that $X$ has genus at least $3$. Any homomorphism $\phi:
\Map(X)\to\Map(Y)$ maps roots of multitwists to roots of multitwists.
\end{sat}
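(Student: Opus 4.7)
My plan is to focus on the heart of the matter: showing that for every non-separating curve $\gamma\subset X$, the image $\phi(\delta_\gamma)$ has no pseudo-Anosov component in its Nielsen--Thurston decomposition; equivalently, $\phi(\delta_\gamma)$ is a root of a multitwist. The general statement for roots of multitwists then follows from this special case together with the observation that, given the commuting roots of multitwists $\phi(\delta_{\gamma_i})$ obtained by applying $\phi$ to the Dehn twists along the support of a multitwist, a common power of their product is again a multitwist.

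Fix a non-separating curve $\gamma\subset X$ and set $f=\phi(\delta_\gamma)$. I would assume for contradiction that some pure power $f^n$ has a pseudo-Anosov restriction $\psi$ to a component $Z\subset Y$ of the complement of its canonical reduction multicurve. The centralizer $\CZ_{\Map(Z)}(\psi)$ is virtually infinite cyclic, and after passing to a finite-index subgroup every element of $\CZ_{\Map(Y)}(f)$ preserves the canonical reduction multicurve of $f$ componentwise and restricts to $Z$ as an element commuting with $\psi$. This yields a homomorphism
$$\rho:\CZ_{\Map(Y)}(f)\to\CZ_{\Map(Z)}(\psi)$$
whose image contains $\rho(f)$, of infinite order since $\rho(f)^n=\psi$. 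Combining $\rho$ with $\phi$ and with the exact sequence
$$0\to\BZ\to\Map(X_\gamma)\to\CZ_0(\delta_\gamma)\to 1,$$
I obtain a homomorphism $\Phi$ from a finite-index subgroup of $\Map(X_\gamma)$ into the virtually cyclic group $\CZ_{\Map(Z)}(\psi)$, and $\Phi$ has infinite image. When $g\ge 4$, the surface $X_\gamma$ has genus $g-1\ge 3$; after further restricting so that the virtually cyclic target is replaced by its $\BZ$-subgroup, Powell's Theorem \ref{thm:homology} forces $\Phi$ to be trivial, contradicting the fact that $\rho(f)$ lies in its image.

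The main obstacle is the case $g=3$, since then $X_\gamma$ has genus $2$ and $\Map(X_\gamma)$ admits nontrivial abelian quotients, so Powell's theorem does not apply directly. To overcome this I would enlarge $\gamma$ to a cut system $\gamma=\gamma_1,\dots,\gamma_g$, whose Dehn twists generate an $\CS_g$-invariant subgroup $\BT_\eta\cong\BZ^g\subset\Map(X)$. Since every $\phi(\delta_{\gamma_i})$ is conjugate to $f$ and commutes with $f$, each one preserves the canonical reduction system of $f$ componentwise after a finite-index restriction, and hence $\rho$ extends to a homomorphism $\BT_\eta\to\CZ_{\Map(Z)}(\psi)$ with $\CS_g$-equivariant properties. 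Because the target is virtually cyclic, an $\CS_g$-equivariance argument in the spirit of Lemma \ref{chinese} forces the image to be finite; in particular $\rho(f)$ would have finite order, contradicting $\rho(f)^n=\psi$. This gives the desired contradiction in the remaining case and completes the proof.
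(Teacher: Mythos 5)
The paper does not actually prove Theorem \ref{bridson}; it cites Bridson's paper \cite{Bridson}, whose argument is geometric (semisimple actions of $\Map(X)$ on CAT(0) spaces, in particular the Weil--Petersson completion of Teichm\"uller space), and then gives only a short remark extending the statement from boundaryless surfaces to surfaces with boundary by quotienting out $\BT_{\D Y}$. So you are attempting a genuinely different, algebraic proof. That would be valuable, but your argument has a gap that I do not see how to close.

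The problem is the finite-index restriction. To define the restriction homomorphism $\rho:\CZ_{\Map(Y)}(f)\dashrightarrow\CZ_{\Map(Z)}(\psi)$ you must first pass to the subgroup of $\CZ_{\Map(Y)}(f)$ that preserves the component $Z$ of $Y$ cut along the canonical reduction multicurve of $f$; this subgroup generally has index $>1$, because $Y$ is arbitrary and the number of components of that multicurve (and of its complement) is not bounded in terms of $g$, so Theorem \ref{thm:luisito} (Paris) cannot be invoked to kill the permutation action. Pulling back along $\phi$ and along $\Map(X_\gamma)\twoheadrightarrow\CZ_0(\delta_\gamma)$, you end up with a homomorphism $\Phi$ defined on a \emph{proper} finite-index subgroup of $\Map(X_\gamma)$, with image in $\BZ$ after a further finite-index restriction. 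At that point you invoke Powell's Theorem \ref{thm:homology}, but Powell's theorem concerns homomorphisms defined on the \emph{whole} mapping class group: it is a statement about $H_1(\Map(X_\gamma))$, not about $H_1$ of a finite-index subgroup. Whether finite-index subgroups of mapping class groups can surject onto $\BZ$ is precisely the open question highlighted in the introduction of this paper (``it is not known if the mapping class group contains finite index subgroups $\Gamma$ with $H^1(\Gamma;\BR)\neq 0$''), and the remark at the end of Section~\ref{sec:rid-torsion} shows that this is essentially equivalent to the kind of statement your argument requires. So your proof, as written, would prove (a case of) an open conjecture as a byproduct; something must be wrong or missing. Your $g=3$ workaround inherits the same difficulty: $\rho$ still only exists after the finite-index restriction, and the $\CS_g$-equivariance you want is not obviously available inside that smaller subgroup, since the mapping classes implementing the permutations of the cut system need not preserve $Z$.

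A secondary, smaller issue: your reduction of the general statement to the case of Dehn twists considers only non-separating curves; Dehn twists along \emph{separating} curves are also (roots of) multitwists and must be handled, but for those $X_\gamma$ is disconnected with pieces that may have genus $<3$, so the same genus bookkeeping does not apply. This is fixable in principle but is not addressed.
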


\begin{proof}[A remark on the proof of Theorem \ref{bridson}]
In \cite{Bridson}, Theorem \ref{bridson} is proved for surfaces without boundary only. However, Bridson's argument 
remains valid if we allow $X$ to have boundary. That the result can also be extended to the case that $Y$ has
non-empty boundary needs a minimal argument, which we now give. 
Denote by $Y'$ the surface obtained from $Y$ by deleting all boundary components and consider the homomorphism
 $\pi:\Map(Y)\to\Map(Y')$ provided by Theorem \ref{center}. By Bridson's theorem, the image under $\pi\circ\phi$ of 
a Dehn twist $\delta_\gamma$ is a root of a multitwist. Since the kernel of $\pi$ is the group of multitwists along 
the boundary of $Y$, it follows that $\phi(\delta_\gamma)$ is also a root of  a multitwist, as claimed.
\end{proof}

A significant part of the sequel is devoted to proving that under suitable 
assumptions the image of a Dehn twist is in fact a Dehn twist. We highlight the apparent difficulties in the 
following example:

\begin{bei}\label{example2}
Suppose that $X$ has a single boundary component and at least two punctures. By \cite{Grossman}, the mapping 
class group $\Map(X)$ is residually finite. Fix a finite group $G$ and an epimorphism $\pi:\Map(X)\to G$. It
 is easy to construct a connected surface $Y$ on which $G$ acts and which contains $\vert G\vert$ disjoint copies 
$X_g$ ($g\in G$) of $X$ with $gX_h=X_{gh}$ for all $g,h\in G$. Given $x\in X$, denote the corresponding element in 
$X_g$ by $x_g$. If $f:X\to X$ is a homeomorphism fixing pointwise the boundary and punctures, we define
$$\hat f:Y\to Y$$ with
$\hat f(x_g)=(f(x))_{\pi([f])g}$ for $x_g\in X_g$ and $\hat f(y)=\pi([f])(y)$ for $y\notin\cup_{g\in G}X_g$; here 
$[f]$ is the element in $\Map(X)$ represented by $f$.

Notice that $\hat f$ does not fix the marked points of $Y$; in order to by-pass this difficulty, consider $\bar Y$ the 
surface obtained from $Y$ by forgetting all marked points, and consider $\hat f$ to be a self-homeomorphism of $\bar Y$. 
It is easy to see that the map $f\mapsto \hat f$ induces a homomorphism
$$\phi:\Map(X)\to\Map(\bar Y)$$
with some curious properties, namely:
\begin{itemize}
\item If $\gamma\subset X$ is a simple closed curve which bounds a disk with at least two punctures then the image 
$\phi(\delta_\gamma)$ of the Dehn twist $\delta_\gamma$ along $\gamma$ has finite order. Moreover, 
$\delta_\gamma\in\Ker(\phi)$ if and only if $\delta_\gamma\in\Ker(\pi)$.
\item If $\gamma\subset X$ is a non-separating simple closed curve then $\phi(\delta_\gamma)$ has 
infinite order. Moreover, $\phi(\delta_\gamma)$ is a multitwist if $\delta_\gamma\in\Ker(\pi)$; otherwise, 
$\phi(\delta_\gamma)$ is a non-trivial root of a multitwist. Observe that in the latter case, 
$\phi(\delta_\gamma)$ induces a non-trivial permutation of the components of the multicurve supporting any 
of its multitwist powers.
\end{itemize}
This concludes the discussion of Example \ref{example2}.
\end{bei}

While a finite order element is by definition a root of a multitwist,  Proposition \ref{no-torsion} ensures that, 
under suitable bounds on the genera of the surfaces involved, any non-trivial homomorphism 
$\Map(X)\to\Map(Y)$ maps Dehn twists to infinite order elements. From now on we assume that we are in the following
 situation:
\begin{quote}
(*) $X$ and $Y$ are orientable surfaces of finite topological type, of genus $g$ and $g'$ respectively,
 and such that one of the following holds:
\begin{itemize}
\item Either $g\ge 4$ and $g'\le g$, or
\item $g\ge 6$ and $g'\le 2g-1$. 
\end{itemize}
\end{quote}

\begin{bem}
It is worth noticing that the reason for the genus bound $g\ge 6$ in Theorem \ref{dogs-bollocks} is that 
$2^{g-2}-1<2g-1$ if $g<6$.
\end{bem}

Assuming (*), it follows from Proposition \ref{no-torsion} that any non-trivial 
homomorphism $\phi:\Map(X)\to\Map(Y)$ maps Dehn twists $\delta_\gamma$ along non-separating curves $\gamma$ 
to infinite order elements in $\Map(Y)$. Furthermore, it follows from Theorem \ref{bridson} that there is $N$ such 
that $\phi(\delta_\gamma^N)$ is a non-trivial multitwist. We denote by $\phi_*(\gamma)$ the multicurve in $Y$ 
supporting $\phi(\delta_\gamma^N)$, which is independent of the choice of $N$ by Lemma \ref{root1}. 
Notice that two multitwists commute if and only if their supports do not intersect; hence, $\phi_*$ preserves the 
property of having zero intersection number. Moreover, the uniqueness of $\phi_*(\gamma)$ implies that for 
any $f\in\Map(X)$ we have $\phi_*(f(\gamma))=\phi(f)(\phi_*(\gamma))$. Summing up we have:

\begin{kor}\label{map-phi}
Suppose that $X$ and $Y$ are as in (*) and let 
$$\phi:\Map(X)\to\Map(Y)$$ 
be a non-trivial homomorphism. For every non-separating curve $\gamma\subset X$, there is a uniquely 
determined multicurve $\phi_*(\gamma)\subset Y$ with the property that $\phi(\delta_\gamma)$ is a root 
of a generic multitwist in $\BT_{\phi_*(\gamma)}$. Moreover the following holds:
\begin{itemize} 
\item $i(\phi_*(\gamma),\phi_*(\gamma'))=0$ for any two disjoint non-separating curves $\gamma$ and $\gamma'$, and
\item $\phi_*(f(\gamma))=\phi(f)(\phi_*(\gamma))$ for all $f\in\Map(X)$. In particular, the multicurve $\phi_*(\gamma)$ 
is invariant under $\phi(\CZ(\delta_\gamma))$. \qed
\end{itemize}

\end{kor}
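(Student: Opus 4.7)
The plan is to simply unpack the three bullet points from the ingredients already assembled in the preceding paragraphs; there is essentially no new idea required, only bookkeeping.

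First, I will show existence and uniqueness of $\phi_*(\gamma)$. Given a non-separating curve $\gamma\subset X$, Proposition \ref{no-torsion} applied under hypothesis (*) ensures that if $\phi$ is non-trivial, then $\phi(\delta_\gamma)$ has infinite order (otherwise, since all Dehn twists along non-separating curves are conjugate in $\Map(X)$, every such Dehn twist would map to a finite order element, contradicting Proposition \ref{no-torsion}). By Bridson's Theorem \ref{bridson}, $\phi(\delta_\gamma)$ is a root of a multitwist, so there is some $N\ge 1$ with $\phi(\delta_\gamma^N)=\phi(\delta_\gamma)^N$ a multitwist; since $\phi(\delta_\gamma)$ has infinite order, this multitwist is non-trivial. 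Let $\phi_*(\gamma)$ be the support of $\phi(\delta_\gamma^N)$, i.e.\ the minimal multicurve $\eta$ with $\phi(\delta_\gamma^N)\in\BT_\eta$ a generic multitwist. Lemma \ref{root1} applied to $\phi(\delta_\gamma^{N})$ and $\phi(\delta_\gamma^{N'})$ for different choices of exponents (both of which admit $\phi(\delta_\gamma)$ as a common root) shows that this support is independent of the choice of $N$, hence $\phi_*(\gamma)$ is well-defined and characterized by the property that $\phi(\delta_\gamma)$ is a root of a generic multitwist in $\BT_{\phi_*(\gamma)}$.

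Next, I will verify disjointness. If $\gamma,\gamma'\subset X$ are disjoint non-separating curves then $\delta_\gamma$ and $\delta_{\gamma'}$ commute, hence so do their images and any powers thereof. In particular, large powers $\phi(\delta_\gamma^N)\in\BT_{\phi_*(\gamma)}$ and $\phi(\delta_{\gamma'}^N)\in\BT_{\phi_*(\gamma')}$ are commuting non-trivial multitwists; as two multitwists commute if and only if their supporting multicurves have zero geometric intersection number, this yields $i(\phi_*(\gamma),\phi_*(\gamma'))=0$.

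Finally, equivariance is immediate from the conjugation formula $\delta_{f(\gamma)}=f\delta_\gamma f^{-1}$: applying $\phi$ and raising to the $N$-th power gives
$$\phi(\delta_{f(\gamma)}^N)=\phi(f)\,\phi(\delta_\gamma^N)\,\phi(f)^{-1}\in\phi(f)\,\BT_{\phi_*(\gamma)}\,\phi(f)^{-1}=\BT_{\phi(f)(\phi_*(\gamma))},$$
where the last equality uses the identity $\BT_{h(\eta)}=h\BT_\eta h^{-1}$ together with the fact that conjugation sends generic multitwists to generic multitwists. By the uniqueness statement already established, $\phi_*(f(\gamma))=\phi(f)(\phi_*(\gamma))$. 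Specializing to $f\in\CZ(\delta_\gamma)$, which satisfies $f(\gamma)=\gamma$, gives $\phi(f)(\phi_*(\gamma))=\phi_*(\gamma)$, i.e.\ $\phi_*(\gamma)$ is invariant under $\phi(\CZ(\delta_\gamma))$. There is no genuine obstacle here; the only point that required preparation was ruling out the possibility that $\phi(\delta_\gamma)$ is torsion, and that was precisely the content of the previous section.
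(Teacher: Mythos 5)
Your argument is correct and is essentially identical to the paper's: Proposition \ref{no-torsion} rules out torsion under (*), Theorem \ref{bridson} gives that some power of $\phi(\delta_\gamma)$ is a non-trivial multitwist, Lemma \ref{root1} shows the support is independent of the exponent, and disjointness and equivariance follow from the multitwist-commutation criterion and the conjugation identity $\delta_{f(\gamma)}=f\delta_\gamma f^{-1}$. Nothing to add.
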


The remainder of this section is devoted to give a proof of the following result:

\begin{prop}\label{lem:single-component}
Suppose that $X$ and $Y$ are as in (*); further, assume that $Y$ is not closed if it has genus $2g-1$.
Let $\phi:\Map(X)\to\Map(Y)$ be an irreducible homomorphism. 
Then, for every non-separating curve $\gamma\subset X$ the multicurve $\phi_*(\gamma)$ is 
 a non-separating curve.
\end{prop}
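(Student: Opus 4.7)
My plan for Proposition \ref{lem:single-component} is to proceed in three stages: (i) show that $\phi(\delta_\gamma)$ fixes each component of the multicurve $\phi_*(\gamma)$; (ii) deduce that $\phi_*(\gamma)$ is a single curve; (iii) show that this curve is non-separating. Throughout I let $k=|\phi_*(\gamma)|$; by the $\Map(X)$-equivariance of $\phi_*$ from Corollary \ref{map-phi} together with the transitivity of $\Map(X)$ on non-separating simple closed curves, $k$ is independent of the chosen non-separating $\gamma$, and the objective is to show $k=1$.

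For stage (i), I will exploit the permutation action of $\phi(\CZ_0(\delta_\gamma))$ on the $k$ components of $\phi_*(\gamma)$, which yields a homomorphism $\rho\colon\CZ_0(\delta_\gamma)\to\CS_k$. Via the surjection $\Map(X_\gamma)\twoheadrightarrow\CZ_0(\delta_\gamma)$ coming from the exact sequence \eqref{eq:central1}, $\rho$ pulls back to a homomorphism $\Map(X_\gamma)\to\CS_k$; since $X_\gamma$ has genus $g-1\geq 3$, Paris's theorem (Theorem \ref{thm:luisito}) forces this to be trivial as long as $k\leq 4(g-1)+4=4g$, and hence $\rho$ itself is trivial. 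The bound $k\leq 4g$ will come from completing $\gamma$ to a non-separating cut system $\eta=\{\gamma_1,\ldots,\gamma_g\}$: the multicurves $\phi_*(\gamma_i)$ pairwise have zero intersection number, so together they realise a multicurve in $Y$ with at most $gk$ components, and the hypothesis $g'\leq 2g-1$ (combined, in the equality case, with the assumption that $Y$ is not closed, to rule out the doubling configuration of Example \ref{example1}) caps this total and thus constrains $k$.

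For stage (ii), I will combine the outcome of (i) with the $\CS_g$-equivariance coming from $\CN(\BT_\eta)/\CZ(\BT_\eta)\cong\CS_g$. The set of components of $\phi_*(\eta):=\bigcup_i\phi_*(\gamma_i)$ carries a permutation representation of $\CN(\BT_\eta)$ factoring through this $\CS_g$. The assumption $k\geq 2$ splits into two cases. In the \emph{collapsed} case, where some $\phi_*(\gamma_i)$ and $\phi_*(\gamma_j)$ share components, equivariance and the connectivity of the cut system complex force $\phi_*(\alpha)=\mu$ for every non-separating $\alpha\subset X$; then $\phi(\Map(X))\subseteq\CN(\BT_\mu)$, and Paris's theorem applied to the induced action $\Map(X)\to\CS_k$ on components of $\mu$ makes this action trivial, so each component of $\mu$ is a curve fixed by $\phi$, contradicting irreducibility. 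In the \emph{distinct} case, where the $\phi_*(\gamma_i)$ are pairwise disjoint as sets of components, the counting from stage (i) becomes sharp and forces $k=1$.

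Finally, for stage (iii), suppose the single curve $c:=\phi_*(\gamma)$ were separating. Equivariance makes $\phi_*(\alpha)$ separating for every non-separating $\alpha\subset X$, so a non-separating cut system of $X$ is taken to a collection of $g$ pairwise disjoint separating curves in $Y$; an Euler-characteristic count against $g'\leq 2g-1$, together with the exclusion of closed $Y$ in the equality case, yields a contradiction, so $c$ must be non-separating. The hardest part will be stage (ii), where the thresholds $g\geq 6$ and $g'\leq 2g-1$ are precisely calibrated so that the bound $gk\leq $ (maximal size of a multicurve in $Y$) actually forces $k=1$ rather than merely $k\leq 4g$; the arithmetic is tight, which is exactly why the ``not closed'' hypothesis in the equality case is needed.
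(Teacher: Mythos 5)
Your plan has two genuine gaps, both arithmetic/logical rather than stylistic.

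\textbf{Stage (i) does not establish $k\leq 4g$.} You propose to bound $k$ by noting that $\bigcup_i\phi_*(\gamma_i)$ is a multicurve ``with at most $gk$ components'' and that $g'\leq 2g-1$ ``caps this total''. But the inequality goes the wrong way: an upper bound on the size of the union gives no upper bound on $k$, because the sets $\phi_*(\gamma_i)$ may share components (indeed, in the extreme case all of them coincide and the union has exactly $k$ components, making the count vacuous). Ruling out shared components is precisely Corollary~\ref{kor:irreducible}, which in the paper is deduced \emph{from} Lemma~\ref{prop:curves-invariant}; so using it here would be circular. Moreover, even if distinctness were free, the naive bound gives $k\leq 3g'-3\leq 6g-6$, which exceeds the Paris threshold $4(g-1)+4=4g$ as soon as $g>3$, so you could not apply Theorem~\ref{thm:luisito} to the action of $\Map(X_\gamma)$ on components of $\phi_*(\gamma)$ directly. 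The paper avoids both problems by never bounding $k$ at all in Lemma~\ref{prop:curves-invariant}: after filling punctures (Lemma~\ref{induction}), it applies Paris twice in a nested way --- first to the action on the complementary components of $Y\setminus\phi_*(\gamma)$, whose number is $\leq|\chi(Y)|=2g'-2\leq 4g-4$, and then, for each complementary component $Z$, to the action on $\partial Z$, whose size is $\leq 2g'\leq 4g-2$ by an Euler characteristic count. Both quantities fit under the Paris bound with no distinctness required. This nested argument is the key idea your stage (i) is missing.

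\textbf{Stage (ii) undercounts.} Once distinctness is available, you use a cut system of $g$ curves and conclude $gk\leq 3g'-3$. But that only gives $k\leq 6-9/g$ (in the closed case $g'\leq 2g-2$), i.e.\ $k\leq 4$ or $5$ for $g\geq 6$, not $k=1$. The paper extends $\gamma$ to a maximal multicurve of $3g-3$ components $\gamma_1,\dots,\gamma_{3g-3}$ with $X\setminus(\gamma_i\cup\gamma_j)$ connected for all $i,j$, so that $(3g-3)K\leq 3g'-3\leq 6g-9$ forces $K<2$. A cut system is not large enough; the full pants-decomposition--sized family is essential. (The boundary case $g'=2g-1$, $Y$ not closed, needs the additional argument in the paper involving the pair of pants containing the single marked point, which you have not anticipated.)

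Stage (iii) is fine and matches the paper's separating-curve count. But as written the proposal cannot be completed: fix stage (i) by proving componentwise-invariance via the nested Paris argument on $Y\setminus\phi_*(\gamma)$, and fix stage (ii) by replacing the cut system with a maximal multicurve whose pairwise complements are connected.
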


Recall that a homomorphism $\phi:\Map(X)\to\Map(Y)$ 
is irreducible if its image does not fix any curve in $Y$, and that if $\phi$ is irreducible
then $\D Y=\emptyset$; see Definition \ref{def-irred} and the remark following.

Before launching the proof of Proposition \ref{lem:single-component} we will establish a few useful facts.

\begin{lem}\label{induction}
Suppose $X$ and $Y$ satisfy (*) and that $\phi:\Map(X)\to\Map(Y)$ is an irreducible homomorphism. Let $\bar Y$ be 
obtained from $Y$ by filling in some, possibly all, punctures of $Y$, and let 
$\bar\phi=\iota_\#\circ\phi:\Map(X)\to\Map(\bar Y)$ be the composition of $\phi$ with the homomorphism $\iota_\#$ induced by 
the embedding $\iota:Y\to\bar Y$. For every non-separating curve $\gamma\subset X$ we have:
\begin{itemize}
\item $\iota(\phi_*(\gamma))$ is a multicurve, and 
\item $\bar\phi_*(\gamma)=\iota(\phi_*(\gamma))$.
\end{itemize}
In particular, $\iota$ yields a bijection between the components of $\phi_*(\gamma)$ and $\bar\phi_*(\gamma)$.
\end{lem}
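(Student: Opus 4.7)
The plan is to reduce to a single-puncture fill by induction using Proposition \ref{prop-embedding}, and then exploit the irreducibility of $\phi$ to rule out the two possible obstructions to $\iota(\phi_*(\gamma))$ being a multicurve. Assume henceforth that $\bar Y$ is obtained from $Y$ by filling in a single puncture $p$.

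First I would show that $\bar\phi=\iota_\#\circ\phi$ is nontrivial: this follows from Lemma \ref{trick}, since $\phi$ itself is nontrivial, being irreducible. Because $\bar Y$ has the same genus as $Y$, condition (*) is preserved, so Proposition \ref{no-torsion} applied to $\bar\phi$ ensures $\bar\phi(\delta_\gamma)$ has infinite order; hence $\bar\phi_*(\gamma)$ is well defined by Corollary \ref{map-phi}. Writing $\phi(\delta_\gamma^N)=\prod_i\delta_{\alpha_i}^{n_i}$ as a generic multitwist along $\phi_*(\gamma)=\bigcup_i\alpha_i$, I obtain
$$\bar\phi(\delta_\gamma^N)=\iota_\#(\phi(\delta_\gamma^N))=\prod_i\delta_{\iota(\alpha_i)}^{n_i}$$
in $\Map(\bar Y)$, where a factor is trivial precisely when $\iota(\alpha_i)$ is inessential in $\bar Y$. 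Provided $\iota(\phi_*(\gamma))$ is a multicurve in $\bar Y$, Lemma \ref{BHB} turns this product into a generic multitwist along $\iota(\phi_*(\gamma))$, whose support is by definition $\bar\phi_*(\gamma)$; this then yields both conclusions of the lemma together with the stated bijection of components.

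It therefore remains to verify that $\iota(\phi_*(\gamma))$ is a multicurve, i.e., that no component of $\phi_*(\gamma)$ becomes inessential in $\bar Y$ and no two components become parallel there. The essentiality step is clean: if some $\iota(\alpha_i)$ were inessential, then $\alpha_i$ would bound in $Y$ a disc whose puncture set is exactly $\{p,q\}$ for some other puncture $q$. Since $\alpha_i$ is uniquely determined up to isotopy by this pair and $\Map(Y)$ fixes each puncture pointwise, $\alpha_i$ would be fixed by $\phi(\Map(X))$, contradicting irreducibility.

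The hard part will be the distinctness step: ruling out that two distinct components $\alpha,\alpha'$ of $\phi_*(\gamma)$ cobound an annulus $A\subset Y$ whose only puncture is $p$. The strategy would be to argue that $\Map(Y)$ preserves the unordered pair $\{\alpha,\alpha'\}$ set-wise — transparent whenever the two sides of $A$ can be distinguished by their puncture sets, which are fixed pointwise by $\Map(Y)$ — and then to upgrade set-wise invariance to pointwise invariance via the perfectness of $\Map(X)$ (Theorem \ref{thm:homology}), which kills any would-be $\BZ/2\BZ$ swap; this forces $\phi(\Map(X))$ to fix each of $\alpha$ and $\alpha'$ individually, contradicting irreducibility. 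The most delicate scenario, in which $p$ is the sole puncture of $Y$ and the two complementary pieces of $A$ are homeomorphic, requires additional input: here I would combine the disjointness property of $\phi_*$ from Corollary \ref{map-phi}, applied to non-separating curves in $X$ disjoint from $\gamma$, with a Paris-type triviality statement (Theorem \ref{thm:luisito}) for the permutation action of $\phi(\CZ(\delta_\gamma))$ on the components of $\phi_*(\gamma)$, to rule out the only remaining way in which the pair could fail to be set-wise preserved.
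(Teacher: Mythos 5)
Your overall framework is right — reduce to filling a single puncture, use Lemma~\ref{BHB}, and rule out inessential and parallel components separately — but the core arguments for both steps contain genuine errors.

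In the essentiality step you write that if $\iota(\alpha_i)$ is inessential then $\alpha_i$ bounds a disc in $Y$ with punctures $\{p,q\}$, and conclude that since ``$\alpha_i$ is uniquely determined up to isotopy by this pair and $\Map(Y)$ fixes each puncture pointwise, $\alpha_i$ would be fixed by $\phi(\Map(X))$.'' This is false: a curve bounding a twice-punctured disc is \emph{not} determined up to isotopy by its puncture pair. If $\beta$ is any curve with $i(\beta,\alpha_i)>0$, then $\delta_\beta(\alpha_i)$ bounds a twice-punctured disc with the same pair $\{p,q\}$ but is not isotopic to $\alpha_i$; indeed the orbit of $\alpha_i$ under $\Map(Y)$ is infinite. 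The property that \emph{is} true, and which the paper exploits, is a dichotomy: any two non-isotopic curves bounding twice-punctured discs with the same pair $\{p,q\}$ must intersect. Hence for all $F\in\Map(Y)$ one has either $F(\alpha_i)=\alpha_i$ or $i(F(\alpha_i),\alpha_i)>0$. One then argues that for any $f\in\Map(X)$ with $i(f(\gamma),\gamma)=0$, the disjointness $i(\phi_*(f(\gamma)),\phi_*(\gamma))=0$ from Corollary~\ref{map-phi} forces $\phi(f)(\alpha_i)=\alpha_i$, and since such $f$ generate $\Map(X)$ (after passing to a second disjoint curve $\gamma'$ with $X\setminus(\gamma\cup\gamma')$ connected), irreducibility is contradicted. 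Your argument skips precisely this step and substitutes an incorrect rigidity claim for it.

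The distinctness step has the same problem. You assert that ``$\Map(Y)$ preserves the unordered pair $\{\alpha,\alpha'\}$ set-wise,'' which is false for the full group $\Map(Y)$, and the subsequent hedge about distinguishing the two sides of the annulus by their puncture sets does not supply an argument that $\phi(\Map(X))$ preserves the pair. Again the missing ingredient is the dichotomy ($\alpha\cup\alpha'$ cobounding a once-punctured annulus forces $F(\alpha\cup\alpha')=\alpha\cup\alpha'$ or $i(F(\alpha),\alpha)>0$) combined with the structure of $\phi_*$ exactly as in the first claim. You do gesture at the right ideas — the disjointness property of $\phi_*$ and a Paris-type triviality for the permutation action — but you invoke them only in what you call the ``most delicate scenario,'' when in fact they constitute the entire argument, not a corner case. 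Once $\phi(\Map(X))$ is shown to preserve $\{\alpha,\alpha'\}$ set-wise, killing the $\BZ/2\BZ$ swap by Theorem~\ref{thm:homology} or Theorem~\ref{thm:luisito} is correct and matches the paper.
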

\begin{proof}
First observe that, arguing by induction, we may assume that $\bar Y$ is obtained from $Y$ by filling in
 a single cusp. We suppose from now on that this is the case and observe that it follows from Lemma \ref{trick} 
that $\bar\phi$ is not trivial. Notice also that since $Y$ and $\bar Y$ have the same genus, $\bar\phi_*(\gamma)$ 
is well-defined by Corollary \ref{map-phi}. 

By definition of $\phi_*$ and $\bar\phi_*$, we can choose $N\in\BN$ such that $\phi(\delta_\gamma^N)$ and 
$\bar\phi(\delta_\gamma^N)$ are generic multitwists in $\BT_{\phi_*(\gamma)}$ and $\BT_{\bar\phi_*(\gamma)}$. 
In particular, it follows from Lemma \ref{BHB} that in order to prove  Lemma \ref{induction} it suffices to show 
that $\iota(\phi_*(\gamma))$  does not contain (1) inessential components, or (2) parallel components. 
\medskip

\noindent{\bf Claim 1.} $\iota(\phi_*(\gamma))$ does not contain inessential components.

\begin{proof}[Proof of Claim 1]
Seeking a contradiction, suppose that a component $\eta$ of $\phi_*(\gamma)$ is inessential 
in $\bar Y$. Since $\bar Y$ is obtained from $Y$ by filling in a single cusp, it follows that $\eta$ bounds a disk 
in $Y$ with exactly two punctures. Observe that this implies that for any element $F\in\Map(Y)$ we have either 
$F(\eta)=\eta$ or $i(F(\eta),\eta)>0$. On the other hand, if $f\in\Map(X)$ is such that $i(f(\gamma),\gamma)=0$ then we have
$$i(\phi(f)(\eta),\eta)\le i(\phi(f)(\phi_*(\gamma)),\phi_*(\gamma))= i(\phi_*(f(\gamma)),\phi_*(\gamma))=0$$
We deduce that $\eta=\phi(f)(\eta)\subset\phi_*(f(\gamma))$ for any such $f$. Since any two non-separating curves 
in $X$ are related by an element of $\Map(X)$ we obtain: 
\begin{quote}
($\star$) {\em If $\gamma'$ is a non-separating curve in $X$ with $i(\gamma,\gamma')=0$ then 
$\eta=\phi(\delta_{\gamma'})(\eta)$ and $\eta\subset\phi_*(\gamma')$.}
\end{quote}
Choose $\gamma'\subset X$ so that $X\setminus(\gamma\cup\gamma')$ is connected. It follows from ($\star$) that 
if  $\gamma''$ is any other non-separating curve which either does not intersect $\gamma$ or $\gamma'$ we have
 $\phi(\delta_{\gamma''})(\eta)=\eta$. Since the mapping class group is generated by such curves, we deduce that every
 element in $\phi(\Map(X))$ fixes $\eta$, contradicting the assumption that $\phi$ is irreducible. 
This concludes the proof of Claim 1.
\end{proof}

We use a similar argument to prove that $\iota(\phi_*(\gamma))$ does not contain parallel components.

\medskip

\noindent{\bf Claim 2.} $\iota(\phi_*(\gamma))$ does not contain parallel components.

\begin{proof}[Proof of Claim 2]
Seeking again a contradiction suppose that there are $\eta\neq\eta'\subset\phi_*(\gamma)$ whose images in $\bar Y$ are 
parallel. Hence, $\eta\cup\eta'$ bounds an annulus which contains a single cusp. As above, it follows that for any 
element $f\in\Map(Y)$ we have either $f(\eta\cup\eta')=\eta\cup\eta'$ or $i(f(\eta),\eta)>0$. By the same argument as
before, we obtain that $\phi(\Map(X))$ preserves $\eta\cup\eta'$. Now, it follows from either Theorem 
\ref{thm:homology} or Theorem \ref{thm:luisito} that $\phi(\Map(X))$ cannot permute $\eta$ and $\eta'$. Hence 
$\phi(\Map(X))$ fixes $\eta$, contradicting the assumption that $\phi$ is irreducible.
\end{proof}

As we mentioned above, Lemma \ref{induction} follows from Claim 1, Claim 2 and Lemma \ref{BHB}.
\end{proof}

Continuing with the preliminary considerations to prove Proposition \ref{lem:single-component}, recall that the 
final claim in Corollary \ref{map-phi} implies that $\phi(\delta_\gamma)$ preserves the multicurve $\phi_*(\gamma)$. 
Our next goal is to show that, as long as $\phi$ is irreducible, the element $\phi(\delta_\gamma)$ preserves  every component of $\phi_*(\gamma)$. 

\begin{lem}\label{prop:curves-invariant}
Suppose that $X$ and $Y$ are as in (*) and let $\phi:\Map(X)\to\Map(Y)$ be an irreducible homomorphism. 
If $\gamma\subset X$ is a non-separating simple closed curve, then $\phi(\CZ_0(\delta_\gamma))$ 
fixes every component of $\phi_*(\gamma)$. Hence, $\phi(\CZ_0(\delta_\gamma))\subset\CZ_0(\BT_{\phi_*(\gamma)})$.
\end{lem}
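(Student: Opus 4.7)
The plan is to reduce the claim to a permutation-group bound via Paris' Theorem \ref{thm:luisito}, and then upgrade from components to sides via Powell's Theorem \ref{thm:homology}. Write $k=|\phi_*(\gamma)|$ for the number of components of $\phi_*(\gamma)$. I will consider the composition
$$\Psi:\Map(X_\gamma)\twoheadrightarrow \CZ_0(\delta_\gamma)\xrightarrow{\phi}\Map(Y)\to S_k,$$
where the first arrow is the surjection from \eqref{eq:central1}, the image of the second lies in $\CN(\BT_{\phi_*(\gamma)})$ by Corollary \ref{map-phi}, and the third is the natural action on components. Under (*) the surface $X_\gamma$ has genus $g-1\geq 3$, so Paris' theorem gives that $\Psi$ is trivial as soon as $k\leq 4(g-1)+4=4g$; in that case $\phi(\CZ_0(\delta_\gamma))\subset\CZ(\BT_{\phi_*(\gamma)})$, i.e.\ each component of $\phi_*(\gamma)$ is fixed.

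The delicate step is therefore the bound $k\leq 4g$. First I invoke Lemma \ref{induction} to fill in all cusps of $Y$, obtaining a closed surface $\bar Y$ of genus $g'\leq 2g-1$; the image $\iota(\phi_*(\gamma))$ is again a multicurve with $k$ components, so $k\leq 3g'-3\leq 6g-6$, which by itself is not tight enough. To sharpen this estimate I extend $\gamma=\gamma_1$ to a cut system $\gamma_1,\dots,\gamma_g$ of $X$. By Corollary \ref{map-phi} the multicurves $\phi_*(\gamma_i)$ are pairwise disjoint as 1-manifolds in $Y$, and since the $\gamma_i$ are $\Map(X)$-conjugate, each $\phi_*(\gamma_i)$ has exactly $k$ components. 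Hence $\bigcup_i\phi_*(\gamma_i)$ sits inside $\bar Y$ as a multicurve of at most $6g-6$ distinct components. The hard part is bounding the multiplicity with which a single curve of $\bar Y$ appears in the family $\{\phi_*(\gamma_i)\}_{i=1}^g$: a symmetry argument, using that $\Map(X)$ permutes the $\gamma_i$ transitively together with the irreducibility of $\phi$, should force this multiplicity to be a small constant, so that $gk\lesssim 6g-6$ and therefore $k<6$, comfortably below $4g$. Bounding this multiplicity is the main technical obstacle.

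For the sides upgrade, once $\phi(\CZ_0(\delta_\gamma))\subset\CZ(\BT_{\phi_*(\gamma)})$ is established, the quotient $\CZ(\BT_{\phi_*(\gamma)})/\CZ_0(\BT_{\phi_*(\gamma)})$ embeds in the abelian group $(\BZ/2\BZ)^k$. Applying Powell's Theorem \ref{thm:homology} to $X_\gamma$ (genus $g-1\geq 3$) gives that the induced homomorphism $\Map(X_\gamma)\to(\BZ/2\BZ)^k$ is trivial, and therefore $\phi(\CZ_0(\delta_\gamma))\subset\CZ_0(\BT_{\phi_*(\gamma)})$, as claimed. In summary, the Paris-Powell combination handles everything once the multiplicity-of-overlap issue in the cut-system count is controlled.
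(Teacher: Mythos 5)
Your overall skeleton (reduce to $\bar Y$ closed via Lemma \ref{induction}, kill the permutation action with Paris' Theorem \ref{thm:luisito}, upgrade from components to sides with Powell's Theorem \ref{thm:homology}) matches the paper's, but the central step --- the bound $k\le 4g$ on the number of components of $\phi_*(\gamma)$ --- is not how the paper argues, and as you set it up it has a circularity problem. The cut-system overcounting gives $gk\lesssim 6g-6$ only if you know the multicurves $\phi_*(\gamma_i)$ are essentially disjoint as \emph{sets of isotopy classes}, i.e.\ do not share components. But that is exactly the content of Corollary \ref{kor:irreducible}, and the paper's proof of that corollary \emph{invokes} Lemma \ref{prop:curves-invariant}: it needs to know that $\phi(\CZ_0(\delta_\gamma))$ already fixes each component of $\phi_*(\gamma)$ in order to promote ``some element fixes a shared component'' to ``all of $\phi(\Map(X))$ fixes it.'' You flag the multiplicity bound as ``the main technical obstacle'' without resolving it; under the paper's logical ordering it cannot be resolved in the way you suggest without assuming what you are trying to prove.

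The paper sidesteps this entirely by never needing $k\le 4g$. Instead it runs Paris' theorem twice on smaller sets: first on the set of complementary components of $Y\setminus\phi_*(\gamma)$, which has at most $|\chi(Y)|=2g'-2\le 4g-4$ elements, and then, once each complementary piece $Z$ is fixed, on the set of boundary curves of $Z$, which has at most $2-\chi(Z)\le 2-\chi(Y)\le 4g-2$ elements. Each application stays within the $4(g-1)+4=4g$ threshold without any a priori control on the total number of components of $\phi_*(\gamma)$. The sides upgrade via Powell is the same in both accounts. If you want to repair your version, you would need to replace the cut-system multiplicity claim with a non-circular argument, or simply adopt the paper's two-stage decomposition of the $S_k$-action.
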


Recall that $\CZ_0(\delta_\gamma)$ is the subgroup of $\Map(X)$ fixing not only $\gamma$ but also the two sides of 
$\gamma$ and that it has at most index 2 in the centralizer $\CZ(\delta_\gamma)$ of the Dehn twist $\delta_\gamma$.

\begin{proof}
We first prove Lemma \ref{prop:curves-invariant} in the case that $Y$ is closed. As in Section \ref{sec:general}, 
we denote by $X_\gamma$ the surface obtained by deleting the interior of a closed regular neighborhood of 
$\gamma$ from $X$. Recall that by \eqref{eq:central1} there is a surjective homomorphism
$$\Map(X_\gamma)\to \CZ_0(\delta_\gamma)$$
Consider the composition of this homomorphism with $\phi$ and, abusing notation, denote its image by 
$\phi(\Map(X_\gamma))=\phi(\CZ_0(\delta_\gamma))$.

By Corollary \ref{map-phi}, the subgroup $\phi(\Map(X_\gamma))$ of $\Map(Y)$ acts on the set of 
components of $\phi_*(\gamma)$ and hence on $Y\setminus\phi_*(\gamma)$. Since $Y$ is assumed to be 
closed and of at most genus $2g-1$ we deduce that $Y\setminus\phi_*(\gamma)$ has at most $\vert\chi(Y)\vert=2g'-2\le 4g-4$ 
components. Since the surface $X_\gamma$ has genus $g-1\ge 3$, we deduce from Theorem \ref{thm:luisito} that 
$\phi(\Map(X_\gamma))$ fixes each component of $Y\setminus\phi_*(\gamma)$.

Suppose now that $Z$ is a component of $Y\setminus\phi_*(\gamma)$ and let $\eta$ be the set of components of 
$\phi_*(\gamma)$ contained in the closure of $Z$. Noticing that 
$$4-4g\le\chi(Y)\le\chi(Z)\le-\vert\eta\vert+2$$ 
we obtain that $\eta$ consists of at most $4g-2$ components. Since $\phi(\Map(X_\gamma))$ fixes $Z$, 
it acts on the set of components of $\eta$. Again by Theorem \ref{thm:luisito},
it follows that this action is trivial, meaning that every component of $\phi_*(\gamma)$ contained in the closure 
of $Z$ is preserved. Since $Z$ was arbitrary, we deduce that $\phi(\Map(X_\gamma)$ preserves every component of 
$\phi_*(\gamma)$ as claimed. Lemma \ref{trick2} now implies that 
$\phi(\CZ_0(\delta_\gamma))=\phi(\Map(X_\gamma))\subset\CZ_0(\BT_{\phi_*(\gamma)})$. This 
concludes the proof of Lemma \ref{prop:curves-invariant} in the case that $Y$ is closed.
\medskip

We now turn our attention to the general case. Recall that the assumption that $\phi$ is irreducible implies that $\D Y=\emptyset$. 
Let $\bar Y$ be the surface obtained from $Y$ by closing up all the cusps and denote by $\bar\phi:\Map(X)\to\Map(\bar Y)$ 
the composition of $\phi$ with the homomorphism $\iota_\#:\Map(Y)\to\Map(\bar Y)$ induced by the embedding 
$\iota:Y\to\bar Y$. By the above, Lemma \ref{prop:curves-invariant} holds true for $\bar\phi$. On the other hand, 
Lemma \ref{induction} shows that for any $\gamma\subset X$ non-separating there is a bijection between $\phi_*(\gamma)$ 
and $\bar\phi_*(\gamma)$. Thus the claim follows.
\end{proof}

Note that Lemma \ref{prop:curves-invariant} yields the following sufficient 
condition for a homomorphism between mapping class groups to be reducible:

\begin{kor}\label{kor:irreducible}
Suppose that $X$ and $Y$ are as in (*) and let $\phi:\Map(X)\to\Map(Y)$ be a non-trivial homomorphism. 
Let $\gamma$ and $\gamma'$ be distinct, disjoint curves on $X$ such that $X\setminus(\gamma\cup\gamma')$ is connected. 
If the multicurves $\phi_*(\gamma)$ and $\phi_*(\gamma')$ share a component, then the homomorphism 
$\phi:\Map(X)\to\Map(Y)$ is reducible.
\end{kor}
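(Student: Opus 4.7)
The plan is to prove the contrapositive: assuming $\phi$ is irreducible, I will show that $\phi_*(\gamma)$ and $\phi_*(\gamma')$ can share no component. So I will suppose, for contradiction, that some curve $\eta$ lies both in $\phi_*(\gamma)$ and in $\phi_*(\gamma')$, and from this produce an essential curve in $Y$ fixed by all of $\phi(\Map(X))$, violating irreducibility.

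The first step is to apply Lemma \ref{prop:curves-invariant} to $\gamma$: every element of $\phi(\CZ_0(\delta_\gamma))$ fixes each component of $\phi_*(\gamma)$, and in particular fixes $\eta$. The same lemma applied to $\gamma'$ gives that $\phi(\CZ_0(\delta_{\gamma'}))$ also fixes $\eta$. Next, I would observe that whenever $\alpha\subset X$ is a non-separating curve with $i(\alpha,\gamma)=0$, the Dehn twist $\delta_\alpha$ is supported in a neighborhood disjoint from $\gamma$ and therefore belongs to $\CZ_0(\delta_\gamma)$; consequently $\phi(\delta_\alpha)$ fixes $\eta$. The symmetric conclusion holds for every non-separating $\alpha$ disjoint from $\gamma'$.

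The remaining ingredient is the generation fact that, precisely because $X\setminus(\gamma\cup\gamma')$ is connected, the Dehn twists along non-separating curves that are disjoint from either $\gamma$ or $\gamma'$ already generate all of $\Map(X)$. This is exactly the step invoked in the closing lines of Claim~1 in the proof of Lemma \ref{induction}, and I would simply reuse it here. Granting this, the stabiliser of $\eta$ in $\Map(Y)$ contains the $\phi$-image of a generating set of $\Map(X)$, hence contains all of $\phi(\Map(X))$. This contradicts the irreducibility hypothesis on $\phi$, completing the proof.

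The proof is really just the extraction of an argument already run in Claim~1 of Lemma \ref{induction}, recast with $\eta$ playing the role there played by the hypothetical inessential component. The only point that requires any care is the generation statement in the last paragraph, but since the paper has already relied on it I do not anticipate this to be a genuine obstacle; everything else is a direct application of Lemma \ref{prop:curves-invariant} and the defining property of $\phi_*$ recorded in Corollary \ref{map-phi}.
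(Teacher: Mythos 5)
Your proof is correct and follows essentially the same route as the paper: use the generation of $\Map(X)$ by Dehn twists along non-separating curves disjoint from $\gamma$ or $\gamma'$ (valid because $X\setminus(\gamma\cup\gamma')$ is connected), note that each such twist lies in $\CZ_0(\delta_\gamma)\cup\CZ_0(\delta_{\gamma'})$, and apply Lemma~\ref{prop:curves-invariant} to conclude $\phi(\Map(X))$ fixes any shared component of $\phi_*(\gamma)$ and $\phi_*(\gamma')$, contradicting irreducibility.
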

\begin{proof}
Suppose that $\phi$ is irreducible and observe that $\Map(X)$ is 
generated by Dehn twists along curves $\alpha$ which are disjoint from $\gamma$ or $\gamma'$. For any such $\alpha$ we 
have $\delta_\alpha\in\CZ_0(\delta_\gamma)\cup\CZ_0(\delta_{\gamma'})$. In particular, it follows from 
Proposition \ref{prop:curves-invariant} that $\phi(\Map(X))$ fixes every component of 
$\phi_*(\gamma)\cap\phi_*(\gamma')$. The assumption that $\phi$ was irreducible implies that 
$\phi_*(\gamma)\cap\phi_*(\gamma')=\emptyset$.
\end{proof}

We are now ready to prove Proposition \ref{lem:single-component}:

\begin{proof}[Proof of Proposition \ref{lem:single-component}]
Let $\gamma$ be a non-separating curve on $X$. Extend $\gamma$ to a multicurve $\eta\subset X$ with $3g-3$ 
components $\gamma_1,\dots,\gamma_{3g-3}$, 
and such that the surface $X\setminus(\gamma_i\cup\gamma_j)$ is connected for all $i,j$. Since $\delta_{\gamma_i}$ 
and $\delta_{\gamma_j}$ are conjugate in $\Map(X)$ we deduce that $\phi_*(\gamma_i)$ and $\phi_*(\gamma_j)$ 
have the same number $K$ of components for all $i,j$. Since $\phi$ is 
irreducible, Corollary \ref{kor:irreducible} implies that $\phi_*(\gamma_i)$ and $\phi_*(\gamma_j)$ do not share
any components for all 
$i\neq j$. This shows that $\cup_i\phi_*(\gamma_i)$ is the union of $(3g-3)K$ distinct curves. Furthermore, since 
$\delta_{\gamma_i}$ and $\delta_{\gamma_j}$ commute, we deduce that $\cup_i\phi_*(\gamma_i)$ is a 
multicurve in $Y$.

Suppose first that $Y$ has genus $g' \le2g-2$. In the light of Lemma \ref{induction}, 
it suffices to consider the case that $Y$ is closed. Now, the multicurve $\cup_i\phi_*(\gamma_i)$ has at most 
$3g'-3 \leq 3(2g-2) - 3< 6g-6$ components. Hence: $$K<\frac{6g-6}{3g-3}\le2,$$
and thus the multicurve $\phi_*(\gamma)$ consists of $K=1$ components; in other words, it is a curve. It is non-separating 
because otherwise the multicurve $\cup_i\phi_*(\gamma_i)$ would consist of $3g-3$ separating curves, and a closed 
surface of genus $g'\le 2g-2$ contains at most $2g-3$ disjoint separating curves. This concludes the
proof of the proposition in the case that $Y$ has genus at most $2g-2$.

\medskip

Suppose now that $Y$ has genus $g'=2g-1$ and that $Y$ is not closed. Again by Lemma \ref{induction},
we can assume that $Y$ has a single puncture, which we consider as a marked point. 
In this case, the multicurve $\cup_i\phi_*(\gamma_i)$ consists of at most 
$3g'-2=6g-5$ curves. Since we know that $\cup_i\phi_*(\gamma_i)$ is the union of $(3g-3)K$ distinct curves,
we deduce $K\leq 2$. In the case that $\cup_i\phi_*(\gamma_i)$ has fewer than $6g-6$ components,
we proceed as before. Therefore, it remains to rule out the possibility of having exactly $6g-6$ components.  

Suppose, for contradiction, that  $\cup_i\phi_*(\gamma_i)$ has $6g-6$ components. 
Since $Y$ has genus $2g-1$ and exactly one marked point, the complement of $\cup_i\phi_*(\gamma_i)$ in $Y$ is a 
disjoint union of pairs of pants, where exactly one of them, call it $P$, contains the marked point of $Y$. 
Now, the boundary components of $P$ are contained in the image under $\phi_*$ of  curves 
$a_1, a_2, a_3 \in \{\gamma_1,\dots,\gamma_{3g-3}\}$. Assume, for the sake of concreteness, that
$a_i\neq a_j$ whenever $i\neq j$; the remaining case is dealt with using minor modifications of the
argument we give here. 

Suppose first that the multicurve $\alpha=a_1\cup a_2\cup a_3$ does not disconnect $X$ and let $\alpha'\neq\alpha$ be another multicurve with three components satisfying:
\begin{enumerate}
\item $X\setminus\alpha'$ is connected,
\item $i(\alpha,\alpha') = 0$, and
\item $X\setminus(\gamma\cup\gamma')$ is connected for all $\gamma,\gamma'\in\alpha\cup\alpha'$.
\end{enumerate}
Notice that since $X\setminus\alpha$ and $X\setminus\alpha'$ are homeomorphic, there is $f\in\Map(X)$ with 
$f(\alpha)=\alpha'$.
Now, $P'=\phi(f)(P)$ is a pair of pants which contains the marked point of $Y$. Taking into account that 
$\D P\subset\phi_*(\alpha)$ and $\D P'\subset\phi_*(\alpha')$ we deduce from (2) that $i(\D P,\D P')=\emptyset$ 
and hence that $P=P'$. Since $\alpha'\neq\alpha$ we may assume, up to renaming, that $a_1\not\subset\alpha'$. 
Since $\phi(f)(\D P)=\D P'$ and $\D P\cap\phi_*(a_1)\neq\emptyset$, we deduce that is $i$ such that  
$\phi_*(a_i)\cap\phi_*(f(a_1))$ contains a boundary curve of $P$. In the light of (3), it follows from Corollary 
\ref{kor:irreducible} that $\phi$ is reducible; this contradiction shows that $X\setminus\alpha$ cannot be connected.

If $X\setminus\alpha$ is not connected, then it has two components, as $X\setminus(a_1\cup a_2)$ is connected. Suppose first that neither of the two components $Z_1,Z_2$ of $X\setminus\alpha$ is a (possibly punctured) pair of pants and notice that this implies that $Z_1$ and $Z_2$ both have positive genus. Let $P_1\subset Z_1$ be an unpunctured pair of pants with boundary $\D P_1=a_1\cup a_2\cup a_3'$ and let $P_2\subset Z_2$ be second unpunctured a pair of pants with $Z_2\setminus P_2$ connected and with boundary $\D P_2=a_3\cup a_1'\cup a_2'$ where $a_1'$ and $a_2'$ are not boundary parallel in $Z_2$; compare with Figure \ref{fig-que-pesadez}. 
\begin{figure}[tbh] \unitlength=1in
\begin{center} 
\includegraphics[width=1in]{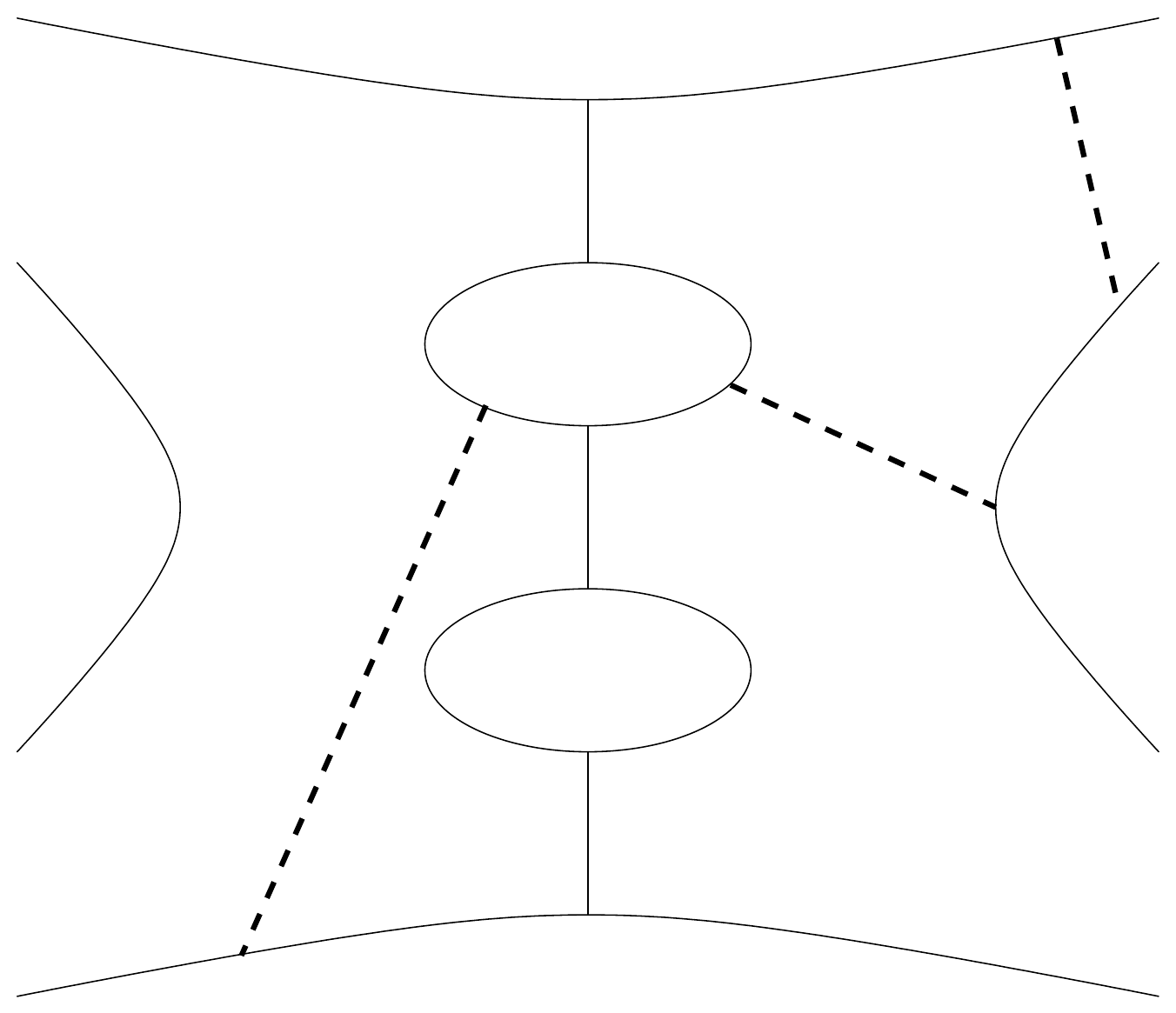}
\end{center}
\caption{}\label{fig-que-pesadez}
\end{figure}
Notice that $Z_1'=(Z_1\cup P_2)\setminus P_1$ is homeomorphic to $Z_1$. Similarly, $Z_2'=(Z_2\cup P_1)\setminus P_2$ is homeomorphic to $Z_2$. Finally notice also that $Z_i'$ contains the same punctures as $Z_i$ for $i=1,2$. It follows from the classification theorem of surfaces that there is $f\in\Map(X)$ with $f(Z_1)=Z_1'$ and $f(Z_2)=Z_2'$. In particular, $f(\alpha)=\alpha'$ where $\alpha'=a_1'\cup a_2'\cup a_3'$. We highlight a few facts:
\begin{enumerate}
\item There is $f\in\Map(X)$ with $f(\alpha)=\alpha'$,
\item $i(\alpha,\alpha') = 0$, and
\item $X\setminus(\gamma\cup\gamma')$ is connected for all $\gamma,\gamma'\in\{a_1,a_2,a_1',a_2'\}$.
\end{enumerate}
As above, we deduce that $\phi(f)(\D P)=\D P'$ and that for all $i=1,2,3$ there is $j$
such that $\phi_*(a_i)\cap\phi_*(f(a_j))$ contains a boundary curve of $P$. In the light of (3), 
it follows again from Corollary \ref{kor:irreducible} that $\phi$ is reducible. We have reduced to 
the case that one of the components of $X\setminus\alpha$, say $Z_1$, is a (possibly punctured) pair of pants.

We now explain how to reduce to the case that $Z_1$ is a pair of pants without punctures. Let 
$a_3'\subset Z_1$ be a curve which, together with $a_3$, bounds an annulus $A\subset Z_1$ such that $Z_1\setminus A$ 
does not contain any marked points. Notice that we may assume without loss of generality that the multicurve 
$\gamma_1\cup\dots\cup\gamma_{3g-3}$ above does not intersect $a_3'$. It follows that 
$i(\phi_*(a_3'),\cup\phi_*(\gamma_i))=0$. Next, observe that a pants decomposition of $Y$ consists of
$3(2g-1)-3+1 = 6g-5$ curves. Since $\phi_*(a_3')$ has two components and $\cup\phi_*(\gamma_i)$
has $6g-6$ components, we deduce that there exists $i$ such that $\phi_*(a_3')$ and $\phi_*(\gamma_i)$ share a
component. If $i \neq 3$, property (3) and Corollary \ref{kor:irreducible} imply that $\phi$ is reducible, since
$a_3'\cup\gamma_i$ does not separate $X$. It thus follows that $\phi_*(a_3')$ and $\phi_*(a_3)$ share a component, and so
$\D P\subset\phi_*(a_1\cup a_2\cup a_3')$.


Summing up, it remains to rule out the possibility that $Z_1$ is a pair of pants without punctures. Choose $\alpha'\subset X$ satisfying:
\begin{enumerate}
\item $\alpha'$ bounds a pair of pants in $X$,
\item $i(\alpha,\alpha') = 0$, and
\item $X\setminus(\gamma\cup\gamma')$ is connected for all $\gamma,\gamma'\in\alpha\cup\alpha'$.
\end{enumerate}
Now there is $f\in\Map(X)$ with $f(\alpha)=\alpha'$ and we can repeat word by word the argument given in the case that $X\setminus\alpha$ was connected. 

After having ruled out all possibilities, we deduce that $\cup_i\phi_*(\gamma_i)$ cannot have $6g-6$ components. This concludes the proof of Proposition \ref{lem:single-component}.
\end{proof}

\section{Proof of Proposition \ref{blabla}}
In this section we show that every homomorphism $\Map(X) \to \Map(Y)$ is trivial if the genus of $X$ is larger
than that of $Y$.  As a consequence we obtain that, under suitable genus bounds, 
the centralizer of the image of a non-trivial homomorphism between mapping class groups is torsion-free.

\begin{named}{Proposition \ref{blabla}}
Suppose that $X$ and $Y$ are orientable surfaces of finite topological type. If the 
genus of $X$ is at least 3 and larger than that of $Y$, then every homomorphism $\phi:\Map(X)\to\Map(Y)$
is trivial.
\end{named}

Recall that Proposition \ref{blabla} is due to Harvey-Korkmaz \cite{Harvey-Korkmaz} in the case that both surfaces $X$ 
and $Y$ are closed. 

\begin{proof}
We will proceed by induction on the genus of $X$. Notice that Proposition \ref{genus2} establishes the base case of the 
induction and observe that by Lemma \ref{trick} we may assume that $Y$ is has empty boundary and no cusps.

Suppose now that $X$ has genus $g\ge 4$ and that we have proved Proposition \ref{blabla} for 
surfaces of genus $g-1$. Our first step is to prove the following: 

\medskip

\noindent {\bf Claim.} {\em Under the hypotheses above, every homomorphism $\Map(X)\to\Map(Y)$ is reducible.}
\begin{proof}[Proof of the claim]
Seeking a contradiction, suppose that there is an irreducible homomorphism $\phi:\Map(X)\to\Map(Y)$, where $Y$ has smaller 
genus than $X$. Let $\gamma\subset X$ be a non-separating curve. Observing that $X$ and $Y$ satisfy (*), we deduce 
that $\phi_*(\gamma)$ is a non-separating curve by Proposition \ref{lem:single-component} and that 
$\phi(\CZ_0(\delta_\gamma))\subset\CZ_0(\delta_{\phi_*(\gamma)})$ by Lemma \ref{prop:curves-invariant}. 
By \eqref{eq:central2}, $\CZ_0(\delta_{\phi_*(\gamma)})$ dominates $\Map(Y_{\phi_*(\gamma)}')$ where 
$Y_{\phi_*(\gamma)}'=Y\setminus\phi_*(\gamma)$. On the other hand, we have by \eqref{eq:central1} that
 $\CZ_0(\delta_\gamma)$ is dominated by the group $\Map(X_\gamma)$ where $X_\gamma$ is obtained from $X$ 
by deleting the interior of a closed regular neighborhood of $\gamma$. 

Since $\phi_*(\gamma)$ is non-separating, the genus of $Y_{\phi_*(\gamma)}'$ and $X_\gamma$ is one less than that of
$Y$ and $X$, respectively. The induction assumption implies that the induced homomorphism
$$\Map(X_\gamma)\to\Map(Y_{\phi_*(\gamma)}')$$
is trivial. The last claim in Lemma \ref{trick2} proves that the homomorphism 
$$\Map(X_\gamma)\to \CZ_0(\delta_{\phi_*(\gamma)})\subset\Map(Y)$$
is also trivial. We have proved that $\CZ_0(\delta_\gamma)\subset\Ker(\phi)$. Since $Z_0(\delta_\gamma)$ 
contains a Dehn twist along a non-separating curve, we deduce that $\phi$ is trivial from Lemma 
\ref{dehn-trivial-trivial}. This contradiction concludes the proof of the claim.
\end{proof}

Continuing with the proof of the induction step in Proposition \ref{blabla}, suppose there exists a non-trivial
homomorphism $\phi:\Map(X)\to\Map(Y)$. By the above claim, $\phi$ is reducible. 
Let $\eta\subset Y$ be a maximal multicurve in $Y$ which is componentwise fixed by $\phi(\Map(X))$, 
and notice that  $\phi(\Map(X))\subset\CZ_0(\BT_\eta)$ by Lemma \ref{trick2}. Consider  $$\phi':\Map(X)\to\Map(Y'_\eta),$$
the composition of $\phi$ with the homomorphism \eqref{eq:central3}. The maximality of the multicurve 
$\eta$ implies that $\phi'$ is irreducible. Since the genus of $Y'_\eta$ is at most equal to that of $Y$,
 we deduce from the claim above that $\phi'$ is trivial. Lemma \ref{trick2} implies hence that $\phi$ is 
trivial as well. This establishes Proposition \ref{blabla}
\end{proof}

As we mentioned before, a consequence of Proposition \ref{blabla} is that, under suitable assumptions, 
the centralizer of the image  of a homomorphism between mapping class groups is torsion-free. Namely, we have:

\begin{lem}\label{trivial-centralizer1}
Let $X$ and $Y$ be surfaces of finite topological type, where $X$ has genus $g \ge 3$ and
$Y$ has genus $g' \le 2g$. Suppose moreover that $Y$ has at least one (resp. three) marked points if $g'=2g-1$ (resp. $g'=2g$). If $\phi:\Map(X) \to \Map(Y)$ is a 
non-trivial homomorphism, then the centralizer of $\phi(\Map(X))$ in $\Map(Y)$ is torsion-free.
\end{lem}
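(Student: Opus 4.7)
The plan is to argue by contradiction. Suppose the centralizer of $\phi(\Map(X))$ in $\Map(Y)$ contains a non-trivial finite order element $\tau$, of order $n\ge 2$. Since $\Map(Y)$ is torsion-free when $\D Y\neq\emptyset$ by Theorem \ref{thm:boundary-no-torsion}, we may assume $Y$ has no boundary. Realize $\tau$ by a finite order diffeomorphism $\tilde\tau$ of $Y$ fixing the marked points pointwise, and form the quotient orbifold $\CO=Y/\langle\tilde\tau\rangle$ together with $\CO^*$ as in Theorem \ref{Birman-Hilden}. Since $\phi(\Map(X))\subset\CZ(\tau)$, the Birman--Hilden sequence yields a homomorphism $\Psi:\Map(X)\to\Map^*(\CO^*)$ whose triviality would force $\phi(\Map(X))\subset\langle\tau\rangle$, which, being abelian, would make $\phi$ itself trivial by Theorem \ref{thm:homology}, contradicting the hypothesis. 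So the task reduces to showing that $\Psi$ is trivial.

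To dispose of $\Psi$ I would proceed in two steps. First, compose $\Psi$ with the natural projection $\Map^*(\CO^*)\to\CS_k$ onto the symmetric group on the $k$ marked points of $\CO^*$ and invoke Paris' theorem (Theorem \ref{thm:luisito}) to conclude that this composition is trivial, provided $k\le 4g+4$. This reduces the problem to a homomorphism $\Map(X)\to\Map(\CO^*)$, and Proposition \ref{blabla} then finishes the job provided the genus $h$ of $\CO^*$ is strictly less than $g$. Both bounds will come from Riemann--Hurwitz applied to the branched cover $Y\to\CO$, together with the observation that each of the $m$ marked points of $Y$ is a fixed point of $\tilde\tau$ (since $\tau$ fixes marked points pointwise in the mapping class group) and therefore descends to a cone point of $\CO$; equivalently $k\ge m$.

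The main obstacle is the genus inequality $h<g$. For $n\ge 3$, Riemann--Hurwitz gives $2g'-2\ge n(2h-2)$ and therefore $h\le (2g+2)/3<g$ whenever $g\ge 3$. For $n=2$, however, one only gets $h\le (g'+1)/2$, which allows the borderline value $h=g$ precisely when $g'\in\{2g-1,2g\}$. Here the marked-point hypotheses of the lemma enter decisively: the involution version of Riemann--Hurwitz reads $k=2g'-4h+2$, and combining this with $k\ge m$ yields $h\le (2g'+2-m)/4$. The assumptions $m\ge 3$ when $g'=2g$ and $m\ge 1$ when $g'=2g-1$ (no assumption otherwise) are then precisely what is required to push $h$ down to at most $g-1$. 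A parallel Riemann--Hurwitz bookkeeping shows $k\le (4g'-4)/n+4\le 4g+2$, so Paris' theorem indeed applies, completing the argument.
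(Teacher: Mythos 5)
Your proof is correct and follows essentially the same route as the paper: reduce to $\D Y=\emptyset$, apply Birman–Hilden, kill the permutation image via Paris' theorem, kill the genus part via Proposition \ref{blabla}, and conclude with Theorem \ref{thm:homology}, with all bounds coming from Riemann–Hurwitz and the observation $k\ge m$. The only cosmetic difference is that the paper first passes to a prime power of $\tau$ and then invokes a single Riemann–Hurwitz lemma (Lemma \ref{fix-bound}) whose extremal case is $p=2$, whereas you keep the original order $n$ and split directly into $n\ge 3$ (where $h<g$ is automatic) and $n=2$ (where the marked-point hypotheses do the work) — both yield identical numerics.
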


The proof of Lemma \ref{trivial-centralizer1} relies on Proposition \ref{blabla} and the following consequence of 
the Riemann-Hurwitz formula:

\begin{lem}\label{fix-bound}
Let $Y$ be a surface of genus $g'\ge 0$ and let $\tau:Y\to Y$ be a nontrivial diffeomorphism of prime order,  
representing an element in $\Map(Y)$. Then $\tau$ has $F\le 2g'+2$ fixed-points and the underlying 
surface of the orbifold $Y/\langle\tau\rangle$ has genus at most $\bar g=\frac{2g'+2-F}4$.
\end{lem}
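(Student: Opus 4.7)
The plan is to apply the Riemann--Hurwitz formula to the quotient map $\pi:Y\to\bar Y:=Y/\langle\tau\rangle$. Since $\tau$ has prime order $p$, every orbit of $\langle\tau\rangle$ on $Y$ has cardinality $1$ or $p$; the orbits of cardinality $1$ are precisely the fixed points of $\tau$, and these are exactly the branch points of $\pi$, each contributing ramification index $p$. Writing $h$ for the genus of $\bar Y$ and applying Riemann--Hurwitz to the underlying closed surfaces (marked points and punctures of $Y$ simply get interpreted as points; if $\tau$ fixes some of them they are counted among the $F$ fixed points, and the quotient inherits a point structure that does not affect the genus computation), one obtains
$$2-2g' \;=\; p(2-2h) \;-\; (p-1)F,$$
which we rewrite as $(p-1)F+2ph=2g'+2p-2$.

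From this identity both inequalities follow by elementary manipulation. First, since $h\ge 0$ we get $(p-1)F\le 2g'+2p-2$, so $F\le 2+\tfrac{2g'}{p-1}\le 2g'+2$ because $p\ge 2$. Second, to establish the bound $h\le\tfrac{2g'+2-F}{4}$, i.e.\ $4h+F\le 2g'+2$, multiply the target inequality by $p-1$ and use the Riemann--Hurwitz identity to substitute for $(p-1)F$: one finds that $4h+F\le 2g'+2$ is equivalent to $h(p-2)\le g'(p-2)$. For $p=2$ this is trivially an equality (indeed Riemann--Hurwitz itself reads $F+4h=2g'+2$ in that case), and for $p\ge 3$ it reduces to showing $h\le g'$. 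But this last inequality is a direct consequence of Riemann--Hurwitz: from $2g'-2=p(2h-2)+(p-1)F\ge p(2h-2)$, the case $h\ge 1$ gives $2g'-2\ge 2h-2$, while $h=0$ is automatic.

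There is no real obstacle here; the proof is just careful bookkeeping with the Riemann--Hurwitz formula. The only points requiring a moment's thought are (i) justifying that $p$ prime implies fixed points are the only branch points (so the ramification contribution is exactly $(p-1)F$), and (ii) handling the fact that $Y$ may have marked points, which is dispatched by noting that the Euler-characteristic computation is insensitive to whether we view $\tau$-fixed cusps as punctures or as filled-in points of the underlying closed surface.
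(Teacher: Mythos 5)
Your proof is correct and follows essentially the same route as the paper: apply the Riemann--Hurwitz formula to the branched cover $Y\to Y/\langle\tau\rangle$ of closed surfaces, use primality of $p$ to identify branch points with fixed points and fix the ramification index at $p$, then manipulate the resulting identity $2-2g'=p(2-2\bar g)-(p-1)F$ to obtain both bounds. The only stylistic difference is in the second bound: the paper solves the identity for $\bar g$ and asserts that the resulting expression is maximal at $p=2$, whereas you transform the target inequality $4h+F\le 2g'+2$ into the equivalent $(p-2)h\le (p-2)g'$ and then separately verify $h\le g'$ from Riemann--Hurwitz. Your version makes explicit the fact $\bar g\le g'$ which the paper's ``maximal at $p=2$'' claim implicitly relies on (since $F$ is not monotone in $p$ without it), so if anything your argument is slightly more self-contained; the mathematical content is the same.
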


\begin{proof}
Consider the orbifold $Y/\langle\tau\rangle$ and let $F$ be the number of its singular points; observe that $F$ is also equal
to the number of fixed points of $\tau$ since $\tau$ has prime order $p$.
Denote by $\vert Y/\langle\tau\rangle\vert$ the underlying surface of the orbifold $Y/\langle\tau\rangle$. 
The Riemann-Hurwitz formula shows that
\begin{equation}\label{RH-formula}
2-2g'=\chi(Y)=p\cdot \chi(\vert Y/\langle\tau\rangle\vert)-(p-1)\cdot F
\end{equation}
After some manipulations, \eqref{RH-formula} shows that 
$$F=\frac {2g'-2+p\cdot (2-2\bar g)}{p-1}$$
where $\bar g$ is the genus of $\vert Y/\langle\tau\rangle\vert$. Clearly, 
the quantity on the right is maximal if $\bar g=0$ and $p=2$. This implies that $F\le 2g'+2$, as claimed.

Rearranging \eqref{RH-formula}, we obtain 
$$\bar g=\frac{2g'+(2-F)(p-1)}{2p}$$
Again this is maximal if $p$ is as small as possible, i.e. $p=2$. Hence $\bar g\le\frac{2g'+2-F}4$.
\end{proof}

We are now ready to prove Lemma \ref{trivial-centralizer1}.

\begin{proof}[Proof of Lemma \ref{trivial-centralizer1}]
First, if $Y$ has non-empty boundary there is nothing to prove, for in this case $\Map(Y)$ is torsion-free. 
Therefore, assume that $\partial Y = \emptyset$. Suppose, for contradiction, that there exists $[\tau]\in\Map(Y)$ non-trivial, of finite 
order, and such that $\phi(\Map(X)) \subset \CZ([\tau])$.

Let $\tau:Y\to Y$ be a finite order diffeomorphism representing $[\tau]$. Passing to a suitable power, 
we may assume that the order of $\tau$ is prime. Consider the orbifold $Y/\langle\tau\rangle$ as a surface with the singular points marked, and recall 
that by Theorem \ref{Birman-Hilden} we have the following exact sequence:
$$\xymatrix{
1 \ar[r] & \langle[\tau]\rangle \ar[r] & \CZ([\tau])\ar[r]^-{\beta} & \Map^*(Y/\langle\tau\rangle)
}$$
On the other hand, we have by definition
$$1\to\Map(Y/\langle\tau\rangle)\to\Map^*(Y/\langle\tau\rangle)\to\CS_F\to 1$$
where $F$ is the number of punctures of $Y/\langle\tau\rangle$. Again, $F$ is equal to the number of fixed 
points of $\tau$ since $\tau$ has prime order. 

 Observe that Lemma \ref{fix-bound} gives that $F\le 2g'+2\le 4g+2$; hence, it follows from Theorem \ref{thm:luisito} 
that the composition of the homomorphism
$$\xymatrix{
\Map(X) \ar[r]^\phi & \CZ([\tau])  \ar[r]^-\beta &  \Map^*(Y/\langle\tau\rangle)\ar[r] &\CS_F
}$$
is trivial; in other words, $(\beta\circ\phi)(\Map(X))\subset\Map(Y/\langle\tau\rangle)$. 

Our assumptions on the genus and the 
marked points of $Y$ imply, by the genus bound in Lemma \ref{fix-bound}, that $Y/\langle\tau\rangle$ 
has genus less than $g$. Hence, the homomorphism $\beta\circ\phi:\Map(X)\to\Map(Y/\langle\tau\rangle)$ 
is trivial by Proposition \ref{blabla}. This implies that the image of $\phi$ is contained in the abelian group 
$\langle[\tau]\rangle$. Theorem \ref{thm:homology} shows hence that $\phi$ is trivial, contradicting our assumption. 
This concludes the proof of Lemma \ref{trivial-centralizer1}
\end{proof}

The following example shows that Lemma \ref{trivial-centralizer1} is no longer true if $Y$ is allowed to
 have genus $2g$ and fewer than 3 punctures. 
 
\begin{bei}\label{me-ha-saltao-los-dientes}
Let $X$ be a surface with no punctures and such that $\D X=\BS^1$. Let $Z$ be a surface of the same genus as $X$, with $\D Z=\emptyset$ but with two punctures. 
Regard $X$ as a subsurface of $Z$ and consider the two-to-one cover $Y\to Z$ corresponding to an arc in $Z\setminus X$ 
joining the two punctures of $Z$. Every homomorphism $X\to X$ fixing poinwise the boundary extends to a homeomorphism of 
$Z$ fixing the punctures and which lifts to a unique homeomorphism $Y\to Y$ which fixes the two components of the preimage 
of $X$ under the covering $Y\to Z$. The image of the induced homomorphism $\Map(X)\to\Map(Y)$ is centralized by the 
involution $\tau$ associated to the two-to-one cover $Y\to Z$. Moreover, if $X$ has genus $g$ then $Y$ has genus $2g$ 
and 2 punctures.
\end{bei}

\section{Proof of Proposition \ref{main}}
We are now ready to prove that under suitable genus bounds, homomorphisms between mapping class 
groups map Dehn twists to Dehn twists. Namely:

\begin{named}{Proposition \ref{main}}
Suppose that $X$ and $Y$ are surfaces of finite topological type, of genus $g\ge 6$ and $g'\le 2g-1$ 
respectively; if $Y$ has genus $2g-1$, suppose also that it is not closed. 
Every nontrivial homomorphism 
$$\phi:\Map(X)\to\Map(Y)$$
maps (right) Dehn twists along non-separating curves to (possibly left) Dehn twists along non-separating curves. 
\end{named}

\begin{bem}
The reader should notice that the proof of Proposition \ref{main} applies, word for word, to homomorphisms between 
mapping class groups of surfaces of the same genus $g \in \{4,5\}$.
\end{bem}

We will first prove Proposition \ref{main} under the assumption that $\phi$ is irreducible and then we will
deduce the general case from here.

\begin{proof}[Proof of Proposition \ref{main} for irreducible $\phi$] 
Suppose that $\phi$ is irreducible and recall that this implies that $\D Y=\emptyset$. Let $\gamma \subset X$
be a non-separating curve. Thus $\phi_*(\gamma)$ is also a non-separating curve, by Proposition 
\ref{lem:single-component}. We first show that $\phi(\delta_\gamma)$ is a power
of $\delta_{\phi_*(\gamma)}$.

Let $X_\gamma$ be the complement in $X$ of the interior of 
a closed regular neighborhood of $\gamma$ and $Y_{\phi_*(\gamma)}'=Y\setminus\phi_*(\gamma)$ the connected
 surface obtained from $Y$ by removing $\phi_*(\gamma)$. We have that:
\begin{itemize}
\item[$(\star)$]$X_\gamma$ and $Y_{\phi_*(\gamma)}'$ have genus $g-1\ge 3$ and $g'-1\le 2g-2$ respectively. Moreover, observe that $Y_{\phi_*(\gamma)}'$ has two more punctures than $Y$; in particular, $Y_\gamma'$ has at least 3 punctures if it has genus $2g-2$.
\end{itemize}
By \eqref{eq:central1} and \eqref{eq:central2} we have epimorphisms
$$\Map(X_\gamma)\to\CZ_0(\phi(\delta_\gamma))\ \hspace{0.2cm} \text{ and } \hspace{0.2cm} 
\ \CZ_0(\delta_{\phi_*(\gamma)})\to\Map(Y_{\phi_*(\gamma)}').$$
In addition, we know that $\phi(\CZ_0(\delta_\gamma))\subset \CZ_0(\delta_{\phi_*(\gamma)})$ by Lemma 
\ref{prop:curves-invariant}. Composing all these homomorphisms we get a homomorphism 
$$\phi':\Map(X_\gamma)\to\Map(Y_{\phi_*(\gamma)}')$$
It follows from Lemma \ref{dehn-trivial-trivial} that the restriction of $\phi$ to $\CZ_0(\delta_\gamma)$ is not trivial because the latter contains a Dehn 
twist along a non-separating curve; Lemma \ref{trick2} implies that $\phi'$ is not trivial either.

Since $\delta_\gamma$ centralizes $\CZ_0(\delta_\gamma)$, it follows that 
$\phi'(\delta_\gamma)\in\Map(Y_{\phi_*(\gamma)}')$ centralizes the image of $\phi'$. 
Now, the definition of $\phi_*(\gamma)$ implies that some power of 
$\phi(\delta_\gamma)$ is a power of the Dehn twist $\delta_{\phi_*(\gamma)}$. Hence, the first claim 
of Lemma \ref{root3} yields that $\phi'(\delta_\gamma)$ has finite order, and thus
$\phi'(\delta_\gamma)\in\Map(Y_{\phi_*(\gamma)}')$ is a finite order element centralizing 
$\phi(\Map(X_\gamma))$. By ($\star$), Lemma \ref{trivial-centralizer1} applies and shows that 
$\phi'(\delta_\gamma)$ is in fact trivial. The final claim of Lemma \ref{root3} now shows that
 $\phi(\delta_{\gamma})$ is a power of $\delta_{\phi_*(\gamma)}$; in other words, there exists 
$N\in\BZ\setminus\{0\}$ such that $\phi(\delta_\gamma)=\delta_{\phi_*(\gamma)}^N$. 

\medskip

It remains to prove  that $N=\pm 1$. Notice that $N$ does not depend on the particular non-separating curve $\gamma$ since 
any two Dehn twists along non-separating curves are conjugate. Consider a collection $\gamma_1, \ldots, \gamma_n$
of non-separating curves on $X$, with $\gamma = \gamma_1$, such that the Dehn twists
$\delta_{\gamma_i}$ generate $\Map(X)$ and $i(\gamma_i, \gamma_j)\le 1$ for all $i,j$ (compare with Figure \ref{fig1}).
Observe that the $N$-th powers of the Dehn twists along the curves 
$\{\phi_*(\gamma_i)\}$ generate $\phi(\Map(X))$. It follows hence from the assumption that $\phi$ is irreducible that the curves
$\{\phi_*(\gamma_i)\}$ fill $Y$ (compare with the proof of Lemma \ref{same-surface-0} below). Thus, since $\phi_*$ preserves disjointness by Corollary \ref{map-phi}, there exists
$\gamma' \in \{\gamma_1, \ldots, \gamma_n\}$ such that $i(\gamma,\gamma')=1$ 
and $i(\phi_*(\gamma),\phi_*(\gamma'))\ge 1$.
Since $i(\gamma,\gamma')=1$, the Dehn twists
$\delta_\gamma$ and $\delta_{\gamma'}$ braid. Thus, the $N$-th powers 
$\delta_{\phi_*(\gamma)}^N=\phi(\delta_\gamma)$ and $\delta_{\phi_*(\gamma')}^N=\phi(\delta_{\gamma'})$ of 
the Dehn twists along $\phi_*(\gamma)$ and $\phi_*(\gamma')$ also braid. Since $i(\phi_*(\gamma),\phi_*(\gamma'))\ge 1$, 
Lemma \ref{inter-1} shows that $i(\phi_*(\gamma),\phi_*(\gamma'))=1$ and $N=\pm 1$, as desired. 
\end{proof}

Before moving on, we remark that in final argument of the proof of the irreducible case of Theorem 
\ref{main} we have proved the first claim of the following lemma:

\begin{lem}\label{intersection-number-0}
Suppose that $X$, $Y$ are as in the statement of 
Proposition \ref{main}, and let and $\phi:\Map(X)\to\Map(Y)$ be an irreducible homomorphism. Then the following holds:
\begin{itemize}
\item $i(\phi_*(\gamma), \phi_*(\gamma')) = 1$ for all curves $\gamma, \gamma'\subset X$ with $i(\gamma, \gamma')=1$.
\item If $a,b,c,d,x,y$ and $z$ is a lantern with the property that no two curves chosen among $a,b,c,d$ and $x$ 
separate $X$, then $\phi_*(a)$, $\phi_*(b)$, $\phi_*(c)$, $\phi_*(d)$, $\phi_*(x)$, $\phi_*(y)$ and $\phi_*(z)$ 
is a lantern in $Y$. 
\end{itemize}
\end{lem}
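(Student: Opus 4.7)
The first assertion is implicit in the final lines of the proof of Proposition \ref{main} just given: for any $\gamma,\gamma'\subset X$ with $i(\gamma,\gamma')=1$ the Dehn twists $\delta_\gamma$ and $\delta_{\gamma'}$ satisfy the braid relation, so their $\phi$-images $\delta_{\phi_*(\gamma)}^{\epsilon}$ and $\delta_{\phi_*(\gamma')}^{\epsilon}$ braid as well, where $\epsilon\in\{\pm 1\}$ is the common sign produced by Proposition \ref{main}. The closing paragraph of that proof already exhibits one such pair for which $\phi_*(\gamma)$ and $\phi_*(\gamma')$ have positive intersection, and Lemma \ref{inter-1} then forces their intersection number to equal one for this specific pair. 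Since any two pairs with geometric intersection one lie in a single $\Map(X)$-orbit and $\phi_*$ is equivariant by Corollary \ref{map-phi}, the equality propagates to every such pair.

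For the lantern statement, my plan is to apply Proposition \ref{fact-lantern} to the images $\phi_*(a),\ldots,\phi_*(z)$. Applying $\phi$ to the lantern relation $\delta_a\delta_b\delta_c\delta_d=\delta_x\delta_y\delta_z$ and using Proposition \ref{main} yields $\delta_{\phi_*(a)}^{\epsilon}\delta_{\phi_*(b)}^{\epsilon}\delta_{\phi_*(c)}^{\epsilon}\delta_{\phi_*(d)}^{\epsilon}=\delta_{\phi_*(x)}^{\epsilon}\delta_{\phi_*(y)}^{\epsilon}\delta_{\phi_*(z)}^{\epsilon}$. For $\epsilon=+1$ this is directly a lantern relation for the images. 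For $\epsilon=-1$ I would invert both sides and use that the four images $\phi_*(a),\phi_*(b),\phi_*(c),\phi_*(d)$ are pairwise disjoint (Corollary \ref{map-phi} applied to the pairwise-disjoint $a,b,c,d$), so their Dehn twists commute, in order to rearrange the result into lantern form up to a relabelling of the three cross curves.

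It remains to verify that the five curves $\phi_*(a),\phi_*(b),\phi_*(c),\phi_*(d),\phi_*(x)$ are pairwise distinct and pairwise disjoint, which are the remaining hypotheses of Proposition \ref{fact-lantern}. Pairwise disjointness is immediate from Corollary \ref{map-phi}, since $a,b,c,d,x$ are pairwise disjoint in the lantern. For pairwise distinctness I would argue by contradiction: if $\phi_*(\alpha)=\phi_*(\beta)$ for distinct $\alpha,\beta\in\{a,b,c,d,x\}$, then by Proposition \ref{lem:single-component} each $\phi_*(\cdot)$ is a single non-separating curve, so $\phi_*(\alpha)$ and $\phi_*(\beta)$ share a component; the hypothesis that no two curves among $\{a,b,c,d,x\}$ separate $X$ then makes Corollary \ref{kor:irreducible} applicable, contradicting the irreducibility of $\phi$. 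Proposition \ref{fact-lantern} then produces the desired lantern in $Y$.

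The only delicate point I foresee is the sign bookkeeping in the case $\epsilon=-1$, where the final relation only lands in lantern form after taking inverses and exploiting the commutativity of the four boundary Dehn twists; beyond this minor symbol shuffling, the argument is a direct assembly of results already established in the paper, and I do not anticipate any genuine geometric obstacle.
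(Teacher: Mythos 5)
Your proof takes essentially the same route as the paper's. For the first bullet, the paper simply remarks that it "has been proved" in the closing paragraph of the irreducible case of Proposition \ref{main}; your equivariance argument via Corollary \ref{map-phi} (all ordered pairs of curves meeting once form a single $\Map(X)$-orbit) is a valid and in fact cleaner way of filling that in. For the lantern bullet you follow the paper's argument exactly: apply $\phi$ to the lantern relation, observe disjointness of the five boundary images from Corollary \ref{map-phi}, get distinctness from the ``no two separate'' hypothesis together with Corollary \ref{kor:irreducible}, then invoke Proposition \ref{fact-lantern}.

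The one place where you introduce a subtlety the paper glosses over is the case $\epsilon=-1$. After inverting and commuting you land on $\delta_{\phi_*(a)}\delta_{\phi_*(b)}\delta_{\phi_*(c)}\delta_{\phi_*(d)}=\delta_{\phi_*(z)}\delta_{\phi_*(y)}\delta_{\phi_*(x)}$, so the fifth curve in the sense of Proposition \ref{fact-lantern} is $\phi_*(z)$, not $\phi_*(x)$. Your distinctness argument via Corollary \ref{kor:irreducible} requires that $X\setminus(z\cup\alpha)$ be connected for $\alpha\in\{a,b,c,d\}$, but the stated hypothesis only controls pairs drawn from $\{a,b,c,d,x\}$. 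So the "relabelling of the three cross curves" is not quite free. The clean resolution — and what the paper tacitly does by writing $\phi(\delta_\gamma)=\delta_{\phi_*(\gamma)}$ — is to normalize to $\epsilon=+1$ at the outset by post-composing $\phi$ with the automorphism of $\Map(Y)$ induced by an orientation-reversing homeomorphism of $Y$: this replaces $\phi_*$ by its image under a fixed homeomorphism of $Y$, which does not affect whether the seven image curves form a lantern, and reduces your $\epsilon=-1$ case to the $\epsilon=+1$ case.
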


We prove the second claim. By the irreducible case of Proposition \ref{main} we know that if 
$\gamma$ is any component of the lantern in question, then $\phi_*(\gamma)$ is a single curve and $\phi(\delta_\gamma)=\delta_{\phi_*(\gamma)}$. In particular notice that the Dehn-twists along $\phi_*(a)$, $\phi_*(b)$, $\phi_*(c)$, $\phi_*(d)$, $\phi_*(x)$, $\phi_*(y)$ and $\phi_*(z)$ satisfy the lantern relation. Since $a,b,c,d,x$ are pairwise
disjoint, Corollary \ref{map-phi} yields that the curves $\phi_*(a)$, $\phi_*(b)$, $\phi_*(c)$, $\phi_*(d)$, $\phi_*(x)$ are also
pairwise disjoint. Moreover, the irreducibility of $\phi$, the assumption that that no two curves chosen among $a,b,c,d$ and $x$ 
separate $X$, and Corollary 
\ref{kor:irreducible} imply that the curves $\phi_*(a)$, $\phi_*(b)$, $\phi_*(c)$, $\phi_*(d)$ and $\phi_*(x)$ 
are pairwise distinct. Thus, the claim follows from Proposition \ref{fact-lantern}.\qed
\medskip

We are now ready to treat the reducible case of Proposition \ref{main}.

\begin{proof}[Proof of Proposition \ref{main} for reducible $\phi$] 
Let $\phi:\Map(X)\to\Map(Y)$ be a non-trivial reducible homomorphism, and let $\eta$ be the maximal multicurve 
in $Y$ which is componentwise fixed by $\phi(\Map(X))$. Recall the exact sequence \eqref{eq:central3}:
$$1\to\BT_\eta\to\CZ_0(\BT_\eta)\to\Map(Y_\eta')\to 0$$
Lemma \ref{trick2} shows that $\phi(\Map(X))\subset\CZ_0(\BT_\eta)$ and that the composition 
$$\phi':\Map(X)\to\Map(Y_\eta')$$ 
of $\phi$ and the homomorphism $\CZ_0(\BT_\eta)\to\Map(Y_\eta')$ is not trivial. Observe that $\phi'$ is irreducible 
because $\eta$ was chosen to be maximal.

The surface $Y_\eta'$ may well be disconnected; if this is the case, $\Map(Y_\eta')$ is by definition the direct 
product of the mapping class groups of the connected components of $Y_\eta'$. Noticing that the sum of the genera 
of the components of $Y_\eta'$ is bounded above by the genus of $Y$, it follows from the bound $g'\le 2g-1$ 
and from Proposition \ref{blabla} that $Y_\eta'$ contains at a single component $Y_\eta''$ on which $\phi(\Map(X))$ 
acts nontrivially. Hence, we can apply the irreducible case of Proposition \ref{main} and deduce that 
$\phi':\Map(X)\to\Map(Y_\eta'')$ maps Dehn twists to possibly left Dehn twists. Conjugating $\phi$ by an outer automorphism of $\Map(X)$ we may assume without loss of generality that $\phi'$ maps Dehn twists to Dehn twists.

Suppose now that $a,b,c,d,x,y$ and $z$ form a lantern in $X$ as in Lemma \ref{intersection-number-0}; such a 
lantern exists because $X$ has genus at least $3$. By Lemma \ref{intersection-number-0} we obtain that the images
 of these curves under $\phi_*'$ also form a lantern. In other words, if $S\subset X$ is the four-holed sphere 
with boundary $a\cup b\cup c\cup d$ then there is an embedding $\iota:S\to Y_\eta''\subset Y_\eta'$ such that 
for any $\gamma\in\{a,\dots,z\}$ we have
$$\phi'(\delta_\gamma)=\delta_{\iota(\gamma)}$$
Identifying $Y_\eta''$ with a connected component of $Y_\eta'=Y\setminus \eta$ we obtain an embedding 
$\hat\iota:S\to Y$. We claim that for any $\gamma$ in the lantern $a,b,c,d,x,y,z$ we have 
$\phi(\delta_\gamma)=\delta_{\hat\iota(\gamma)}$. 

A priori we only have that, for any such $\gamma$, both $\phi(\delta_\gamma)$ and $\delta_{\hat\iota(\gamma)}$
 project to the same element $\delta_{\iota(\gamma)}$ under the homomorphism $\CZ_0(\BT_\eta)\to\Map(Y_\eta')$.
 In other words, there is $\tau_\gamma\in\BT_\eta$ with $\phi(\delta_\gamma)=\delta_{\hat\iota(\gamma)}\tau_\gamma$. 
Observe that since any two curves $\gamma,\gamma'$ in the lantern $a,b,c,d,x,y,z$ are non-separating, the 
Dehn twists $\delta_\gamma$ and $\delta_{\gamma'}$ are conjugate in $\Map(X)$. 
Therefore, their images under $\phi$ are also conjugate in $\phi(\Map(X))\subset\CZ_0(\BT_\eta)$. 
Since $\BT_\eta$ is central in $\CZ_0(\BT_\eta)$, it follows that in fact $\tau_\gamma=\tau_{\gamma'}$ 
for any two curves $\gamma$ and $\gamma'$ in the lantern. Denote by $\tau$ the element of $\BT_\eta$ so obtained. 

On the other hand, both $\delta_a,\dots,\delta_z$ and $\delta_{\hat\iota(a)},\dots,\delta_{\hat\iota(z)}$ 
satisfy the lantern relation and, moreover, $\tau$ commutes with everything. Hence
\begin{align*}
1 & = \phi(\delta_a)\phi(\delta_b)\phi(\delta_c)\phi(\delta_d)\phi(\delta_z)^{-1}\phi(\delta_y)^{-1}\phi(\delta_x)^{-1} =\\
& = \delta_{\hat\iota(a)}\tau\delta_{\hat\iota(b)}\tau\delta_{\hat\iota(c)}\tau\delta_{\hat\iota(d)}\tau\tau^{-1}\delta_{\hat\iota(z)}^{-1}\tau^{-1}\delta_{\hat\iota(y)}^{-1}\tau^{-1}\delta_{\hat\iota(x)}^{-1}=\\
& = \delta_{\hat\iota(a)}\delta_{\hat\iota(b)}\delta_{\hat\iota(c)}\delta_{\hat\iota(d)}\delta_{\hat\iota(z)}^{-1}\delta_{\hat\iota(y)}^{-1}\delta_{\hat\iota(x)}^{-1}\tau=\tau
\end{align*}
Hence, we have proved that 
$$\phi(\delta_a)=\delta_{\hat\iota(a)}\tau=\delta_{\hat\iota(a)}$$
In other words, the image under $\phi$ of the Dehn twist along some, and hence every, non-separating curve is a Dehn twist. 
\end{proof}

%

\section{Reducing to the irreducible}

In this section we explain how to reduce the proof of Theorem \ref{dogs-bollocks} to the case
of irreducible homomorphisms between mapping class groups of surfaces without boundary.

\subsection{Weak embeddings}
Observe there are no embeddings $X\to Y$ if $X$
has no boundary but $Y$ does (compare with Corollary \ref{kor-embedding} below). We are going to relax the definition of embedding to allow 
for this possibility. For this purpose, it is convenient to regard $X$ and $Y$ as possibly non-compact surfaces 
without marked points; recall that we declared ourselves to be free to switch between cusps, marked points and ends.

\begin{defi*}
Let $X$ and $Y$ be possibly non-compact surfaces of finite topological type without marked points. 
A {\em weak embedding} $\iota:X\to Y$ is a topological embedding of $X$ into $Y$.
\end{defi*}

Given two surfaces $X$ and $Y$ without marked points there are two, essentially unique, compact surfaces 
$\hat X$ and $\hat Y$ with sets $P_{\hat X}$ and $P_{\hat Y}$ of marked points and with 
$X=\hat X\setminus P_{\hat X}$ and $Y=\hat Y\setminus P_{\hat Y}$. We will say that a weak embedding 
$\iota:X\to Y$ is {\em induced by an embedding} $\hat\iota: (\hat X,P_{\hat X}) \to (\hat Y,P_{\hat Y})$ if 
there is a homeomorphism $f:Y\to Y$ which is isotopic to the identity relative to $P_{\hat Y}$,
and $\hat\iota\vert_{X}=f\circ\iota$.

It is easy to describe which weak embeddings are induced by embeddings: 
A weak embedding $\iota:X\to Y$ is induced by an embedding if and only if the image $\iota(\gamma)$ of every
 curve $\gamma\subset X$ which bounds a disk in $\hat X$ containing at most one marked point bounds a disk 
in $\hat Y$ which again contains at most one marked point. Since $\iota(\gamma)$ bounds a disk without punctures 
if $\gamma$ does, we can reformulate this equivalence in terms of mapping classes:

\begin{lem}\label{weak-to-real}
A weak embedding $\iota:X\to Y$ is induced by an embedding if and only if $\delta_{\iota(\gamma)}$ is trivial 
in $\Map(Y)$ for every, a fortiori non-essential, curve $\gamma\subset X$ which bounds a disk with a puncture.\qed
\end{lem}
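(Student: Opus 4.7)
For the forward direction, suppose $\iota$ is induced by an embedding $\hat\iota:\hat X\to\hat Y$, so $\hat\iota|_X=f\circ\iota$ for some homeomorphism $f:Y\to Y$ isotopic to the identity rel $P_{\hat Y}$. Given a curve $\gamma\subset X$ bounding a closed disk $D\subset\hat X$ whose unique marked point is $p\in P_{\hat X}$, the restriction $\hat\iota|_D$ is a topological embedding of the closed disk $D$ into $\hat Y$, so $\hat\iota(D)$ is a closed disk in $\hat Y$ bounded by $\hat\iota(\gamma)$. Any marked point $q\in P_{\hat Y}$ lying in $\hat\iota(D)$ must be of the form $q=\hat\iota(x)$ for some $x\in D$, and the defining condition $\hat\iota^{-1}(P_{\hat Y})\subset P_{\hat X}$ forces $x\in P_{\hat X}\cap D=\{p\}$. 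Hence $\hat\iota(D)$ contains at most the marked point $\hat\iota(p)$, so $\hat\iota(\gamma)$ is inessential in $Y$ and $\delta_{\hat\iota(\gamma)}=1$ in $\Map(Y)$. Since $f$ is isotopic to the identity rel $P_{\hat Y}$, the curves $\iota(\gamma)$ and $\hat\iota(\gamma)=f(\iota(\gamma))$ are isotopic in $Y$, and therefore $\delta_{\iota(\gamma)}=\delta_{\hat\iota(\gamma)}=1$.

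For the reverse direction, assume that $\delta_{\iota(\gamma)}=1$ in $\Map(Y)$ whenever $\gamma\subset X$ bounds in $\hat X$ a disk containing exactly one marked point. The plan is to build an injective continuous extension $\hat\iota:\hat X\to\hat Y$ with $\hat\iota|_X=\iota$ by defining the image of each puncture separately. Note that the condition $\hat\iota^{-1}(P_{\hat Y})\subset P_{\hat X}$ is automatic, since $\iota(X)\subset Y$ does not meet $P_{\hat Y}$. Fix pairwise disjoint closed disk neighborhoods $U_p\subset\hat X$ of the points $p\in P_{\hat X}$, with $\gamma_p:=\partial U_p\subset X$. By hypothesis, $\iota(\gamma_p)$ is inessential in $Y$ and so bounds a closed disk $D_p\subset\hat Y$ containing at most one marked point of $\hat Y$. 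Because $\iota$ embeds a collar of $\gamma_p$ into a collar of $\iota(\gamma_p)$, exactly one side of $\iota(\gamma_p)$ contains the punctured disk $\iota(U_p\setminus\{p\})$; choose $D_p$ to be that side. A Schoenflies-type argument then identifies $\iota(U_p\setminus\{p\})$ with $D_p\setminus\{q_p\}$ for a unique point $q_p\in D_p$, and $q_p$ must coincide with the marked point of $\hat Y$ in $D_p$ if there is one, since $\iota(X)$ avoids $P_{\hat Y}$. Setting $\hat\iota(p):=q_p$ yields a continuous extension; injectivity follows from the disjointness of the $U_p$'s together with the fact that $q_p\notin\iota(X)$, a consequence of $\iota$ being a homeomorphism onto its image.

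The main obstacle is the Schoenflies step in the reverse direction, namely the assertion that the embedded half-closed punctured disk $\iota(U_p\setminus\{p\})$ fills $D_p$ up to a single point. For tame (PL or smooth) embeddings this is immediate from the Schoenflies theorem, and reducing to that case is where most of the actual work is concentrated. In the purely topological setting one can argue instead by a nesting argument: take a sequence of curves $\gamma_p^n$ bounding disks $U_p^n$ in $\hat X$ with $\bigcap_n U_p^n=\{p\}$, observe that the corresponding disks $D_p^n\subset\hat Y$ form a nested decreasing sequence each containing at most one marked point of $\hat Y$, and use local compactness of $\hat Y$ together with the topological-embedding property of $\iota$ to conclude that $\bigcap_n D_p^n$ is a single point.
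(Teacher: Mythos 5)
Your forward direction is correct and matches the reasoning the authors sketch in the paragraph preceding the lemma.

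The reverse direction, however, has a genuine gap. You try to extend $\iota$ itself continuously to $\hat\iota\colon\hat X\to\hat Y$ by showing that $\iota(U_p\setminus\{p\})$ fills the disk $D_p$ up to a single point $q_p$, appealing to Schoenflies in the tame case and to a nesting argument in general. This assertion is simply false, even for smooth weak embeddings. Take $Y$ a closed surface of genus $g\geq 2$, fix a closed disk $\overline D\subset Y$, set $X=Y\setminus\overline D$ (a once-punctured genus-$g$ surface) and let $\iota\colon X\hookrightarrow Y$ be the inclusion. Every $\iota(\gamma_p^n)$ bounds a disk in $Y$ containing $\overline D$ and no marked points, so the hypothesis of the lemma holds; yet $\iota(U_p\setminus\{p\})=D_p\setminus\overline D$ is an open annulus, $\bigcap_n D_p^n=\overline D$ is a nondegenerate cell-like continuum, and $\iota$ admits no continuous extension to $\hat X\to\hat Y$. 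Neither the Schoenflies theorem nor "local compactness of $\hat Y$ together with the topological-embedding property of $\iota$" rules this out: nothing forces the end of $\iota(X)$ at $p$ to accumulate on a single point of $\hat Y$.

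This is exactly the place where the homeomorphism $f$ (the isotopy rel $P_{\hat Y}$ built into the definition of ``induced by an embedding'') must do real work. One should not try to extend $\iota$ directly. The correct outline is: form the nested intersection $K_p:=\bigcap_n D_p^n$, note that it is a decreasing intersection of disks each containing at most one marked point and hence is a nonempty, compact, contractible, non-separating continuum in $\hat Y$ containing at most one marked point; then produce an isotopy of weak embeddings collapsing each $K_p$ to a single point $q_p$, and only afterwards define $\hat\iota(p)=q_p$. Your argument omits the isotopy entirely, and without it the conclusion is false.

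Separately, and more minor: the phrase "choose $D_p$ to be that side" is not a free choice. You need the side of $\iota(\gamma_p)$ containing $\iota(U_p\setminus\{p\})$ to actually be a disk with at most one marked point, i.e.\ the disk guaranteed by inessentiality. This requires an argument: in the paper's setting it follows because $\iota(X\setminus U_p)$ has positive genus and hence cannot lie in a disk, forcing it to occupy the non-disk side; but without a lower bound on the topology of $X$ this can fail, so the sentence needs a justification.
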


Notice that in general a weak embedding $X\to Y$ does not induce a homomorphism $\Map(X)\to\Map(Y)$. On the other hand, the following proposition asserts that if a homomorphism $\Map(X)\to\Map(Y)$ is, as far as it goes, induced by a weak embedding, then it is induced by an actual embedding.

\begin{prop}\label{weak-hom}
Let $X$ and $Y$ be surfaces of finite type and genus at least $3$. Suppose that $\phi:\Map(X)\to\Map(Y)$ is a 
homomorphism such that there is a weak embedding $\iota:X\to Y$ with the property that for every non-separating 
curve $\gamma\subset X$ we have $\phi(\delta_\gamma)=\delta_{\iota(\gamma)}$. Then $\phi$ is induced by an embedding 
$X\to Y$.  
\end{prop}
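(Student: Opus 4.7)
The plan is to first upgrade the weak embedding $\iota$ to an honest embedding by invoking Lemma \ref{weak-to-real}, and then use the Dehn-Lickorish generation theorem to conclude that this embedding induces the given homomorphism $\phi$. According to Lemma \ref{weak-to-real}, the first step reduces to proving that $\delta_{\iota(\gamma)}=1$ in $\Map(Y)$ for every curve $\gamma\subset X$ that bounds a disk containing exactly one puncture.

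Fix such a curve $\gamma$ and let $D\subset X$ denote the once-punctured disk it bounds. The main step will be to construct a lantern supported on a $4$-holed sphere $S\subset X$ with $\D S=\gamma\cup b\cup c\cup d$, such that the three remaining boundary curves $b,c,d$ and the three interior lantern curves $x,y,z$ are all non-separating in $X$. Since the genus of $X$ is at least $3$, this is a standard surface topology construction: one arranges $S$ so that $X\setminus(\bar D\cup S)$ is connected and has enough complexity to prevent any of these six curves from separating $X$. I expect this construction to be the main technical point of the argument; everything else will be essentially formal.

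With such a lantern in hand, the fact that $\delta_\gamma=1$ in $\Map(X)$ (because $\gamma$ is inessential) reduces the lantern relation in $\Map(X)$ to $\delta_b\delta_c\delta_d=\delta_x\delta_y\delta_z$. Applying $\phi$ and using the hypothesis $\phi(\delta_\alpha)=\delta_{\iota(\alpha)}$ for non-separating $\alpha$ gives $\delta_{\iota(b)}\delta_{\iota(c)}\delta_{\iota(d)}=\delta_{\iota(x)}\delta_{\iota(y)}\delta_{\iota(z)}$ in $\Map(Y)$. On the other hand, since $\iota$ is a topological embedding, it transports the $4$-holed sphere $S$ and its seven lantern curves to a lantern configuration in $Y$, so the full lantern relation $\delta_{\iota(\gamma)}\delta_{\iota(b)}\delta_{\iota(c)}\delta_{\iota(d)}=\delta_{\iota(x)}\delta_{\iota(y)}\delta_{\iota(z)}$ also holds in $\Map(Y)$. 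Comparing the two identities yields $\delta_{\iota(\gamma)}=1$, as required.

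By Lemma \ref{weak-to-real}, $\iota$ is therefore induced by some embedding $\hat\iota:X\to Y$, which in turn induces a homomorphism $\hat\iota_\#:\Map(X)\to\Map(Y)$ characterized by $\hat\iota_\#(\delta_\alpha)=\delta_{\iota(\alpha)}$ for every curve $\alpha\subset X$. The hypothesis on $\phi$ then gives $\hat\iota_\#(\delta_\alpha)=\phi(\delta_\alpha)$ for all non-separating $\alpha\subset X$. Since $X$ has genus at least $3$, Theorem \ref{lickorish} asserts that Dehn twists along non-separating curves generate $\Map(X)$, so $\phi=\hat\iota_\#$; that is, $\phi$ is induced by the embedding $\hat\iota$.
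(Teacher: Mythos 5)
Your proposal is correct and follows essentially the same route as the paper's proof: both arguments hinge on exhibiting a lantern in $X$ in which $\gamma$ is the unique inessential curve and the other six curves are non-separating, applying $\phi$ to the degenerate lantern relation, comparing with the genuine lantern relation in $Y$ coming from $\iota$ to conclude $\delta_{\iota(\gamma)}=1$, and then finishing with Lemma \ref{weak-to-real} and the Dehn--Lickorish generating set.
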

\begin{proof}
Suppose that $a\subset X$ bounds a disk with one puncture and consider the lantern in $X$ given 
in Figure \ref{fig-lantern-cusp}. We denote the bold-printed curves by $a,b,c,d$ and the dotted lines by $x,y,z$;
observe that $a$ is the  only non-essential curve in the lantern. 

\begin{figure}[tbh] \unitlength=1in
\begin{center} 
\includegraphics[width=3.5in]{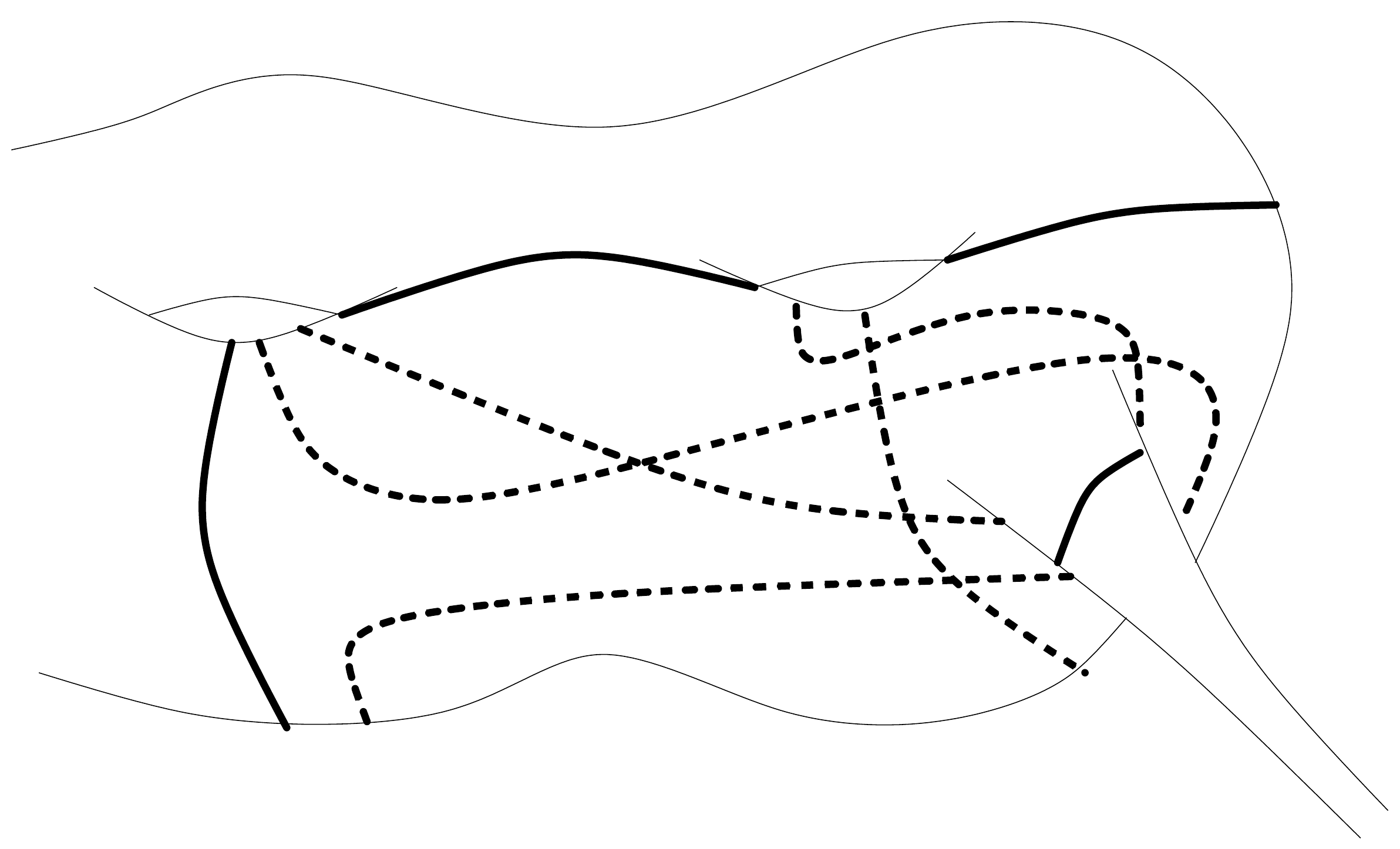}
\end{center}
\caption{A lantern in $X$ where one of the curves is non-essential and all the others are non-separating.}
\label{fig-lantern-cusp}
\end{figure}
 
By the lantern relation and because $a$ is not essential we have
\begin{equation}\label{patata1}
1=\delta_a=\delta_x\delta_y\delta_z\delta_b^{-1}\delta_c^{-1}\delta_d^{-1}
\end{equation}
The images under $\iota$ of the curves $a,b,c,d,x,y,z$ also form a lantern in $Y$ and hence we obtain
\begin{equation}\label{patata2}
\delta_{\iota(a)}=\delta_{\iota(x)}\delta_{\iota(y)}\delta_{\iota(z)}\delta_{\iota(b)}^{-1}
\delta_{\iota(c)}^{-1}\delta_{\iota(d)}^{-1}
\end{equation}
The assumption in the Proposition implies that the image under the homomorphism $\phi$ of the right side of 
\eqref{patata1} is equal to the right side of \eqref{patata2}. This implies that $\delta_{\iota(a)}$ is trivial. 
Lemma \ref{weak-to-real} shows now that the weak embedding $\iota$ is induced by an embedding, which we again 
denote by $\iota$. Let $\iota_\#$ the homomorphism induced by $\iota$. Since,
 by assumption,   $\phi(\delta_\gamma)=\delta_{\iota(\gamma)}=\iota_\#(\delta_\gamma)$ for all non-separating curves $\gamma$,
 and since the Dehn twists along these curves
 generate the mapping class group, we deduce that $\phi=\iota_\#$. In particular, $\phi$ is induced by an embedding, 
as we needed to prove.
\end{proof}

\subsection{Down to the irreducible case}
Armed with Proposition \ref{weak-hom}, we now prove that it suffices to establish Theorem \ref{dogs-bollocks}
for irreducible homomorphisms. Namely, we have:

\begin{lem}\label{tetete1}
Suppose that Theorem \ref{dogs-bollocks} holds for irreducible homomorphisms. Then it also holds for reducible ones.
\end{lem}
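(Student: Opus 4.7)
The plan is as follows. Let $\phi:\Map(X)\to\Map(Y)$ be a non-trivial reducible homomorphism. I would first pick a maximal multicurve $\eta\subset Y$ componentwise fixed by $\phi(\Map(X))$; by Lemma \ref{trick2}, the image lies in $\CZ_0(\BT_\eta)$ and the composition
$$\phi':\Map(X)\to\Map(Y'_\eta)$$
with \eqref{eq:central3} is non-trivial. Exactly as in the reducible case of Proposition \ref{main}, the bound $g'\le 2g-1$ and Proposition \ref{blabla} force $\phi(\Map(X))$ to act non-trivially on exactly one component $Y''_\eta$ of $Y'_\eta$, and the resulting $\phi':\Map(X)\to\Map(Y''_\eta)$ is irreducible by maximality of $\eta$.

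Next I would apply the assumed irreducible case of Theorem \ref{dogs-bollocks} to $\phi'$, obtaining an embedding $\iota':X\to Y''_\eta$ with $\phi'=\iota'_\#$. Composing with the inclusion $Y''_\eta\hookrightarrow Y$ yields a weak embedding $\iota:X\to Y$ in the sense of the previous subsection. The goal is then to verify the hypothesis of Proposition \ref{weak-hom}, namely that $\phi(\delta_\gamma)=\delta_{\iota(\gamma)}$ for every non-separating $\gamma\subset X$. Since both $\phi(\delta_\gamma)$ and $\delta_{\iota(\gamma)}$ lie in $\CZ_0(\BT_\eta)$ and project to the same element $\delta_{\iota'(\gamma)}\in\Map(Y''_\eta)$, they differ by an element of $\BT_\eta$; write
$$\phi(\delta_\gamma)=\delta_{\iota(\gamma)}\cdot\tau_\gamma,\qquad \tau_\gamma\in\BT_\eta.$$

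The key step is to show $\tau_\gamma=1$. Since any two Dehn twists along non-separating curves are conjugate in $\Map(X)$, and $\BT_\eta$ is central in $\CZ_0(\BT_\eta)\supset\phi(\Map(X))$, conjugation leaves the $\BT_\eta$-factor untouched, so $\tau_\gamma=\tau$ is independent of $\gamma$. Now I would fix a non-separating lantern $a,b,c,d,x,y,z\subset X$ as in Lemma \ref{intersection-number-0}; since $\iota'(a),\ldots,\iota'(z)$ form a lantern in $Y''_\eta$, so do their images under $\iota$ in $Y$. Applying $\phi$ to the lantern relation $\delta_a\delta_b\delta_c\delta_d=\delta_x\delta_y\delta_z$ and using that $\tau$ commutes with everything gives
$$\delta_{\iota(a)}\delta_{\iota(b)}\delta_{\iota(c)}\delta_{\iota(d)}\,\tau^4=\delta_{\iota(x)}\delta_{\iota(y)}\delta_{\iota(z)}\,\tau^3,$$
and the lantern relation in $Y$ for $\iota(a),\ldots,\iota(z)$ forces $\tau=1$.

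Thus $\phi(\delta_\gamma)=\delta_{\iota(\gamma)}$ for every non-separating $\gamma$, and Proposition \ref{weak-hom} upgrades the weak embedding $\iota$ to an actual embedding $X\to Y$ inducing $\phi$. The main obstacle is the bookkeeping that isolates a single component $Y''_\eta$ on which the irreducible hypothesis can be applied, together with the vanishing of the multitwist factor $\tau$; both are handled using the centrality of $\BT_\eta$ in $\CZ_0(\BT_\eta)$ combined with the lantern relation, exactly as in the reducible case of Proposition \ref{main}.
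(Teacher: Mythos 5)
Your proposal is correct and follows essentially the same route as the paper: pick a maximal multicurve $\eta$ componentwise fixed by the image, pass to the irreducible $\phi':\Map(X)\to\Map(Y'_\eta)$, apply the assumed irreducible case to obtain an embedding, view it as a weak embedding into $Y$, and conclude with Proposition~\ref{weak-hom}. The only difference is a matter of citation economy: the paper concludes the step $\phi(\delta_\gamma)=\delta_{\hat\iota(\gamma)}$ by simply invoking the already-established reducible case of Proposition~\ref{main} (which guarantees $\phi(\delta_\gamma)=\delta_{\phi_*(\gamma)}$ is a Dehn twist and $\phi_*(\gamma)=\phi'_*(\gamma)=\hat\iota(\gamma)$ under the identification of $Y'_\eta$ with a subsurface of $Y$), whereas you re-derive that step from scratch by the centrality-plus-lantern argument showing $\tau=1$ — which is, in fact, exactly the argument the paper uses in its proof of the reducible case of Proposition~\ref{main}. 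Both are valid; yours is slightly more self-contained but repeats work the paper has already done.
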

\begin{proof}

Let $X$ and $Y$ be surfaces as in the statement of Theorem \ref{dogs-bollocks} and suppose that 
$\phi:\Map(X)\to\Map(Y)$ is a non-trivial reducible homomorphism. Let $\eta$ be a maximal multicurve in $Y$ 
whose every component of $\eta$ is invariant under $\phi(\Map(X))$; by Lemma 
\ref{trick2}, $\phi(\Map(X))\subset\CZ_0(\BT_\eta)$. Consider, as in the proof of Proposition \ref{main}, the composition
$$\phi':\Map(X)\to\Map(Y_\eta')$$
of $\phi$ and the homomorphism in \eqref{eq:central3}. Lemma \ref{trick} shows that $\phi'$ is non-trivial; moreover, it is irreducible 
by the maximality of $\eta$. Now, Proposition \ref{main} implies that for any $\gamma$ non-separating both $\phi(\delta_\gamma)=\delta_{\phi_*(\gamma)}$ and $\phi'(\delta_\gamma)=\delta_{\phi_*'(\gamma)}$ are Dehn twists. As in the proof of the reducible case of Proposition \ref{main} we can consider $Y_\eta'=Y\setminus\eta$ as a subsurface of $Y$. Clearly, $\phi_*(\gamma)=\phi_*'(\gamma)$ after this identification. 

Assume that Theorem \ref{dogs-bollocks} holds for irreducible homomorphisms. Since $\phi'$ is irreducible, we 
obtain an embedding
$$\iota:X\to Y_\eta'$$
inducing $\phi'$. Consider the embedding $\iota:X\to Y_\eta'$ as a weak embedding $\hat\iota:X\to Y$. By the 
above, $\phi(\delta_\gamma) = \delta_{\hat\iota(\gamma)}$, for every $\gamma\subset X$ non-separating. 
Finally, Proposition \ref{weak-hom} implies that $\phi$ is induced by an embedding.
\end{proof}

\subsection{No factors}
Let $\phi:\Map(X)\to\Map(Y)$ be a homomorphism as in the statement of Theorem \ref{dogs-bollocks}. We will say that
$\phi$ {\em factors} 
if there are a surface $\bar X$, an embedding $\bar\iota:X\to\bar X$, and a homomorphism $\bar\phi:\Map(\bar X)\to\Map(Y)$ 
such that the following diagram commutes:
\begin{equation}\label{eq-factor}
\xymatrix{
\Map(X)\ar[d]_{\bar\iota_\#}\ar[rd]^\phi & \\
\Map(\bar X)\ar[r]_{\bar\phi} &\Map(Y)
}
\end{equation}
Since the composition of two embeddings is an embedding, we deduce that $\phi$ is induced by an embedding if $\bar\phi$ is. 
Since a homomorphism $\Map(X) \to \Map(Y)$ may factor only finitely many times, we obtain:  


\begin{lem}\label{tetete2}
If Proposition \ref{main} holds for homomorphisms $\phi:\Map(X)\to\Map(Y)$ which do not factor, then it holds in full 
generality.\qed
\end{lem}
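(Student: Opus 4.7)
The plan is to factor $\phi$ maximally and invoke the hypothesis at the top of the resulting tower. Let $\phi:\Map(X)\to\Map(Y)$ be a nontrivial homomorphism satisfying the hypotheses of Proposition \ref{main}. If $\phi$ does not factor, we are done; otherwise, write $\phi=\bar\phi\circ\bar\iota_\#$ with $\bar\iota:X\to\bar X$ a non-trivial embedding and $\bar\phi:\Map(\bar X)\to\Map(Y)$ a non-trivial homomorphism, and repeat the procedure with $\bar\phi$. The first thing to verify is that this process terminates, as asserted immediately before the lemma. Any embedding is non-decreasing in genus, so by Proposition \ref{blabla}, applied at every stage (which remains non-trivial since the composition is $\phi$), the genus of the surfaces in the tower is bounded above by the genus of $Y$. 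Once the genus stabilizes, each further non-trivial embedding step must strictly change the Euler characteristic or the number of marked points of the underlying compact surface, and both of these are controlled by $Y$. Hence only finitely many factorization steps can occur, and we end with a commutative diagram \eqref{eq-factor} in which $\bar\phi$ does not factor.

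Because $\bar\iota:X\to\bar X$ is an embedding, $\bar X$ has genus at least that of $X$, and so the hypotheses of Proposition \ref{main} on $\bar X$ and $Y$ are inherited from those on $X$ and $Y$: indeed $g(\bar X)\ge g\ge 6$, the bound $g(Y)\le 2g-1\le 2g(\bar X)-1$ remains valid, and the closed/non-closed exception on $Y$ is unchanged. Since $\phi=\bar\phi\circ\bar\iota_\#$ is non-trivial, so is $\bar\phi$, and by the hypothesis of the lemma, Proposition \ref{main} applies to $\bar\phi$.

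Now fix a non-separating curve $\gamma\subset X$. I claim that $\bar\iota(\gamma)\subset\bar X$ is also non-separating. Since $|X|\setminus\gamma$ is connected, its image $\bar\iota(|X|)\setminus\bar\iota(\gamma)$ is a connected subset of $|\bar X|\setminus\bar\iota(\gamma)$, and the complement $|\bar X|\setminus\bar\iota(|X|)$ is attached to it along (part of) the topological boundary of $\bar\iota(|X|)$; using Proposition \ref{prop-embedding} to decompose $\bar\iota$ into elementary embeddings, each type of attachment preserves connectedness, and hence $|\bar X|\setminus\bar\iota(\gamma)$ is connected. By the defining property of $\bar\iota_\#$, we have $\bar\iota_\#(\delta_\gamma)=\delta_{\bar\iota(\gamma)}$, and therefore
$$\phi(\delta_\gamma)=\bar\phi\bigl(\bar\iota_\#(\delta_\gamma)\bigr)=\bar\phi(\delta_{\bar\iota(\gamma)}),$$
which by the non-factoring case of Proposition \ref{main} applied to $\bar\phi$ is a Dehn twist along a non-separating curve in $Y$. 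This is precisely the conclusion of Proposition \ref{main} for $\phi$.

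The only subtle step is the termination of the factorization tower; the remainder is a formal diagram chase combined with the elementary observation that non-separating curves are pushed forward to non-separating curves by any embedding into a larger surface.
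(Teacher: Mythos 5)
Your proof has the correct structure and follows the paper's implicit reasoning (the lemma is stated with a \qed, preceded only by the bare assertion that ``a homomorphism may factor only finitely many times''): factor $\phi$ maximally, apply the non-factoring case, and pull the conclusion back. The verification that the genus hypotheses are inherited by the pair $(\bar X,Y)$, and the observation that an embedding carries a non-separating curve to a non-separating curve, so that $\phi(\delta_\gamma)=\bar\phi(\delta_{\bar\iota(\gamma)})$ is a Dehn twist along a non-separating curve of $Y$, are both correct.

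However, your termination argument has a genuine gap: the claim that, once the genus stabilizes, the Euler characteristic and the number of marked points of the intermediate surfaces ``are controlled by $Y$'' is false. Take $Y$ closed of genus $g$, let $X_n$ be the genus-$g$ surface with $n+1$ boundary components and no punctures, let $\iota_n:X_n\to X_{n+1}$ be an anannular subsurface embedding whose complement is a pair of pants attached along one boundary circle, and let $\phi_n:\Map(X_n)\to\Map(Y)$ be the non-trivial homomorphism obtained by capping off all boundary components. Then $\phi_n=\phi_{n+1}\circ(\iota_n)_\#$ for every $n$, so $\phi_0$ admits an infinite factorization tower with $g(X_n)=g(Y)$ for all $n$ but $|\chi(X_n)|\to\infty$. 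Thus ``every tower terminates'' is simply not true. What the lemma requires is the weaker statement that some choice of factorization reaches a non-factoring homomorphism (in the example, $\phi_0$ factors directly through $\Id_{\Map(Y)}$, which does not factor since $Y$ is closed), and this still needs an argument that your iteration does not supply. One route is to restrict attention to the factorizations that actually arise in the proof of Theorem \ref{dogs-bollocks} --- filling a puncture (Lemma \ref{fill-cusp}) and deleting boundary components (Corollary \ref{tetete3}) --- each of which strictly decreases the number of punctures plus boundary components, so those chains terminate. The paper does not prove the finiteness assertion either, so the gap is shared; but your specific ``controlled by $Y$'' justification does not survive scrutiny.
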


Our next step is to prove that any irreducible homomorphism $\phi:\Map(X)\to\Map(Y)$ factors if $X$ has boundary.
We need to establish the following result first:

\begin{lem}\label{trivial-centralizer}
Suppose that $X$ and $Y$ are as in the statement of Theorem \ref{dogs-bollocks} and let $\phi:\Map(X)\to\Map(Y)$ be an
irreducible homomorphism. Then the centralizer of $\phi(\Map(X))$ in $\Map(Y)$ is trivial.
\end{lem}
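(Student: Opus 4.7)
The plan is to show that any $f\in\Map(Y)$ centralizing $\phi(\Map(X))$ must be trivial, by ruling out each Nielsen--Thurston type in turn. Irreducibility of $\phi$ forces $\D Y=\emptyset$, and under the hypotheses of Theorem \ref{dogs-bollocks} (either $g'\le 2g-2$, or $g'=2g-1$ with $Y$ not closed, hence with at least one marked point), Lemma \ref{trivial-centralizer1} already tells us that the centralizer is torsion-free. So it suffices to show that $f$ cannot have infinite order.

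By Proposition \ref{main}, for every non-separating $\gamma\subset X$ we have $\phi(\delta_\gamma)=\delta_{\phi_*(\gamma)}^{\pm 1}$; consequently $f$ commutes with each such Dehn twist and hence fixes every isotopy class $[\phi_*(\gamma)]$. I will use two facts from the proof of Proposition \ref{main}: first, that a finite subcollection of $\{\phi_*(\gamma)\}$---the images of the Humphries generators---fills $Y$; and second, that by Lemma \ref{intersection-number-0}, $i(\phi_*(\gamma),\phi_*(\gamma'))=1$ whenever $i(\gamma,\gamma')=1$. In particular, choosing a partnered pair, $\phi(\Map(X))$ contains two braiding Dehn twists, hence a subgroup isomorphic to a quotient of the braid group $B_3$ that contains a non-abelian free group. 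This subgroup is not virtually cyclic, so $f$ cannot be pseudo-Anosov (whose centralizer in $\Map(Y)$ would be virtually cyclic).

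Hence $f$ is reducible. Let $\sigma\subset Y$ be its canonical reduction system, which is nonempty; after replacing $f$ by a suitable power (still centralizing $\phi(\Map(X))$), we may assume $f$ is pure: it fixes each component of $\sigma$ and acts on each piece $P$ of $Y\setminus\sigma$ either as the identity or as a pseudo-Anosov. I claim $f$ is in fact a nontrivial multitwist supported on some $\sigma'\subseteq\sigma$. Indeed, no $P$ can be a disk or once-punctured disk, as otherwise a component of $\sigma=\D P$ would be inessential; and since $\{\phi_*(\gamma)\}$ fills $Y$, each $P$ must contain an essential curve of the form $\phi_*(\gamma)$ in its interior (otherwise $P$ itself would be a component of the complement of a filling system, hence a disk). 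Such a curve is fixed by $f$, so $f|_P$ cannot be pseudo-Anosov and is therefore the identity; consequently, $f$ is a multitwist, and since $f$ has infinite order, its support $\sigma'\subseteq\sigma$ is nonempty.

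To reach the contradiction, recall that $\delta_c$ commutes with a multitwist supported on $\sigma'$ precisely when $c$ is disjoint from $\sigma'$ or $c$ is a component of $\sigma'$. For any non-separating $\gamma\subset X$, choose a partner $\gamma'$ with $i(\gamma,\gamma')=1$; the curves $\phi_*(\gamma)$ and $\phi_*(\gamma')$ are then distinct and intersect once. If $\phi_*(\gamma)\in\sigma'$, then $\phi_*(\gamma)\in\sigma'\setminus\{\phi_*(\gamma')\}$, and commutation of $\delta_{\phi_*(\gamma')}$ with $f$ would force $\phi_*(\gamma')$ to be disjoint from $\phi_*(\gamma)$, contradicting $i=1$. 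Hence every $\phi_*(\gamma)$ is disjoint from $\sigma'$. But a finite subcollection of $\{\phi_*(\gamma)\}$ fills $Y$, so its complement is a disjoint union of disks and punctured disks containing no essential curves; as $\sigma'$ lies in this complement, we conclude $\sigma'=\emptyset$, contradicting the infinite order of $f$. The main obstacle in the argument is the reducible case: one must simultaneously use the filling property to force $f$ onto the reduction system as a multitwist, and the intersection-one partners furnished by Lemma \ref{intersection-number-0} to rule out the multitwist itself.
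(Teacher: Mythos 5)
Your proof is correct and takes a genuinely different route from the paper's in the key step. The two arguments coincide in the opening moves: irreducibility forces $\D Y=\emptyset$, Lemma \ref{trivial-centralizer1} (whose hypotheses are a strict subset of those of Theorem \ref{dogs-bollocks}) rules out finite-order elements, and commuting with the Dehn twist $\phi(\delta_\gamma)$ rules out pseudo-Anosov. Both therefore arrive at an infinite-order reducible element $f$ centralizing $\phi(\Map(X))$, with canonical reduction system $\sigma$. From here the paper argues \emph{on the image side}: since $\phi(\Map(X))$ commutes with $f$, it preserves $\sigma$; one then bounds the number of complementary pieces and their boundary components by Euler characteristic and invokes Paris' Theorem \ref{thm:luisito} to conclude that $\phi(\Map(X))$ fixes some component of $\sigma$, contradicting irreducibility --- an argument running parallel to Lemma \ref{prop:curves-invariant}. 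You instead argue \emph{with the element $f$ itself}: the curves $\phi_*(\gamma)$ are fixed by $f$, hence (after passing to a pure power) disjoint from $\sigma$ and contained in pieces of $Y\setminus\sigma$; the filling property (Lemma \ref{same-surface-0}) forces every piece to contain one, so $f$ is a multitwist supported on $\sigma'\subseteq\sigma$; then the intersection-one pairs guaranteed by Lemma \ref{intersection-number-0} show no $\phi_*(\gamma)$ lies in $\sigma'$, so all of them are disjoint from $\sigma'$, so $\sigma'$ sits in the complement of a filling collection and must be empty. Your route trades Paris' theorem and the counting argument for the filling and intersection-one data already established in the proof of Proposition \ref{main}, which makes it arguably more self-contained and avoids the Euler-characteristic estimate on the number of pieces. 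One step to tighten: the assertion that each $\phi_*(\gamma)$ is disjoint from $\sigma$ (rather than merely fixed by $f$) should be flagged explicitly --- it uses the standard property of canonical reduction systems that a pure element's invariant curves can be isotoped off its CRS --- and the ``essential, non-peripheral in its interior'' claim then needs that $\phi_*(\gamma)$ is not isotopic to a component of $\sigma$ in the case under consideration, which your intersection-one argument eventually delivers. These are small omissions of justification rather than gaps.
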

\begin{proof}
Suppose, for contradiction, that there is a non-trivial element $f$ in $\CZ(\phi(\Map(X)))$; we will show
that $\phi$ is reducible. Noticing that 
the genus bounds in Theorem \ref{dogs-bollocks} are more generous than those in Lemma \ref{trivial-centralizer1}, 
we deduce from the latter that $f$ has infinite order. 
Let $\gamma \subset X$ be a non-separating curve
and recall that $\phi(\delta_\gamma)$ is a Dehn twist by Proposition \ref{main}.
Since $f$ commutes with 
$\phi(\delta_\gamma)$ it follows that $f$ is not pseudo-Anosov. In particular, $f$ must be reducible; let $\eta$ 
be the canonical reducing multicurve associated to $f$ \cite{BLM}. Since $\phi(\Map(X))$ commutes with $f$ we deduce 
that $\phi(\Map(X))$ preserves $\eta$. We will prove that $\phi(\Map(X))$ fixes some component of $\eta$, obtaining hence a contradiction to the assumption that $\phi$ is irreducible. The arguments 
are very similar to the arguments in the proof of Lemma \ref{induction} and Lemma \ref{prop:curves-invariant}.

First, notice that the same arguments as the ones used to prove Lemma \ref{induction} imply 
that some component of $\eta$ is fixed if some component of $Y\setminus\eta$ is a disk or an annulus. Suppose that this is not the case. Then $Y\setminus\eta$ has at most $2g'-2\le 4g-4$ 
components. Hence Theorem \ref{thm:luisito} implies that $\phi(\Map(X))$ fixes every component of $Y\setminus\eta$. Using again that no component of $Y\setminus\eta$ is a disk or an annulus we deduce that every such component $C$
 has at most $2g'+2\le4g-2$ boundary components. Hence Theorem \ref{thm:luisito} implies that $\phi(\Map(X))$ 
fixes every component of $\D C\subset\eta$. We have proved that some component of $\eta$ is fixed by $\phi(\Map(X))$ 
and hence that $\phi$ is reducible, as desired.
\end{proof}

We can now prove:

\begin{kor}\label{tetete3}
Suppose that $X$ and $Y$ are as in Theorem \ref{dogs-bollocks} and that $\D X\neq\emptyset$. Then every
irreducible homomorphism $\phi:\Map(X)\to\Map(Y)$ factors.
\end{kor}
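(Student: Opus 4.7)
The plan is to factor $\phi$ through the surface $\bar X$ obtained from $X$ by deleting a single boundary component. Since $\D X \neq \emptyset$, pick any $\beta \subset \D X$. The resulting inclusion yields an embedding $\bar\iota : X \to \bar X$ in the paper's sense, and the Birman-type sequence \eqref{eq:birman3} reads
$$1 \to \langle \delta_\beta \rangle \to \Map(X) \xrightarrow{\bar\iota_\#} \Map(\bar X) \to 1.$$
Thus producing the required diagram \eqref{eq-factor} reduces to verifying that $\delta_\beta \in \Ker(\phi)$.

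The key observation is that $\delta_\beta$ lies in the center of $\Map(X)$: since $X$ has genus $g \geq 6 \geq 3$, Theorem \ref{center} applies and places every boundary Dehn twist in the center of $\Map(X)$. Consequently $\phi(\delta_\beta)$ commutes with every element of $\phi(\Map(X))$, i.e.\ it belongs to the centralizer $\CZ(\phi(\Map(X)))$ in $\Map(Y)$. Now $\phi$ is irreducible by hypothesis, and the genera of $X,Y$ satisfy the bounds of Theorem \ref{dogs-bollocks}; this places us exactly in the setting of Lemma \ref{trivial-centralizer}, which asserts that $\CZ(\phi(\Map(X)))$ is trivial. Therefore $\phi(\delta_\beta)=1$, and the universal property of the quotient displayed above provides a unique homomorphism $\bar\phi : \Map(\bar X) \to \Map(Y)$ with $\phi = \bar\phi \circ \bar\iota_\#$. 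This is the desired factorization.

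The only substantive step is the centralizer computation of Lemma \ref{trivial-centralizer}, and that work has already been carried out. Conditional on it, the corollary is a packaging of three ingredients already in hand: the centrality of boundary Dehn twists provided by Theorem \ref{center}, the triviality of centralizers of irreducible $\phi$ from Lemma \ref{trivial-centralizer}, and the Birman exact sequence \eqref{eq:birman3} for deleting a boundary component. No genuine obstacle remains.
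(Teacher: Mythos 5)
Your proof is correct and follows essentially the same route as the paper: both rely on the centrality of boundary Dehn twists (Theorem \ref{center}), the triviality of the centralizer of $\phi(\Map(X))$ for irreducible $\phi$ (Lemma \ref{trivial-centralizer}), and the exact sequence governing boundary deletion to conclude the factorization. The only cosmetic difference is that the paper deletes all boundary components in one step via the sequence $1\to\BT_{\D X}\to\Map(X)\to\Map(X')\to 1$ from Theorem \ref{center}, whereas you delete a single component using \eqref{eq:birman3}; both yield a valid factoring, so this is immaterial.
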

\begin{proof}
Let $X'=X\setminus\D X$ be the surface obtained from $X$ by deleting the boundary and consider the associated embedding $\iota:X\to X'$. By Theorem \ref{center}, the homomorphism $\iota_\#:\Map(X)\to\Map(X')$ fits in the exact sequence
$$1\to\BT_{\D X}\to\Map(X)\to\Map(X')\to 1$$
where $\BT_{\D X}$ is the center of $\Map(X)$. It follows from Lemma \ref{trivial-centralizer} that if $\phi$ is irreducible, then $\BT_{\D X}\subset\Ker(\phi)$. We have proved that $\phi$ descends to $\phi':\Map(X')\to\Map(Y)$ and hence that $\phi$ factors as we needed to show.
\end{proof}

Combining Lemma \ref{tetete1}, Lemma \ref{tetete2} and Corollary \ref{tetete3} we deduce:

\begin{prop}\label{reduction}
Suppose that Theorem \ref{dogs-bollocks} holds if 
\begin{itemize}
\item $X$ and $Y$ have no boundary, and 
\item $\phi:\Map(X)\to\Map(Y)$ is irreducible and does not factor.
\end{itemize}
Then, Theorem \ref{dogs-bollocks} holds in full generality.\qed
\end{prop}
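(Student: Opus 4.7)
The plan is to apply the three preceding reduction lemmas in sequence, verifying at each step that the reduction is compatible with the constraints imposed by earlier steps. Given any non-trivial homomorphism $\phi:\Map(X)\to\Map(Y)$ satisfying the hypotheses of Theorem \ref{dogs-bollocks}, I first invoke Lemma \ref{tetete1} to reduce to the case where $\phi$ is irreducible. By the remark following Definition \ref{def-irred}, irreducibility forces $\D Y = \emptyset$, so this property comes for free.

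Next, I reduce to the case $\D X = \emptyset$ using Corollary \ref{tetete3}: if $\D X \neq \emptyset$, the corollary provides a factorization of $\phi$ through $\bar\phi: \Map(X')\to\Map(Y)$, where $X' = X\setminus \D X$. Since $\bar\phi$ has the same image as $\phi$, it remains irreducible; and if $\bar\phi$ is induced by an embedding $X'\to Y$, then $\phi$ is induced by the composition $X\to X'\to Y$, which is itself an embedding by Proposition \ref{prop-embedding}. Hence it suffices to prove the theorem assuming $\phi$ is irreducible and $\D X = \D Y = \emptyset$.

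Finally, I apply the analogue for Theorem \ref{dogs-bollocks} of Lemma \ref{tetete2}: a homomorphism can be factored only finitely many times, so we may replace $\phi$ by a terminal non-factoring factor, which recovers the original $\phi$ by post-composition with an embedding. To check that this last reduction preserves the conditions already imposed, note that irreducibility is automatic since the image is unchanged; and if $\D X = \emptyset$, then $|X|$ is closed, so any topological embedding $|X|\to|\bar X|$ is a homeomorphism by invariance of domain, which forces $\D \bar X = \emptyset$ as well. Thus we may also assume $\phi$ does not factor.

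The bookkeeping step flagged above, namely verifying that the non-factoring reduction preserves both irreducibility and the vanishing of the boundary, is the only place where one must pause to check anything nontrivial; otherwise, Proposition \ref{reduction} is simply the combination of Lemma \ref{tetete1}, Corollary \ref{tetete3}, and Lemma \ref{tetete2}.
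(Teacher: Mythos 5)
Your proof is correct and is essentially the same argument the paper has in mind; the paper simply states that Proposition \ref{reduction} follows by ``combining Lemma \ref{tetete1}, Lemma \ref{tetete2} and Corollary \ref{tetete3}'' without spelling out the bookkeeping, and you have supplied exactly that bookkeeping, including the non-obvious check that once $\D X = \emptyset$ the factoring process stays within the boundaryless, irreducible regime. One small imprecision: after applying the Lemma \ref{tetete2}-type reduction, the image of $\bar\phi$ \emph{contains} the image of $\phi$ but need not a priori equal it; irreducibility is nonetheless preserved simply because a larger group fixes fewer curves (in your setting the images actually do coincide, since the factoring embedding has closed underlying source and so is a filling-punctures map with surjective induced homomorphism, but that deserves a word of justification rather than the bare assertion ``the image is unchanged'').
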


\section{Proof of Theorem \ref{dogs-bollocks}}
In this section we prove the main result of this paper, whose statement we now recall:

\begin{named}{Theorem \ref{dogs-bollocks}}
Suppose that $X$ and $Y$ are surfaces of finite topological type, of genus $g\ge 6$ and $g'\le 2g-1$ respectively; 
if $Y$ has genus $2g-1$, suppose also that it is not closed. Then every nontrivial homomorphism 
$$\phi:\Map(X)\to\Map(Y)$$
is induced by an embedding $X\to Y$. 
\end{named}

\begin{bem}
As mentioned in the introduction, the same conclusion as in Theorem \ref{dogs-bollocks} applies for homomorphisms $\phi:\Map(X)\to\Map(Y)$ if
 both $X$ and $Y$ have the same genus $g\in\{4,5\}$. This will be shown in the course of the proof.
\end{bem}

By Proposition \ref{reduction} we may assume that $X$ and $Y$ have no boundary, that $\phi$ is irreducible and that it 
does not factor. Moreover, by Proposition \ref{main}, the image of a Dehn twist $\delta_\gamma$ along a non-separating
 curve is either the right or the left Dehn twist along the non-separating curve $\phi_*(\gamma)$. Notice that, up to 
composing $\phi$ with an outer automorphism of $\Map(Y)$ induced by an orientation reversing homeomorphism of $Y$, we
may actually  assume that $\phi(\delta_\gamma)$ is actually a right Dehn twist for some, and hence every, non-separating 
curve 
$\gamma\subset X$. In light of this, from now on we will assume without further notice that we are in the following
situation:

\begin{quote}{\bf Standing assumptions:} $X$ and $Y$ have no boundary; $\phi$ is irreducible and does not factor; $\phi(\delta_\gamma)=\delta_{\phi_*(\gamma)}$ for all $\gamma\subset X$ non-separating.
\end{quote}

Under these assumptions, we now prove $\phi$ is induced by a homeomorphism. We will make extensive
use of the concrete set of generators of $\Map(X)$ given in Figure \ref{fig1}, which we include
here as Figure \ref{fig11} for convenience. The reader should have Figure \ref{fig11} constantly in mind 
through the rest of this section.

\begin{figure}[tbh] \unitlength=1in
\begin{center} 
\includegraphics[width=3.5in]{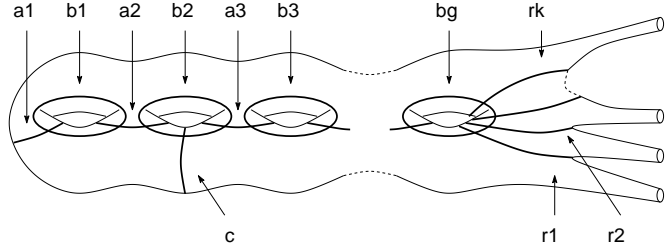}
\end{center}
\caption{Dehn twists along the curves $a_i,b_i,c$ and $r_i$ generate $\Map(X)$.}\label{fig11}
\end{figure}

 Notice that the sequence $a_1,b_1,a_2,b_2,\dots,a_g,b_g$ in Figure \ref{fig11} forms a chain; 
we will refer to it as the {\em $a_ib_i$-chain}. We refer to the multicurve $r_1\cup\dots\cup r_k$ as 
the {\em $r_i$-fan}. The curve $c$ would be worthy of a name such as {\em el pendejo} 
or {\em el pinganillo} but we prefer to call it simply {\em the curve $c$}. 

Observe that all the curves in Figure \ref{fig11} are non-separating. Hence, it follows from Proposition 
\ref{lem:single-component} that $\phi_*(\gamma)$ is a non-separating curve for any curve $\gamma$ in the 
collection $a_i,b_i,r_i,c$. We claim that the images under $\phi_*$ of all these curves in fill $Y$.

\begin{lem}\label{same-surface-0}
The image under $\phi_*$ of the $a_ib_i$-chain, the $r_i$-fan and the $c$-curve fill $Y$.
\end{lem}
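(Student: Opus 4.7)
The plan is to argue by contradiction: if the listed curves fail to fill $Y$, one can produce an essential curve in $Y$ disjoint from all of them, and this curve will be preserved by $\phi(\Map(X))$, contradicting the standing irreducibility assumption.

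More precisely, I would first observe that by Theorem \ref{lickorish} and the choice of Humphries generators, the mapping class group $\Map(X)$ is generated by the Dehn twists $\delta_{a_i}, \delta_{b_i}, \delta_c, \delta_{r_i}$. Under the standing assumptions, Proposition \ref{main} (or rather its stated form for right twists) gives
\[
\phi(\delta_{a_i})=\delta_{\phi_*(a_i)},\quad \phi(\delta_{b_i})=\delta_{\phi_*(b_i)},\quad \phi(\delta_{c})=\delta_{\phi_*(c)},\quad \phi(\delta_{r_i})=\delta_{\phi_*(r_i)},
\]
so the image subgroup $\phi(\Map(X))\subset\Map(Y)$ is generated by Dehn twists along the curves in the collection
\[
\mathcal C\ =\ \{\phi_*(a_i),\phi_*(b_i),\phi_*(c),\phi_*(r_i)\}\ \subset\ Y.
\]

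Next, suppose for contradiction that $\mathcal C$ does not fill $Y$. Then the complement of a regular neighborhood of the union of the curves in $\mathcal C$ contains a component which is not a disk and not a once-punctured disk; since by the standing assumptions $Y$ has empty boundary, any essential simple closed curve in such a component is essential in $Y$. Pick such a curve $\alpha\subset Y$. By construction $i(\alpha,\beta)=0$ for every $\beta\in\mathcal C$, so the Dehn twist $\delta_\alpha$ commutes with each generator of $\phi(\Map(X))$, hence commutes with every element of $\phi(\Map(X))$. Equivalently, $\phi(\Map(X))\subset\CZ(\delta_\alpha)$, which means that every element of $\phi(\Map(X))$ preserves the isotopy class of $\alpha$.

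This directly contradicts the irreducibility of $\phi$ (Definition \ref{def-irred}), proving that $\mathcal C$ must fill $Y$. The argument is short and the only step one has to be slightly careful about is producing the essential curve $\alpha$ from the failure of filling — but since $Y$ has no boundary under the standing assumptions, any non-disk, non-once-punctured-disk complementary region supplies one, so there is no genuine obstacle.
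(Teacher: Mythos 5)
Your proof is correct and takes essentially the same approach as the paper: both observe that the images of the Humphries generators are Dehn twists along the $\phi_*$-images, so any curve disjoint from all of them would be fixed by $\phi(\Map(X))$, contradicting irreducibility. The only cosmetic difference is that you route the argument through the centralizer $\CZ(\delta_\alpha)$, while the paper just notes directly that the curve is fixed by each generator; these are the same observation.
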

\begin{proof}
Since the Dehn twists along the curves $a_i,b_i,r_i,c$ generate $\Map(X)$, any curve in $Y$ which 
is fixed by the images under $\phi$ of all these Dehn twists is fixed by $\phi(\Map(X))$. Noticing 
that the image under $\phi$ of a Dehn twist along any of  $a_i,b_j,r_l,c$ is a Dehn twist 
along the $\phi_*$-image of the curve, the claim follows since $\phi$ is assumed to be irreducible.
\end{proof}

Suppose that $\gamma,\gamma'$ are two distinct elements of the collection $a_i,b_i,r_i,c$. 
We now summarize several of the already established facts about the relative positions of the curves 
$\phi_*(\gamma),\phi_*(\gamma')$:
\begin{enumerate}
\item If $i(\gamma,\gamma')=0$ then $i(\phi_*(\gamma),\phi_*(\gamma'))=0$ by Corollary \ref{map-phi}.
\item If $\gamma, \gamma'$ are distinct and disjoint, and $X\setminus(\gamma\cup\gamma')$ is connected, 
then $\phi_*(\gamma)\neq\phi_*(\gamma')$ by Corollary \ref{kor:irreducible}.
\item If $i(\gamma,\gamma')=1$ then $i(\phi_*(\gamma),\phi_*(\gamma')=1$ by Lemma \ref{intersection-number-0}.
\end{enumerate}
Notice that these properties do not ensure that $\phi_*(r_i)\neq\phi_*(r_j)$ if $i\neq j$. 
We denote by $\CR\subset Y$ the maximal multicurve with the property that each one of its components is homotopic to one of the curves $\phi_*(r_i)$. Notice that $\CR=\emptyset$ if and only if $X$ has at most a puncture and that in any case $\CR$ has at most as many components as curves has the $r_i$-fan. 
The following lemma follows easily from (1), (2) and (3) above:

\begin{lem}\label{mapping-curves}
With the notation of Figure \ref{fig11} the following holds:
\begin{itemize}
\item The image under $\phi_*$ of the $a_ib_i$-chain is a chain of the same length in $Y$. 
\item Every component of the multicurve $\CR$ intersects the $\phi_*$-image of the $a_ib_i$-chain exactly in the component $\phi_*(b_g)$. 
\item The curve $\phi_*(c)$ is disjoint from every curve in $\CR$, intersects $\phi_*(b_2)$ exactly once, and is disjoint
from the images of the other curves in the $a_ib_i$-chain.
\qed
\end{itemize}
\end{lem}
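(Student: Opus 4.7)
My plan is to derive all three bullets by feeding the intersection pattern of the curves in Figure \ref{fig11} into the two preservation principles already established for $\phi_*$: Corollary \ref{map-phi} (disjoint non-separating curves stay disjoint) and Lemma \ref{intersection-number-0} (pairs with intersection number one stay at intersection number one). Beyond a direct application of these two facts, the only thing that will require an argument is ruling out coincidences among the images.

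For the first bullet, I would relabel the $a_ib_i$-chain as $\gamma_1=a_1,\gamma_2=b_1,\ldots,\gamma_{2g}=b_g$ and read off the intersection numbers on the images directly from Lemma \ref{intersection-number-0} and Corollary \ref{map-phi}: one if $|i-j|=1$ and zero otherwise. It remains to verify pairwise distinctness. For $|i-j|=1$ this is automatic, since the images meet once. Supposing $i<j$ with $j-i\ge 2$ and $\phi_*(\gamma_i)=\phi_*(\gamma_j)$, I would proceed as follows: if $j-i\ge 3$, the curve $\gamma_{i+1}$ intersects $\gamma_i$ once while being disjoint from $\gamma_j$, so by the two preservation principles $\phi_*(\gamma_{i+1})$ cannot simultaneously meet and be disjoint from $\phi_*(\gamma_i)=\phi_*(\gamma_j)$; if $j-i=2$, then since $g\ge 2$ at least one of $\gamma_{i-1}$ or $\gamma_{j+1}$ lies in the chain and plays exactly the same role.

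For the second bullet, Figure \ref{fig11} shows that each $r_i$ is a non-separating curve intersecting $b_g$ exactly once and disjoint from every other curve of the chain. Applying Lemma \ref{intersection-number-0} and Corollary \ref{map-phi} transports these relations to $\phi_*(r_i)$ and the image of the chain; since any component of $\CR$ is by definition isotopic to some $\phi_*(r_i)$, the conclusion follows. The third bullet is the same story: from the figure, $c$ is a non-separating curve intersecting $b_2$ once, disjoint from every other chain curve, and disjoint from every $r_i$. The two preservation principles translate these statements verbatim into the desired ones about $\phi_*(c)$, the image of the chain, and $\CR$.

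I expect the distinctness step in the first bullet to be the only point calling for any actual argument rather than a one-line invocation; everything else is a direct transcription from the combinatorics of Figure \ref{fig11} to the behavior of $\phi_*$.
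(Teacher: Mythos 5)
Your proof is correct, and it is a genuinely (if modestly) different route from the one the paper has in mind. The paper prefaces the lemma by listing three properties and states that the lemma ``follows easily'' from them: (1) preservation of disjointness (Corollary~\ref{map-phi}), (2) distinctness for disjoint curves whose union has connected complement (Corollary~\ref{kor:irreducible}), and (3) preservation of intersection number one (Lemma~\ref{intersection-number-0}). The intended argument for distinctness in the first bullet is therefore to invoke (2) directly: any two disjoint chain curves $\gamma_i,\gamma_j$ with $|i-j|\geq 2$ have connected complement, so $\phi_*(\gamma_i)\neq\phi_*(\gamma_j)$. You instead derive distinctness using only (1) and (3), by observing that a coincidence $\phi_*(\gamma_i)=\phi_*(\gamma_j)$ would force an auxiliary chain curve to simultaneously meet and be disjoint from the same curve; the case split on $j-i=2$ versus $j-i\geq 3$, and the remark that $g\geq 2$ guarantees the needed auxiliary curve exists, correctly handles all cases. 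This buys a slightly more elementary proof that bypasses Corollary~\ref{kor:irreducible} entirely. One small point worth making explicit in your write-up: the same intersection-count reasoning also rules out coincidences like $\phi_*(r_i)=\phi_*(a_j)$ in the second bullet (they would have to meet $\phi_*(b_g)$ both once and not at all), so the three bullets really are a full transcription of the combinatorics via (1) and (3) alone.
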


At first glance, Lemma \ref{mapping-curves} yields the desired embedding without any further work, but this
is far from true. A first problem is that $\CR$ may have too few curves. Also, the curves in the $r_i$-fan 
come equipped with a natural ordering and that we do not know yet that $\phi_*$ preserves this ordering. 
Before tackling these problems, we clarify the position of $\phi_*(c)$:

\begin{lem}\label{same-surface-2}
Let $Z$ be a regular neighborhood of the $a_ib_i$-chain, noting that $c\subset Z$. Then there is an orientation 
preserving embedding $F:Z\to Y$ such that $\phi_*(\gamma)=F(\gamma)$ for $\gamma=a_i,b_i,c$ and $i=1,\dots,g$.
\end{lem}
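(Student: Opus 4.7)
The plan is to construct $F$ in three stages. First, by Lemma \ref{mapping-curves} the images $\phi_*(a_1),\dots,\phi_*(b_g)$ form a chain of length $2g$ in $Y$ with the same pairwise intersection pattern as the chain $a_1,\dots,b_g$ in $X$. Hence the change of coordinates principle for oriented surfaces yields an orientation-preserving embedding $F_0\colon Z\to Y$ with $F_0(a_i)=\phi_*(a_i)$ and $F_0(b_i)=\phi_*(b_i)$ for all $i$, whose image $Z':=F_0(Z)$ is a regular neighborhood in $Y$ of the image chain; both $Z$ and $Z'$ are genus $g$ subsurfaces with a single boundary component.

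The crux is to show that $\phi_*(c)\subset Z'$ up to isotopy. The chain relation in $\Map(X)$ reads $\delta_{\partial Z}=(\delta_{a_1}\delta_{b_1}\cdots\delta_{b_g})^{4g+2}$; applying $\phi$ and using the same relation for the image chain in $\Map(Y)$ gives $\phi(\delta_{\partial Z})=\delta_{\partial Z'}$. Since $c\subset Z$ implies $i(c,\partial Z)=0$, the twists $\delta_c$ and $\delta_{\partial Z}$ commute in $\Map(X)$; applying $\phi$, the twists $\delta_{\phi_*(c)}$ and $\delta_{\partial Z'}$ commute in $\Map(Y)$. Now $\phi_*(c)$ is non-separating by Proposition \ref{lem:single-component}, while $\partial Z'$ is separating in $Y$, so these curves are distinct, and commutation of their Dehn twists forces $i(\phi_*(c),\partial Z')=0$. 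After an isotopy, $\phi_*(c)$ lies entirely on one side of $\partial Z'$; since it meets the chain, which is contained in $Z'$, we conclude $\phi_*(c)\subset Z'$. In the degenerate case where $\partial Z'$ is inessential in $Y$ the conclusion is immediate, because a non-separating curve cannot be contained in a disk with at most one puncture.

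Finally, $F_0(c)$ and $\phi_*(c)$ are both curves in $Z'$ meeting $\phi_*(b_2)$ exactly once and disjoint from the remaining chain curves. Cutting $Z'$ open along the chain produces a disk in which both curves become arcs with endpoints on the two arcs descended from $\phi_*(b_2)$; as arcs in a disk with prescribed endpoint arcs are unique up to isotopy, there exists an orientation-preserving self-homeomorphism $H$ of $Z'$ preserving the chain setwise with $H(F_0(c))=\phi_*(c)$, and $F:=H\circ F_0$ is the desired embedding. The principal technical hurdle is the localization of $\phi_*(c)$ inside $Z'$, which the chain-relation argument handles cleanly.
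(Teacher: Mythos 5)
The overall plan matches the paper's, and the first two stages are fine: constructing $F_0$ from the chain via change of coordinates, and using a chain relation together with commutativity of disjoint twists to place $\phi_*(c)$ inside $Z'$. (The paper applies the length-$4$ chain relation to $\partial Z_0$, where $Z_0$ is a neighborhood of $a_1,b_1,a_2,b_2$, whereas you apply the length-$2g$ relation to $\partial Z$; both work, and you handle the degenerate case where $\partial Z'$ is inessential.)

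The gap is in the last step. Cutting $Z'$ open along the $a_ib_i$-chain does \emph{not} produce a disk: a regular neighbourhood of a chain of $2g$ curves is a genus-$g$ surface with one boundary component, and the complement of the chain inside it is an \emph{annulus}, not a disk (the paper says this explicitly; one can verify it already for $g=1$, where cutting a one-holed torus along $a_1\cup b_1$ yields an annulus). In an annulus, an embedded arc joining the two prescribed subarcs of the boundary is \emph{not} unique up to isotopy; there are precisely two isotopy classes of candidate arcs for $\phi_*(c)\cap A$. Consequently, your uniqueness claim does not hold, and the self-homeomorphism $H$ of $Z'$ carrying $F_0(c)$ to $\phi_*(c)$ need not exist while fixing every chain curve. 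The paper closes this gap by observing that there is an involution $\tau$ of $F(Z)$ fixing each curve $\phi_*(a_i),\phi_*(b_i)$ which interchanges the two candidate arcs; replacing $F$ by $\tau\circ F$ when necessary yields $F(c)=\phi_*(c)$. You would need to supply this (or an equivalent) argument to complete the proof. A minor related point: you only require $H$ to preserve the chain ``setwise,'' but in order for $F=H\circ F_0$ to satisfy $F(a_i)=\phi_*(a_i)$ and $F(b_i)=\phi_*(b_i)$ you need $H$ to fix each chain curve individually.
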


\begin{proof}
The image under $\phi_*$ of the $a_ib_i$-chain is a chain of the same length, by Lemma \ref{mapping-curves}.
 Let $Z'$ be a regular 
neighborhood of the $\phi_*$-image of the $a_ib_i$-chain. Since regular neighborhoods of any two chains of the 
same length are homeomorphic in an orientation preserving manner, there is an orientation preserving embedding
$$F:Z\to Z'$$
with $F(a_i)=\phi_*(a_i)$ and $F(b_i)=\phi_*(b_i)$ for all $i$. It remains to prove that $F$ can be chosen 
so that $F(c)=\phi_*(c)$.

Let $Z_0\subset Z$ be the subsurface of $X$ filled by $a_1,b_1,a_2,b_2$ and observe that, up to isotopy,
 $c\subset Z_0$. The boundary of $Z_0$ is connected and, by the chain relation (see Section \ref{sec:general})
we can write the Dehn twist along $\D Z_0$ as:
$$\delta_{\D Z_0}=(\delta_{a_1}\delta_{b_2}\delta_{a_2}\delta_{b_2})^{10}$$
Hence we have
\begin{align*}
\phi(\delta_{\D Z_0}) 
&=(\phi(\delta_{a_1})\phi(\delta_{b_2})\phi(\delta_{a_2})\phi(\delta_{b_2}))^{10} \\
&=(\delta_{\phi_*(a_1)}\delta_{\phi_*(b_2)}\delta_{\phi_*(a_2)}\delta_{\phi_*(b_2)})^{10}\\
&=(\delta_{F(a_1)}\delta_{F(b_2)}\delta_{F(a_2)}\delta_{F(b_2)})^{10}\\
&=\delta_{F(\D Z_0)}
\end{align*}
where the last equality follows again from the chain relation.

Since $c$ is disjoint from $\D Z_0$ we have that $\delta_c$ and $\delta_{\D Z_0}$ 
commute. Hence $\phi_*(c)$ does not intersect $F(\D Z_0)$. On the other hand, since $\phi_*(c)$ 
intersects $\phi_*(b_2)\subset F(Z_0)$, we deduce from Lemma \ref{mapping-curves} that $\phi_*(c)\subset F(Z_0)$. 

Observe now that $F(Z)\setminus(\cup\phi_*(a_i)\cup\phi_*(b_i))\simeq Z\setminus(\cup a_i\cup b_i)$ is homeomorphic to an
 annulus $A$. It follows from Lemma \ref{mapping-curves} that the intersection of $\phi_*(c)$ with $A$ 
is an embedded arc whose endpoints are in the subsegments of $\D A$ corresponding to $\phi_*(b_2)$. There are 
two choices for such an arc. However, there is an involution $\tau:F(Z)\to F(Z)$ with $\tau(\phi_*(a_i))=\phi_*(a_i)$ and $\tau(\phi_*(b_i))=\phi_*(b_i)$ and which interchanges these two arcs. It follows that, up to possibly replacing $F$ by 
$\tau\circ F$, we have $F(c)=\phi_*(c)$, as we needed to prove.
\end{proof}

At this point we are ready to prove the first cases of Theorem \ref{dogs-bollocks}.

\begin{proof}[Proof of Theorem \ref{dogs-bollocks} ($X$ is closed or has one puncture)]
Let $Z$ be as in Lemma \ref{same-surface-2} and note that, from the assumptions on $X$,
the homomorphism $\sigma_\#:\Map(Z)\to\Map(X)$ induced by the embedding 
$\sigma:Z\to X$ is surjective. Let $\hat\phi:\Map(Z)\to\Map(Y)$ be the composition of $\sigma_\#$ 
and $\phi$. By Lemma \ref{same-surface-2}, $\hat\phi$ is induced by an embedding of $F:Z\to Y$.

Suppose first that $X$ has one puncture. In particular, there is a weak embedding $X\to Z\subset X$ which is 
homotopic to the identity $X\to X$. The composition of this weak embedding $X\to Z$ and of the embedding 
$F:Z\to Y$ is a weak embedding which, in the sense of Proposition \ref{weak-hom}, induces $\phi$. It follows 
from Proposition \ref{weak-hom} that $\phi$ is induced by an embedding, as we needed to prove.

The case that $X$ is closed is slightly more complicated. The assumption that $\phi$ is irreducible and that a 
collection of curves in $F(Z)$ fills $Y$, implies that $Y\setminus F(Z)$ is a disk containing at most one puncture. 
If $Y\setminus F(Z)$ is a disk without punctures, then the we can clearly extend the map $F$ to a continuous 
injective map $X\to Y$. Since any continuous injective map between closed connected surfaces is a homeomorphism, 
it is a fortiori an embedding and we are done in this case.

It remains to rule out the possibility that $X$ is closed and $Y$ has one puncture. Suppose that this is the case 
and let $\bar Y$ be the surface obtained from $Y$ by filling in its unique puncture. We can now apply the above argument
 to the induced homomorphism
$$\bar\phi:\Map(X)\to\Map(\bar Y),$$
to deduce that $\bar\phi$ is induced by an embedding $X \to \bar Y$.
Since any embedding from a closed surface is a homeomorphism we deduce that $\bar\phi$ is an isomorphism. 
Consider $\phi\circ\bar\phi^{-1}:\Map(\bar Y)\to\Map(Y)$ and notice that composing $\phi\circ\bar\phi^{-1}$ 
with the filling-in homomorphism $\Map(Y)\to\Map(\bar Y)$ we obtain the identity. Hence, $\phi\circ\bar\phi^{-1}$ 
is a splitting of the Birman exact sequence
$$1\to\pi_1(\bar Y)\to\Map(Y)\to\Map(\bar Y)\to 1,$$
which does not exist by Lemma \ref{no-birman-split}. It follows that $Y$ cannot have a puncture, as we needed to prove.
\end{proof}

We continue with the preparatory work needed to prove Theorem \ref{dogs-bollocks} in the general case. From now on we assume that $X$ has at least 2 punctures. 

\begin{quote}{\bf Standing assumption:} $X$ has at least 2 punctures.
\end{quote}

Continuing with the proof of Theorem \ref{dogs-bollocks}, we prove next $Y$ has the same genus as $X$.

\begin{lem}\label{same-surface-1}
Both surfaces $X$ and $Y$ have the same genus $g$.
\end{lem}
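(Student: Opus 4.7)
Lemma \ref{same-surface-2} already provides an orientation-preserving embedding $F:Z\to Y$ of the genus-$g$ regular neighborhood $Z$ of the $a_ib_i$-chain; in particular $g'\ge g$. So the real task is to show $g'\le g$. My plan is to extend $F$ to an orientation-preserving embedding of a regular neighborhood of the \emph{entire} Humphries curve system on $X$, and then use the filling property (Lemma \ref{same-surface-0}) to conclude that this extension captures all of $Y$ up to peripheral disks.

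The core observation is that $\phi_*$ preserves the full pairwise intersection pattern of the Humphries family $\{a_i,b_j,c,r_k\}$. All such intersection numbers are either $0$ or $1$ in $X$, and both values are preserved: the zeros by Corollary \ref{map-phi}, the ones by Lemma \ref{intersection-number-0}. By Lemma \ref{mapping-curves}, each component of $\CR$ meets $F(b_g)$ transversally at exactly one point while being disjoint from the other $F(a_j)$, $F(b_j)$, and $F(c)$. Because all pairwise intersections have multiplicity at most one, the cyclic order of crossings at each intersection point is forced by the ambient orientation, and the homeomorphism type of a regular neighborhood of the configuration is uniquely determined by the combinatorial intersection data. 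Consequently, starting from $F$, one can enlarge the embedding by attaching to $F(Z)$ a band along each arc of $\phi_*(r_k)\setminus F(Z)$, matching the attaching data near $F(b_g)$ to the model inside $X$. This yields an orientation-preserving embedding $\tilde F:N\to Y$, where $N\subset X$ is a regular neighborhood of $Z\cup\bigcup_k r_k$.

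To finish, observe that since the Humphries curves generate $\Map(X)$ they fill $X$, so $X\setminus N$ is a disjoint union of disks and once-punctured disks, and hence $N$ has the same genus as $X$, namely $g$. By Lemma \ref{same-surface-0}, the images $\phi_*(a_i),\phi_*(b_j),\phi_*(c),\phi_*(r_k)$ fill $Y$ as well, so $Y\setminus \tilde F(N)$ is also a disjoint union of disks and once-punctured disks, and therefore $Y$ has the same genus as $\tilde F(N)\cong N$, namely $g$. The main technical hurdle is the possibility that distinct curves $r_i\neq r_j$ in the $r_i$-fan have coincident $\phi_*$-images, so that the multicurve $\CR$ has strictly fewer components than the fan; this case must either be ruled out via the rigidity arguments already developed (in the spirit of Corollary \ref{kor:irreducible}) or shown to be compatible with the topological matching, so that the genus count above is unaffected. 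That is the delicate point of the proof.
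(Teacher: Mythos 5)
Your approach differs genuinely from the paper's, and it has two gaps, only one of which you flag. The one you do acknowledge — that distinct $r_i,r_j$ may have coincident $\phi_*$-images — is real, but it is the lesser of the two. The more serious problem is your assertion that ``the homeomorphism type of a regular neighborhood of the configuration is uniquely determined by the combinatorial intersection data.'' That is false as stated: pairwise intersection numbers and local crossing data do not determine the \emph{cyclic order} of the intersection points with the $\phi_*(r_i)$ along $\phi_*(b_g)$ (equivalently, where the bands attach along $F(\partial Z)$), and two combinatorially different attachment orders can produce subsurfaces of different genus. The bands for the $\phi_*(r_i)$ each have both endpoints on the single boundary circle $F(\partial Z)$; whether such a band is ``cutting'' (genus unchanged) or ``handle-creating'' (genus $+1$) depends exactly on this interleaving. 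So $\tilde F(N)\cong N$ does not follow from what has been established up to this point. Worse, the tools the paper uses to pin down this ordering — Lemmas \ref{bound-annulus}, \ref{fill-cusp}, \ref{ordering} and the ``initial curve'' claim in the final proof — come \emph{after} Lemma \ref{same-surface-1} and some of them cite it explicitly (Lemma \ref{bound-annulus} invokes ``since $Y$ has the same genus as $X$''). Trying to fill your gap by appealing to those results would therefore be circular.

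The paper's proof sidesteps all of this. It never attempts to extend $F$ over the $r_i$-bands at this stage. Instead it looks at $S=Y\setminus F(Z)$, observes via Lemma \ref{same-surface-0} that the \emph{arcs} $\rho_i=\phi_*(r_i)\cap S$ fill $S$, caps off the single boundary circle $F(\partial Z)$ to obtain $\bar S$, and completes the $\rho_i$ to \emph{pairwise disjoint} simple closed curves $\bar\rho_i$ in $\bar S$. A disjoint collection of simple closed curves filling a surface forces that surface to have genus $0$; hence $S$ has genus $0$ and $Y$ has the same genus as $F(Z)$, namely $g$. This argument is insensitive both to possible coincidences among the $\phi_*(r_i)$ and to their cyclic order along $\phi_*(b_g)$, which is precisely why it can be run before the ordering lemmas. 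Your outline would become correct if you first supplied, independently of Lemma \ref{same-surface-1}, a proof that the band-attachment combinatorics in $Y$ matches that in $X$; but as written that is the missing heart of the argument.
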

\begin{proof}
With the same notation as in Lemma \ref{same-surface-2} we need to prove that $S=Y\setminus F(Z)$ is a 
surface of genus $0$. By Lemma \ref{same-surface-0} the arcs $\rho_i=\phi_*(r_i)\cap S$ fill $S$. Denote by 
$\bar S$ the surface obtained by attaching a disk along $F(\D Z)\subset\D S$ and notice that the arcs $\rho_i$ 
can be extended to a collection of disjoint curves $\bar\rho_i$ in $\bar S$. Moreover, every curve in $\bar S$
 either agrees or intersects one of the curves $\bar \rho_i$ more than once, which is impossible if $S$ has genus 
at least $1$; this proves Lemma \ref{same-surface-1}.
\end{proof}

Notice that if $\eta\subset X$ is separating, all we know about 
$\phi(\delta_\eta)$ is that it is a root of a multitwist by Bridson's Theorem \ref{bridson}; 
in particular, $\phi(\delta_\eta)$ may be trivial or have finite order. If this is not the case, we denote 
by $\phi_*(\eta)$ the multicurve supporting any multitwist power of $\phi(\delta_\eta)$. Observe that if $\eta$ 
bounds a disk with punctures then, up to replacing the $a_ib_i$-chain by another such chain, we may assume that 
$i(\eta,a_i)=i(\eta,b_i)=0$ for all $i$. In particular, $\phi_*(\eta)$ does not intersect any of the curves 
$\phi_*(a_i)$ and $\phi_*(b_i)$. It follows that no component of $\phi_*(\eta)$ is non-separating. We record 
our conclusions:

\begin{lem}\label{disk}
Suppose that $\eta\subset X$ bounds a disk with punctures and that $\phi(\delta_\eta)$ has infinite order. 
Then every component of the multicurve $\phi_*(\eta)$ separates $Y$.\qed
\end{lem}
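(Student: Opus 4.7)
The plan is to exploit the fact that the disk bounded by $\eta$ contains no genus, so that all the genus of $X$ lives in the complementary subsurface $X\setminus D$ (where $D$ is the disk with punctures bounded by $\eta$). First I would replace the $a_ib_i$-chain of Figure \ref{fig11} by a chain $a_1',b_1',\ldots,a_g',b_g'$ of non-separating curves which is entirely contained in $X\setminus D$, hence disjoint from $\eta$. Such a chain exists because $X\setminus D$ has genus $g$ and thus supports a chain of length $2g$ of the required topological type; all our earlier results (notably Proposition \ref{main} and Lemma \ref{mapping-curves}) apply equally to this new chain since they depended only on the topology of the configuration, not on the specific curves.

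Next, since $\delta_{a_i'}$ and $\delta_{b_i'}$ commute with $\delta_\eta$ (disjoint supports), their $\phi$-images commute with $\phi(\delta_\eta)$, and thus with any multitwist power of $\phi(\delta_\eta)$. By Proposition \ref{main} applied to the new chain, $\phi(\delta_{a_i'})=\delta_{\phi_*(a_i')}$ and $\phi(\delta_{b_i'})=\delta_{\phi_*(b_i')}$. Since Dehn twists along two essential curves commute if and only if the curves are equal or disjoint, each component of $\phi_*(\eta)$ must be either disjoint from, or equal to, every $\phi_*(a_i')$ and $\phi_*(b_i')$. But equality is impossible: a chain curve intersects its neighbors in the chain, while a component of $\phi_*(\eta)$ is forced to be disjoint from \emph{all} chain curves simultaneously. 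Hence every component of $\phi_*(\eta)$ is disjoint from the entire image chain.

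To finish, I would observe that by Lemma \ref{mapping-curves} the images $\phi_*(a_1'),\phi_*(b_1'),\ldots,\phi_*(a_g'),\phi_*(b_g')$ form a chain of length $2g$ in $Y$, and by Lemma \ref{same-surface-1} the surface $Y$ has genus exactly $g$. A regular neighborhood $N$ of an even-length chain of $2g$ curves is a subsurface of genus $g$ with connected boundary, so $Y\setminus N$ is a (possibly punctured) surface of genus $0$. Every component of $\phi_*(\eta)$ can thus be isotoped into $Y\setminus N$, and any simple closed curve in a genus-zero surface is separating. This gives the desired conclusion.

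The statement is essentially a direct translation of disjointness on the source side to disjointness on the target side, combined with an elementary Euler-characteristic observation about chains; I do not anticipate a substantial obstacle beyond arranging the chain in the complement of $D$, which is immediate once one uses that $D$ is a disk with punctures only.
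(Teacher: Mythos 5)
Your proof is correct and follows essentially the same route as the paper's: replace the $a_ib_i$-chain by one disjoint from $\eta$ (possible because $\eta$ bounds a disk with punctures), deduce that $\phi_*(\eta)$ is disjoint from the image chain, and invoke Lemma \ref{same-surface-1} together with the fact that the image chain fills a genus-$g$ subsurface of $Y$ to conclude that $\phi_*(\eta)$ lies in a genus-zero complement, hence consists of separating curves. The paper states this in one sentence right before the lemma; your write-up fills in the same argument in more detail, including the explicit exclusion that no component of $\phi_*(\eta)$ can coincide with a chain curve.
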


Our next goal is to bound the number of cusps of $Y$:

\begin{lem}\label{same-surface-3}
Every connected component of $Y \setminus \left (\bigcup_i\phi_*(a_i)\cup\CR\right)$
contains at most single puncture. In particular $Y$ has at most as many punctures as $X$.
\end{lem}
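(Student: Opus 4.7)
The plan is to combine an Euler characteristic count with a contradiction argument exploiting the irreducibility of $\phi$.

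I would first verify that $\phi_*(a_1),\dots,\phi_*(a_g)$ forms a cut system of $Y$. Since $Y$ has genus $g$ by Lemma~\ref{same-surface-1}, and Corollary~\ref{map-phi} together with Corollary~\ref{kor:irreducible} ensures that the curves $\phi_*(a_i)$ are $g$ pairwise distinct, pairwise disjoint non-separating curves, their complement $Y\setminus\bigcup_i\phi_*(a_i)$ is a connected genus-$0$ surface with $2g$ boundary components and $p_Y$ punctures. Lemma~\ref{mapping-curves} places $\CR$ inside this complement, so setting $\alpha=\bigcup_i\phi_*(a_i)\cup\CR$ and $m=\vert\CR\vert$, the multicurve $\alpha$ has $g+m$ components, and the identity $2-2g-p_Y=\chi(Y)=2K-2(g+m)-p_Y$ (valid because all components of $Y\setminus\alpha$ have genus zero) yields that $Y\setminus\alpha$ has exactly $K=m+1$ connected components.

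Now suppose, for a contradiction, that some component $C$ of $Y\setminus\alpha$ contains two punctures $q_1,q_2$. The plan is to produce an essential simple closed curve $\sigma\subset Y$ preserved by $\phi(\Map(X))$, contradicting irreducibility. Using the embedding $F\colon Z\to Y$ of Lemma~\ref{same-surface-2}, where $Z$ is a regular neighborhood of the $a_ib_i$-chain together with the curve $c$, I would isotope $F$ so that $F(Z)$ contains no puncture of $Y$ in its interior; this is possible because $Z$ has no punctures and $F(Z)$ is isotopic to an arbitrarily thin regular neighborhood of a $1$-complex, which can be made to avoid any prescribed finite subset. Because the $a_ib_i$-chain has even length $2g$, the chain relation implies that $\partial Z$, and hence $\partial F(Z)$, is a single curve; since $F(Z)$ has genus $g=\mathrm{genus}(Y)$, the complement $Y\setminus F(Z)$ is a connected genus-$0$ subsurface, and all punctures of $Y$ lie in it.

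I would then construct $\sigma\subset C\setminus F(Z)$ bounding a disk containing exactly $q_1,q_2$: take a thin arc inside $C\setminus F(Z)$ joining $q_1$ and $q_2$, and let $\sigma$ be the boundary of a thin regular neighborhood of this arc. By construction $\sigma$ is disjoint both from $\alpha$ (being in $C$) and from $F(Z)$, so for every Humphries generator $\gamma$ the curve $\phi_*(\gamma)$ is disjoint from $\sigma$: indeed $\phi_*(a_i)$ and $\phi_*(r_j)$ lie in $\alpha$, while $\phi_*(b_i)$ and $\phi_*(c)$ lie in $F(Z)$. Consequently $\delta_\sigma$ commutes with every $\phi(\delta_\gamma)=\delta_{\phi_*(\gamma)}$, and since the Humphries Dehn twists generate $\Map(X)$, the image $\phi(\Map(X))$ centralizes $\delta_\sigma$, hence preserves the isotopy class of the essential curve $\sigma$; this contradicts the irreducibility of $\phi$. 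The bound $p_Y\le p_X$ follows immediately: each of the $m+1$ components of $Y\setminus\alpha$ contains at most one puncture, and $m+1$ is bounded by one more than the number of curves in the $r_i$-fan, which is $p_X$.

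The principal obstacle is the construction of the arc from $q_1$ to $q_2$ inside $C\setminus F(Z)$; one must verify that $q_1$ and $q_2$ lie in the same connected component of that open set. Since $Y\setminus F(Z)$ is connected and both punctures lie in it, this reduces to analyzing how $\alpha\cap(Y\setminus F(Z))=\CR\setminus F(Z)$, a disjoint union of $m$ arcs with endpoints on the single circle $\partial F(Z)$ (each $\phi_*(r_j)$ meets $\partial F(Z)$ in exactly two points because it crosses $\phi_*(b_g)\subset F(Z)$ exactly once by Lemma~\ref{mapping-curves}), decomposes $Y\setminus F(Z)$; the hypothesis that $q_1,q_2$ share the component $C$ of $Y\setminus\alpha$ then forces them to share a chamber of this decomposition, and any arc in that chamber does the job.
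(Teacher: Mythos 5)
Your approach is genuinely different from the paper's. The paper's proof is shorter: it uses the filling property (Lemmas \ref{same-surface-0} and \ref{same-surface-2}) to conclude that every region cut off by $\bigcup_i\phi_*(a_i)\cup\bigcup_i\phi_*(b_i)\cup\CR$ contains at most one puncture, and then checks via Lemma \ref{mapping-curves} that the arcs of $\bigcup_i\phi_*(b_i)$ do not separate any component of $Y\setminus\alpha$, so each such component still carries at most one puncture. You instead run an Euler-characteristic count to get $m+1$ components and argue by contradiction, constructing an essential curve $\sigma$ bounding a twice-punctured disk, disjoint from the $\phi_*$-images of all Humphries generators, so that $\phi(\Map(X))$ fixes $\sigma$, contradicting irreducibility. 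Both routes work; yours is longer but makes the appeal to irreducibility explicit, whereas the paper relies on a rather terse non-separation claim for $\bigcup_i\phi_*(b_i)$.

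There is, however, a real gap at exactly the point you flag as ``the principal obstacle.'' You assert, but do not prove, that if $q_1,q_2$ lie in the same component $C$ of $Y\setminus\alpha$ then they lie in the same chamber of $Y\setminus\bigl(F(Z)\cup\CR\bigr)$. This is not automatic: a path in $C$ from $q_1$ to $q_2$ avoids $\alpha$ but could a priori run through $F(Z)$ and, since $F(Z)\setminus\bigcup_i\phi_*(a_i)$ is connected, join two distinct chambers. The claim is true, but you must close it, for instance as follows: $Y\setminus F(Z)$ is a $p_Y$-punctured disk, so cutting it along the $m$ disjoint arcs of $\CR\setminus F(Z)$ gives exactly $m+1$ chambers; you computed that $Y\setminus\alpha$ also has exactly $m+1$ components; and the inclusion-induced map from chambers to components is surjective because no component of $Y\setminus\alpha$ lies wholly inside $F(Z)$, since every arc of $\CR\cap F(Z)$ has both endpoints on the single circle $\partial F(Z)$, so each piece of $\bigl(F(Z)\setminus\bigcup_i\phi_*(a_i)\bigr)\setminus\CR$ touches $\partial F(Z)$ and extends across it. Hence the map is a bijection and ``same component'' does imply ``same chamber.'' A smaller omission occurs at the outset: that $\phi_*(a_1),\dots,\phi_*(a_g)$ is a cut system (so that all pieces of $Y\setminus\alpha$ have genus zero) follows from the chain structure supplied by Lemma \ref{mapping-curves}, not merely from the curves being distinct, disjoint and non-separating.
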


Recall that $\CR\subset Y$ is the maximal multicurve with the property that each one of its components is homotopic to one of the curves $\phi_*(r_i)$.

\begin{proof}
Observe that Lemma \ref{same-surface-0} and Lemma \ref{same-surface-2} imply that the union of $\CR$ and the image under $\phi_*$ of the 
$a_ib_i$-chain fill $Y$. In particular, every component of the complement in $Y$ of the union 
of $\CR$ and all the curves $\phi_*(a_i)$ and $\phi_*(b_i)$ contains at most one puncture of $Y$. It follows from Lemma \ref{mapping-curves} that the multicurve $\cup\phi_*(b_i)$ does not separate any of the components of the complement of 
$(\cup\phi_*(a_i))\cup\CR$ in $Y$. We have proved the first claim. 

It follows again from Lemma \ref{mapping-curves} that the multicurve $\cup\phi_*(a_i)\cup\CR$ separates $Y$ into at most $k$ components where $k\ge 2$ is the number of punctures of $X$. Thus,
$Y$ has at most as many punctures as $X$. 
\end{proof}

So far, we do not know much about the relative positions of the curves in $\CR$; this will change once we have established the next three lemmas.
 
\begin{lem}\label{bound-annulus}
Suppose that $a,b\subset X$ are non-separating curves that bound an annulus $A$. Then $\phi_*(a)$ 
and $\phi_*(b)$ bound an annulus $A'$ in $Y$; moreover, if $A$ contains exactly one puncture and 
$\phi_*(a)\neq\phi_*(b)$, then $A'$ also contains exactly one puncture.
\end{lem}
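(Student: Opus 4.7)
The plan is to proceed by cases depending on whether $A$ has punctures, then produce the cobounding annulus $A'$, and finally pin down its puncture count. First, if $A$ has no punctures, then $a$ and $b$ are already isotopic in $X$, so $\delta_a=\delta_b$ and hence $\phi_*(a)=\phi_*(b)$; they bound a degenerate annulus and the ``moreover'' hypothesis is not triggered. So assume henceforth that $A$ contains at least one puncture, making $a$ and $b$ distinct. By Corollary \ref{map-phi} and Proposition \ref{lem:single-component}, $\phi_*(a)$ and $\phi_*(b)$ are disjoint non-separating curves in $Y$; if they coincide the annulus $A'$ is once again degenerate and we are done, so further assume $\phi_*(a)\neq\phi_*(b)$.

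To produce the cobounding annulus, I will examine the complementary subsurface $B=X\setminus\mathring A$. Choose a chain of non-separating curves $\gamma_1,\dots,\gamma_{2g-1}$ inside $B$, together with enough extra non-separating curves to form a pants decomposition of $B$; each such curve is disjoint from both $a$ and $b$ and has non-separating union with any other. By Corollary \ref{map-phi}, Corollary \ref{kor:irreducible}, and Lemma \ref{intersection-number-0}, the $\phi_*$-images of these curves are pairwise distinct with the same intersection pattern and, together with $\phi_*(a)$ and $\phi_*(b)$, fill a subsurface $B'\subset Y$ of genus $g$ whose only possible boundary components lie in $\phi_*(a)\cup\phi_*(b)$. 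Since $Y$ has genus at most $2g-1$, an Euler-characteristic count forces $Y\setminus B'$ to be a planar surface with exactly two boundary components, which must be $\phi_*(a)$ and $\phi_*(b)$; hence $\phi_*(a)$ and $\phi_*(b)$ cobound an annular region $A'\subset Y$.

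For the puncture count in the case where $A$ has exactly one puncture $p$, I will use that $\delta_a\delta_b^{-1}\in\Map(X)$ is precisely the point-push of $p$ around a loop in $A$, and so lies in the kernel of the Birman projection $\Map(X)\to\Map(\bar X)$ where $\bar X$ fills in $p$. Its image $\delta_{\phi_*(a)}\delta_{\phi_*(b)}^{-1}$ correspondingly lies in the kernel of the Birman projection $\Map(Y)\to\Map(\bar Y)$ where $\bar Y$ fills in the punctures of $A'$. Applying Lemma \ref{induction} to the filling of all punctures of $Y$, combined with the no-factor assumption on $\phi$ to guarantee that the image of $\delta_a\delta_b^{-1}$ remains non-trivial in any partial filling of fewer than all punctures of $A'$, one concludes that $A'$ contains exactly one puncture.

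The main obstacle I anticipate is the topological identification of $Y\setminus B'$ as an annulus with exactly two boundary components rather than a more complex planar surface; this will require the full force of the irreducibility of $\phi$, the tight genus bound $g(Y)\le 2g-1$, and a careful Euler-characteristic count along the lines of Proposition \ref{lem:single-component}. The puncture count step is essentially Birman-sequence bookkeeping, but needs the non-factoring hypothesis to rule out $\phi$ descending through extra fillings.
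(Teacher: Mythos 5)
Your proof contains genuine gaps in both halves.

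For the existence of $A'$: your overall idea — use curves in the complement $B=X\setminus\mathring A$ whose $\phi_*$-images have the same intersection pattern and pin $\phi_*(a),\phi_*(b)$ into the complement — is in the same spirit as the paper's, but the execution is flawed. First, $B$ has genus $g-1$, not $g$: removing the interior of an annulus with non-separating boundary from a genus-$g$ surface reduces the genus by one, so the filled image $B'$ should also be modeled on a genus-$(g-1)$ subsurface with two boundary curves. Second, and more seriously, your Euler-characteristic count invokes only the hypothesis $g(Y)\le 2g-1$; with that alone, $Y\setminus B'$ could perfectly well have positive genus, and the argument does not close. What you actually need is Lemma \ref{same-surface-1}, established earlier in the section, that $Y$ has genus exactly $g$. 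The paper's version of the argument is simpler and cleaner: $A$ is disjoint from a chain of length $2g-1$ (a chain whose regular neighborhood has genus $g-1$ and two boundary circles, filling $B$ snugly), $\phi_*$ maps chains to chains, and since $Y$ has genus $g$ the complement of the regular neighborhood of the image chain is already an annulus, into which both $\phi_*(a)$ and $\phi_*(b)$ fall.

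For the puncture count: your Birman-sequence argument is a genuinely different approach from the paper's, but it does not establish the conclusion. You observe correctly that $\delta_a\delta_b^{-1}$ is a point-push, hence lies in $\Ker(\Map(X)\to\Map(\bar X))$, and that its image $\delta_{\phi_*(a)}\delta_{\phi_*(b)}^{-1}$ vanishes after filling in all punctures of $A'$. But the no-factor assumption gives you only one bit of information: $\Ker(\Map(X)\to\Map(\bar X))\not\subset\Ker(\phi)$, i.e.\ some point-push has nontrivial image. This is compatible with $A'$ having one, two, or twenty punctures; it does not detect anything about the puncture count of $A'$. The phrase ``one concludes that $A'$ contains exactly one puncture'' is asserted, not argued. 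The paper's argument instead normalizes by a mapping class so that $a=a_1$ and $b$ has a prescribed intersection pattern with the Humphries curves, observes that $A'$ is then trapped in one of the two components of $Y\setminus\bigl(\bigcup_i\phi_*(a_i)\cup\bigcup_i\phi_*(r_i)\bigr)$ adjacent to $\phi_*(a_1)$, and invokes Lemma \ref{same-surface-3} to see that each such component contains at most one puncture. Combined with the fact that $A'$ must have at least one puncture (else $\phi_*(a)=\phi_*(b)$), this gives exactly one. This combinatorial use of the generator picture is the key idea you are missing.
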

\begin{proof}
Notice that $A$ is disjoint from a chain of length $2g-1$. Since $\phi_*$ maps chains to chains (Lemma \ref{mapping-curves}), since it preserves disjointness (Corollary \ref{map-phi})
and since $Y$ has the same genus as $X$ (Lemma \ref{same-surface-1}), we deduce that $\phi_*(\D A)$ consists of
non-separating curves which are contained in an annulus in $Y$. The first claim follows.

Suppose that $\phi_*(a)\neq\phi_*(b)$; up to 
translating by a mapping class, we may assume that $a=a_1$ and that $b$ is a curve disjoint from
$(\cup a_i) \cup (\cup r_i)$ and with $i(b,b_1)=1$ and $i(b,b_i)=0$ for $i=2,\dots,g$ (compare with the dashed curve 
in Figure \ref{fig111}). 
\begin{figure}[tbh] \unitlength=1in
\begin{center} 
\includegraphics[width=3.5in]{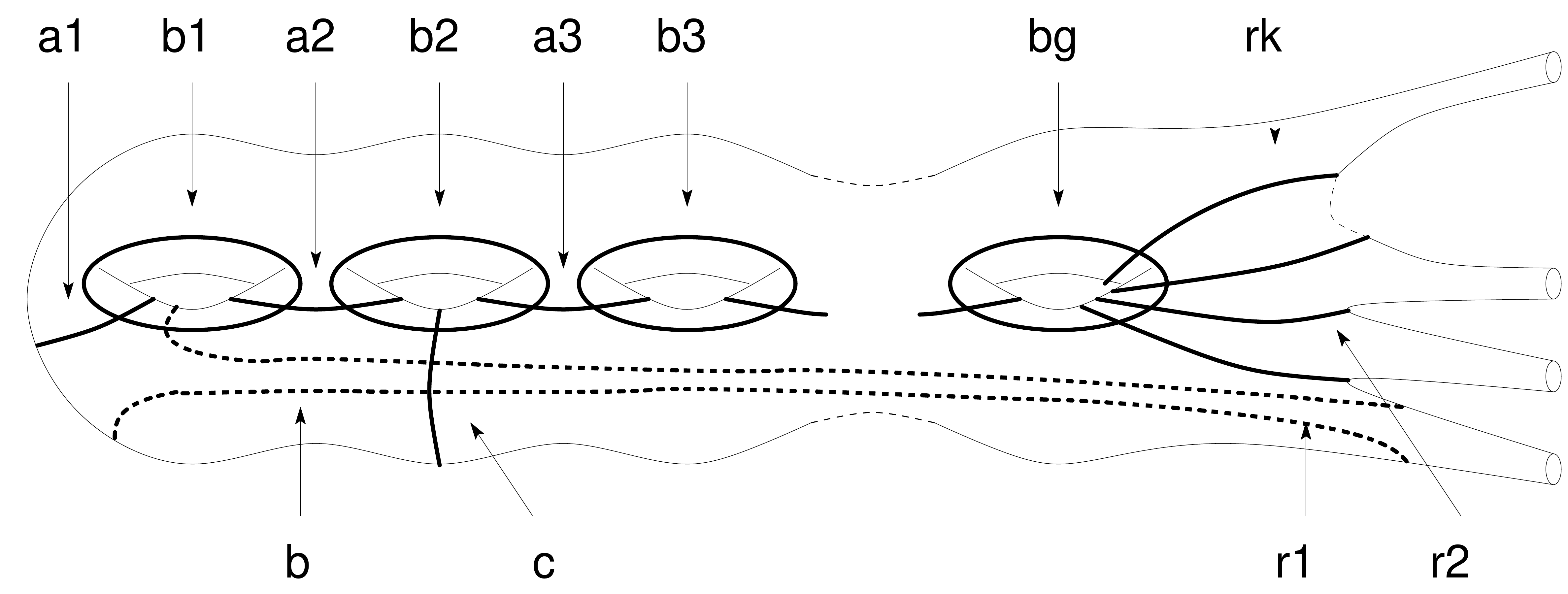}
\end{center}
\caption{}\label{fig111}
\end{figure}
Since $\phi_*$ preserves disjointness and intersection number one (Lemma \ref{intersection-number-0}), it follows 
that the annulus $A'$ bounded by $\phi_*(a)=\phi_*(a_1)$ and $\phi_*(b)$ is contained in one of the two connected components
of $Y \setminus \left (\bigcup_i\phi_*(a_i)\cup \bigcup_i\phi_*(r_i)\right )$ adjacent to $\phi_*(a_1)$. 
By Lemma \ref{same-surface-3}, each one of these components contains at most a puncture, and thus the claim follows.
\end{proof}

\begin{lem}\label{fill-cusp}
Let $\gamma, \gamma' \subset X$ be non-separating curves bounding an annulus with one puncture. 
Then $\phi_*(\gamma)\neq\phi_*(\gamma')$.
\end{lem}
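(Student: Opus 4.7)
Suppose for contradiction that $\phi_*(\gamma) = \phi_*(\gamma')$. By Proposition \ref{main}, both $\phi(\delta_\gamma)$ and $\phi(\delta_{\gamma'})$ equal $\delta_{\phi_*(\gamma)}$, so $h := \delta_\gamma\delta_{\gamma'}^{-1}$ lies in $\Ker\phi$. Let $p$ denote the puncture inside the annulus $A$ bounded by $\gamma\cup\gamma'$, let $\bar X$ be the surface obtained from $X$ by filling in $p$, and let $\iota:X\to\bar X$ be the associated embedding. The Birman exact sequence \eqref{eq:birman1} identifies the kernel $K$ of $\iota_\#:\Map(X)\to\Map(\bar X)$ with $\pi_1(\bar X,p)$, where the remaining punctures of $X$ are regarded as marked points of $\bar X$; under this identification $h$ corresponds to the based homotopy class of the core of $A$, viewed in $\bar X$ and based at $p$, which is a simple loop non-separating in $\bar X$. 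The plan is to show that $K\subset\Ker\phi$, for this would imply that $\phi$ descends to a homomorphism $\bar\phi:\Map(\bar X)\to\Map(Y)$ with $\bar\phi\circ\iota_\# = \phi$, contradicting the standing assumption that $\phi$ does not factor.

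To enlarge $\Ker\phi\cap K$, I invoke the change-of-coordinates principle: $\Map(X)$ acts transitively on unordered pairs of disjoint non-separating simple closed curves in $X$ cobounding an annulus whose interior meets $P_X$ exactly at $p$ (any two such pairs have homeomorphic complements in $X$ preserving the puncture structure). Since $\Ker\phi$ is normal in $\Map(X)$, conjugating $h$ yields $\delta_a\delta_b^{-1}\in\Ker\phi\cap K$ for every such pair $(a,b)$. Under the Birman identification these elements are exactly the point-pushes along simple loops in $\bar X$ based at $p$ that are non-separating in $\bar X$.

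It remains to verify that such loops generate $\pi_1(\bar X,p)$ as an abstract group; granted this, $K\subset\Ker\phi$ and the contradiction follows. A symplectic basis $a_1,b_1,\ldots,a_g,b_g$ supplies $2g$ simple non-separating loops in $\bar X$ based at $p$. Any further free generator $c$ encircling another puncture $p_j\in P_X\setminus\{p\}$, chosen as a simple loop at $p$ disjoint from $a_1$ outside the basepoint, can be written $c = \alpha^{-1}a_1$ in $\pi_1(\bar X, p)$, where $\alpha$ is the simple loop obtained by smoothing the concatenation $a_1\cdot c^{-1}$ near $p$. Since $c$ is null-homologous in $\bar X$ and $[\alpha] = [a_1]\neq 0$ in $H_1(\bar X)$, the loop $\alpha$ is non-separating in $\bar X$, so $c$ is a product of simple non-separating loops. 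I expect the only delicate point to be this last generation step, which requires careful positioning of $a_1$ and $c$ so that their concatenation admits an embedded smoothing; the rest of the argument is a direct application of the Birman exact sequence combined with normality of $\Ker\phi$.
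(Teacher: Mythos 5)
Your proof follows the paper's argument essentially exactly: via the Birman exact sequence, $\delta_\gamma\delta_{\gamma'}^{-1}$ is identified with the point-push along the core $\alpha$ of the annulus, it lies in $\Ker\phi$, and --- since $\Ker\phi$ is normal and the $\Map(X)$-translates of $\alpha$ under conjugation correspond to the point-pushes along all simple non-separating based loops, which generate $\pi_1(\bar X, p)$ --- one concludes $\pi_1(\bar X,p)\subset\Ker\phi$, so $\phi$ factors through $\Map(\bar X)$, contradicting the standing assumption. The paper merely asserts the generation fact; your attempt to verify it is reasonable, but note one inaccuracy: the claim that ``$c$ is null-homologous in $\bar X$'' fails whenever $\bar X$ retains more than one puncture. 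The fix is immediate: what you actually need is $[\alpha]=[a_1]-[c]\neq 0$ in $H_1(\bar X)$, and this holds regardless because $[c]$ is a peripheral class while $[a_1]$ has nonzero image in $H_1$ of the closed surface obtained by filling all punctures, so $[a_1]-[c]$ cannot vanish.
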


\begin{proof}
We will prove that if $\phi_*(\gamma)=\phi_*(\gamma')$, then $\phi$ factors in the sense of \eqref{eq-factor}; notice
that this contradicts our standing assumption.

Suppose $\phi_*(\gamma)=\phi_*(\gamma')$, noting that Proposition \ref{main} implies that 
$\phi(\delta_\gamma)=\phi(\delta_{\gamma'})$. Let $p$ be the puncture in the annulus bounded by $\gamma$ and $\gamma'$. Consider the $\bar X$ surface 
obtained from $X$ by filling in the puncture $p$ and the Birman exact sequence \eqref{eq:birman1}:
$$1\to\pi_1(\bar X,p)\to\Map(X)\to\Map(\bar X)\to 1$$
associated to the embedding $X\to\bar X$. Let $\alpha\in\pi_1(\bar X,p)$ be the unique essential simply 
loop contained in the annulus bounded by $\gamma\cup\gamma'$. The image of $\alpha$ under the left
arrow of the Birman exact sequence is $\delta_\gamma\delta_{\gamma'}^{-1}$. Hence, $\alpha$ 
belongs to the kernel of $\phi$. Since $\pi_1(\bar X,p)$ has a set of generators consisting of 
translates of $\alpha$ by $\Map(X)$ we deduce that that $\pi_1(\bar X,p)\subset\Ker(\phi)$. This shows 
that $\phi:\Map(X)\to\Map(Y)$ factors through $\Map(\bar X)$ and concludes the proof of Lemma \ref{fill-cusp}.
\end{proof}

\begin{lem}\label{ordering}
Let $a, b \subset X$ be non-separating curves which bound an annulus with exactly two punctures.
Then  $\phi_*(a)$ and $\phi_*(b)$ bound an annulus $A'\subset Y$ with exactly two punctures. Moreover, if $x\subset A$
is any non-separating curve in $X$ separating the two punctures of $A$, then $\phi_*(x)\subset A'$ and separates the two 
punctures of $A'$.
\end{lem}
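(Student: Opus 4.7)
The plan is to analyze the relative position of the three curves $\phi_*(a), \phi_*(x), \phi_*(b)$ in $Y$.

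First, applying Lemma \ref{bound-annulus} to each of the pairs $(a,x)$, $(x,b)$, and $(a,b)$ and using Lemma \ref{fill-cusp} to ensure $\phi_*(a)\neq\phi_*(x)$ and $\phi_*(x)\neq\phi_*(b)$, I obtain annuli $A_1,A_2\subset Y$ cobounded respectively by $\phi_*(a),\phi_*(x)$ and by $\phi_*(x),\phi_*(b)$, each containing exactly one puncture, together with an annulus $A'\subset Y$ cobounded by $\phi_*(a),\phi_*(b)$ whose puncture count is still to be determined. Forgetting marked points, the three curves $\phi_*(a),\phi_*(x),\phi_*(b)$ become pairwise isotopic essential simple closed curves in $|Y|$; since $|Y|$ has genus $g\geq 6$ and is not a torus, two disjoint isotopic essential simple closed curves there cobound a unique topological annulus, so the three curves lie on a common annular neighborhood of $|Y|$ in a well-defined linear order, with exactly one of them in the middle.

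The key topological step is to prove that $\phi_*(x)$ is the curve in the middle. Granting this, $A_1$ and $A_2$ lie on opposite sides of $\phi_*(x)$ and glue along $\phi_*(x)$ into a topological annulus cobounded by $\phi_*(a),\phi_*(b)$, containing $\phi_*(x)$ in its interior and having $1+1=2$ punctures. By the uniqueness of the cobounding annulus this union must equal $A'$, which is the conclusion of the lemma.

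To establish the key claim I would argue by contradiction: suppose, for instance, that $\phi_*(a)$ is in the middle (the case of $\phi_*(b)$ is symmetric). Then $\phi_*(a)$ is a non-separating essential simple closed curve sitting inside the unique one-punctured annulus cobounded by $\phi_*(x),\phi_*(b)$. The only essential simple closed curves in a one-punctured annulus are boundary-parallel (a curve around the puncture bounds a once-punctured disk and is inessential by our conventions). Using Lemma \ref{fill-cusp} to exclude $\phi_*(a)=\phi_*(x)$, I am forced into the identification $\phi_*(a)=\phi_*(b)$.

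The main obstacle is therefore to rule out $\phi_*(a)=\phi_*(b)$. For this I would exploit the standing assumption that $\phi$ does not factor. The identification $\phi(\delta_a)=\phi(\delta_b)$, conjugated by elements of $\Map(X)$ that permute the punctures enclosed by $A$, propagates to identifications $\phi_*(\gamma)=\phi_*(\gamma')$ for every pair $(\gamma,\gamma')$ of non-separating curves cobounding a two-punctured annulus in $X$. Applied to appropriate sub-configurations of the $r_i$-fan among the Humphries generators, this collapses the multicurve $\CR$ onto too few components; combining this with Lemma \ref{same-surface-3}, the filling property from Lemma \ref{same-surface-0}, and Lemma \ref{mapping-curves}, the resulting structural constraints should force $\phi$ to factor through the mapping class group of a surface obtained from $X$ by filling in one of the two punctures enclosed by $A$, yielding the desired contradiction.
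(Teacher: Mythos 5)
Your handling of the main case ($\phi_*(a)\neq\phi_*(b)$) matches the paper: apply Lemmas \ref{bound-annulus} and \ref{fill-cusp} to get the once-punctured annuli $A_1'$, $A_2'$ cobounded by $\phi_*(a),\phi_*(x)$ and by $\phi_*(x),\phi_*(b)$, and then glue them to get the twice-punctured annulus $A'$. This part is fine.

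The genuine problem is the step where you rule out $\phi_*(a)=\phi_*(b)$, which you reduce to a claim that $\phi$ would have to factor through the mapping class group of $X$ with one of the two punctures of $A$ filled in. This does not follow. If $\bar X$ is $X$ with $p_1$ filled in, the curves $a$ and $b$ remain non-isotopic in $\bar X$ (they cobound a once-punctured annulus containing $p_2$), so the image of $\delta_a\delta_b^{-1}$ in $\Map(\bar X)$ is $\delta_{\bar a}\delta_{\bar b}^{-1}\neq 1$. In particular $\delta_a\delta_b^{-1}$ does not lie in the point-pushing subgroup $\pi_1(\bar X,p_1)$, and the hypothesis $\phi_*(a)=\phi_*(b)$ gives no reason for $\phi$ to kill $\pi_1(\bar X,p_1)$. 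Your propagation claim is also too strong: conjugation by $\Map(X)$ only gives $\phi_*(\gamma)=\phi_*(\gamma')$ for pairs $(\gamma,\gamma')$ cobounding a twice-punctured annulus \emph{containing the same two punctures} as $A$, since $\Map(X)$ fixes each puncture; this identifies at most one pair $r_i,r_{i+2}$ in the $r_i$-fan and does not collapse $\CR$ in the way you suggest. The remaining portion (``the resulting structural constraints should force $\phi$ to factor\dots'') is not a proof, and I do not see a way to make it one along these lines.

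The paper's route for this sub-step is entirely different. It uses a lantern $a,b,c,d,x,y,z$ in which $c$ and $d$ are inessential, so that the lantern relation collapses to $\delta_a\delta_b=\delta_x\delta_y\delta_z$. Applying $\phi$ and using $\phi_*(a)=\phi_*(b)$ gives $\delta_{\phi_*(a)}^2=\delta_{\phi_*(x)}\delta_{\phi_*(y)}\phi(\delta_z)$; Bridson's theorem makes $\phi(\delta_z)$ a root of a multitwist, and a case analysis on $i(\phi_*(x),\phi_*(y))\in\{0,2,>2\}$ leads to contradictions via \cite[Theorem~3.10]{Hamidi-Tehrani}, Lemma \ref{disk}, and Lemma \ref{fill-cusp}. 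You should replace your factoring heuristic with an argument of this kind, or find a concrete substitute; as written, the proof of the crucial implication $\phi_*(a)\neq\phi_*(b)$ is missing.
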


\begin{proof}
Let $x\subset A$ be a curve as in the statement. 
 Suppose first 
that $\phi_*(a)\neq\phi_*(b)$. Consider the annuli 
$A'$, $A_1'$ and $A_2'$ in $Y$ with boundaries
$$\D A'=\phi_*(a)\cup\phi_*(b),\ \D A_1'=\phi_*(a)\cup\phi_*(x),\ \D A_2'=\phi_*(x)\cup\phi_*(b).$$
By Lemmas \ref{bound-annulus} and \ref{fill-cusp},  the annuli $A_1'$ and $A_2'$ contain exactly
one puncture. Finally, since $\phi_*(x)$ does not intersect $\phi_*(a)\cup\phi_*(b)$, it 
follows that $A'=A_1'\cup A_2'$ and the claim follows.

It remains to rule out the possibility that $\phi_*(a)=\phi_*(b)$. Seeking a contradiction, suppose 
that this is the case. Consider curves $d,c,y,z$ as in Figure \ref{fig1120} 
\begin{figure}[tbh] \unitlength=1in
\begin{center} 
\includegraphics[width=2in]{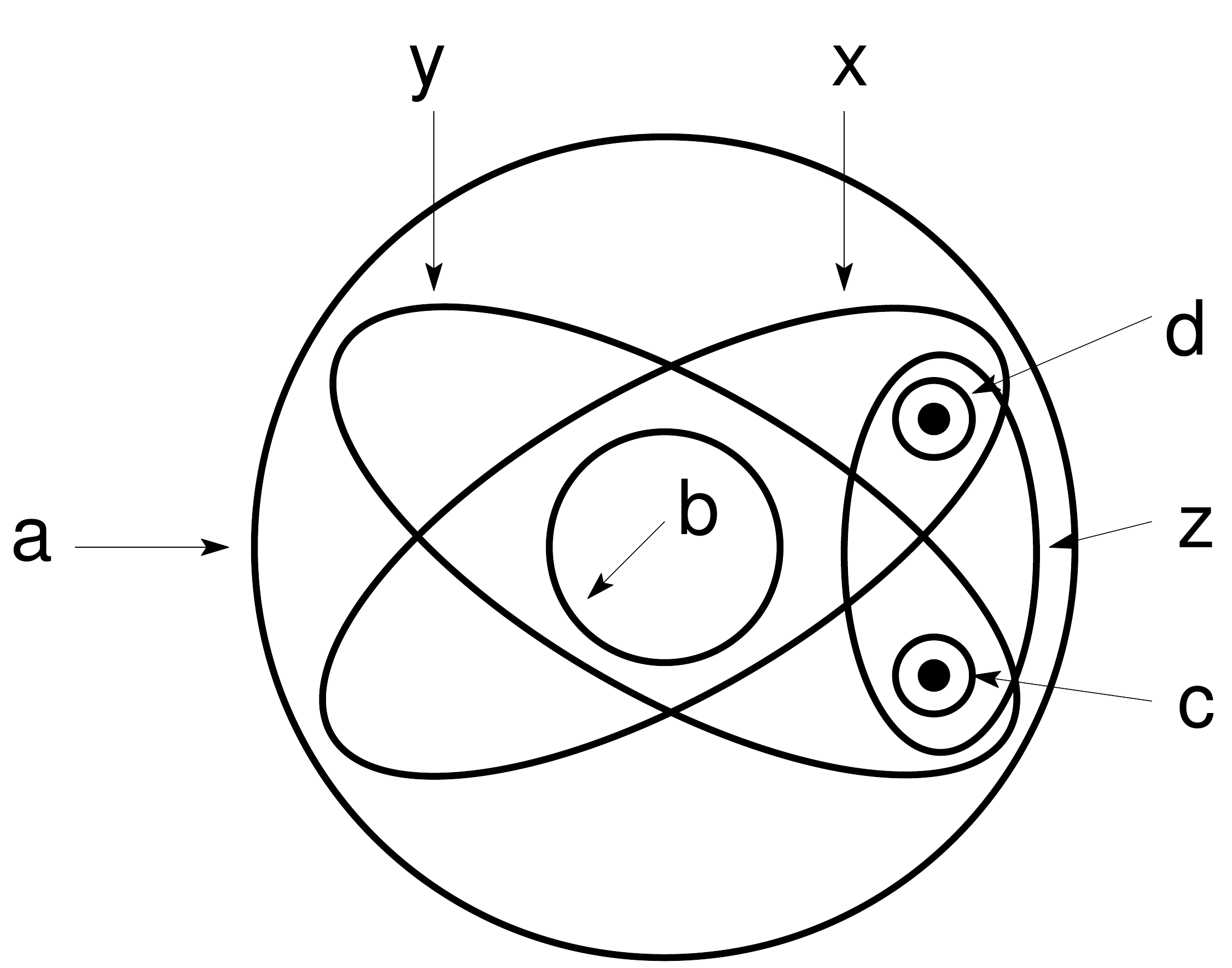}
\end{center}
\caption{The back dots represent cusps}\label{fig1120}
\end{figure}
and notice that $a,b,c,d,x,y,z$ is a lantern, and that $c,d$ are not essential. In particular, the lantern 
relation reduces to $\delta_a \delta_b = \delta_x \delta_y \delta_z$. Applying $\phi$ we obtain
$$\delta_{\phi_*(a)}^2=\delta_{\phi_*(x)}\delta_{\phi_*(y)}\phi(\delta_z)$$
By Bridson's theorem, $\phi(\delta_z)$ is a root of a multitwist. Since $\delta_a$ and $\delta_z$ 
commute, we have that $\delta_{\phi_*(a)}^2\phi(\delta_z)^{-1}$ is also a root of a multitwist. 

By the same arguments as above, there are annuli $A_1',A_2'$ in $Y$, each containing at most 
one puncture, with boundaries
$$\D A_1'=\phi_*(a)\cup\phi_*(x),\ \D A_2'=\phi_*(a)\cup\phi_*(y)$$
Observe that $i(\phi_*(x),\phi_*(y))$ is even. If $i(\phi_*(x),\phi_*(y))>2$, then by 
\cite[Theorem 3.10]{Hamidi-Tehrani} the element $\delta_{\phi_*(x)}\delta_{\phi_*(y)}$ is 
relatively pseudo-Anosov, and hence not a multitwist. If $i(\phi_*(x),\phi_*(y))=2$, then we
 are in the situation of Figure \ref{fig112}, 
\begin{figure}[tbh] \unitlength=1in
\begin{center} 
\includegraphics[width=1in]{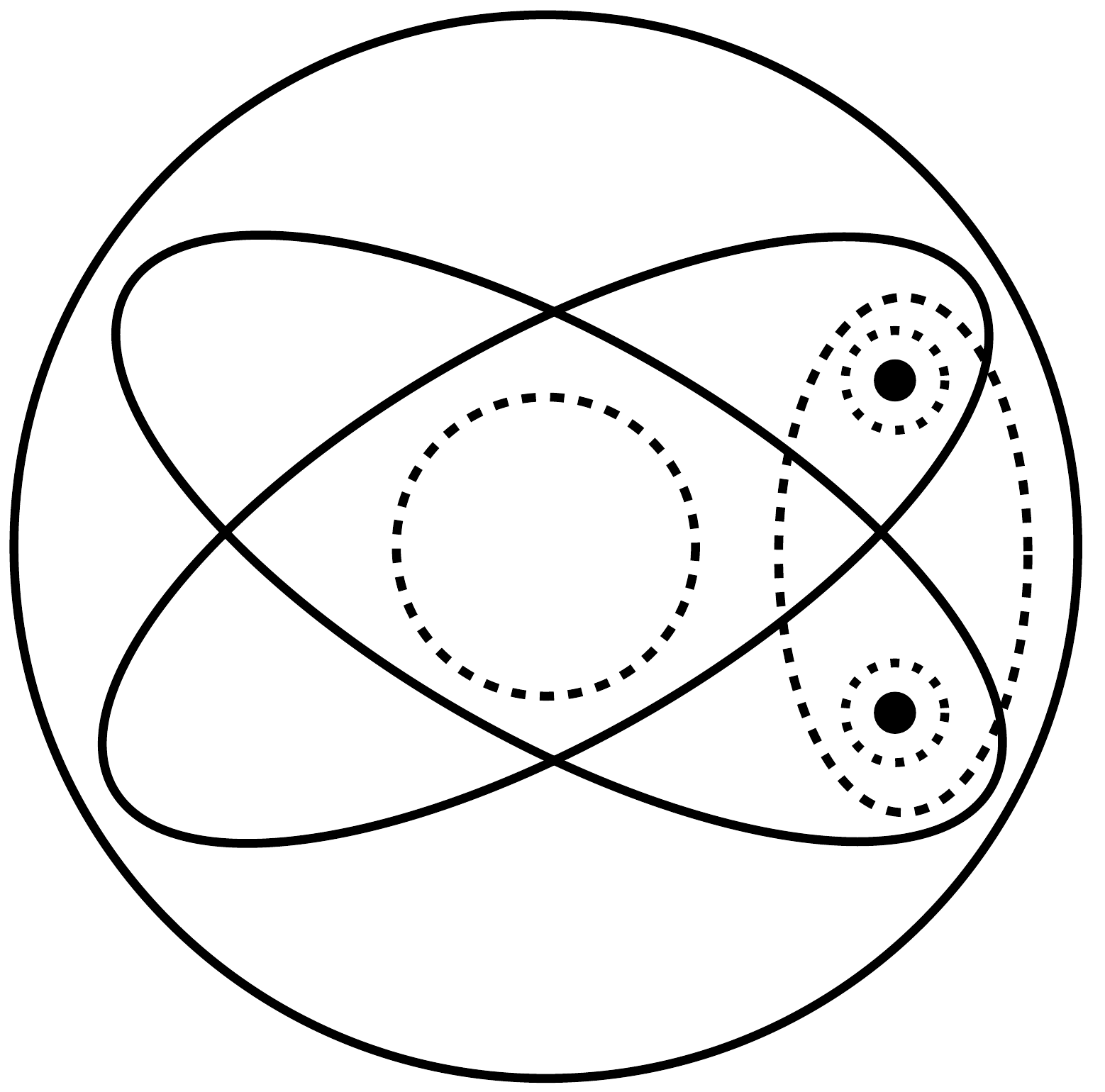}
\end{center}
\caption{The solid lines are $\phi_*(a)$, $\phi_*(x)$ and $\phi_*(y)$ and the black dots are cusps.}\label{fig112}
\end{figure}
meaning that we can extend $\phi_*(a),\phi_*(x),\phi_*(y)$ to a
 lantern $\phi_*(a),\hat b,\hat c,\hat d,\phi_*(x),\phi_*(y),\hat z$ with $\hat c,\hat d$ not-essential and $\hat b$ non-separating. From the lantern relation we obtain:
$$\delta_{\phi_*(y)}^{-1}\delta_{\phi_*(x)}^{-1}\delta_{\phi_*(a)}=\delta_{\hat z}^{-1}\delta_{\hat b}$$
This implies that $\phi(\delta_z)=\delta_{\hat z}^{-1}\delta_{\hat b}\delta_{\phi_*(a)}$ is a multitwist whose support 
contains non-separating components. This contradicts Lemma \ref{disk} and so we deduce that, if $\phi_*(a)=\phi_*(b)$ 
then $\phi_*(x)$ and $\phi_*(y)$ cannot have positive intersection number.

Finally, we treat the case $i(\phi_*(x), \phi_*(y))=0$. Since $x$ and $y$ are disjoint from $a$, it follows that the 
left side of 
$$\delta_{\phi_*(x)}^{-1}\delta_{\phi_*(y)}^{-1}\delta_{\phi_*(a)}^2=\phi(\delta_z)$$
is a multitwist supported on a multicurve contained in $\phi_*(a)\cup\phi_*(x)\cup\phi_*(y)$. Since 
these three curves are non-separating, it follows from Lemma \ref{disk} that $\phi(\delta_z)=\Id$. This shows 
that $\phi_*(a)=\phi_*(x)=\phi_*(y)$. Since $a$ and $x$ bound an annulus which exactly one puncture, 
we get a contradiction to Lemma \ref{fill-cusp}. Thus, we have proved
that $\phi_*(a)\neq\phi_*(b)$; this concludes the proof of Lemma \ref{ordering}
\end{proof}

We are now ready to finish the proof of Theorem \ref{dogs-bollocks}.

\begin{proof}[Proof of Theorem \ref{dogs-bollocks}]
Continuing with the same notation and standing assumptions, we now introduce orderings on 
the $r_i$-fan and the multicurve $\CR\subset Y$. 
In order to do so, observe 
that the union of the multicurve $\cup a_i$ and any of the curves in the $r_i$-fan separates $X$. Similarly,
 notice that by Lemma \ref{mapping-curves} the union of the multicurve $\cup\phi_*(a_i)$ and any of the 
components of $\CR$ is a multicurve consisting of $g+1$ non-separating curves. Since $Y$ has genus $g$, by Lemma 
\ref{same-surface-1}, we deduce that the union of the multicurve $\cup\phi_*(a_i)$ and any of the components 
of $\CR$ separates $Y$. We now define our orderings:
\begin{itemize}
\item Given two curves $r_i,r_j$ in the $r_i$-fan we say that $r_i\le r_j$ if $r_i$ and $c$ are in the same 
connected component of $X\setminus(a_1\cup\dots\cup a_g\cup r_j)$. Notice that the labeling in Figure \ref{fig11} is such that $r_i\le r_j$ for $i\le j$. 
\item Similarly, given two curves $r,r'\in\CR$ we say that $r\le r'$ if $r$ and $\phi_*(c)$ are in the same 
connected component of $X\setminus(\phi_*(a_1)\cup\dots\cup\phi_*(a_g)\cup r')$.
\end{itemize}
The minimal element of the $r_i$-fan, the curve $r_1$ in Figure \ref{fig11}, is called the {\em initial curve}
in the $r_i$-fan; we define the initial curve of the multicurve $\CR$ in an analogous way. 
We claim that its image under $\phi_*$ is the initial curve of $\CR$:
\medskip

\noindent{\bf Claim.} $\phi_*(r_1)$ is the initial curve in $\CR$.

\begin{proof}[Proof of the claim]
Suppose, for contradiction, that $\phi_*(r_1)$ is not the initial curve in $\CR$. 
Consider, besides the curves in Figure \ref{fig11}, a curve $c'$ as in Figure \ref{fig:closest}. In words, 
$c$ and $c'$ bound an annulus with exactly two punctures and
\begin{equation}\label{eq:closest}
i(c',r_i)=0\ \forall i\ge 2\ \ \hbox{and}\ \ i(c',a_i)=0\ \forall i
\end{equation}
Notice that by Lemma \ref{ordering}, $\phi_*(c)$ and $\phi_*(c')$ bound an annulus $A$ which contains exactly two punctures. 

\begin{figure}[tbh] \unitlength=1in
\begin{center} 
\includegraphics[width=3in]{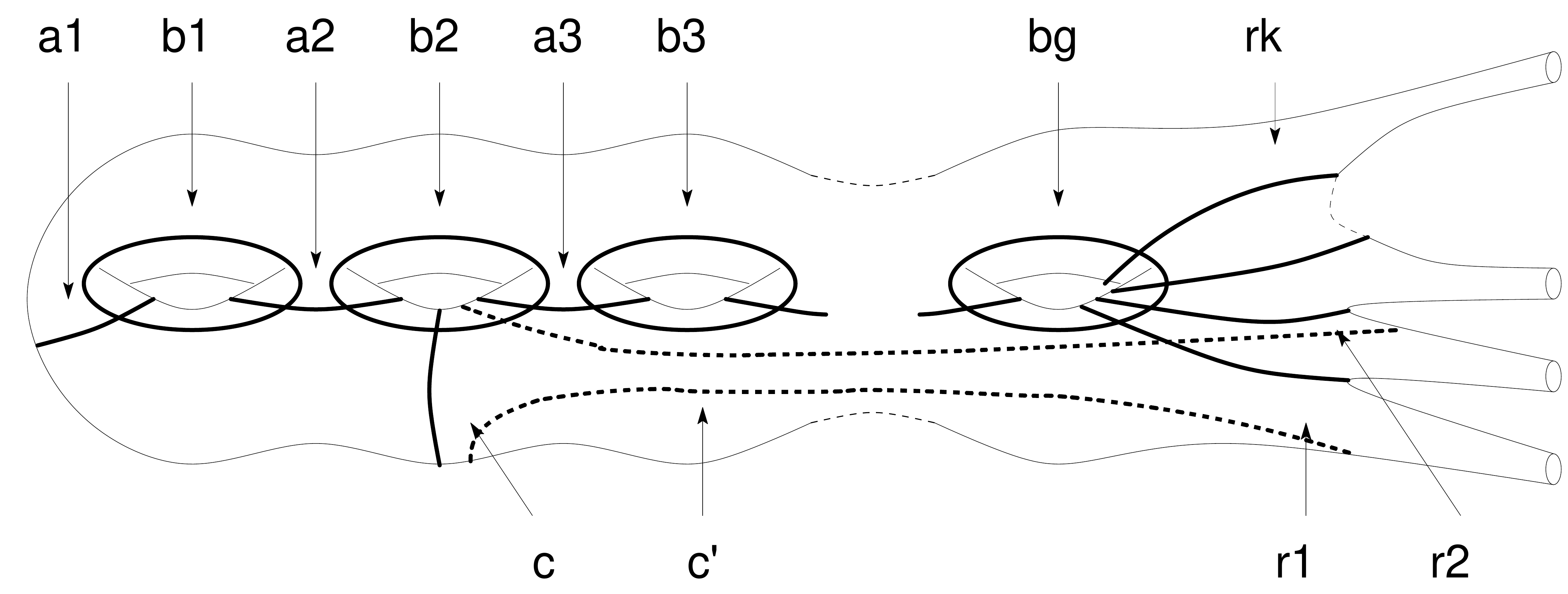}
\end{center}
\caption{The dotted curve $c'$ and $c$ bound an annulus with two punctures.}\label{fig:closest}
\end{figure}

Since $\phi(r_1)$ is not the initial curve, then $i(\phi_*(c'),\cup\phi_*(r_j))= 0$ for all $j$, as $i(c',r_j) = 0$ 
for all $j>1$. Also, by 
disjointness $i(\phi_*(c'),\phi_*(a_i))=0$ for all $i$. Since the boundary $\D A=\phi_*(c)\cup\phi_*(c')$ 
of the annulus $A$ is disjoint of $\cup\phi_*(a_i)\cup\phi_*(r_i)$ it is contained in one of the connected 
components of $X\setminus(\cup\phi_*(a_i)\cup\phi_*(r_i))$. However, each one of these components contains 
at most one puncture, by Lemma \ref{same-surface-3}. This contradicts Lemma \ref{ordering}, and thus we have
established the claim.
\end{proof}

We are now ready to prove that $\phi_*$ induces an order preserving bijection between the $r_i$-fan
and the multicurve $\CR$. Denote the curves in $\CR$ by $r_i'$, labeled in such a way that 
$r_i'\le r_j'$ if $i\le j$. By the previous claim, $\phi_*(r_1)=r_1'$. Next, consider the curve $r_2$, and observe that $r_1$ and $r_2$ bound an annulus with exactly one puncture. 
Hence,  Lemma
 \ref{fill-cusp} yields that $\phi_*(r_1)=r_1'$ and  $\phi_*(r_2)$  also bound an annulus with exactly one 
 puncture. In particular, $\phi_*(r_2)$ cannot be separated from $r_1'$ by any component of $\CR$. 
This proves that $\phi_*(r_2)=r_2'$. We now consider the curve $r_3$. The argument just used for 
$r_2$ implies that either $\phi_*(r_3)=r_3'$ or $\phi_*(r_3)=r_1'$. The latter is impossible, as
the curves $r_1$ and $r_3$ bound an annulus with exactly two punctures and hence so do 
$\phi_*(r_1)=r_1'$ and $\phi_*(r_3)$, by Lemma \ref{ordering}. Thus $\phi_*(r_3)=r_3'$. 
Repeating this argument as often as necessary we obtain that the map $\phi_*$ induces an injective, order 
preserving map from the $r_i$-fan to $\CR$. Since by definition $\CR$ has at most as many components as the 
$r_i$-fan, we have proved that this map is in fact an order preserving bijection.

Let $Z\subset X$ be a regular neighborhood of the $a_ib_i$-chain, and recall $c\subset Z$.
 By Lemma \ref{same-surface-2} there is an orientation preserving embedding $F:Z\to Y$ such that 
$\phi_*(\gamma)=F(\gamma)$ for $\gamma=a_i,b_i,c$ ($i=1,\dots,g$). We choose $Z$ so that it 
intersects every curve in the $r_i$-fan in a segment. Observe that Lemma \ref{mapping-curves} implies 
that $F$ can be isotoped so that 
$$F(Z\cap(\cup r_i))=F(Z)\cap\CR$$ 
The orderings of the $r_i$-fan and of $\CR$ induce orderings of $Z\cap(\cup r_i)$ and $Z\cap\CR$. Since the map 
$\phi_*$ preserves both orderings  we deduce that $F$ preserves the induced orderings of $Z\cap(\cup r_i)$ and $F(Z)\cap\CR$.

\begin{figure}[tbh] \unitlength=1in
\leavevmode \SetLabels
\L (.23*-.06) $\partial Z $\\
\L (.79*-.06) $\partial Z $\\
\L (.08*.27) $Z \cap r_1$\\
\L (.08*.67) $Z \cap r_2$\\
\endSetLabels
\par
\begin{center} 
\AffixLabels{\includegraphics[width=4.6in]{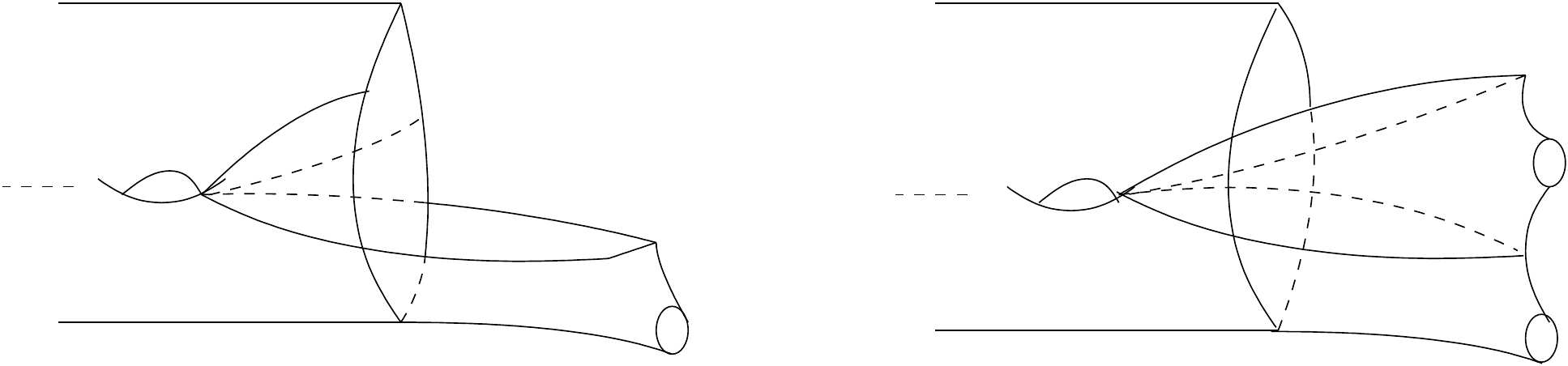}}
\end{center}
\caption{Attaching the first (left) and second (right) annuli along $\partial Z$.}\label{fig:annuli}
\end{figure}

Let $k$ be the number of curves in the $r_i$-fan, and thus in $\CR$. We successively attach $k$ annuli along the boundary $\D Z$ of $Z$, as indicated in Figure \ref{fig:annuli}. In this way we get a surface $Z_1$ naturally homeomorphic to $X$. We perform the analogous operation on $\D F(Z)$, thus obtaining a surface $Z_2$ which is naturally homeomorphic to $Y$. Since the map $\phi_*$ is preserves the orderings of the $r_i$-fan and of $R$, we get that the homeomorphism $F:Z\to F(Z)$ extends to a homeomorphism
$$\bar F:X \to Y$$
such that $$\bar F(\gamma)=\phi_*(\gamma)$$
for every curve  $\gamma$ in the collection $a_i,b_i,c,r_i$. It follows that the homomorphisms $\phi$ and $\bar F_\#$
both map the Dehn twist along $\gamma$ to the Dehn twist along $\phi_*(\gamma)$ and, in particular, to the same 
element in $\Map(Y)$. Since the Dehn twists along the curves $a_i,b_i,c,r_i$ generate $\Map(X)$, we deduce $\phi=F_\#$.
This finishes the proof of Theorem \ref{dogs-bollocks}.
\end{proof}

\section{Some consequences of Theorem \ref{dogs-bollocks}}
\label{sec:applications}
We now present several consequences of Theorem \ref{dogs-bollocks}; each 
of them results from imposing extra conditions on the surfaces involved 
and then reinterpreting the word ``embedding'' in that specific situation. 
Namely, observe that Proposition \ref{prop-embedding} immediately implies
the following:

\begin{kor}\label{kor-embedding}
Suppose that $X$ and $Y$ are surfaces of finite topological type and that $\iota:X\to Y$ is an embedding.
\begin{enumerate}
\item If $X$ is closed, i.e. if $X$ has neither boundary nor marked points, then $\iota$ is a homeomorphism.
\item If $X$ has no boundary, then $\iota$ is obtained by forgetting a (possibly empty) collection of
 punctures of $X$. In particular, $\iota_\#:\Map(X)\to\Map(Y)$ is surjective, and it is injective if and only 
if $\iota$ is a homeomorphism.
\item If $X=Y$ and $X$ has no boundary, then $\iota$ is a homeomorphism.
\item If $X=Y$ and $\iota$ is a subsurface embedding, then $\iota$ is (isotopic to) a homeomorphism. \qed
\end{enumerate}
\end{kor}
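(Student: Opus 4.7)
The plan is to deduce each of the four assertions from Proposition \ref{prop-embedding} — which decomposes any embedding $\iota:X\to Y$ into a chain of elementary moves (filling in punctures, deleting boundary components, and subsurface embeddings) — together with a short topological analysis of what this decomposition can look like under each hypothesis.

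For (1), I would argue that since $X$ has neither marked points nor boundary, no "filling in" or "deleting" step can originate at $X$, so the decomposition reduces to a single subsurface embedding. The underlying map $|\iota|:|X|\to|Y|$ is then an injection between compact surfaces whose image is closed by compactness and open by invariance of domain (applicable because $|X|$ has no boundary), so it exhausts the connected surface $|Y|$. Hence $|\iota|$ is a homeomorphism, and because $P_X=\emptyset$ the condition $\iota^{-1}(P_Y)\subset P_X$ forces $P_Y=\emptyset$ as well, making $\iota$ a homeomorphism.

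For (2), I would repeat the invariance-of-domain argument (still using that $X$ has no boundary) to conclude that $|\iota|:|X|\to|Y|$ is a homeomorphism of underlying surfaces. The resulting injection $P_Y\hookrightarrow P_X$ identifies $P_Y$ with a subset of $P_X$, so $\iota$ is isotopic to the embedding that forgets the marked points in $P_X\setminus |\iota|^{-1}(P_Y)$. Surjectivity of $\iota_\#$ then follows by iterating the Birman exact sequence~\eqref{eq:birman1}, each of whose right-hand maps is onto; injectivity is equivalent to no puncture being forgotten, i.e., to $\iota$ being a homeomorphism. Case (3) is then immediate: (2) exhibits $\iota$ as a forgetful embedding, and $|P_X|=|P_Y|$ forces the forgotten collection to be empty.

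For (4), the subsurface-embedding hypothesis gives $\iota(P_X)\subset P_Y=P_X$, and injectivity on the finite set $P_X$ upgrades this to $\iota(P_X)=P_X$, so every marked point already lies in $\iota(|X|)$. An Euler-characteristic computation then shows that the complement of $\iota(|X|)$ in $|X|$ has vanishing Euler characteristic and so, in the orientable setting, consists of annular components; by the previous sentence, these annuli contain no marked points. Each such annulus can then be absorbed into $\iota(|X|)$ by an isotopy of $|X|$ relative to $P_X$, exhibiting $\iota$ as isotopic to a homeomorphism. The only delicate step in the whole argument is the invariance-of-domain observation underlying (1)--(3); the rest amounts to bookkeeping with marked points and iterating the Birman exact sequence, together with the standard Euler-characteristic and absorption argument in (4).
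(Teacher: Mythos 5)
Your proof is correct and fills in, with standard invariance-of-domain and Euler-characteristic arguments, exactly the elementary details that the paper leaves unstated when it remarks that the corollary ``immediately'' follows from Proposition \ref{prop-embedding}. The only thing worth noting is that for parts (1)--(3) the appeal to the decomposition of Proposition \ref{prop-embedding} is dispensable --- invariance of domain applied to $\vert\iota\vert$ does the work directly --- but this is cosmetic and does not affect correctness.
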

\medskip

Combining the first part of Corollary \ref{kor-embedding} and Theorem \ref{dogs-bollocks}, we obtain:

\begin{kor}\label{kor:closed}
Suppose that $X$ and $Y$ are surfaces of finite topological type, of genus $g\ge 6$ and $g'\le 2g-1$ respectively; 
if $Y$ has genus $2g-1$, suppose also that it is not closed. 

If $X$ is closed then every nontrivial homomorphism $\phi:\Map(X)\to\Map(Y)$ is induced by a 
homeomorphism $X\to Y$; in particular $\phi$ is an isomorphism.\qed
\end{kor}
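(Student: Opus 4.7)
The proof is essentially a one-line deduction from two facts already in hand: Theorem \ref{dogs-bollocks} and part (1) of Corollary \ref{kor-embedding}. The plan is therefore to chain them together and verify nothing is lost along the way.

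First, I would apply Theorem \ref{dogs-bollocks} directly. The hypotheses on genera and on $Y$ not being closed when $g' = 2g-1$ are precisely those of that theorem, and $\phi$ is assumed non-trivial. This produces an embedding $\iota : X \to Y$ (in the sense of the definition in Section \ref{sec:embeddings}) such that $\phi = \iota_\#$.

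Next, I would invoke part (1) of Corollary \ref{kor-embedding}: since $X$ is closed, meaning $|X|$ is a closed surface and $P_X = \emptyset$, any embedding out of $X$ is automatically a homeomorphism. Concretely, the underlying continuous injective map $|X| \to |Y|$ is a map from a closed surface to a compact surface, so by invariance of domain its image is open, and by compactness it is also closed; connectedness of $|Y|$ then forces the map to be a surjective homeomorphism $|X| \to |Y|$ (in particular $|Y|$ has no boundary). The condition $\iota^{-1}(P_Y) \subset P_X = \emptyset$ then forces $P_Y = \emptyset$ as well, so $Y$ is closed and $\iota : X \to Y$ is a homeomorphism.

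Finally, since homeomorphisms induce isomorphisms of mapping class groups, $\phi = \iota_\#$ is an isomorphism. The only step that required any content is Theorem \ref{dogs-bollocks}; the rest is a purely topological translation of the word \emph{embedding} in the closed case, and so there is no genuine obstacle here beyond verifying that the genus hypotheses of Theorem \ref{dogs-bollocks} are inherited unchanged.
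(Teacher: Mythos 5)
Your proof is correct and is exactly the paper's route: the paper deduces Corollary \ref{kor:closed} by combining Theorem \ref{dogs-bollocks} with part (1) of Corollary \ref{kor-embedding}, just as you do, and the supplementary invariance-of-domain argument you give is the content of that corollary in the closed case.
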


As mentioned in the introduction, if there are no restrictions on the genus of $Y$ then Corollary \ref{kor:closed}
is far from true. Indeed, Theorem 1 of \cite{ALS} shows that for every closed surface $X$ there exist a closed
surface $Y$ and an injective homomorphism $\Map(X) \to \Map(Y)$. 

\medskip

Moving away from the closed case, if $X$ is allowed to have marked points and/or boundary then there are numerous 
non-trivial embeddings of $X$ into other surfaces. 
That said, the second part of Corollary \ref{kor-embedding} tells us that if $X$ has no boundary, then every 
subsurface embedding of $X$ into another surface is necessarily a homeomorphism. Hence we have:

\begin{kor}\label{no-boundary}
Suppose that $X$ and $Y$ are surfaces of finite topological type, of genus $g\ge 6$ and $g'\le 2g-1$ respectively; 
if $Y$ has genus $2g-1$, suppose also that it is not closed. 

If $X$ has empty boundary then any injective homomorphism $\phi:\Map(X)\to\Map(Y)$ is induced by a 
homeomorphism $X\to Y$; in particular $\phi$ is an isomorphism.\qed
\end{kor}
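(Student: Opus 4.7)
The plan is to deduce the corollary directly from Theorem \ref{dogs-bollocks} together with the classification of embeddings from a boundaryless surface provided in Corollary \ref{kor-embedding}(2). The only real content is checking the two hypotheses needed to invoke Theorem \ref{dogs-bollocks}, namely that $\phi$ is nontrivial and that the resulting embedding must be a homeomorphism.

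First I would observe that the injectivity hypothesis, combined with the fact that $\Map(X)$ is nontrivial for $g \geq 6$, immediately implies that $\phi$ is nontrivial. The hypotheses on the genera of $X$ and $Y$ are exactly those of Theorem \ref{dogs-bollocks}, so we can apply it to obtain an embedding
\[
\iota:X\to Y
\]
such that $\phi=\iota_\#$.

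Next, since $X$ has empty boundary by assumption, Corollary \ref{kor-embedding}(2) applies: the embedding $\iota$ must be obtained by forgetting some (possibly empty) collection of punctures of $X$, the induced homomorphism $\iota_\#$ is automatically surjective, and moreover $\iota_\#$ is injective if and only if $\iota$ is a homeomorphism.

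Finally, since $\phi=\iota_\#$ is assumed to be injective, the equivalence above forces $\iota$ to be a homeomorphism. In particular no punctures were forgotten and $\phi$ is an isomorphism, as claimed. There is no real obstacle here: the substantive work has already been done in Theorem \ref{dogs-bollocks} and in the structural analysis of embeddings from a surface without boundary encoded in Proposition \ref{prop-embedding} and Corollary \ref{kor-embedding}.
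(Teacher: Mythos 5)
Your proposal is correct and follows essentially the same route the paper intends: invoke Theorem \ref{dogs-bollocks} (after noting injectivity forces nontriviality) to get an inducing embedding, then use Corollary \ref{kor-embedding}(2) to conclude that, since $X$ has no boundary and $\iota_\#$ is injective, $\iota$ must be a homeomorphism.
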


Again, if there are no restrictions on the genus of $Y$ then Corollary \ref{no-boundary} is simply not true; 
see Section 2 of \cite{Ivanov-McCarthy} and Theorem 2 of \cite{ALS}.

\medskip

By Corollary \ref{kor-embedding} (3), if $X$ has no boundary then any embedding $\iota: X \to X$ is a homeomorphism.
Observing that Theorem \ref{dogs-bollocks} applies for homomorphisms between surfaces of the same genus $g\ge 4$, 
(see the remark following the statement of the theorem) we deduce:

\begin{named}{Theorem \ref{no-boundary2}}
Let $X$ be a surface of finite topological type, of genus $g\ge 4$ and with empty boundary. Then any non-trivial 
endomorphism $\phi:\Map(X)\to\Map(X)$ is induced by a homeomorphism $X\to X$; in particular 
$\phi$ is an isomorphism.\qed
\end{named}

Note that Theorem \ref{no-boundary2} fails if $X$ has boundary. 
However, by Corollary \ref{kor-embedding} (4), any subsurface embedding $X\to X$ is isotopic to a homeomorphism. 
Therefore, we recover the following result due to Ivanov-McCarthy \cite{Ivanov-McCarthy} 
(see \cite{Ivanov-2} and \cite{McCarthy} for related earlier results):

\begin{kor}[Ivanov-McCarthy]\label{ivanov-mccarthy}
Let $X$ be a surface of finite topological type, of genus $g\ge 4$. Then any injective homomorphism 
$\phi:\Map(X)\to\Map(X)$ is induced by a homeomorphism $X\to X$; in particular $\phi$ is an isomorphism.\qed
\end{kor}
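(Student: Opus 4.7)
The plan is to deduce Corollary \ref{ivanov-mccarthy} directly from Theorem \ref{dogs-bollocks} together with Corollary \ref{kor-embedding}. First I would observe that since $\phi$ is injective and $\Map(X)$ is non-trivial, $\phi$ is a non-trivial homomorphism. Since $X$ has genus $g \geq 4$ and we are considering a homomorphism $\Map(X) \to \Map(X)$ between mapping class groups of surfaces of the same genus, the remark following Theorem \ref{dogs-bollocks} applies: in both the regime $g \geq 6$ and the regime $g \in \{4,5\}$ (with $X = Y$), the conclusion of Theorem \ref{dogs-bollocks} holds. Thus there is an embedding $\iota : X \to X$ such that $\phi = \iota_\#$.

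Next I would use the assumption that $\phi$ is injective to constrain the type of embedding $\iota$ can be. By Proposition \ref{prop-embedding}, $\iota_\# : \Map(X) \to \Map(X)$ is injective if and only if $\iota$ is an anannular subsurface embedding; equivalently, $\iota$ involves neither filling in a puncture nor filling in or deleting a boundary component (each of these three elementary operations has non-trivial kernel, as recorded in the Birman exact sequences \eqref{eq:birman1}, \eqref{eq:birman2} and in \eqref{eq:birman3}). Hence the injectivity of $\phi$ forces $\iota$ to be an anannular subsurface embedding.

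Finally, I would invoke part (4) of Corollary \ref{kor-embedding}, which states that any subsurface embedding $X \to X$ is isotopic to a homeomorphism. Combining this with the previous step shows that $\iota$ is (isotopic to) a homeomorphism of $X$, and therefore $\phi = \iota_\#$ is induced by a homeomorphism and is in particular an isomorphism. There is essentially no obstacle in this argument, since the real work has already been done in Theorem \ref{dogs-bollocks} and in the classification of embeddings given by Proposition \ref{prop-embedding}; the corollary is just a matter of reading off what an anannular self-embedding of $X$ must look like.
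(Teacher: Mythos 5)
Your proposal is correct and follows exactly the same route as the paper: apply Theorem \ref{dogs-bollocks} (with the remark extending it to genus $4,5$ for $X=Y$) to obtain an embedding $\iota$ with $\phi=\iota_\#$, use Proposition \ref{prop-embedding} to see that injectivity of $\phi$ forces $\iota$ to be an anannular subsurface embedding, and then invoke Corollary \ref{kor-embedding}(4). You have simply spelled out the middle step a bit more explicitly than the paper does.
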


\section{Proof of Theorem \ref{forgetful}}
\label{sec:forgetful-proof}
Given a Riemann surface $X$ of finite analytic type, endow the associated Teichm\"uller space $\CT(X)$ with the
 standard complex structure. Recall that $\Map(X)$ acts discretely on $\CT(X)$ by biholomorphic automorphisms.
 In particular, we can consider the moduli space
$$\CM(X)=\CT(X)/\Map(X)$$ 
as a complex orbifold; by construction it is a good orbifold, meaning that its universal cover is a manifold.

Suppose now that $Y$ is another Riemann surface of finite analytic type. We will consider maps $f:\CM(X)\to\CM(Y)$ 
in the category of orbifolds. Since $\CM(X)$ and $\CM(Y)$ are both good obifolds we can associate to every such 
map $f$ a homomorphism
$$f_*:\Map(X)\to\Map(Y)$$
and a holomorphic map
$$\tilde f:\CT(X)\to\CT(Y)$$
which is $f_*$-equivariant, that is,
$$\tilde f(\gamma x)=f_*(\gamma)\left(\tilde f(x)\right)\ \ \ \forall \gamma\in\Map(X)\ \hbox{and}\ x\in\CT(X)$$
and such that the following diagram commutes:
$$\xymatrix{
\CT(X)\ar[d]\ar[r]^{\tilde f} & \CT(Y)\ar[d]\\
\CM(X)\ar[r]^f &\CM(Y)
}$$
Here both vertical arrows are the standard projections.

\begin{bem}
In the remainder of this section we will treat $\CM(X)$ and $\CM(Y)$ as if they were manifolds. This is 
justified by two observations. First, every statement we make holds indistinguishable for manifolds and for
 orbifolds. And second, in all our geometric arguments we could pass to a manifold finite cover and work there.
 We hope that this does not cause any confusion.
\end{bem}

Suppose now that $X$ and $Y$ have the same genus and that $Y$ has at most as many marked points as $X$. 
Choosing an identification between the set of marked points of $Y$ and a subset of the set of marked points of 
$X$, we obtain a holomorphic map
$$\CM(X)\to\CM(Y)$$
obtained by forgetting all marked points of $X$ which do not correspond to a marked point of $Y$. Different 
identifications give rise to different maps; we will refer to these maps as {\em forgetful maps}. 

In the rest of the section we will prove Theorem \ref{forgetful}, whose statement we now recall:

\begin{named}{Theorem \ref{forgetful}}
Suppose that $X$ and $Y$ are Riemann surfaces of finite analytic type and assume that $X$ has 
genus $g\ge 6$ and $Y$ genus $g'\le 2g-1$; if $g'=2g-1$ assume further that $Y$ is not closed. 
Then, every non-constant holomorphic map
$$f:\CM(X)\to\CM(Y)$$
is a forgetful map.
\end{named}

In order to prove Theorem \ref{forgetful}, we will first deduce from Theorem \ref{dogs-bollocks} that there is a 
forgetful map $F:\CM(X)\to\CM(Y)$ homotopic to $f$; then we will modify an argument due to Eells-Sampson to conclude that 
that $f=F$. In fact we will prove, without any assumptions on the genus, that any two homotopic holomorphic 
maps between moduli spaces are equal:

\begin{named}{Proposition \ref{Eells-Sampson}}
Let $X$ and $Y$ be Riemann surfaces of finite analytic type and let $f_1,f_2:\CM(X)\to\CM(Y)$ be homotopic 
holomorphic maps. If $f_1$ is not constant, then $f_1=f_2$.
\end{named}

Armed with Proposition \ref{Eells-Sampson}, we now conclude the proof of Theorem \ref{forgetful}.

\begin{proof}[Proof of Theorem \ref{forgetful} from Proposition \ref{Eells-Sampson}]
Suppose that $f:\CM(X)\to\CM(Y)$ is holomorphic and not constant. It follows from the latter assumption and 
from Proposition \ref{Eells-Sampson} that $f$ is not homotopic to a constant map. In particular, the induced homomorphism
$$f_*:\Map(X)\to\Map(Y)$$
is non-trivial. Thus, it follows from Theorem \ref{dogs-bollocks} that $f_*$ is induced by an embedding. 
Now, since $X$ has empty boundary, Corollary \ref{kor-embedding} (2) tells us that every embedding $X \to Y$
is obtained by filling in a collection of punctures of $X$. It follows that there is a forgetful map
$$F:\CM(X)\to\CM(Y)$$
with $F_*=f_*$. We deduce that $F$ and $f$ are homotopic to each other because the universal cover $\CT(Y)$ of $\CM(Y)$ is contractible. Hence, Proposition \ref{Eells-Sampson} shows that $f=F$, as we needed to prove.
\end{proof}

The remainder of this section is devoted to prove Proposition \ref{Eells-Sampson}. Recall at this point that 
$\CT(X)$ admits many important $\Map(X)$-invariant metrics. In particular, we will endow:
\begin{itemize}
 \item $\CT(X)$ with McMullen's K\"ahler hyperbolic metric \cite{McMullen}, and
 \item $\CT(Y)$ with the Weil-Petersson metric.
\end{itemize}

The reason why we do not endow $\CT(X)$ with the Weil-Petersson metric is encapsulated in the following
observation:

\begin{lem}\label{lem1}
Every holomorphic map $f:\CT(X)\to\CT(Y)$ is Lipschitz.
\end{lem}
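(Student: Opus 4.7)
The plan is to reduce everything to the classical fact that any holomorphic map between complex manifolds is 1-Lipschitz with respect to the Kobayashi pseudometric, and then to compare the two metrics of interest with the Teichm\"uller (= Kobayashi) metric on each side.

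First, I would recall Royden's theorem (extended to the punctured case by Earle-Kra and Gardiner), which states that the Kobayashi metric on $\CT(X)$ coincides with the Teichm\"uller metric $d_T^X$, and similarly for $\CT(Y)$. Since the Kobayashi pseudometric is contracted by any holomorphic map, we obtain
\[
d_T^Y(f(x),f(y)) \le d_T^X(x,y) \quad \text{for all } x,y \in \CT(X).
\]
So $f$ is automatically $1$-Lipschitz when both spaces are equipped with the Teichm\"uller metric.

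Next, I would compare each of the target metrics with the Teichm\"uller metric. On the source side, one of the main theorems in McMullen's paper \cite{McMullen} asserts that his K\"ahler hyperbolic metric is bi-Lipschitz equivalent to the Teichm\"uller metric on $\CT(X)$; in particular there is a constant $C_1$ with $d_T^X \le C_1 \cdot d_{\mathrm{McM}}^X$. On the target side, the Weil-Petersson norm on tangent vectors (represented by Beltrami differentials $\mu$) is bounded by the Teichm\"uller norm via Cauchy-Schwarz, since
\[
\|\mu\|_{WP}^2 = \int_Y |\mu|^2\, d\mathrm{vol}_{\mathrm{hyp}} \le \|\mu\|_\infty^2 \cdot \mathrm{vol}_{\mathrm{hyp}}(Y) = C_2^2 \, \|\mu\|_T^2,
\]
the hyperbolic area of $Y$ being finite (Gauss-Bonnet). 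Integrating along paths yields $d_{WP}^Y \le C_2 \cdot d_T^Y$.

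Combining these three inequalities gives
\[
d_{WP}^Y(f(x),f(y)) \le C_2 \, d_T^Y(f(x),f(y)) \le C_2 \, d_T^X(x,y) \le C_1 C_2 \, d_{\mathrm{McM}}^X(x,y),
\]
so $f$ is Lipschitz with constant $C_1 C_2$. I do not foresee any real obstacle here; the only mild subtlety is ensuring that Royden's identification of the Kobayashi and Teichm\"uller metrics is available in the finite-analytic-type (punctured) setting, which is precisely the content of the Earle-Kra-Gardiner extension and so can simply be cited.
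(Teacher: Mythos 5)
Your proposal is correct and follows the paper's proof essentially verbatim: factor $f$ through the Teichm\"uller metrics, apply Royden's theorem (Kobayashi $=$ Teichm\"uller, so holomorphic maps are $1$-Lipschitz), use \cite[Theorem 1.1]{McMullen} to see that McMullen's K\"ahler hyperbolic metric is bi-Lipschitz to the Teichm\"uller metric on the source, and finally observe that the Teichm\"uller metric dominates the Weil--Petersson metric on the target. The paper cites \cite[Proposition 2.4]{McMullen} for this last comparison rather than giving the Cauchy--Schwarz argument; your sketch of that step is the right idea but slightly conflates the extremal and harmonic Beltrami representatives, so be careful if you want to spell it out rather than cite it.
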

\begin{proof}
Denote by $\CT(X)_T$ and $\CT(Y)_T$ the Teichm\"uller spaces of $X$ and $Y$, respectively, both equipped with the 
Teichm\"uller metric. Consider $f$ as a composition of maps
$$\xymatrix{
\CT(X)\ar[r]^{\Id} & \CT(X)_T\ar[r]^f & \CT(Y)_T\ar[r]^{\Id} & \CT(Y)
}$$
By \cite[Theorem 1.1]{McMullen} the first arrow is bi-lipschitz. By Royden's theorem \cite{Royden}, the middle map is $1$-Lipschitz. Finally, the last arrow is also Lipschitz because the Teichm\"uller metric dominates the Weil-Petersson metric up to a constant factor \cite[Proposition 2.4]{McMullen}.
\end{proof}

We will also need the following fact from Teichm\"uller theory:

\begin{lem}\label{lem2}
There exists a collection $\{K_i\}_{n\in \BN}$ of subsets of $\CM(X)$ with the following properties:
\begin{itemize}
\item  $\CM(X) = \bigcup_{n \in \BN} K_i$,
\item $K_{n} \subset K_{n+1}$ for all $n$,
\item there is $L>0$ such that $K_{n+1}$ is contained within distance $L$ of $K_n$ for all $n$, and
\item the co-dimension one volume of $\D K_n$ decreases exponentially when $n\to\infty$.
\end{itemize}
\end{lem}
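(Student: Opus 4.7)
The plan is to take $K_n$ to be sublevel sets of an exhaustion function built from the systole. Set
$$\ell:\CM(X)\to(0,\infty),\ \ \ell([Z])=\min_{\gamma\subset Z}\ell_Z(\gamma),$$
the length of the shortest closed geodesic in the hyperbolic metric on $Z$. By Mumford's compactness theorem the sets
$$K_n=\{[Z]\in\CM(X):\ell([Z])\ge e^{-n}\}$$
are compact, nested, and exhaust $\CM(X)$, so the first two properties come for free.

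To verify the bounded-width property, I would use Fenchel--Nielsen coordinates together with the explicit form of McMullen's metric $g_{1/\ell}$. Near a point where a curve $\gamma$ has small length $\ell_\gamma$, the metric $g_{1/\ell}$ contains a term comparable to $(d\log\ell_\gamma)^2$, so the path obtained by moving $\log\ell_\gamma$ by one unit, while keeping all the other Fenchel--Nielsen coordinates fixed, has uniformly bounded McMullen-length. Applying this to the systolic curve(s) shows that every point of $K_{n+1}\setminus K_n$ lies within a uniform distance $L$ of $K_n$.

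The main obstacle is the exponential decay of the codimension-one volume of $\partial K_n=\{\ell=e^{-n}\}$. Up to a codimension-two locus (where two geodesics attain the systole simultaneously), $\partial K_n$ is a disjoint union, indexed by mapping class group orbits of simple closed curves $\gamma$, of pieces of the form $\{\ell_\gamma=e^{-n},\ \ell_{\gamma'}\ge e^{-n}\ \text{for all other}\ \gamma'\}$. On each such piece, Fenchel--Nielsen coordinates $(\ell_\gamma,\tau_\gamma,\dots)$ give a local product description. Wolpert's asymptotic expansion of the Weil--Petersson metric near the stratum $\ell_\gamma=0$ shows that the WP area of the twist circle $\tau_\gamma\in\BR/\ell_\gamma\BZ$ at fixed $\ell_\gamma$ decays as a positive power of $\ell_\gamma$. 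McMullen's modification leaves the twist direction essentially unchanged and only adds an $L^\infty$ bounded correction to the transverse directions, so the codimension-one McMullen-volume of the relevant piece of $\partial K_n$ is bounded by a positive power of $\epsilon_n=e^{-n}$ times the finite McMullen-volume of the lower-complexity moduli space obtained by pinching $\gamma$.

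It remains to sum over orbits. On any fixed surface there are boundedly (in $g$) many geodesics of length $\le 1$, and two distinct orbits contribute pieces of $\partial K_n$ with essentially disjoint interiors; hence the sum is finite and the total codimension-one volume decays exponentially in $n$. The technical heart of the argument is thus Wolpert's asymptotics combined with McMullen's explicit description of $g_{1/\ell}$; everything else is bookkeeping.
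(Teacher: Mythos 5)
Your exhaustion by systole sublevel sets $K_n=\{\ell\ge e^{-n}\}$ is, after a change of parameter, the same as the one in the paper: both remove a shrinking Deligne--Mumford collar whose depth grows linearly in $n$. What differs is how the exponential decay of $\vol(\partial K_n)$ is established. The paper works in Wolpert's plumbing coordinates $((\BD^*)^k\times\BD^{d-k})/G$, observes that the inclusion of this chart into $\CM(X)$ is distance-decreasing for the Kobayashi metric (hence volume-decreasing, since Kobayashi $=$ Teichm\"uller by Royden and Teichm\"uller is bi-Lipschitz to $g_{1/\ell}$ by McMullen), and then just reads off the exponential decay of the horocircle lengths in the product hyperbolic metric. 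You instead work in Fenchel--Nielsen coordinates and try to import Wolpert's Weil--Petersson asymptotics. The paper's route is preferable because the one-sided Kobayashi contraction gives the volume bound for free, whereas your route needs an explicit comparison between $g_{1/\ell}$ and $g_{WP}$ on the level set $\{\ell_\gamma=e^{-n}\}$.

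That comparison is where the gap is. You assert that McMullen's modification ``leaves the twist direction essentially unchanged''; this is false. In the twist direction at $\ell_\gamma$ small the WP length of the twist circle is of order $\ell_\gamma^{3/2}$, while $g_{1/\ell}$ is bi-Lipschitz to the Teichm\"uller metric, whose twist-circle length is of order $\ell_\gamma$ (compute the horocircle length $2\pi/\log(1/|z|)$ in the plumbing disk $\BD^*$ and use $\ell_\gamma\asymp 1/\log(1/|z|)$). So $g_{1/\ell}$ is \emph{larger} than $g_{WP}$ in the twist direction by an unbounded factor $\ell_\gamma^{-1/2}$, and your proposed bound does not literally follow from the WP asymptotics plus a bounded correction. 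The conclusion you want is nonetheless true, because $\ell_\gamma\asymp e^{-n}$ still decays exponentially; replacing the WP estimate by the Teichm\"uller (equivalently Kobayashi, equivalently $g_{1/\ell}$) estimate fixes the step. The same caution applies to your bounded-width argument, which also implicitly uses an unproven comparability of $g_{1/\ell}$ with Fenchel--Nielsen coordinates: it is cleanest to deduce it, as the paper does, from the Kobayashi contraction of the plumbing chart together with McMullen's bi-Lipschitz theorem rather than from a hands-on Fenchel--Nielsen computation.
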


Recall that $\CT(X)$, and hence $\CM(X)$, has been endowed with McMullen's K\"ahler hyperbolic metric.

\begin{proof}
Let $\overline{\CM(X)}$ be the Deligne-Mumford compactification of the moduli space $\CM(X)$; recall that
points $Z \in \overline{\CM(X)}\setminus\CM(X)$ are surfaces with $k$ nodes ($k\ge 1$). 
 Wolpert \cite{Wolpert} proved that every point in $ \overline{\CM(X)}\setminus\CM(X)$ has a small neighborhood 
$\overline{U_Z}$ in $\overline{\CM(X)}$ whose intersection
$$U_Z=\overline{U_Z}\cap\CM(X)$$ 
with $\CM(X)$ is bi-holomorphic to a neighborhood of $(0,\dots,0)$ in
$$((\BD^*)^k\times(\BD)^{\dim_\BC(\CT(X))-k})/G$$
where $G$ is a finite group; here $\BD^*$ and $\BD$ are the punctured and unpunctured open unit disks in $\BC$. 
We may assume, without loss of generality, that
$$U_Z\simeq ((D^*)^k\times(D)^{\dim_\BC(\CT(X))-k})/G$$
where $D\subset\BD$ is the disk of Euclidean radius $\frac 12$ centered at $0$. 

Endow now $\BD^*$ and $\BD$ with the hyperbolic metric. For $n\in \BN \cup \{0\}$ let $D_n\subset D$ be the disk such 
that the hyperbolic distance between $\D D^*$ and $\D D_n^*$ is equal to $n$ in $\BD^*$. We set:
$$U_Z(n)=((D_n^*)^k\times(D_n)^{\dim_\BC(\CT(X))-k})/G$$
Observe that, with respect to the hyperbolic metric, the volume of $\D U_Z(n)$ decreases exponentially with $n$.

Since $\overline{\CM(X)}\setminus\CM(X)$ is compact, we can pick finitely many sets $U_{Z_1},\dots,U_{Z_r}$ 
such that $\CM(X)\setminus\cup_iU_{Z_i}$ is compact. For $n\in\BN$ we set
$$K_n=\CM(X)\setminus\cup_iU_{Z_i}(n)$$
By construction, 
$$\CM(X)=\cup_n K_n\ \ \hbox{and}\ \ K_n\subset K_{n+1}\ \forall n$$
We claim that the sets $K_n$ satisfy the rest of the desired properties.

First, recall that Theorem 1.1 of \cite{McMullen} implies that the Teichm\"uller and K\"ahler hyperbolic metrics on $\CM(X)$
are bi-Lipschitz equivalent to each other.  In particular, it suffices to prove the 
claim if we consider $\CM(X)$ equipped with the Teichm\"uller metric. It is due to Royden \cite{Royden} 
that the Teichm\"uller metric agrees with the Kobayashi metric. It follows that the inclusion
$$U_{Z_i}\subset\CM(X)$$
is 1-Lipschitz when we endow $U_{Z_i}$ with the product of hyperbolic metrics and the $\CM(X)$
 with the Teichm\"uller metric. By the choice of $D_n$, every point in $U_{Z_i}(n+1)$ is within 
distance $\sqrt{\dim_\BC(\CT(X))}$ of $U_{Z_i}(n)$ with respect to the hyperbolic metric. Hence, the same is true with respect 
to the Teichm\"uller metric. Therefore, $K_{n+1}$ is contained within a fixed Teichm\"uller distance of $K_n$.

Noticing that $\D K_n\subset\cup_{i=1,\dots,r}\D U_{Z_i}(n)$, that the volume of $\D U_{Z_i}(n)$ decreases exponentially with respect to the hyperbolic metric, and that $1$-Lipschitz maps contract volume, we deduce that the volume of $\D K_n$ also decreases exponentially with respect to the Teichm\"uller metric. This concludes the proof of Lemma \ref{lem2}.
\end{proof}

We are almost ready to prove Proposition \ref{Eells-Sampson}. We first remind the reader of a few facts and definitions on 
the energy of maps. Suppose that $N$ and $M$ are Riemannian manifolds and that $f:N\to M$ is a smooth map. The {\em energy density} of $f$ at $x\in N$ is defined to be:
$$E_x(f)=\sum_{i=1}^{\dim_\BR N}\Vert df_xv_i\Vert_M^2$$
where $v_1,\dots,v_{\dim_\BR N}$ is an arbitrary orthonormal basis of $T_xM$. The {\em energy} of $f$ is then the integral of the energy density:
$$E(f)=\int_N E_x(f) d\vol_N(x)$$
Here $d\vol_N$ is the Riemannian volume form of $N$.

Suppose now that $N$ and $M$ are K\"ahler and let $\omega_N$ and $\omega_M$ be the respective K\"ahler forms. Recall that 
$$\omega_N^{\dim_\BC N}=\omega_X\wedge\dots\wedge\omega_X$$
is a volume form on $N$; more concretely, it is a constant multiple of the Riemannian volume form $d\vol_X$, where the 
constant depends only on $\dim_\BC N$. 

Pulling back the K\"ahler form $\omega_M$ via $f:N\to M$, we also have the top-dimensional form $(f^*\omega_M)\omega_N^{\dim_\BC N-1}$ on $N$. A local computation due to Eells and Sampson \cite{Eells-Sampson} shows that for all $x\in N$ we have
\begin{equation}\label{eq:ES}
E_x(f)d\vol_N\ge c\cdot(f^*\omega_M)\omega_N^{\dim_\BC N-1}
\end{equation}
where $c>0$ is a constant which again only depends on the dimension $\dim_\BC N$. Moreover, equality holds in \eqref{eq:ES} if and only if $f$ is holomorphic at $x$.

\begin{bem}
We stress that the proof of \eqref{eq:ES} is infinitesimal. In particular, it is indifferent to any global geometric property of the involved manifolds such as completeness.
\end{bem}

We finally have all the ingredients needed to prove Proposition \ref{Eells-Sampson}:

\begin{proof}[Proof of Proposition \ref{Eells-Sampson}]
Suppose that $f_0,f_1:\CM(X)\to\CM(Y)$ are holomorphic maps and recall that we have endowed $\CM(Y)$ 
with the Weil-Petersson metric and $\CM(X)$ with McMullen's K\"ahler hyperbolic metric. We remind the reader that both 
metrics are K\"ahler and have finite volume.

Suppose that $f_0$ and $f_1$ are homotopic and let 
$$\hat F:[0,1]\times\CM(X)\to\CM(Y)$$
be a homotopy (as orbifold maps). The Weil-Petersson metric is negatively curved but not complete. However, it is 
geodesically convex. This allows to consider also the straight homotopy
$$F:[0,1]\times\CM(X)\to\CM(Y),\ \ F(t,x)=f_t(x)$$
determined by the fact that $t\mapsto f_t(x)$ is the geodesic segment joining $f_0(x)$ and $f_1(x)$ in the homotopy 
class of $\hat F([0,1]\times\{x\})$. Clearly, $f_t$ is smooth for all $t$.

Given a vector $V\in T_x\CM(X)$ the vector field $t\mapsto d(f_t)_xV$ is a Jacobi field along $t\mapsto f_t(x)$. Since 
the Weil-Peterson metric is negatively curved, the length of Jacobi fields is a convex function. Now, Lemma
\ref{lem1} gives that $f_0$ and $f_1$ are Lipschitz, and therefore the length of $d(f_t)_xV$ is bounded by the length of $V$ 
times a constant which depends neither on $t$ nor on $x$. It follows that the maps 
$$f_t:\CM(X)\to\CM(Y)$$
are uniformly Lipschitz. In particular, they have finite energy $E(f_t)<\infty$. In fact, the same convexity
 property of Jacobi fields shows that, for any $x$, the function $t\mapsto E_x(f_t)$ is convex. This implies that 
the energy function $t\mapsto E(f_t)$ is also convex. Moreover, it is strictly convex unless both holomorphic maps 
$f_0$ and $f_1$ either agree or are constant. Since the last possibility is ruled out by our assumptions we have:
\medskip

\noindent{\bf Fact.} Either $f_0=f_1$ or the function $t\mapsto E(f_t)$ is strictly convex.\qed
\medskip

Supposing that $f_0\neq f_1$ we may assume that $E(f_0)\ge E(f_1)$. By the fact above, for all $t\in(0,1)$:
we have
\begin{equation}\label{eq:convex}
E(f_t)<E(f_0)
\end{equation}
We are going to derive a contradiction to this assertion. In order to do so, let $K_n\subset\CM(X)$ be one of the 
sets provided by Lemma \ref{lem2} and denote by $\omega_X$ and $\omega_Y$ the K\"ahler forms of $\CM(X)$ and $\CM(Y)$ 
respectively. Since the K\"ahler forms are closed, we deduce from Stokes theorem that
\begin{align*}
0 
 = & \int_{[0,t]\times K_n}d\left((F^*\omega_Y)\omega_X^{\dim_\BC(\CT(X))-1}\right) \\
= & \int_{\D([0,t]\times K_n)}(F^*\omega_Y)\omega_X^{\dim_\BC(\CT(X))-1}\\
 = & \int_{\{t\}\times K_n}(F^*\omega_Y)\omega_X^{\dim_\BC(\CT(X))-1}-
\int_{\{0\}\times K_n}(F^*\omega_Y)\omega_X^{\dim_\BC(\CT(X))-1}\\
& + \int_{[0,t]\times \D K_n}(F^*\omega_Y)\omega_X^{\dim_\BC(\CT(X))-1}\\
 = & \int_{K_n}(f_t^*\omega_Y)\omega_X^{\dim_\BC(\CT(X))-1}-
\int_{K_n}(f_0^*\omega_Y)\omega_X^{\dim_\BC(\CT(X))-1} \\
& + \int_{[0,t]\times \D K_n}(F^*\omega_Y)\omega_X^{\dim_\BC(\CT(X))-1}
\end{align*}
Below we will prove:
\medskip

\noindent{\bf Claim.} $\lim_{n\to\infty}\int_{[0,t]\times \D K_n}(F^*\omega_Y)\omega_X^{\dim_\BC(\CT(X))-1}=0$.
\medskip

Assuming the claim we conclude with the proposition. From the claim and the computation above we obtain:
$$\lim_{n\to\infty}\left(\int_{K_n}(f_t^*\omega_Y)\omega_X^{\dim_\BC(\CT(X))-1}-
\int_{K_n}(f_0^*\omega_Y)\omega_X^{\dim_\BC(\CT(X))-1}\right)=0$$
Taking into account that both maps $f_t$ and $f_0$ are Lipschitz and that $\CM(X)$ has finite volume, we deduce that
$$\int_{\CM(X)}(f_t^*\omega_Y)\omega_X^{\dim_\BC(\CT(X))-1}=
\int_{\CM(X)}(f_0^*\omega_Y)\omega_X^{\dim_\BC(\CT(X))-1}$$
We obtain now from \eqref{eq:ES}
\begin{align*}
E(f_t) &\ge c\int_{\CM(X)}(f_t^*\omega_Y)\omega_X^{\dim_\BC(\CT(X))-1} \\
&=c\int_{\CM(X)}(f_0^*\omega_Y)\omega_X^{\dim_\BC(\CT(X))-1}=E(f_0)
\end{align*}
where the last equality holds because $f_0$ is holomorphic. This contradicts \eqref{eq:convex}.

It remains to prove the claim. 

\begin{proof}[Proof of the claim]
Let $d=\dim_\BR\CT(X)$ and fix $(t,x)\in[0,1]\times\D K_n$. Let $E_1,\dots,E_d$ be an orthonormal basis of 
$T_{(t,x)}([0,1]\times\D K_n)$. We have
\begin{align*}
\big\vert(F^*\omega_Y)&(E_1,E_2)\cdot\omega_X(E_3,E_4)\cdot\ldots\cdot\omega_X(E_{d-1},E_d)\big\vert\\
   &=\big\vert\langle dF_{(t,x)}E_1,i\cdot dF_{(t,x)}E_2\rangle_Y\langle E_3,iE_4\rangle_X\dots\langle E_{d-1},E_d\rangle_X\big\vert\\
   &=\Vert dF_{(t,x)}\Vert^2
\end{align*}
where $\langle\cdot,\cdot\rangle_X$ and $\langle\cdot,\cdot\rangle_Y$ are the Riemannian metrics on $\CM(X)$ and $\CM(Y)$ and where $\Vert dF_{(t,x)}\Vert$ is the operator norm of $dF_{(t,x)}$.

From this computation we deduce that there is a constant $c>0$ depending only on the dimension such that
$$\left\vert\int_{[0,t]\times \D K_n}(F^*\omega_Y)\omega_X^{\dim_\BC(\CT(X))-1}\right\vert\le c
\int_{[0,t]\times \D K_n}\Vert dF_{(t,x)}\Vert^2d\vol_{[0,t]\times \D K_n}(t,x)$$
Recall that $F:[0,1]\times\CM(X)\to\CM(Y)$ is the straight homotopy between the holomorphic (and hence 
Lipschitz) maps $f_0$ and $f_1$. Let $L$ be a Lipschitz constant for these maps and fix, once and for all, a point $x_0\in K_0\subset\CM(X)$. As we mentioned above, the restriction of $F$ to $\{t\}\times\CM(X)$ is $L$-Lipschitz for all $t$. Hence, the only direction that $dF_{(t,x)}$ can really increase is the $\frac\partial{\partial t}$ direction. Since $F$ is the straight homotopy, we have
\begin{align*}
\Vert dF_{(t,x)}\frac\partial{\partial t}\Vert_Y
   &=d_{\CM(Y)}(f_0(x),f_1(x)) \\
   &\le 2Ld_{\CM(X)}(x,x_0)+d_{\CM(Y)}(f_0(x_0),f_1(x_0))
\end{align*}
It follows that there are constants $A,B$ depending only on $L$ and the base point $x_0$ such that for all $(t,x)$ we have
$$\Vert dF_{(t,x)}\Vert\le A\cdot d_{\CM(X)}(x_0,x)^2+B$$
Summing up we have
\begin{multline}\label{final-eq}
\left\vert\int_{[0,t]\times \D K_n}(F^*\omega_Y)\omega_X^{\dim_\BC(\CT(X))-1}\right\vert\le\\
\le\left(A\cdot\max_{x\in\D K_n}d_{\CM(X)}(x,x_0)^2+B\right)\vol(\D K_n)
\end{multline}
By construction, $\max_{x\in\D K_n}d_{\CM(X)}(x,x_0)$ is bounded from above by a linear function of $n$. On the other hand, $\vol(\D K_n)$ decreases exponentially. This implies that the right side of \eqref{final-eq} tends to $0$ with $n\to\infty$. This proves the claim.
\end{proof}

Having proved the claim, we have concluded the proof of Proposition \ref{Eells-Sampson}.
\end{proof}

\bigskip

\noindent Department of Mathematics, National University of Ireland, Galway. \newline \noindent
\texttt{javier.aramayona@nuigalway.ie}

\bigskip

\noindent Department of Mathematics, University of Michigan, Ann Arbor. \newline \noindent
\texttt{jsouto@umich.edu}


\begin{thebibliography}{14}

\bibitem{Andersen}
J. Andersen, {\em Mapping Class Groups do not have Kazhdan's Property (T)}, {\tt math.QA/0706.2184v1}.

\bibitem{ALS}
J. Aramayona, C. Leininger and J. Souto, {\em Injections of mapping class groups}, Geom. Topol. 13 (2009), no. 5. 

\bibitem{Bell-Margalit}
R. Bell and D. Margalit, {\em Braid groups and the co-Hopfian property},
J. Algebra 303 (2006). 

\bibitem{Bestvina-Church-Souto}
M. Bestvina, T. Church and J. Souto, {\em Some groups of mapping classes not realized by diffeomorphisms}, to appear in Commentarii Mathematici Helvetici.

\bibitem{Bestvina-Fujiwara}
M. Bestvina and K. Fujiwara, {\em Bounded cohomology of subgroups of mapping class groups},  Geom. Topol. 6 (2002).

\bibitem{Birman-Hilden}
J. Birman and H. Hilden, {\em On isotopies of homeomorphisms of Riemann surfaces}, Annals of Math., 97 (1973).

\bibitem{BLM}
J. Birman, A. Lubotzky and J. McCarthy, {\em Abelian and solvable subgroups of the mapping class groups}, 
Duke Math. J. 50 (1983).

\bibitem{Borel-Serre}
A. Borel and J.-P. Serre, {\em Corners and arithmetic groups}, Comment. Math. Helv. 48 (1973).

\bibitem{Bridson}
M. Bridson, {\em Semisimple actions of mapping class groups on CAT(0) spaces}, preprint, {\url {arXiv:0908.0685}}. 

\bibitem{Bridson-Vogtmann}
M. Bridson and K. Vogtmann, {\em Abelian covers of graphs and maps between outer automorphism groups of free groups}, 
preprint
, {\url {arXiv:1007.2598}}.

\bibitem{Castel}
F. Castel, {\em Repr\'esentations g\'eom\'etriques des groupes de tresses}, preprint.

\bibitem{Eells-Sampson} J. Eells, J. H. Sampson, {\em Harmonic mappings of Riemannian manifolds},
  Amer. J. Math.  86  (1964). 

\bibitem{Farb-Margalit}
B. Farb and D. Margalit, {\em A primer on mapping class groups}, to appear in Princeton University Press.


\bibitem{Farb-Masur} B. Farb, H. Masur, {\em Superrigidity and mapping class groups}, Topology 37, No.6 (1998).

\bibitem{Grossman}
E. Grossman, {\em On the residual finiteness of certain mapping class groups}, 
J. London Math. Soc. (2) 9 (1974/75)

\bibitem{Hamidi-Tehrani}
H. Hamidi-Tehrani, {\em Groups generated by positive multitwists and the fake lantern problem}, 
Algebr. Geom. Topol. 2 (2002)

\bibitem{Harer}
J. Harer, {\em The virtual cohomological dimension of the mapping class group of an orientable surface}, 
Invent. Math. 84 (1986).

\bibitem{Harvey}
W. Harvey, {\em Boundary structure of the modular group}, in {\em Riemann surfaces and related topics: 
Proceedings of the 1978 Stony Brook Conference} Ann. of Math. Stud., 97, Princeton Univ. Press, 1981.

\bibitem{Harvey-Korkmaz}
W. Harvey and M. Korkmaz, {\em Homomorphisms from mapping class groups},  Bull. London Math. Soc.  
 37  (2005),  no. 2. 

\bibitem{Hatcher-Thurston}
A. Hatcher and W. Thurston, {\em A presentation for the mapping class group of a closed orientable surface}, 
Topology 19 (1980).

\bibitem{Humphries} S. P. Humphries, {\em Generators for the mapping class group}, in Topology of low-dimensional manifolds 
(Proc. Second Sussex Conf., Chelwood Gate, 1977), pp. 44--47, Lecture Notes in Math., 722, Springer, Berlin, 1979. 

\bibitem{Ivanov-2}
N. Ivanov, {\em Automorphisms of Teichm\"uller modular groups}, in {\em Topology and geometry---Rohlin Seminar}, 
199--270, Lecture Notes in Math., 1346, Springer, Berlin, 1988.

\bibitem{Ivanov-comm}
N. Ivanov, {\em Automorphism of complexes of curves and of Teichm\"uller spaces}, Internat. Math. Res. Notices 1997.

\bibitem{Ivanov}
N. Ivanov, {\em Mapping class groups}, in {\em Handbook of Geometric Topology}, North-Holland, 2002.

\bibitem{Ivanov-McCarthy}
N. Ivanov and J. McCarthy, 
{\em On injective homomorphisms between Teichm\"uller modular groups. I.},
Invent. Math. 135 (1999), no. 2. 

\bibitem{Kerckhoff}
S. Kerckhoff, {\em The Nielsen realization problem}, Ann. of Math. (2) 117 (1983).

\bibitem{Korkmaz}
M. Korkmaz, {\em Low-dimensional homology groups of mapping class groups: a survey},  Turkish J. Math.  26
  (2002),  no. 1. 

\bibitem{kuku}
A. Kuribayashi and I. Kuribayashi, {\em Automorphism groups of compact Riemann surfaces of genera three and four},
 J. Pure Appl. Algebra 65 (1990).

\bibitem{kuku5}
A. Kuribayashi and I. Kuribayashi, {\em On automorphism groups of compact Riemann surfaces of genus $5$}, Proc. 
Japan Acad. Ser. A Math. Sci. 63 (1987). 

\bibitem{Looijenga}
E. Looijenga, {\em Prym representations of mapping class groups}, Geometria Dedicata 64, (1997).

\bibitem{Maclachlan}
C. Maclachlan, {\em Abelian groups of automorphisms of compact Riemann surfaces}, Proc. London Math. Soc. (3) 15 (1965).

\bibitem{Mangahas}
J. Mangahas, {\em Uniform uniform exponential growth of subgroups of the mapping class group},
Geom. Funct. Anal. 19 (2010), no. 5. 

\bibitem{Margalit}
D. Margalit,  {\em A lantern lemma}, Algebr. Geom. Topol. 2  (2002). 

\bibitem{Margalit-Schleimer}
D. Margalit and S. Schleimer, {\em Dehn twists have roots}, Geometry and Topology 13 (2009).

\bibitem{Margulis} G. A. Margulis, {\em Arithmeticity of the irreducible lattices in the semisimple groups of rank 
greater than $1$},
Invent. Math. 76 (1984), no. 1. 

\bibitem{Markovic}
V. Markovic, {\em Realization of the mapping class group by homeomorphisms}, Invent. Math. 
168 (2007), no. 3. 

\bibitem{McCarthy}
J. McCarthy, {\em Automorphisms of surface mapping class groups. A recent theorem of N. Ivanov}, 
Invent. Math. 84 (1986), no. 1. 

\bibitem{McMullen} C. T. McMullen, {\em The moduli space of Riemann surfaces is K\"ahler hyperbolic},
  Ann. of Math. (2)  151  (2000),  no. 1. 

\bibitem{Morita}
S. Morita, {\em Characteristic classes of surface bundles}, Invent. Math. 90 (1987), no. 3. %

\bibitem{Nakajima}
S. Nakajima, {\em On abelian automorphism groups of algebraic curves}, J. London Math. Soc. (2) 36 (1987)

\bibitem{Paris}
L. Paris, {\em Small index subgroups of the mapping class group}, to appear in Journal of Group Theory.

\bibitem{Powell}
J. Powell, Two theorems on the mapping class group of a surface, Proc. Amer. Math. 
Soc. 68 (1978). 

\bibitem{Royden} H. L. Royden, {\em Automorphisms and isometries of Teichm\"uller space}, 
in Advances in the Theory of Riemann Surfaces (Proc. Conf., Stony Brook, N.Y., 1969)  pp. 369--383 
Ann. of Math. Studies, No. 66. Princeton Univ. Press, Princeton, N.J.

\bibitem{Thurston}
W. P. Thurston, {\em On the geometry and dynamics of diffeomorphisms of surfaces}, Bull. Amer. Math. Soc. 19 (1988).

\bibitem{Wolpert} 
S. Wolpert, {\em Riemann surfaces, moduli and hyperbolic geometry}, in {\em Lectures on Riemann Surfaces}, 
World Scientific, 1989.


\end{thebibliography}
\end{document}